\definecolor{forestgreen}{rgb}{0.0, 0.27, 0.13}
\renewcommand{\thefigure}{%
  \ifnum\value{subsection}>0
    \thesubsection.%
  \else
    \thesection.%
  \fi
  \arabic{figure}%
}
\renewcommand\thefigure{\arabic{figure}}
\def\blR{\mathcal{R}}
\def\torT{\textgoth{T}}
\def\scR{\mathscr{R}}
\def\yj{{\mathtt{j}}}
\def\yi{{\mathtt{i}}}
\def\ya{{\mathtt{a}}}
\def\yb{{\mathtt{b}}}
\def\yc{{\mathtt{c}}}
\def\yx{{\mathtt{x}}}
\def\yy{{\mathtt{y}}}
\def\yr{{\mathtt{r}}}
\def\yd{{\mathtt{d}}}
\def\ye{{\mathtt{e}}}
\def\ve{{\mathbf{e}}}
\def\vv{{\mathbf{v}}}
\def\qA{{\mathtt{A}}}
\def\qB{{\mathtt{B}}}
\def\qC{{\mathtt{C}}}
\def\qD{{\mathtt{D}}}
\def\qH{{\mathtt{H}}}
\def\qO{{\mathtt{O}}}
\def\qP{{\mathtt{P}}}
\def\qR{{\mathtt{R}}}
\def\qQ{{\mathtt{Q}}}
\def\qU{{\mathtt{U}}}
\def\qV{{\mathtt{V}}}
\def\qX{{\mathtt{X}}}
\newcommand{\eqdef}{\stackrel{\scriptscriptstyle\rm def}{=}}
\theoremstyle{plain}
\newtheorem{mainthm}{Theorem} 
\newtheorem{thm}{Theorem}[section]
\newtheorem{lem}[thm]{Lemma}
\newtheorem{prop}[thm]{Proposition}
\newtheorem{claim}[thm]{Claim}
\theoremstyle{definition}
\newtheorem{defi}[thm]{Definition}
\newtheorem{Question}{Question}
\newtheorem{caveat}[thm]{Caveat}
\newtheorem{remark}[thm]{Remark}
\newtheorem{notation}[thm]{Notation}
\newtheorem{rem}[thm]{Remark}
\numberwithin{equation}{section}
\newcommand{\diam}{\mathrm{diam}\,}
\newcommand{\fC}{\mathfrak{C}}
\newcommand{\tfS}{\mathfrak{X}}
\newcommand{\tfB}{\textfrak{D}}
\newcommand{\tum}{\mathtt{1}}
\newcommand{\tr}{\mathtt{r}}
\newcommand{\ta}{\mathtt{a}}
\newcommand{\tb}{\mathtt{b}}
\newcommand{\tc}{\mathtt{c}}
\newcommand{\td}{\mathtt{d}}
\newcommand{\te}{\mathtt{e}}
\newcommand{\ti}{\mathtt{i}}
\newcommand{\tj}{\mathtt{j}}
\newcommand{\tk}{\mathtt{k}}
\newcommand{\tp}{\mathtt{p}}
\newcommand{\tq}{\mathtt{q}}
\newcommand{\tx}{\mathtt{x}}
\newcommand{\ty}{\mathtt{y}}
\def\cB{\textswab{B}}
\def\fB{\textswab{B}}
\def\tfB{\textfrak{D}}
\def\norte{{Q}}
\def\sul{{P}}
\newcommand{\fS}{\mathfrak{S}}
\def\bfR{\mathbf{R}}
\def\fL{\mathfrak{L}}
\renewcommand{\leq}{\leqslant}
\renewcommand{\geq}{\geqslant}
\def\loc{\mathrm{loc}}
\def\sss{\mathrm{sss}}
\def\ss{\mathrm{ss}}
\def\uuu{\mathrm{uuu}}
\def\uu{\mathrm{uu}}
\def\st{\mathrm{s}}
\def\ut{\mathrm{u}}
\def\ct{\mathrm{c}}
\def\cs{\mathrm{cs}}
\def\cu{\mathrm{cu}}
\let\oldmarginpar\marginpar
\renewcommand\marginpar[1]{\-\oldmarginpar[\raggedleft\tiny #1]%
{\raggedright\tiny #1}}
\newcommand{\relationarrow}[4]{\raise1ex\hbox{$\overset{\text{#1}}{\longrightarrow}$}\hspace{#3}\lower1ex\hbox{$\underset{\text{#2}}{\longleftarrow}$}\hspace{#4}\lower1ex\hbox{$/$}}
\title[Robust heterodimensional cycles of  co-index two via split blending machines]{Robust heterodimensional cycles of  co-index two via split blending machines}
\date{}
\author[Pablo G. Barrientos]{Pablo G.~Barrientos}
\address[P. G.~Barrientos]{Instituto de Matem\'atica e Estat\'{\i}stica UFF, Rua Marcos Waldemar de Freitas Reis, S/N - Campus do Gragoat\'{a}, Niter\'{o}i, Rio de Janeiro, 
Brazil}
\email{pgbarrientos@id.uff.br}
\author[Lorenzo J.D\'\i az]{Lorenzo J. D\'\i az}
\address[L. J. D\'\i az]{Departamento de Matem\'atica PUC-Rio, Marqu\^es de S\~ao Vicente 225, G\'avea, Rio de Janeiro 225453-900, Brazil}
\email{lodiaz@puc-rio.br}
\author[Yuri Ki]{Yuri Ki}
\address[Y. Ki]{Instituto de Matem\'atica e Estat\'{\i}stica UFF, Rua Marcos Waldemar de Freitas Reis, S/N - Campus do Gragoat\'{a}, Niter\'{o}i, Rio de Janeiro, 
Brazil}
\email{yuriki@id.uff.br}
\author[Cristina Lizana]{Cristina Lizana}
\address[C. Lizana]{Departamento de Matem\'atica. Instituto de Matem\'atica e Estat\'\i stica. Universidade Federal da Bahia. Av. Milton Santos s/n, 40170-110. Salvador, Bahia, Brazil}
\email{clizana@ufba.br}
\author[Sebastián A. P\'erez]{Sebastián A. P\'erez}
\address[S. A. P\'erez]{Instituto de Matem\'aticas, Pontificia Universidad Cat\'olica de Valpara\'\i so, Blanco Viel 596, Cerro Bar\'on, Valpara\'iso, Chile}
\email{sebastian.perez.o@pucv.cl}
\begin{document}
\date{\today}

\begin{abstract}
We consider diffeomorphisms $f$ with heterodimensional cycles of co-index two, associated with saddles $P$ and $Q$ having unstable indices 
$\ell$ and $\ell+2$, respectively. In a partially hyperbolic setting, where a two-dimensional center direction and strong invariant manifolds are defined, we introduce 
the class of  \emph{non-escaping cycles}, where the strong stable manifold of $P$ and the strong unstable manifold of $Q$ are
involved in the cycle. This configuration guarantees the existence of orbits that remain in a neighbourhood of the cycle.

We show that such diffeomorphisms $f$ can be $C^1$ approximated by diffeomorphisms exhibiting simultaneously $C^1$ robust heterodimensional cycles of co-indices one and two, encompassing all possible combinations among hyperbolic sets of unstable indices $\ell$, $\ell+1$, and $\ell+2$.

The proof relies on the construction of \emph{split blending machines}. This tool extends Asaoka's blending machines to a partially hyperbolic setting, providing a mechanisms to generate and control robust intersections within a two-dimensional central bundle. 

We also present simple dynamical settings where such cycles occur, namely skew product dynamics with surface  fiber maps.
Non-escaping cycles also appear  in contexts such as Derived from Anosov diffeomorphisms and matrix cocycles on $\mathrm{GL}(3,\mathbb{R})$.
\end{abstract}

\thanks{
P. Barrientos was supported by PID2020-113052GB-I00 funded by MCIN, PQ 305352/2020-2 (CNPq), and JCNE E-26/201.305/2022 (FAPERJ), Brazil.
L. J. D\'iaz is partially supported by CAPES-Finance Code 001, CNPq 310069/2020-3, CNPq Projeto Universal 404943/2023-3, INCT-FAPERJ E-26/200866/2018, and CNE-FAPERJ E26/204046/2024, Brazil.
C. Lizana was partially supported by CNPq projects 406750/2021-1 and 404943/2023-3, Brazil.
S. A. Pérez was partially supported by FONDECYT Regular 1250697, Chile.
The authors thank the hospitality of PUC-Rio (Brazil), UFBA (Brazil), PUCV (Chile), and CMUP (Portugal).
}

\keywords{Blender, Blending machine, Heterodimensional cycle, Partial hyperbolicity, Skew-product}
\subjclass[2020]{Primary: 37C20. Secondary: 37C29, 37D20, 37D30.}

\maketitle

\begin{flushright}
To Jacob Palis, in memoriam
\end{flushright}
\section{introduction}
\label{s.introduction}

\subsection{Framework}
\label{ss.frame}
Homoclinic tangencies and heterodimensional cycles are conjectured to be the two primary sources of nonhyperbolic dynamics, see \cite{Pal:00}.  These bifurcations give rise to cascades of new bifurcations and, in many cases, to robustly non-hyperbolic dynamics.
Here we  focus on  \emph{heterodimensional cycles}, a configuration in which the invariant manifolds of two 
hyperbolic basic sets with different \emph{$\ut$-indices} (dimension of the unstable bundle)
 intersect cyclically. Due to a deficiency in dimension, one of these intersections  cannot be transverse. 
A threshold for the ``lack of transversality" is given by the \emph{co-index} of the cycle, defined as the absolute value of the difference between the u-indices of the sets.
Heterodimensional cycles can only occur in dimensions three or higher. In dimension three, heterodimensional cycles always have a co-index  one.

Recall that a basic set $\Lambda$ of diffeomorphism $f$ has a unique and well-defined continuation $\Lambda_g$ for every diffeomorphism $g$ sufficiently close to $f$. A heterodimensional cycle between two basic sets $\Lambda$ and $\Gamma$ of
a diffeomorphism $f$ is said to be 
{\em{$C^r$ robust}} if there exists a $C^r$ neighbourhood $\mathcal{U}$ of $f$ such that, for every $g \in \mathcal{U}$, the continuations $\Lambda_g$ and $\Gamma_g$ also form a heterodimensional cycle.
By the Kupka-Smale genericity theorem, every robust cycle  involves some {\em{nontrivial}} hyperbolic set (i.e., a set containing non-periodic orbits).
The presence of such cycles prevents hyperbolicity; thus, robust heterodimensional cycles give rise to open sets of non-hyperbolic diffeomorphisms.

Following \cite[Preface]{BonDiaVia:05}, examples of open sets of diffeomorphisms consisting entirely of non-hyperbolic maps are roughly divided into two categories: critical and non-critical.
Critical behavior is associated with the occurrence of homoclinic tangencies, whereas non-critical behavior is related to partial hyperbolicity, with typical bifurcations involving heterodimensional cycles. 
The first category includes constructions such as those in \cite{New:70, New:74}, which deal with the persistence of homoclinic tangencies. 
The examples in \cite{AbrSma:68, Sim:72a} belong to the second category. In these constructions, 
the invariant manifolds of
two hyperbolic sets with different $\ut$-indices intersect cyclically, forming heterodimensional cycles, although this terminology was not used in the original works.
Indeed these cycles are robust.

A natural question in dynamics is to determine when the unfolding of a cycle leads to robust ones.
In the case of heterodimensional cycles, the co-index measures the dimension gap that must be bridged to produce ``transverse-like" intersections, those capable of generating cyclic and robust interactions between the invariant manifolds of the sets in the cycle.
To overcome this difficulty, additional geometric structures are required. This issue was resolved for co-index one cycles in \cite{BonDia:08} (in the $C^1$ setting) and \cite{LiTur:24} (for higher $C^r$ regularity), where the key ingredient is the construction of blenders. See Section~\ref{ss.bridging} for further discussion.

Within the terminology above, the dynamics that we consider  fall into the non-critical category. We study co-index two heterodimensional cycles within a partially hyperbolic setting that ensures the saddles in the cycle have well-defined strong stable and unstable manifolds. In this context, the generation of new cycles requires addressing two independent difficulties: bridging the two-dimensional gap to achieve ``transverse-like properties"  and overcoming the ``loss of recurrence"  along the central directions of the cycle. The latter is managed by assuming that the strong invariant manifolds are ``involved"  in the cycle. We call such cycles {\em{non-escaping,}} see Definition~\ref{d.nonescapinghc} and the heuristic discussion in Section~\ref{sss.heuristic}. We prove that these cycles simultaneously give rise to $C^1$ robust heterodimensional cycles of both co-index one and two.

  To state our main result, note that a co-index two cycle associated
with saddles of $\mathrm{u}$-indices $\ell$ and $(\ell+2)$ may generate hyperbolic sets of $\mathrm{u}$-indices 
$\ell$, $(\ell+1)$, and $(\ell+2)$,
which can in turn be involved in new cycles.
To capture the full complexity of the arising cycles, we introduce the following definition:

\begin{defi}[Cycle of type $(\ell,m)$]
\label{d.typesofcycles}
A heterodimensional cycle is a {\em{cycle of type $(\ell,m)$,}}
with $\ell<m$, if it is associated with hyperbolic sets of $\ut$-indices $\ell$ and $m$.
\end{defi}


\begin{mainthm}
\label{t.robustcycles}
Let $f$ be a $C^1$ diffeomorphism having  a non-escaping  cycle of type $(\ell, \ell+2)$ 
associated to a pair of saddles.
Then there is $g$ arbitrarily $C^1$ close to $f$ having
robust cycles of type $(\ell, \ell+2)$, $(\ell,\ell+1)$, and $(\ell+1,\ell+2)$.
\end{mainthm}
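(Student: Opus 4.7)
The plan is to exploit the non-escaping hypothesis to reduce the problem, near the cycle, to a two-dimensional central return dynamics, and then to install a split blending machine on that dynamics which simultaneously produces robust intersections along both central directions. The three cycles of types $(\ell,\ell+2)$, $(\ell,\ell+1)$ and $(\ell+1,\ell+2)$ are all read off from one single blender by choosing appropriate hyperbolic subsets of its maximal invariant set.

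First, by a small $C^1$ perturbation supported in disjoint neighbourhoods of $P$ and $Q$, I would linearise $f$ along the splitting into strong stable, two-dimensional centre and strong unstable bundles. The purpose of the non-escaping hypothesis is exactly that the strong stable manifold of $P$ and the strong unstable manifold of $Q$ take part in the cycle; the heuristic discussion in the introduction shows that this provides orbits which shadow the cycle without escaping along the central directions. Composing the local dynamics near $P$ with transitions along the heteroclinic connections and iterating, I would extract a first-return map on a centre fundamental domain. This return map is naturally a $C^1$ iterated function system on a two-dimensional central disc with a product-like expanding/contracting structure on its two central factors.

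The main step, which I expect to be the real obstacle, is to perturb this return system inside a small open set so that it satisfies the axioms of a split blending machine. Asaoka's original blending machine uses a finite family of central contractions whose images cover a one-dimensional target, producing a blender with a one-dimensional superposition property. Its split version requires two such covering families, acting on two complementary one-dimensional central directions, that coexist with a partially hyperbolic splitting inside the blender region. The technical heart of the argument is arranging the ``horizontal'' and ``vertical'' covering conditions simultaneously: they compete both with each other and with the quantitative cone/partial-hyperbolicity estimates, and all of them must hold on a full $C^1$-open set. Once these conditions are verified, the maximal invariant set in the blender region contains three basic hyperbolic subsets $\Lambda_{\ell}$, $\Lambda_{\ell+1}$, $\Lambda_{\ell+2}$ of $\ut$-indices $\ell$, $\ell+1$, $\ell+2$, and for any two of them the centre-unstable manifold of one meets the centre-stable manifold of the other along a relatively open piece of a strong invariant disc.

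Finally, I would assemble the three robust cycles as follows. The robust intersection of $W^{\ut}(\Lambda_{\ell+2})$ with the centre-stable leaf associated to $\Lambda_{\ell}$, delivered by the split blender, combined with the persistence of the heteroclinic coming from the original cycle of $f$, gives a $C^1$ robust cycle of type $(\ell,\ell+2)$ for every $g$ in a $C^1$-neighbourhood of the perturbed map. The same mechanism, with $\Lambda_{\ell+1}$ replacing $\Lambda_{\ell}$ on one side or $\Lambda_{\ell+2}$ on the other, yields the robust cycles of types $(\ell+1,\ell+2)$ and $(\ell,\ell+1)$, using the other central direction of the blender. Since the defining conditions of a split blending machine and the continuations of all the hyperbolic sets involved are open in the $C^1$ topology, the three cycles are robust simultaneously, which is the conclusion of the theorem.
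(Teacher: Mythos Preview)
Your plan diverges from the paper's architecture, and the divergence hides a real gap. The paper does \emph{not} build a single split blending machine containing hyperbolic subsets of three different $\ut$-indices. Instead it constructs \emph{two} preblending machines on the two-dimensional quotient dynamics: one, $\tfB_{\tp}$, for the family $\mathcal{G}$ (yielding a $\cu$-blender $\Lambda_{\tp}$ of $\ut$-index $\ell+2$), and one, $\tfB_{\tq}$, for $\mathcal{G}^{-1}$ (yielding a $\cs$-blender $\Lambda_{\tq}$ of $\ut$-index $\ell$). Each has dimension jump one, in a \emph{single} central direction. The robust $(\ell,\ell+2)$ cycle then comes from an \emph{adapted transition} $(R_\sharp,U_\sharp,\psi_\sharp)$ between them, and the connecting theorem (Theorem~\ref{tct.conecting-preblender}, resting on Theorem~\ref{t.Asaoka}) gives the robust nontransverse intersection; the transverse side $W^{\st}(\Lambda_\tq)\pitchfork W^{\ut}(\Lambda_\tp)$ is obtained separately (Remark~\ref{r.uniformsizeofinvariantmanifolds}). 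The two co-index one cycles are produced by an entirely different mechanism: Proposition~\ref{p.muitos} manufactures periodic points with a neutral central eigenvalue and strong homoclinic intersections, and Lemma~\ref{l.robustcyclesfromstrong} (via \cite[Theorem~2.4]{BonDia:08}) converts each into a robust cycle of type $(\ell,\ell+1)$ or $(\ell+1,\ell+2)$. These are arranged \emph{before} the blending machines and coexist with them because the perturbations live in disjoint cuspidal regions (Figure~\ref{f.Soportes}).

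The step in your outline that would fail is the assertion that one split blending machine with ``horizontal and vertical covering simultaneously'' contains basic sets $\Lambda_\ell,\Lambda_{\ell+1},\Lambda_{\ell+2}$. A hyperbolic basic set has one $\ut$-index, and a blender's superposition property in a central direction relies on that direction lying inside one of its hyperbolic bundles. You cannot have a single hyperbolic set whose stable manifold jumps a dimension \emph{and} whose unstable manifold also jumps a dimension: that would force the two central sub-bundles to be simultaneously contracting and expanding. The paper's point is precisely to split the deficit $2=1+1$ between a $\cs$-blender and a $\cu$-blender and then make them interact; the estimates in (at1)--(at5) and (AT1)--(AT5) are what guarantee this interaction is robust. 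Your intermediate set $\Lambda_{\ell+1}$ does not appear anywhere in the co-index two part of the argument; the $(\ell,\ell+1)$ and $(\ell+1,\ell+2)$ cycles come from the strong-homoclinic route, not from the machines.
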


\begin{remark}
\label{r.LiTur} 
Theorem~\ref{t.robustcycles} applies to a broader class of cycles which, following the terminology of \cite{LiTur:24}, we call
cycles with a {\em contour of double type I},  defined in Section~\ref{ss.doubletypeone} involving the notion of a contour, see Section~\ref{ss.non-escaping}.
Informally, such cycles behave as if they carried two independent type I contours of a cycle of co-index one (in the sense of \cite{LiTur:24}), one associated with the strong stable manifold of the saddle of $\ut$-index $\ell$, and the other with the strong unstable manifold of the saddle of $\ut$-index $\ell+2$.

Viewing non-escaping cycles as cycles with a double type I contour, the coexistence of a pair of robust co-index one cycles, 
obtainable without destroying the original cycle, follows naturally from the approach in \cite{LiTur:24}.
The genuinely new and technically demanding phenomenon lies in the coexistence of robust co-index two cycles.
\end{remark}

\begin{Question}
\label{q.stabilization}
It remains an open problem whether there is a transitive set (for instance, a homoclinic class) that contains all the hyperbolic sets involved in the three types of cycles in the theorem.
A further question is whether such a set could also contain the continuations of the saddles in the cycle.
These questions are closely related to the problem of stabilization of cycles; see \cite{BonDiaKir:12,LiTur:24}.
A more precise version of  this question should involve contours. Let us observe that there are co-index one cycles
(precisely those having a contour of type I) that cannot be stabilizable, see \cite[Theorem C]{LiTur:24}.
\end{Question}

Our aim is to present the geometric ideas underlying robust cycles of co-index two, setting aside the regularity issues involved in the perturbations. The approach in \cite{LiTur:24} provides a natural starting point for addressing higher regularity, which remains a delicate and technically demanding aspect of the theory. In line with the study of co-index one cycles, our strategy is to first introduce the geometric framework (as in \cite{BonDia:08}).

%
%
%
%
%

Before turning to the setting of high co-index cycles, 
we revisit the co-index one case. There are two (non-disjoint classes) of co-index one cycles: 
twisted and non-twisted (following the terminology of \cite{BonDiaKir:12}), or type I and type II (as in \cite{LiTur:24}). In the case of 
twisted  or type I cycles, the bifurcating diffeomorphism may exhibit very rich dynamics,
see Theorem~\ref{tp.finallyarobustcycle}.
 As shown in \cite[Theorems 4 and 5]{LiTur:24}, the saddles in the cycle may be accumulated by other hyperbolic sets, which, however, fail to be homoclinically or heteroclinically related to the saddles in the original cycle. This leads to a form of latent underlying dynamics\footnote{This is somewhat reminiscent of the so-called ``ghost dynamics" in saddle-node cycles, described via transition maps in \cite{NewPalTak:83}; see also \cite[Section 11.3]{DiaRioVia:01}. The key difference is that ghost dynamics are virtual, suppressed dynamics that only appear upon unfolding the cycle, whereas latent dynamics are actual and present in the system.}, whose study is essential for understanding the global dynamics of the cycle.
In the non-escaping setting, a similar form of latent dynamics arises.

Our interest in co-index two cycles arises from a class of skew-products with fiber dynamics on the two-sphere, a setting reminiscent of \cite{Goretal:05}; see Section~\ref{ss.toynonescaping}.  
This class leads us to introduce non-escaping cycles and to place them within a broader context.  

Within nonhyperbolic dynamics, non-escaping cycles already appear in the class of  diffeomorphisms constructed by Abraham-Smale \cite{AbrSma:68} and Shub~\cite{Shu:72}, as well as in Manning's versions of them in \cite{Man:72}; see Section~\ref{ss.shubandco}.  

Although our primary focus is on diffeomorphisms, an additional motivation arises from the study of matrix cocycles in $\mathrm{GL}(3, \mathbb{R})$, which are directly related to skew-products with fiber maps defined on the two-sphere.  
We note that heterodimensional cycles of co-index one have already appeared in the study of the boundary of hyperbolicity for matrix cocycles, see \cite[Theorem 4.1]{AviBocYoc:10}, even though the terminology of cycles is not explicitly used there (see also the discussion in \cite[Section 1.2]{DiaGelRam:22}).  
In the case of matrix cocycles in $\mathrm{GL}(3, \mathbb{R})$, we observe that the corresponding skew-product dynamics often exhibit non-escaping cycles; see Section~\ref{ss.cocyclesnonescaping} for further discussion.

\subsection{Bridging the dimension deficit using blenders}
\label{ss.bridging}
Blenders were introduced in \cite{BonDia:96} as a tool for constructing robustly transitive diffeomorphisms. Since then, various versions have been developed to serve different purposes. Rather than surveying all these variants, we focus on a key dynamical feature of them: a {\em{blender}} is a hyperbolic set with a special geometric superposition property that makes certain non-transverse intersections between invariant manifolds robust under perturbations, behaving  as transverse ones.
A bit more precisely, let $\Lambda$ be a blender, and denote by $d^\st$ and $d^\ut$ the dimensions of its stable and unstable bundles, respectively. Then $\Lambda$ is called a {\em{$\cu$-blender}} if its stable manifold ``behaves''  as if it had dimension $d^\st + j^\st$, for some $j^{\st} \geq 1$. Similarly, $\Lambda$ is a $\cs$-blender if its unstable manifold behaves as if it had dimension $d^\ut + j^{\ut}$, for some $j^{\ut} \geq 1$. 
For details see Definition~\ref{d.cublender}.
We call the number $j^\st$ {\em{(stable) dimension  jump of the 
$\cu$-blender,}} similarly for $j^\ut$.
For an informal presentation of blenders, see \cite{BonCroDiaWil:16}.
The robust cycles obtained in \cite{BonDia:08, LiTur:24} 
are associated to co-index one cycles and 
involve a saddle and a blender which provides a one-dimensional jump. To study cycles of higher co-index, a more refined analysis of blenders is required.

The first examples of robust cycles were provided in \cite{AbrSma:68} in dimension four and later extended to dimension three in \cite{Sim:72a}. 
 These configurations  are specific and not associated with bifurcations.
In \cite[Section 4.2]{Bon:11}, these constructions are revisited by embedding Plykin attractors on two disks
in a three-dimensional manifold as normally hyperbolic basic sets of $\ut$-index two. The resulting sets have a two-dimensional stable manifold, while their stable bundle is one-dimensional. Hence they can be regarded as $\cu$-blenders and used as a dynamical plug to generate robust cycles of co-index one. To complete the construction it is enough to relate the normally hyperbolic Plykin set with a saddle of $\ut$-index one.
We refer to this type of blender as a $\cu$-blender of Plykin type.

A crucial difference between the constructions
in \cite{AbrSma:68, Sim:72a, Bon:11}
 and those in \cite{BonDia:08, LiTur:24} lies in the mechanism by which robust cycles are obtained. The former relies on rigid, semi-global configurations in which blenders are present in the initial system. In contrast, the latter requires only the existence of a co-index one cycle
  associated to a pair of saddles, a flexible configuration that appears in many contexts. The key point is that in \cite{AbrSma:68, Sim:72a, Bon:11}, blenders are assumed and used as tools, whereas in \cite{BonDia:08, LiTur:24}, blenders 
are obtained
 by unfolding the 
 cycles and thereafter used
 as plugs to produce robust cycles.

We now turn to robust cycles of co-index $k \geq 2$.  
One approach, following \cite{Bon:11}, starts with a hyperbolic attractor of unstable dimension $k \geq 2$ and embeds it as a saddle-type basic set by adding a normally hyperbolic unstable direction.  
The resulting set is a $\cu$-blender with  jump $j^\st = k$, which serves as a dynamical plug to create robust cycles of co-index $k$. This, however, is an ad hoc construction.  

A different, more semi-global approach, proposed in \cite{BarKiRai:14}, perturbs the product of a diffeomorphism with a horseshoe and the identity on a $k$-dimensional manifold.  
This configuration also produces robust cycles of co-index $k$, involving a blender horseshoe with dimension jump $k$ and a saddle, but it is not intrinsically tied to cycles and requires starting from an already rich dynamics.

In the previous constructions, the dimension deficit required for transverse-like intersections is resolved by using a blender with a dimension jump equal to the full deficit. Our alternative approach,
 is to split the deficit $k$ into two jumps 
  $j^\st$ and $j^\ut$, corresponding to unstable and stable blenders, with
$j^\st+j^u=k$. This strategy was implemented in \cite{Asa:22}.
Our construction is inspired by the ideas developed there, although substantial modifications and new ingredients are 
required to overcome the difficulties specific to our setting.
This approach allows us to consider cycles involving just a pair of saddles.

The previous discussion 
 leads to the question below, where the term ``simple''   is intentionally vague, referring to cycles involving saddles and specific conditions on the intersections of their invariant manifolds.
This excludes cases where, by assumption, the saddles belong to ``big''   hyperbolic sets (as in the first two cases discussed above).
For the second item in the question, have in mind the constructions in  \cite{SaiTakYor:23}, where the cycles are not associated to blenders
but to sets with large Hausdorff dimension. Also note that the question below can be posed in any $C^r$ topology.

\begin{Question}
\label{q.q1}
Consider any $k \in \mathbb{N}$.
\begin{enumerate}
\item Does there exist a ``simple"  class of (heterodimensional) cycles of co-index $k$ that generates robust cycles of co-index $k$?
\item Are such cycles associated with blenders? 
If so, what are the corresponding dimension jumps of these blenders?
\end{enumerate}
\end{Question}

In the case $k=1$, this question was positively answered in  \cite{BonDia:08, LiTur:24} obtaining a blender with 
dimension jump one.
We introduce the class of non-escaping cycles (of co-index two) and provide a positive answer to Question~\ref{q.q1} 
for them.
To fill the two-dimensional deficit in the indices of the sets, we get
 the simultaneous existence of a $\cs$-blender $\Lambda_1$ and a $\cu$-blender $\Lambda_2$ of $\ut$-indices $\ell$ and $\ell+2$, respectively, and both with dimension jump equal to one. 
 The blender property implies that the sets
$W^\ut(\Lambda_1)$ and $W^\st(\Lambda_2)$
($W^\st(\cdot)$ and $W^\ut(\cdot)$ denote the stable and unstable manifolds)
 behave as manifolds of dimensions $\ell+1$ and $(n-\ell-2)+1$, respectively (here $n$ is the dimension of the ambience).
Hence the sum of the ``dimensions'' is $n$ and 
can provide intersections that behave as transverse ones and hence may be done robust.

Note that in our construction the blenders have a dimension jump equal to one. This leads to a second question, which refines item (2) of Question~\ref{q.q1}.

\begin{Question}
\label{q.q}
Given $j \geq 2$, does there exist a ``simple" class of heterodimensional cycles that generates blenders exhibiting a dimension jump equal to $j$? Are these blenders involved in robust cycles?
\end{Question}

A recent approach \cite{BarRai:17} provides robust homoclinic tangencies from robust heterodimensional cycles of the lifted dynamics in the Grassmannian manifold, in a partially hyperbolic setting with center dimension at least two.  
This strategy was further refined in \cite{Asa:22} to produce $C^2$ robust homoclinic tangencies of high co-dimension in a non-dominated setting, though not directly related to explicit bifurcations.  
In the same work, Asaoka also revisits the theory of blenders, introducing the notion of a {\em blending machine}, a mechanism for generating robust intersections of dynamically defined Cantor sets in higher dimensions.  
We adapt this concept to the dominated setting by introducing the {\em split blending machine}, a variant designed for partially hyperbolic systems with two- or higher-dimensional center direction, reflecting the co-index two cycles considered here (see Section~\ref{ss.blendermachines}).

 \section{Setting, toy model, and sketch of the proof} 
 \label{smanythings}
 
 In Section~\ref{ss.non-escaping}, we state precisely the setting where Theorem~\ref{t.robustcycles} applies.
In Section~\ref{ss.toynonescaping}, we present a toy non-escaping cycle.
 In Section~\ref{ss.techniquesandtools},
 we sketch the main ingredients of our constructions.
 
 Throughout the paper, we consider a diffeomorphism $f$ defined on a compact boundaryless  Riemannian manifold $M$ of dimension $n  \geqslant 4$,
having two saddles $P$ and $Q$ forming a heterodimensional cycle of co-index two
(in what follows we omit the heterodimensional), where the $\ut$-index of $P$ is smaller than that of $Q$.  
 
 \subsection{Non-escaping  cycles of co-index two}
 \label{ss.non-escaping}
\subsubsection{Heuristics} 
\label{sss.heuristic}
The dynamics associated with a cycle depend on several factors which are considered in the the definition of non-escaping ones
(see Remark~\ref{r.defnonesc}):
(i) domination or partial hyperbolicity in the cycle (if any);
(ii) the so-called central multipliers of the saddles;
(iii) the geometry of the intersections $W^\st (P) \cap W^\ut (Q)$ and $W^\ut (P) \cap W^\st (Q)$; and
(iv) the transitions along the cycle between their saddles.
These aspects are not entirely independent.

 Concerning (iii), the way the invariant manifolds of the saddles in the cycle intersect plays a fundamental role. A key problem in analyzing the unfolding of a cycle is the generation of ``recurrences'' (used here as a deliberately vague term) around the cycle. 
 The simplest precursor of this type of question appear in  surface diffeomorphisms with heteroclinic tangencies whose limit sets and non-wandering sets are different, see \cite[Figure 4.4]{New:78}. In such cases, the relative position of the tangencies is crucial in determining the recurrent behaviour.
In the specific case of heterodimensional cycles, recurrences always occur in the co-index one case, a  key fact
 in establishing their robustness. However, for higher co-indices, there may exist 
``escaping directions''  that obstruct the recurrence mechanism. The rough idea is that, in the unfolding of a cycle, orbits tend to follow the available center-strong stable direction to enter the cycle, and the unstable directions to exit it.  
In non-escaping cycles, these directions are intrinsically part of the cycle structure, which guarantees recurrence.  
We now formalize this heuristic.
  
 \subsubsection{Contours of a cycle}
 \label{sss.elements}
For simplicity, we assume that the saddles $P$ and $Q$ are fixed points. 
The $f$-invariant  {\em{maximal contour of the cycle}} is the compact set
\begin{equation} 
\label{e.domaincycle}
\Theta \eqdef \{P\} \cup \{Q\} \cup \Big(W^\st (P) \cap W^\ut(Q) \Big) \cup  
\Big(W^\ut (P) \cap W^\st(Q) \Big).
\end{equation}
After unfolding the cycle, 
one aims to control the  orbits  that remain within a small neighbourhood of $\Theta$ (or some prescribed  part of it, since in many cases the set $\Theta$ may be very big). Hence  one may consider compact $f$-invariant subsets
$\Upsilon$ of $\Theta$ that contain the saddles $P$ and $Q$, along heteroclinic  points in both sets 
$W^\st (P) \cap W^\ut(Q)$ and  
$W^\ut (P) \cap W^\st(Q)$. We refer to such a set as a {\em{contour of the cycle.}}

To be more precise,
we suppose there exists a contour $\Upsilon$
having a partially hyperbolic splitting adapted to the co-index two 
\begin{equation} 
T_\Upsilon M = E^\sss \oplus E^\ct \oplus E^\uuu,
\end{equation}
where $ E^\sss$ and  $E^\uuu$ are uniformly contracting and expanding, respectively.
Adapted means that  $E^\ct$ is a central direction of dimension two such that
 the stable and unstable bundles of the saddles
of the cycle satisfy
\begin{equation}
\label{e.bundles}
\begin{split}
&E^\st(P)= E^\sss(P) \oplus E^\ct (P), \qquad
E^\ut(P)= E^\uuu(P),\\
&E^\st(Q)= E^\sss(Q),  \qquad  \qquad \quad \,\,\,\,
E^\ut (Q)= E^\ct (Q) \oplus E^\uuu (Q).
\end{split}
\end{equation}
 The bundle $E^\ct$ may split in a dominated way or not. We consider the first case,
assuming that
$E^\ct= E^\ct_1 \oplus E^\ct_2$, where $E^\ct_1$ is more contracting than $E^\ct_2$. These bundles play asymmetric roles, depending
on the saddle of the cycle as explained below.  Putting together the previous bundles we get a dominated splitting over 
$\Upsilon$,
\begin{equation} 
\label{e.domaincyclebis}
T_\Upsilon M = E^\sss \oplus E^\ct_1 \oplus E^\ct_2 \oplus E^\uuu.
\end{equation}

The results in \cite{Dia:95,DiaRoc:97} show that, for co-index one cycles, the resulting dynamics primarily depends on the central direction, which is one-dimensional and  
 further exploited in \cite{BonDia:08,LiTur:24} to get robust cycles.  
Co-index two cycles, however, require a more careful analysis.  
Note from \eqref{e.bundles} that the saddle $P$ admits several stable subbundles:
\[
E^\sss(P) \subset E^\ss(P) \subset E^\st(P), 
\quad \text{with} \quad 
E^\ss(P) \eqdef E^\sss(P) \oplus E^\ct_1(P).
\]
We refer to these as the {\em strongest stable}, {\em strong stable}, and {\em stable} bundles, respectively.  
Similarly, for the saddle $Q$ there are expanding bundles
\[
E^\uuu(Q) \subset E^\uu(Q) \subset E^\ut(Q), \quad 
\text{with} \quad 
E^\uu(Q) \eqdef E^\ct_2(Q) \oplus E^\uuu(Q),
\]
Accordingly, one can define the invariant manifolds tangent to them \cite{HirPugShu:77}:
\begin{equation}
\label{e.invariantstableunstablemanifolds}
W^\sss(P) \subset W^\ss(P) \subset W^\st(P) \qquad \mbox{and} \quad 
W^\uuu(Q) \subset W^\uu(Q) \subset W^\ut(Q).
\end{equation}

The orbits generated in the unfolding of the cycle split in segments as follows: 
first, iterations close to $Q$;
 then a transition from $Q$ to $P$ following some heteroclinic point in $\Upsilon \cap W^\st(P) \cap W^\ut (Q)$;
next, iterations close to $P$; and finally
a transition from $P$ to $Q$ 
 following some heteroclinic point in  
 $\Upsilon \cap W^\ut(P) \cap W^\st (Q)$.  
Typically, these orbits  exit from $Q$ along $E^\ct_2$ and approach to $P$ along
$E^\ct_1$. Therefore, both directions must be involved in the cycle, otherwise, points 
may leave the
neighbourhood of the cycle along $E^\ct_2$ and fail to return to the cycle along $E^\ct_1$.
This observation is done more precise
using  $W^\ss (P)$ and $W^\uu (Q)$:
the interaction of the central directions is guaranteed if 
$W^\ss (P)$ and $W^\uu (Q)$ both intersect the contour $\Upsilon$.
This  leads to the definition of non-escaping cycles.


\subsubsection{Non-escaping contours and cycles} 
\label{sss.defnonesc}
 We introduce conditions (NE1)--(NE2) for a contour $\Upsilon$ of the cycle associated to  $P$ and $Q$
 to be {\em{non-escaping.}}
 Recall the definition of the maximal contour $\Theta$ in \eqref{e.domaincycle}.
\smallskip

 \noindent{\bf{(NE1)}}
 There are points $X_1, X_2, Z_1, Z_2  \in W^s(P) \pitchfork W^u(Q)$
 with pairwise disjoint orbits
 and $Y\in  W^\ut(P) \cap W^\st(Q)$ such that  
 \begin{equation}
 \label{e.contourupsilon}
 \Upsilon \eqdef
 \{P\} \cup \{Q\} \cup \bigcup _{i \in \mathbb{Z}}
 f^i (\{X_1, X_2, Z_1, Z_2,  Y\}) \subset \Theta
 \end{equation}
 has a dominated  splitting  
\begin{equation}
\label{e.fourbundles}
T_\Upsilon M =
E^\sss \oplus E^{\ct}_1  \oplus E^{\ct}_2 \oplus E^\uuu, \qquad E^\ct \eqdef E^{\ct}_1 \oplus E^\ct_2,
\end{equation}
with four non-trivial bundles satisfying \eqref{e.bundles} and
such that $\dim E^{\ct}_1= \dim E^{\ct}_2=1$.
%

The eigenvalues of $Df$ at $P$ and $Q$ corresponding to $E^\ct_1$ and $E^\ct_2$ are the 
{\em{central eigenvalues}}
 in (ii) in Section~ \ref{sss.heuristic}.
By hypothesis, they are real and different in modulus. There is no assumption on their signals.

Consider the invariant manifolds
$W^\sss(P) \subset W^\ss(P) \subset W^\st(P)$ and
$W^\uuu(Q) \subset W^\uu(Q) \subset W^\ut(Q)$ in \eqref{e.invariantstableunstablemanifolds}.
Each of them has co-dimension one in the next one.
Thus, the sets
$$
W^\st(P)\setminus W^\ss(P), \quad W^\ss(P)\setminus W^\sss(P), \quad
W^\ut(Q)\setminus W^\uu(Q), \quad W^\uu(Q)\setminus W^\uuu(Q)
$$
each have two connected components, which we denote by
$W^\st_j(P)$, $W^\ss_j(P)$, $W^\ut_j(Q)$, and $W^\uu_j(Q)$,
for $j \in {\ell,r}$.

\smallskip

\noindent{\bf{(NE2)}}
The heteroclinic points 
$X_1,X_2, Z_1,Z_2$
in the contour $\Upsilon$ 
such that
$$
X_1,X_2 \in W^\uu (Q) \cap  (W^\st (P) \setminus W^\ss(P) )\quad
\mbox{and} \quad
Z_1, Z_2\in W^\ss (P) \cap (W^\ut (Q) \setminus W^\uu(Q))
$$
satisfy one of the following properties:
\begin{enumerate}
\item
 either (a) $X_1, X_2$ lie in different components of $W^\uu(Q)\setminus W^\uuu(Q)$, or (b)
they lie in the same component of $W^\uu(Q)\setminus W^\uuu(Q)$ but in different components of $W^\st(P)\setminus W^\ss(P)$;
\item
either (a) $Z_1, Z_2$ lie in different components of $W^\ss(P)\setminus W^\sss(P)$, or (b) they 
lie in the same component of $W^\ss(P)\setminus W^\sss(P)$ but in different components of $W^\ut(Q)\setminus W^\uu(Q)$.
\end{enumerate}

%
%
%

\begin{defi}[Non-escaping contours and cycles]
\label{d.nonescapinghc}
A  contour $\Upsilon$ of a cycle of co-index two associated to saddles $P$ and $Q$ 
is called {\em{non-escaping}} if it 
satisfies conditions  (NE1) and (NE2). A co-index two cycle is {\em{non-escaping}} if it has some non-escaping contour.
\end{defi}

\begin{remark} 
\label{r.defnonesc}
Condition (NE2) directly addresses item (iii) in Section~\ref{sss.heuristic} and implicitly involves items (i) and (ii). The transverse intersection
$W^\st(P) \pitchfork W^\ut(Q)$ captures both $W^\ss(P)$ and $W^\uu (Q)$.
Regarding the transitions in item (iv), the forward transitions from $Q$ to $P$ along the orbits of $X_j$ avoid the strong stable manifold of $P$.
Similarly, the backward transitions from $P$ to $Q$ along the orbits of $Z_j$ avoid the strong unstable manifold of $Q$.
These conditions hold open and densely in the setting under consideration.
\end{remark}

\subsection{Toy non-escaping cycles}\
\label{ss.toynonescaping}
We introduce a class of symbolic skew-products, whose fiber maps are defined on $\mathbb{S}^2$, having non-escaping cycles. This construction is motivated by \cite{Goretal:05}.
For simplicity, and with cocycles in $\mathrm{GL}(3,\mathbb{R})$ in mind, we present the construction with fiber maps on $\mathbb{S}^2$, although it can be adapted to any surface.
These constructions can be translated to the differentiable setting by replacing the symbolic part with a horseshoe.

Consider  $C^1$ diffeomorphisms  $f_0,f_1\colon \mathbb{S}^2\to \mathbb{S}^2$
 defined on the two sphere satisfying conditions (a)-(e) below depicted in Figure~\ref{fig.cases-exp}.
 
\begin{figure}[h]
\begin{overpic}[scale=.33,
]{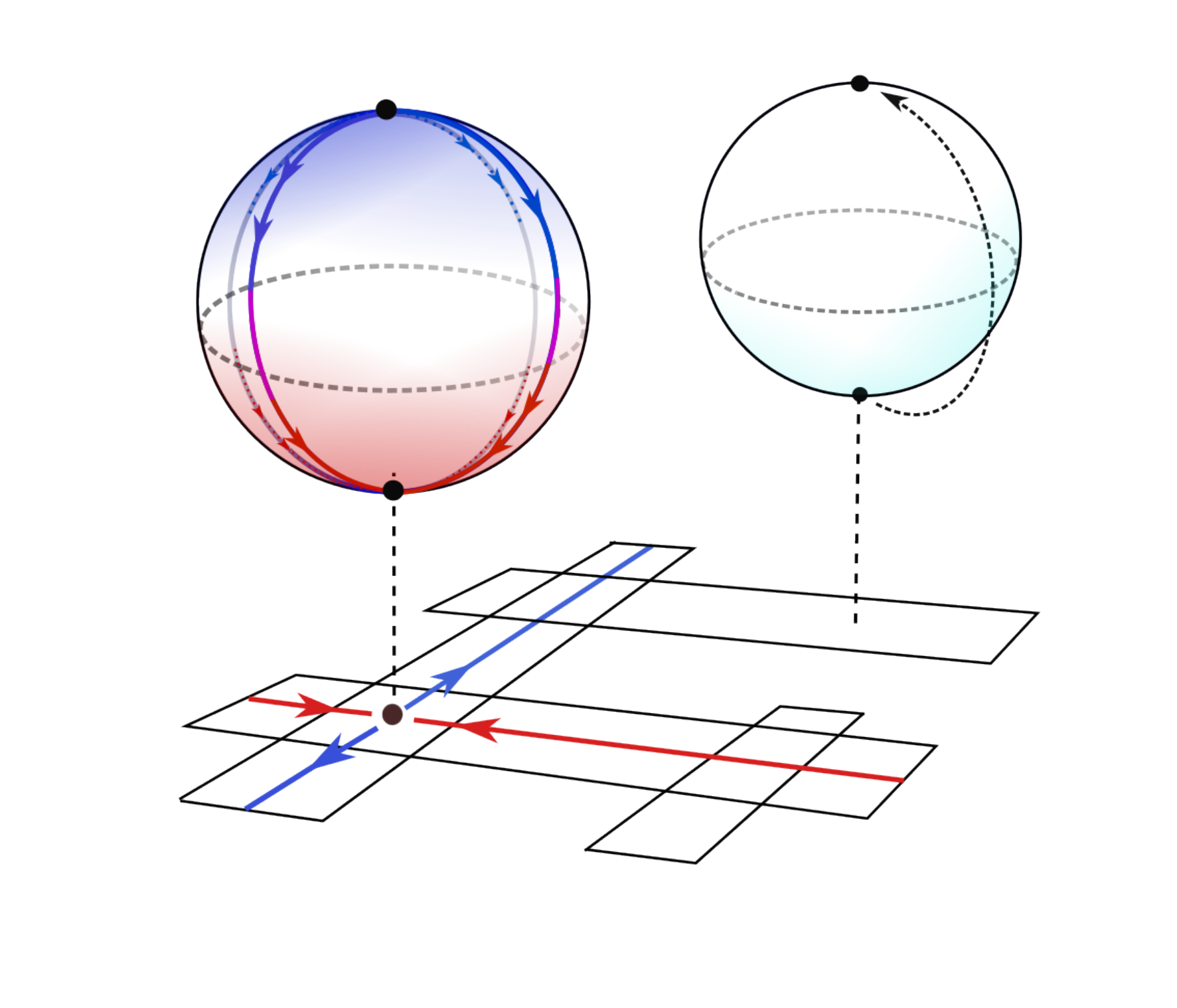}
	      	\put(30,38.5){$P$}	
		\put(30,80){$Q$}
	        \put(79,17){$0$}	
		\put(35,16){$0^{\mathbb{Z}}$}
		\put(15,75){$f_0$}	
		\put(69,46){$P$}	
		\put(71,82){$Q$}
		\put(88,29){$1$}	
	        \put(88,75){$f_1$}	
                 \end{overpic}
		\vspace{-1.5cm}
		\caption{Toy non-escaping cycle}
		\label{fig.cases-exp}
\end{figure}

\begin{enumerate}
\item[(a)]
$f_0$ has two fixed points, a
global repeller $\norte$  and
global attractor
$\sul$,
\item[(b)]
$Df_0(P)$ and $Df_0(Q)$ have real eigenvalues with different modulus. 
\end{enumerate}
Let $E^\ss_P$ and $E^\cs_P$ be the eigenspaces of $Df_0(P)$ 
where $E^\ss_P$ is associated to the most contracting eigenvalue.
Similarly, we let 
$E^\cu_Q$ and $E^\uu_Q$ the eigenspaces of $Df_0(Q)$, 
where
$E^\uu_Q$ is associated to the most expanding eigenvalue. 
Hence,   there are  one-dimensional strong stable and unstable foliations, 
denoted by  $\mathcal{F}^\ss$ and  $\mathcal{F}^\uu$,
defined on 
$W^\st (P,f_0)$ and $W^\ut (Q,f_0)$, respectively. 
In particular, 
the one-dimensional strong stable manifold $W^{\ss}(P, f_0)\subset \mathbb{S}^2$ and 
$W^\uu (Q,f_0) \subset \mathbb{S}^2$ are defined. 
Note that every point in 
$$
W^\st(P,f_0) \cap W^\ut (Q,f_0) = \mathbb{S}^2 \setminus \{P,Q\}
$$ 
has strong stable and unstable leaves.
Note also that 
$W^{\ss}(P, f_0)\setminus \{P\}$ has two connected components
 $W^{\ss}_{\ell}(P, f_0)$  and   $W^{\ss}_{r}(P, f_0)$. Similarly, 
 we can write 
 $$
 W^{\uu}(Q, f_0)\setminus \{Q\}= W^{\uu}_{\ell}(Q, f_0)\cup W^{\uu}_{r}(Q, f_0).
 $$ 
\begin{enumerate}
\item[(c)]
For $j=1, 2$, there are points 
\[
X_j \in W^\uu(Q,f_0) \pitchfork \mathcal{F}^{\ss} (X_j, f_0)\quad
 \mbox{and} \quad Z_j \in W^\ss(P,f_0) \pitchfork \mathcal{F}^{\uu} (Z_j, f_0),
\]
such that 
\[
\begin{split}
&X_1 \in W^{\ss}_{\ell} (P, f_0)  \setminus W^{\uu} (Q, f_0), 
\qquad \,\,\,\,\,\,\,\,
X_2 \in W^{\ss}_r (P, f_0)  \setminus W^{\uu} (Q, f_0),
\\
&Z_1 \in W^{\uu}_{\ell}(Q, f_0)  \setminus  W^{\ss} (P, f_0),
\qquad
Z_2 \in W^{\uu}_r (Q, f_0)  \setminus  W^{\ss} (P, f_0).
\end{split}
\]
\item[(d)]
$f_1(\sul)=\norte.$
\end{enumerate}
 We finally assume the transversality condition
\begin{enumerate}
\item[(e)]
$Df_1 (E^{\ss}_P) \oplus E^{\uu}_{Q} = T_Q \mathbb{S}^2$.
\end{enumerate}

Let $\Sigma_2 = \{0,1\}^\mathbb{Z}$.
Given $\underline{\ti}\in \Sigma_2$, we write
$\underline{\ti}=\underline{\ti}^- . \underline{\ti}^+=(\ldots i_{-2} i_{-1} . i_0i_1\ldots)$, where ``$.$'' marks
the $0$th position, $\underline{\ti}^-=(i_j)_{j<0} $,
and $\underline{\ti}^+=(i_j)_{j\ge 0}$.
%
Consider the {\em{shift map,}}
$$
\sigma \colon \Sigma_2 \to \Sigma_2,
\quad
\sigma (\, \underline{\ti} \,)= \underline{\ti}', \quad \mbox{where} \quad i_{j}' = i_{j+1} \quad \mbox{for every} \quad  j \in \mathbb{Z}.
$$
We consider the transition matrix
$$
\mathfrak{M}=
\left(
\begin{matrix} 1 & 1\\ 1 & 0
\end{matrix}
\right),
$$
the subspace $\Sigma_{\mathfrak{M}}$ of $\Sigma_2$ formed by sequences without two consecutive symbols equal to
$1$, and the restriction $\sigma_{\mathfrak{M}}$ of $\sigma$ to  $\Sigma_{\mathfrak{M}}$.
The skew-product map $F=F_{\mathfrak{M},\mathcal{F}}$  
associated to $\sigma_\mathfrak{M}$ and the family $\mathcal{F}=\{f_0,f_1\}$ is defined by
\begin{equation}
\label{e.toyzinho}
F \colon \Sigma_\mathfrak{M} \times \mathbb{S}^2 \to
\Sigma_\mathfrak{M} \times \mathbb{S}^2,
\quad
F (\,\underline{\ti}, x\,)= ( \sigma_\mathfrak{M}(\,\underline{\ti}\,), f_{i_0} (x)),
\quad
\mbox{where}
\quad \underline{\ti} =(\ldots  i_{-1}.i_0i_1\ldots).
\end{equation}

This map can be thought as a partially hyperbolic one, where the subshift provides a hyperbolic part
with (arbitrarily) strong contraction and expansion and the fiber maps provide the central dynamics.
In this way, the points 
\begin{equation}
\label{e.hoy}
\mathbf{Q}=(0^\mathbb{Z}, \norte)\qquad \mbox{and} \qquad
\mathbf{P}=(0^\mathbb{Z}, \sul) 
\end{equation}
 are ``saddles" of $F$ with different ``types of
hyperbolicity" (their indices 
 differ by two).  
 We see that $F$ exhibits a non-escaping cycle. Although we refrain from giving a translation of this notion for skew-products. We outline the key steps.

  We first see  that $\mathbf{P}$ and $\mathbf{Q}$ form  ``heterodimensional'' cycle. 
For that,
fix any
$$
X\in \mathbb{S}^2\setminus
\{\norte, \sul\}=W^\ut(\norte, f_0) \cap W^\st(\sul, f_0)
$$ 
and consider 
\begin{equation}
\label{e.twopoints}
\mathbf{X}\eqdef (0^\mathbb{Z}, X)
\qquad
\mbox{and}
\qquad
\mathbf{Y}\eqdef  ( 0^{-\mathbb{N}}.1 0^\mathbb{N},\sul).
\end{equation}
It follows that
\begin{equation}
\label{eq.skewcycle}
\mathbf{X}\in
W^\ut (\mathbf{Q}, F ) \cap W^\st (\mathbf{P}, F )
\quad
\mbox{and}
\quad
\mathbf{Y}\in
W^\st (\mathbf{Q}, F) \cap W^\ut (\mathbf{P}, F),
\end{equation}
obtaining the claimed cycle.


The definition of a non-escaping cycle involves a dominated structure and
 invariant manifolds. We adapt them to the skew-product setting.
We first discuss the strongest and strong invariant  sets analogous to the ones in 
\eqref{e.invariantstableunstablemanifolds}.  
We focus on the point $\mathbf{P}$, the analysis for $\mathbf{Q}$ is similar.
We let
\[
W^\sss (\mathbf{P},F)\eqdef W^\st (0^\mathbb{Z}, \sigma_\mathfrak{M}) \times \{{P}\},
\qquad
W^\ss (\mathbf{P},F)\eqdef  W^\st (0^\mathbb{Z}, \sigma_\mathfrak{M}) \times  W^\ss ({P}, f_0) .
\]
 Hence
$$
W^\ss (\mathbf{P},F) \setminus W^\sss (\mathbf{P},F) =  W^\ss_\ell (\mathbf{P},F) \cup W^\ss_r (\mathbf{P},F)
$$
where 
$$
W^\ss_j (\mathbf{P},F)\eqdef W^\st (0^\mathbb{Z}, \sigma_\mathfrak{M}) \times   W^\ss_j ({P}, f_0),  \qquad j=\ell,r.
$$
The sets $W^\uuu (\mathbf{Q},F)$,  $W^\uu (\mathbf{Q},F)$,  $W^\uu_j (\mathbf{Q},F)$, $W^{\uu}_j ({Q}, f_0)$ are analogously defined\footnote{Note that in the symbolic case these sets are not connected.
The corresponding translation to diffeomorphisms provides connected sets.}. 
This completes the discussion about invariant sets.

Given the points $X_1,X_2$ and $Z_1,Z_1$ in condition~(c), we define 
\begin{equation}
\label{e.choosingpoints}
\mathbf{X}_1 \eqdef (0^\mathbb{Z}, X_1),  \quad \mathbf{X}_2 \eqdef (0^\mathbb{Z}, X_2),  \quad
\mathbf{Z}_1 \eqdef (0^\mathbb{Z}, Z_1),  \quad \mathbf{Z}_2 \eqdef (0^\mathbb{Z}, Z_2).
\end{equation}
We consider the contour $\Upsilon$ as in~\eqref{e.contourupsilon} 
with elements
$\mathbf{Q}, \mathbf{P},  \mathbf{X}_1,\mathbf{X}_2,  \mathbf{Z}_1,  \mathbf{Z}_2$ and  $\mathbf{Y}$.

Transversality conditions (c) and (e) 
imply that
it satisfies (NE1).
Condition (NE2) is guaranteed by 
the choices of the points $X_1,X_2, Z_1,Z_2$
and equation
\eqref{e.choosingpoints}. 

The previous construction is summarised as follows:

 \begin{prop}
 \label{p.Fhasnonescaping}
 The map $F$ has a non-escaping cycle associated with $\mathbf{P}$ and $\mathbf{Q}$.
 \end{prop}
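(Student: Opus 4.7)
The plan is to verify conditions (NE1) and (NE2) directly for the contour
\[
\Upsilon \eqdef \{\mathbf{P}\} \cup \{\mathbf{Q}\} \cup \bigcup_{i\in\mathbb{Z}} F^i\bigl(\{\mathbf{X}_1,\mathbf{X}_2,\mathbf{Z}_1,\mathbf{Z}_2,\mathbf{Y}\}\bigr),
\]
formed by the heteroclinic points fixed in \eqref{e.twopoints} and \eqref{e.choosingpoints}. First, I would check that $\mathbf{P},\mathbf{Q}$ are fixed saddles of $F$ whose $\ut$-indices differ by two, and that the intersections in \eqref{eq.skewcycle} hold. For $\mathbf{X}$, both verifications reduce to the $f_0$-dynamics on the invariant fiber $\{0^\mathbb{Z}\}\times\mathbb{S}^2$, where (a) gives $f_0^n(X) \to P$ and $f_0^{-n}(X) \to Q$. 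For $\mathbf{Y}$, the first forward iterate is $(0^{-\mathbb{N}}1.0^\mathbb{N},Q)$ by (d), after which $Q$ is fixed by $f_0$ and the base converges to $0^\mathbb{Z}$; backward iterates keep the fiber at $P$ while the base again converges to $0^\mathbb{Z}$. Transversality at $\mathbf{Y}$ then follows from (e) combined with the arbitrarily strong hyperbolicity of $\sigma_\mathfrak{M}$ at $0^\mathbb{Z}$.

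Next, I would exhibit the dominated splitting \eqref{e.fourbundles} required by (NE1). The symbolic factor supplies the extremal bundles $E^\sss$ and $E^\uuu$, with contraction/expansion rates as sharp as desired, while the fiber direction $T\mathbb{S}^2$ plays the role of the central bundle $E^\ct$. The one-dimensional refinement $E^\ct = E^\ct_1 \oplus E^\ct_2$ is defined at $\mathbf{P}$ and $\mathbf{Q}$ by the two eigenspaces guaranteed by (b), and extended along the heteroclinic orbits in $\Upsilon$ by $F$-invariance; transversality conditions (c) and (e) ensure that these central extensions fit coherently across the cycle. The identities \eqref{e.bundles} are then automatic: the attractor condition at $P$ makes the whole fiber contracting at $\mathbf{P}$, so $E^\st(\mathbf{P}) = E^\sss(\mathbf{P}) \oplus E^\ct(\mathbf{P})$ and $E^\ut(\mathbf{P}) = E^\uuu(\mathbf{P})$, and the symmetric statement holds at $\mathbf{Q}$.

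Finally, I would verify (NE2) by reading off the geometric placement from (c). Using the product description $W^\uu(\mathbf{Q},F) = W^\ut(0^\mathbb{Z},\sigma_\mathfrak{M}) \times W^\uu(Q,f_0)$, the fact that $X_1, X_2$ lie on opposite components of $W^\uu(Q,f_0)\setminus\{Q\}$ lifts to $\mathbf{X}_1, \mathbf{X}_2$ lying in different components of $W^\uu(\mathbf{Q},F) \setminus W^\uuu(\mathbf{Q},F)$, realizing case (1)(a) of (NE2); the analogous argument for $Z_1, Z_2$, using the product description of $W^\ss(\mathbf{P},F)$, yields case (2)(a). Pairwise disjointness of the five orbits is immediate: those of $\mathbf{X}_j, \mathbf{Z}_j$ remain in the fiber over $0^\mathbb{Z}$ with distinct fiber trajectories, while the orbit of $\mathbf{Y}$ visits base sequences other than $0^\mathbb{Z}$. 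The main, and essentially only, obstacle is a notational one: interpreting the partial hyperbolicity \eqref{e.fourbundles} in a setting where the base is symbolic and carries no tangent directions. This is resolved in the intended differentiable model by replacing the subshift with a horseshoe, as the paper suggests; the combinatorial and geometric content of a non-escaping cycle is already faithfully captured by the symbolic construction above.
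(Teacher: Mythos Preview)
Your proposal is correct and follows essentially the same route as the paper: the proposition is stated as a summary of the preceding construction, and the paper's argument consists precisely of verifying the heteroclinic relations \eqref{eq.skewcycle}, invoking conditions (c) and (e) for the dominated splitting required by (NE1), and reading off (NE2) from the placement of $X_1,X_2,Z_1,Z_2$ in the components of the strong manifolds via \eqref{e.choosingpoints}. Your acknowledgement that the symbolic base lacks genuine tangent directions, to be resolved by passing to the horseshoe model, matches the paper's own caveat that it refrains from giving a literal translation of the non-escaping notion to skew-products.
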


%

%
%

\subsection{Organization and Ingredients of the proof}
\label{ss.techniquesandtools}

To prove our results, we work on three levels: quotient two-dimensional dynamics (corresponding to the the center direction), skew-products, and diffeomorphisms. As in \cite{BonDia:08}, the strategy is hierarchical, but here we place the results under a more conceptual framework. For the first two levels, we introduce preblending and split blending machines, while the extension to diffeomorphisms follows directly, since the considered skew-products reproduce the semi-local dynamics of a diffeomorphism and Asaoka's machinery in \cite{Asa:22} applies. We also follow the approach in \cite{LiTur:24}, considering  a contour of the cycle and the dynamics in neighbourhoods of it.

In Section~\ref{s.quotientdynamics}, we introduce a class of perturbations in a contour of the cycle leading to what we call simple contours. In a neighbourhood of them, the transverse dynamic to the two-dimensional center direction is affine and preserves the strongest contracting and expanding directions, allowing us to consider the quotient dynamics. 
The role of the quotient dynamics here is reminiscent of the quadratic limit dynamics arising in the renormalization of homoclinic bifurcations, see \cite{PalTak:93}.
Perturbations of the diffeomorphisms that preserve the transverse dynamics are called adapted. The dynamics around the contour is then formulated in terms of skew-products.

In Section~\ref{ss.skewassociated}, we introduce a dictionary relating the quotient dynamics of the adapted perturbations to the corresponding skew-product. These relations allow us to identify strong homoclinic intersections of points with neutral derivatives, a configuration that leads to robust co-index one cycles (see \cite{BonDia:08}).
A key property is that these cycles are obtained without destroying the initial co-index two cycle.

Using the quotient dynamics, in Section~\ref{ss.occurrence}, for each central direction, we construct periodic  points with neutral derivative in that direction. On the one hand, this leads to the occurrence of strong homoclinic intersections (which on its turn leads to robust cycles of co-index one). On the other hand, this allows to
construct the rectangles that will serve as the base for building first preblending machines and thereafter split blending machines (see Section~\ref{s.splitblendermachines}), which are the germ of the co-index two cycles 
and extend the blending machines in \cite{Asa:22} to contexts with splittings with more than two bundles. These structures are robust and provide an abstract framework capturing the mechanism behind blenders constructed using dynamical rectangles and covering properties.  In Section ~\ref{s.splitblendermachines}, we revisit and adapt tools in ~\cite{Asa:22} for this purpose. Our approach is more flexible and considers non-hyperbolic  covering properties (covering of some projections, see Remark~\ref{r.dc=0}) which may enable for further potential applications.

In Section~\ref{s.generationofsplitblendingmachines}, we consider one-step skew-products, whose base and fiber dynamics are given by an affine horseshoe and a family of two-dimensional local diffeomorphisms. These maps arise naturally in our context. We also introduce preblending machines, which induce split blending machines within the skew-product framework.

In Section~\ref{s.connectingpreblenders}, following \cite{Asa:22}, we introduce adapted transitions between preblending machines and show that they give rise to a pair of split blending machines
(one machine is central attracting and the other central repelling) having a robust cycle.
The proof of Theorem A  is completed in Section \ref{s.transitionmaps}. 

Finally, in Section \ref{s.examples}, we present two classes of dynamics that exhibit non-escaping cycles:
robustly transitive diffeomorphisms with two-dimensional center and dynamics related to matrix cocycles in 
$\mathrm{GL}(3,\mathbb{R})$.

\begin{caveat}
Throughout, all perturbations considered are $C^1$ and arbitrarily small.
\end{caveat}

\section{Simple cycles, skew-products, and quotient dynamics}
\label{s.quotientdynamics}
Throughout this section,  
$f \colon M \to M$ denotes a diffeomorphism with a non-escaping contour  $\Upsilon$ as in Section~\ref{sss.elements}.  
We analyse the dynamics of $f$ near $\Upsilon$, for that we introduce the  semi-local dynamics in Section~\ref{ss.severalthings}.  
In Section~\ref{ss.simplecycles}, we prove that $f$ admits  $C^1$ perturbations preserving the contour, whose dynamics near the contour is affine.  
Following \cite[Section 3.1]{BonDia:08}, we refer to such contours as {\em simple}.  
Section~\ref{ss.associatedquotient} introduces the associated two-dimensional quotient dynamics.

\subsection{Dynamics around the cycle contour}
\label{ss.severalthings}

Recall that the contour $\Upsilon$ has elements
elements 
$Q,P,X_1, X_2, Z_1, Z_2,Y$
see equation~\eqref{e.contourupsilon}
and the splitting in \eqref{e.fourbundles}.
For notational simplicity, we write $A=X_1$, $B=X_2$, $D=Z_1$, $E=Z_2$, and $Y=C$.

\subsubsection{Local dynamics and itineraries}
\label{sss.localitineraries}
Consider the set
\begin{equation} 
\label{e.alphabets}
\mathcal{C}
\eqdef \{\tq, \tp, \ta,\tb, \tc, \td, \te\},
\end{equation}
and, for $\tx\in \mathcal{C}$,
neighbourhoods $U_\tx$ of $X$ and numbers
$n_\tx \in \mathbb{N}$, called {\em{transition times,}} with $n_\tp=n_\tq=1$.

Define the {\em{local strong stable manifold $W^\ss_{\loc} (P)$ of $P$}}   as the connected component of 
$W^\ss (P,f)\cap U_\tp$ that contains $P$. Similarly, 
the {\em{local strong unstable manifold $W^\uu_{\loc} (Q)$ of $Q$}}
 is the connected component of 
$W^\uu (Q,f)\cap U_\tq$ that contains $Q$.

\begin{remark}[Neighbourhoods and transition times]
\label{r.transitionsetc}
After shrinking these neighbourhoods, replacing the heteroclinic points by some iterate in their orbits, and increasing the transition times, recalling conditions (NE1)-(NE2) in Section~\ref{sss.defnonesc}, we can assume that (see Figure~\ref{f.10:45})
\begin{enumerate}
\item
$f(U_\tq) \cap U_\tp =\emptyset = f(U_\tp) \cap U_\tq$,
\item
$\bigcup_{\tx \in \{\ta,\tb, \td, \te\}} U_\tx \subset U_\tq$ and
$U_\tc \subset U_\tp$,
\item
$f(\bigcup_{\tx \in \{\ta,\tb, \td, \te\}} U_\tx) \cap U_\tq =\emptyset$ and 
$f(U_\tc) \cap U_\tp=\emptyset$,
\item
$\bigcup_{\tx \in \{\ta,\tb, \td, \te\}} f^{n_\tx} (U_\tx) \subset U_\tp$ and
$f^{n_\tc} (U_\tc) \subset U_\tq$, 
\item
$\left( f^{n_\ta} (U_\ta) \cup f^{n_\tb} (U_\tb) \right) \cap
W^\ss_{\loc} (P) =\emptyset$ and 
$\left( U_\td \cup U_\te \right) \cap
W^\uu_{\loc} (Q)=\emptyset$, and
\item
the family of sets below is pairwise disjoint,
$$
U_\tp, \, U_\tq, \, 
\left\{f^i (U_\tj)\colon i\in \{1, \dots, n_\tj-1\}, \tj \in  \{\ta,\tb, \tc, \td, \te\} \right\}.
$$
\end{enumerate}
\end{remark}

\begin{figure}[h]
\begin{overpic}[scale=0.31,
]{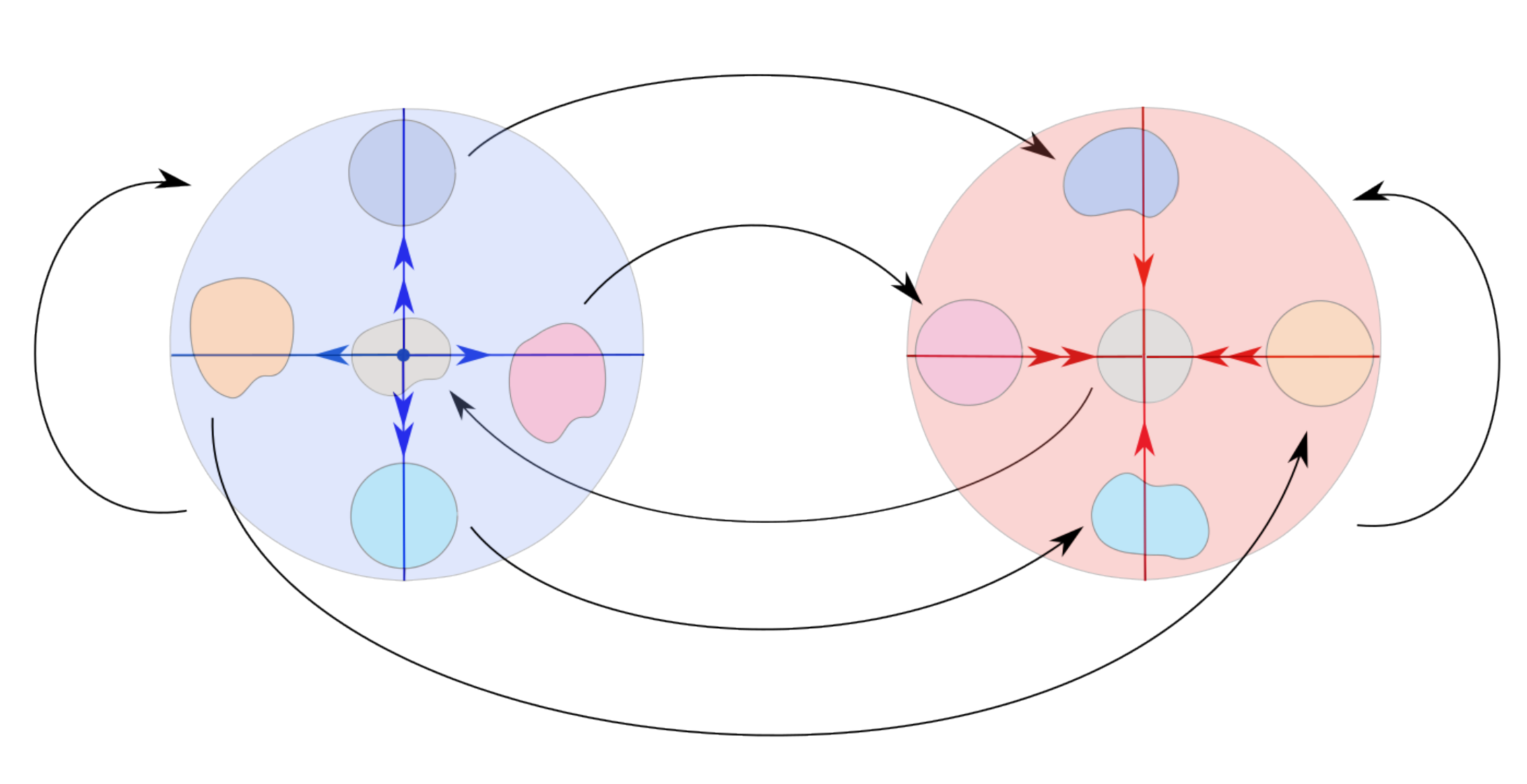}  
         \put(4,40){\large $f$}  
             \put(25.7,27.2){\large $\bullet$}  
                \put(21.5,23.5){\large $Q$}
                     \put(74.2,27){\large $\bullet$}  
                \put(77,24){\large $P$}    
           \put(14,44){\Large $U_\tq$}  
         \put(14,29){\Large $U_\te$} 
            \put(34.5,25.5){\Large $U_\td$} 
               \put(24.5,39.5){\Large $U_\ta$} 
                 \put(24.5,16.5){\Large $U_\tb$} 
                    \put(70,30){\Large $U_\tc$} 
         \put(48,20){\Large  $f^{n_{\yc}}$}
             \put(48,39){\Large  $f^{n_{\yd}}$}
              \put(48,49){\Large  $f^{n_{\ya}}$}
         \put(47,12){\Large  $f^{n_{\yb}}$}
               \put(47,5){\Large  $f^{n_{\ye}}$}
         \put(94,40){\Large $f$}
           \put(83,44){\Large $U_\tp$} 
         \end{overpic}
         \vspace{-.3cm}
         \caption{Neighbourhoods and transition times}
         \label{f.10:45}
\end{figure}

%
%
%
%
%

%
%

We consider the maximal invariant set of $f$  in a neighbourhood $U_\Upsilon$ of the contour $\Upsilon$
 defined by
\begin{equation}
\label{e.maximalneighbofthecycle}
\Lambda_f (U_\Upsilon) \eqdef \bigcap_{i\in \mathbb{Z}} f^i (U_\Upsilon), \qquad
U_\Upsilon \eqdef  U_\tq \cup U_\tp \
\cup
\left(
\bigcup_{\tj \in \{\ta,\tb, \tc, \td, \te\}}  \left(  \bigcup_{i=1}^{n_\tj-1} f^i (U_\tj) \right) \right).
\end{equation}

\subsubsection{Symbolic spaces, shifts, and skew-products}
\label{sss.atoymodel}
To study  the dynamics in $\Lambda_f (U_\Upsilon)$,
we consider the set
$\mathcal{C}$ in \eqref{e.alphabets} as an
 alphabet,
 the symbolic space 
 $\Sigma_\mathcal{C} \eqdef \mathcal{C}^\mathbb{Z}$,
and 
the family of local diffeomorphisms, called {\em{local maps,}}
\begin{equation}
\label{e.thefamilyF}
\mathfrak{F}=\mathfrak{F}_\mathcal{C}\eqdef \left\{
f_\tx \eqdef f^{n_\tx}|_{U_\tx} \colon  
 \tx \in \mathcal{C} \right\}. 
 \end{equation}
 For these maps, we have properties similar to the ones in Remark~\ref{r.transitionsetc}.

\begin{notation}
\label{n.notationshifts}
Similarly to the notation in Section~\ref{ss.toynonescaping},
given $\underline{\ti} \in \Sigma_\mathcal{C}$, we write
$\underline{\ti} =\underline{\ti} ^-.\underline{\ti} ^+$, 
where ``$.$'' indicates
the $0$th position, $\underline{\ti}^-=(i_j)_{j<0} $,
and $\underline{\ti}^+=(i_j)_{j\ge 0}$.
We endow  $\Sigma_\mathcal{C}$ with  the 
standard metric that makes it a compact space
whose topology coincides with the one generated by cylinder sets.

We let
$\Sigma^\ast_\mathcal{C} \eqdef \bigcup_{k\geq 1} \mathcal{C}^k$.
An element $\ti=i_0\dots i_n\in \Sigma^\ast_\mathcal{C}$ is called a
\emph{{word.}} We denote by $|\ti|$ the {\emph{length}} of $\ti$ (the number $n+1$ of letters).
A word $\tj$ of the form $\tj=i_0\dots i_k$ with $k \leqslant n$ is called
a {\em{prefix of $\ti$.}}
\end{notation}

Consider the matrix  ${T}$ of possible transitions  between the sets $U_\tj$ by the maps in $\mathfrak{F}_\mathcal{C}$:
\begin{equation}
\label{e.matrixT}
 {T} \eqdef
\left(t_{\tx,\ty}
\right), \quad \mbox{where} \quad t_{\tx,\ty}=
\begin{cases}
 &1 \quad \mbox{if $f_\yx (U_\yx)\cap U_\yy \ne \emptyset$},\\
  &0 \quad \mbox{if $f_\yx (U_\yx)\cap U_\yy = \emptyset$}.
  \end{cases}
\end{equation}
Associated to $T$, 
consider the shift map $\sigma_{{T}}\colon \Sigma_{{T}}\to \Sigma_{{T}}$ and
define 
$\Sigma^\ast_{T}$ as the subset
$\Sigma^\ast_\mathcal{C}$ formed by the words that can be extended to  a sequence
in $\Sigma_{T}$. The words in $\Sigma^\ast_{T}$ are called {\em{admissible.}}

\begin{notation}
\label{e.compositionsofmaps}
We use the following notation for the compositions of maps in $\mathfrak{F}$:
\begin{equation}
\label{e.notation}
f_{\ti}= f_{i_{0}  \cdots i_n} \eqdef
f_{i_{n}} \circ \cdots \circ f_{i_0}, \quad
\mbox{where} \quad 
 \ti = i_0\dots i_n\in \Sigma^\ast_{T},
\end{equation}
where the map $f_\ti$ is defined on its maximal domain (which may be an empty set)
\begin{equation}
\label{e.neighUi}
U_\ti = U_{\ti, \mathfrak{F}} \eqdef \{ X \in U_{i_0} \colon  f_{i_{0}  \cdots i_j} (X) \in U_{i_{j+1}}, \, j=0, \dots, n-1\}.
\end{equation}


Given $\ti \in \Sigma^\ast_{T}$,
the $\ti$-orbit of a point $X \in U_\ti$  is defined by
\begin{equation}
\label{e.orbitsegment}
\mathcal{O}_{\ti}(X) \eqdef \{ f_\tj(X) \colon \mbox{$\tj$ is a prefix of $\ti$}\}.
\end{equation}
We define similarly orbits of sets.
\end{notation}

We finally define the skew-product
\begin{equation}
\label{e.theskewproductfordiffeo}
F=F_{{T}, \mathfrak{F}} \colon \Lambda_{{T}, \mathfrak{F}} \to   \Lambda_{{T}, \mathfrak{F}},
\quad F(\,\underline{\ti}\,,X)=(\sigma_{T} (\,\underline{\ti}\,), f_{i_0} (X)),
\end{equation}
where
\begin{equation}
\label{e.setfordiffeo}
\Lambda_{{T}, \mathfrak{F}} \eqdef \Sigma_{T} \times U_\mathcal{C} \qquad \mbox{with} \qquad U_\mathcal{C} \eqdef \bigcup_{\ti \in \mathcal{C}} U_{\ti}.
\end{equation}

\subsection{Double type I contours}
\label{ss.doubletypeone}

The definition of type I contours of co-index one in \cite{LiTur:24} involves the so-called Shilnikov cross-form coordinates from \cite{Shi:69} and invokes \cite[Lemma 6]{GonShiTur:08} to obtain formulas for the composition of local maps. Here, we adapt this idea to express them in terms of local coordinates.

\begin{defi}[Double type I contour]
\label{d.doubletypeI} 
Let $f$ be a diffeomorphism having a non-escaping cycle associated with saddles $P$ and $Q$ of u-indices $\ell$ and 
$\ell+2$ with contour $\Upsilon$ as  in  Section~\ref{ss.severalthings}. The contour $\Upsilon$  is of {\em double type I} if
 there are 
$\bar{A}, \bar{D} \in \Upsilon$ and
local coordinates
at $U_\tp$ and $U_\tq$ such that, for some $\rho>1$, 
\begin{enumerate}
\item
The local invariant manifolds of $P$ and $Q$ in \eqref{e.invariantstableunstablemanifolds} are of the form
\[
\begin{array}{rlrl}
W_\loc^\ut(P)&=\{0^{n-\ell}\} \times [-\rho,\rho]^{\ell}, 
&\quad 
W_\loc^\st(Q) &=[-\rho, \rho]^{n-\ell-2} \times \{0^{\ell+2}\},
\\
W_\loc^\st(P) &= [-\rho,\rho]^{n-\ell} \times \{0^{\ell}\}, 
&\quad 
W_\loc^\ut(Q) &= \{0^{n-\ell-2}\} \times [-\rho,\rho]^{\ell+2},
\\
W_\loc^\ss(P) &= [-\rho, \rho]^{n-\ell-1} \times \{0^{\ell+1}\}, 
&\quad 
W_\loc^\uu(Q) &= \{0^{n-\ell-1}\} \times [-\rho,\rho]^{\ell+1},
\\
W_\loc^\sss(P) &= [-\rho,\rho]^{n-\ell-2} \times \{0^{\ell+2}\}, 
&\quad 
W_\loc^\uuu(Q) &= \{0^{n-\ell}\} \times [-\rho,\rho]^{\ell},
\end{array}
\]
\item
the points  $\bar{A}, \bar{D}\in U_\tq$ and $f^{n_\ya}(\bar{A}), f^{n_\yd} (\bar{D})\in U_\tp$ are of the form
\[
\begin{array}{rlrl}
\mathrm{(i)} \,\,\,\,\, \bar{A} &= (0^{\sss}, 0, a_2, a^{\uuu}), \quad a_2>0,  &\qquad
f^{n_\ya}(\bar{A}) &= (\bar a^{\sss}, \bar a_1, \bar a_2, 0^{\uuu}), \quad \bar a_2>0,
 \\
\mathrm{(ii)} \,\,\,\,\, \bar{D} &= (0^{\sss}, d_1, d_2, d^{\uuu}), \quad d_1>0,  &\qquad
f^{n_\ya}(\bar{D}) &= (\bar d^{\sss}, \bar d_1 , 0, 0^{\uuu}), \quad \bar d_1>0;
\end{array}
\]
\item
the derivative of the transition map $f^{n_\yc}$ is such that
\[
\begin{split}
\mathrm{(i)} \,\,\,\,\, &Df^{n_\yc} (C) (0^\sss, 1,0, 0^\uuu) \cdot (0^\sss, 1,0,0^\uuu)>0, \\
\mathrm{(ii)} \,\,\,\,\, &Df^{n_\yc} (C) (0^\sss, 0,1, 0^\uuu) \cdot (0^\sss, 0,1,0^\uuu)>0.
\end{split}
\]
\end{enumerate}
To $\Upsilon$ we associate the integer $\iota (\Upsilon)=\ell$ where $\ell$ is the $\ut$-index of $P$.
\end{defi}

\begin{lem} 
\label{l.everyneistypeI}
Given any neighbourhood $U$ of a non-escaping contour of a diffeomorphism $f$, there exists an arbitrarily small
perturbation $g$ of $f$
with a non-escaping cycle associated to the saddles of the initial one
 having a double type I contour contained in $U$.
\end{lem}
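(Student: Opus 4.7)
The strategy is to install the coordinate normal form of Definition~\ref{d.doubletypeI} by three $C^1$ perturbations of pairwise disjoint supports, each respecting what has been arranged before and preserving both the non-escaping structure (NE1)--(NE2) and the containment of the resulting contour in $U$. The scheme is the $C^1$, four-bundle analogue of the adjustments used in~\cite{BonDia:08} for co-index one cycles.

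First, I would shrink $U_\tp,U_\tq$ inside $U$ and apply a standard Franks-type $C^1$ perturbation to obtain an arbitrarily $C^1$-close diffeomorphism $g$ that coincides with its differential at $P$ and at $Q$ on these boxes. Since the central multipliers are real and of pairwise distinct modulus and the dominated splitting~\eqref{e.fourbundles} has four bundles with well-separated rates, $Dg(P)$ and $Dg(Q)$ are diagonalizable over $\mathbb{R}$. Choosing linear coordinates with axes aligned to $E^\sss,E^\ct_1,E^\ct_2,E^\uuu$ and box size $\rho>1$ immediately gives item~(1) of Definition~\ref{d.doubletypeI}, as each of the invariant manifolds in~\eqref{e.invariantstableunstablemanifolds} then coincides with the corresponding coordinate subspace.

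Next, I would replace each of $X_j,Z_j,Y$ by a sufficiently high iterate so that $X_j,Z_j$ and $f^{n_\ya}(X_j),f^{n_\yd}(Z_j)$ lie deep inside the linearization boxes. Because $X_j\in W^\uu(Q)\cap(W^\st(P)\setminus W^\ss(P))$, in linear coordinates $X_j$ takes the form $(0^\sss,0,*,*)\in U_\tq$ and $f^{n_\ya}(X_j)$ the form $(*,*,*,0^\uuu)\in U_\tp$ with both central entries nonzero; symmetrically for $Z_j$. Condition (NE2) ensures that the two candidates $\{X_1,X_2\}$ realize opposite signs of the $\ct_2$-entry at $Q$ (case~(a)) or at $P$ (case~(b)), and symmetrically $\{Z_1,Z_2\}$ in the $\ct_1$-entry. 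Combined with the four independent orientation choices of $E^\ct_1(P),E^\ct_2(P),E^\ct_1(Q),E^\ct_2(Q)$, an appropriate selection of $\bar A\in\{X_1,X_2\}$, $\bar D\in\{Z_1,Z_2\}$ and axis reversals realizes the positivity $a_2,\bar a_2,d_1,\bar d_1>0$ required by item~(2).

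The main obstacle is item~(3). Once Step~2 fixes the four central orientations and the choices of $\bar A,\bar D$, the signs of the two diagonal entries of $Df^{n_\yc}(C)$ on the central plane are intrinsic to $g$ and cannot be altered by further relabeling. To correct them, I would apply a cascade of $C^1$-small perturbations along the transition orbit of $Y=C$, each supported in a small tube around an intermediate orbit point disjoint from $U_\tp,U_\tq$ and from the orbits of the $X_j,Z_j$. Each such perturbation acts as a small rotation of the invariant central 2-plane, and the cumulative effect on the central block of $Df^{n_\yc}(C)$ is essentially the sum of the individual angles: spreading a near-$\pi$ rotation over many iterates flips both diagonal signs simultaneously, while a near-identity cumulative rotation leaves them fixed. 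A single sign flip is handled by combining the rotation trick with a choice between transverse branches of $W^\ut(P)\pitchfork W^\st(Q)$ carrying opposite $\ct_i$-orientations, producing an alternative branch via an auxiliary perturbation near a transversal point if necessary. The delicate technical point is ensuring that each rotation step fits in the $C^1$ category at an arbitrarily small size; this requires an orbit segment outside the linearization boxes with sufficiently many points, which is obtained by shrinking the linearization boxes and taking $C$ as a deep iterate of $Y$. Because every perturbation in this step is supported outside the regions used in the preceding steps, items~(1) and~(2) persist, the contour remains inside $U$, and (NE1)--(NE2) are preserved for the new cycle.
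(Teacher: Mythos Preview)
Your overall plan is close in spirit to the paper's, but there is a genuine gap in Step~3, and the paper's route differs in an instructive way.

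The paper reverses your order: it first secures item~(3) together with half of item~(2) purely by chart involutions $\iota_1,\iota_2$ applied independently to the charts at $P$ and at $Q$, at zero perturbation cost. Flipping $\iota_2$ at exactly one of the two charts changes the sign in~(3)(i) while flipping exactly one of $a_2,\bar a_2$; so $a_2>0$ and~(3)(i) come for free. The remaining sign $\bar a_2>0$ is then handled by a four-case analysis using the second transverse point $B=X_2$ furnished by~(NE2); only in one subcase is a perturbation needed, and there a \emph{new} transverse heteroclinic point $A'$ is manufactured via the ``double round'' composition $f_\yb f_\tq^m f_\yc f_\tp^n f_\ya$. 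The analogous argument with $\iota_1$ and $D,E$ settles~(2)(ii) and~(3)(ii) independently. Nothing is ever done to the orbit of $C$.

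Your Step~3 instead perturbs along the orbit of $C$ by cumulative small rotations. You correctly note that a total rotation near $\pi$ flips both diagonal signs, but your treatment of the single-flip case is where the argument breaks: you appeal to ``transverse branches of $W^\ut(P)\pitchfork W^\st(Q)$ carrying opposite $\ct_i$-orientations'', yet the contour contains only the single nontransverse orbit of $Y$, and nothing in (NE1)--(NE2) produces a second one. ``Producing an alternative branch via an auxiliary perturbation'' is precisely the nontrivial step that the paper carries out---but on the \emph{transverse} side of the cycle (where (NE2) supplies two points to choose between) rather than on the nontransverse side, and via the explicit double-round mechanism rather than a bare existence claim. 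The cleanest repair of your argument is to adopt the paper's ordering: absorb item~(3) into the involution step, and reserve the combinatorics of~(NE2) together with the double-round construction for the residual sign in item~(2).
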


\begin{proof}
Recall the heteroclinic points $A,B,C,D$ of a non-escaping cycle as in Definition~\ref{d.nonescapinghc}.

We can select local charts $\eta_P$ and $\eta_Q$ satisfying (1).
These points fall into the cases (2)(i) ($A$ or $B$) 
or (2)(ii) ($D$ or  $E$), but their coordinate signs are not a priori yet determined.
We now show how to control them.

In these coordinates, consider the central involutions  
\[
\iota_1(x^\st, x_1, x_2, x^\ut) \mapsto (x^\st, -x_1, x_2, x^\ut), 
\qquad  
\iota_2(x^\st, x_1, x_2, x^\ut) \mapsto (x^\st, x_1, -x_2, x^\ut).
\]  
The conditions in (1) remain valid after replacing 
$\eta_P$ with $\eta_P \circ \iota_i$ and $\eta_Q$ with $\eta_Q \circ \iota_i$.  

The analysis of conditions (2)(i), (3)(i) (involving $A, B$ and the center coordinate $x_2$) 
and conditions (2)(ii), (3)(ii) (involving $D,E$ and the coordinate $x_1$) 
are independent, and can be solved using the involutions $\iota_2$ and $\iota_1$, respectively.  
Hence we focus only on case (i). In the chosen coordinates, write
\[
 X = (0^{\sss}, 0, x_{2}, x^{\uuu}), \qquad
 f^{n_\yx}(X) = (\bar x^{\sss}, \bar x_{1}, \bar x_{2}, 0^{\uuu}), \quad X \in \{A,B\}.
\]

The involutions $\iota_2$ allows us to assume that for instance
$a_2>0$, and that $Df^{n_\tc}$ satisfies (3)(i).  
There are four possible cases:
\begin{enumerate}
\item $\bar{a}_2 > 0$.
\item $\bar{a}_2 < 0$, $b_2 > 0$, $\bar b_2 > 0$.
\item $\bar{a}_2 < 0$, $b_2 < 0$, $\bar b_2 > 0$.
\item $\bar{a}_2 < 0$, $b_2< 0$, $\bar b_2 < 0$.
\end{enumerate}
In case (1), take $\bar{A}=A$.  
In case (2), take $\bar{A}=B$.  
In case (4), replace $\eta_P$ and $\eta_Q$ by $\eta_P \circ \iota_2$ and $\eta_Q \circ \iota_2$ and then take $\bar{A} = B$
(note that this does not modify (3)(i)).
It remains to consider case (3), where we need to consider a new contour and perhaps doing
a perturbation: 
it is enough to consider double round points (are depicted in Figure~\ref{f.doubleroundpoints}) 
in a small neighbourhood of $A$ having, in the local coordinates,
a return of the 
form
$$
f_\yb f_\tq^m f_\yc f_\tp^n f_\ya, 
$$
where $m,n$ are chosen such that admissible domain 
 $U_{\ta \tp^n \tc \tq^m\tb} \ne \emptyset$
 (hence  $U_{\ta \tp^n \tc \tq^m}\ne \emptyset$), which  may involve local perturbations $f_\tp$ and $f_\tq$.
This allows to take a new heteroclinic point $\bar{A}=A'$ having a return $n_{\ya'}=\ta \tp^n \tc \tq^m\tb$ satisfying
(2)(i).

\begin{figure}[h]
\begin{overpic}[scale=0.31,
]{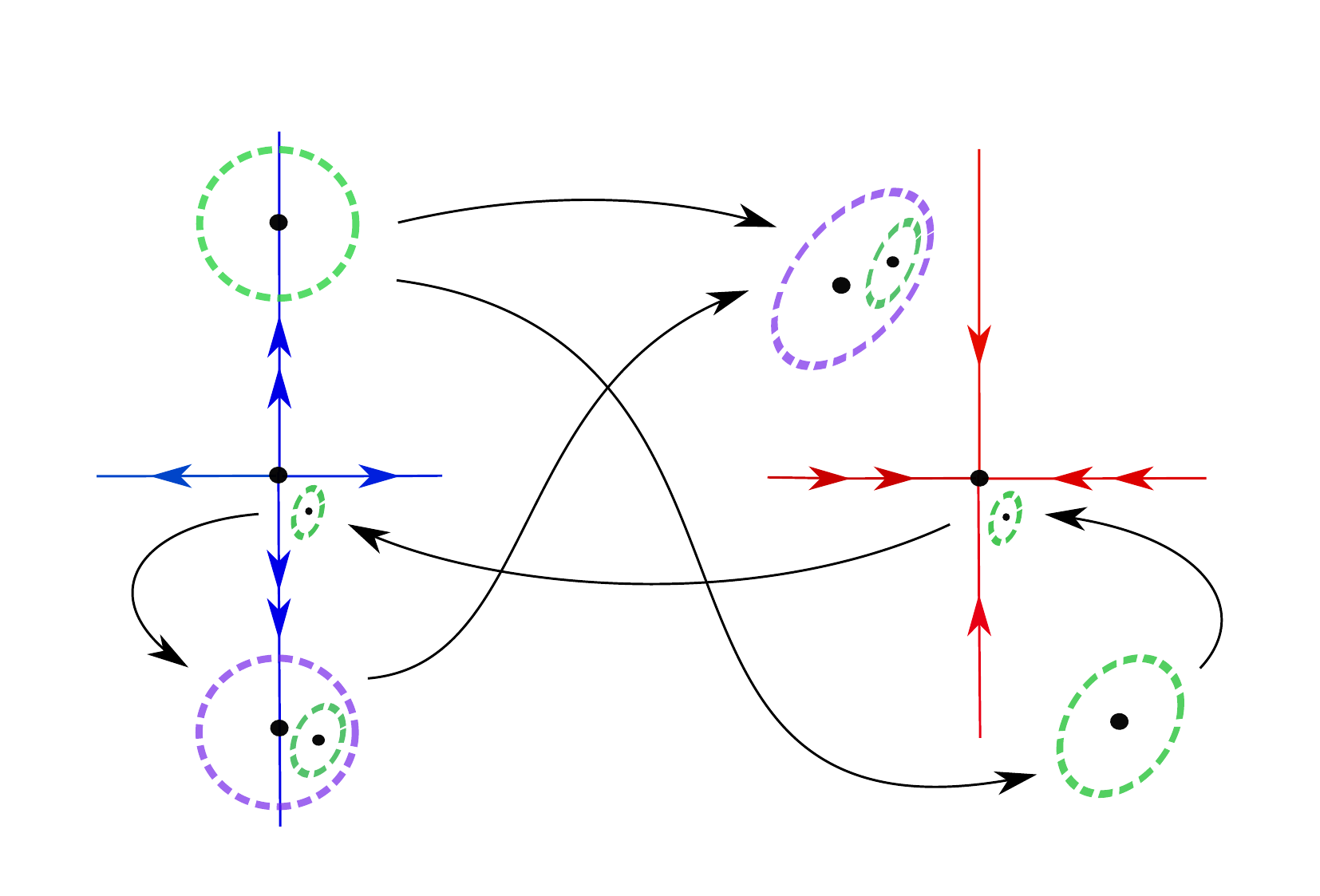}  
         \put(16.7,50){\large $A$}  
              \put(16.7,11.5){\large $B$}  
                \put(16,34){\large $Q$}
             \put(76,34){\large $P$}    
               \put(31.8,57){\Large  $f_\yb f_\tq^m f_\yc f_\tp^n f_\ya$}
         \put(3,25){\Large  $f^{m}_{\tq}$}
            \put(94,25){\Large  $f^{n}_{\tp}$}
             \put(36.7,35){\Large  $f_{\yb}$}
                       \put(45,18){\Large  $f_{\yc}$}
                                        \put(63,14){\Large  $f_{\ya}$}
         \end{overpic}
         \vspace{-.5cm}
         \caption{Case (3)}
         \label{f.doubleroundpoints}
\end{figure}

Note that all previous conditions do not change after introducing the involution $\iota_1$ in the coordinates.
This implies that the analysis for the points $D,E$ can be performed independently. The proof is now complete.
\end{proof}

In view of Lemma~\ref{l.everyneistypeI}, in what follows  we will  deal with double type I contours,
considering only the points $\bar{A}=A$ and $\bar{D}=D$ and the corresponding transitions. Then, we remove from 
the alphabet $\mathcal{C}$ the symbols $\tb$ and $\te$, as well as, the corresponding transition maps. We still denote this alphabet by $\mathcal{C}$ and the family of local maps by $\mathfrak{F}$.

\subsection{Simple contours}
\label{ss.simplecycles}
The arguments in this section are standard in the $C^1$ topology and  follow \cite[Section 3.1]{BonDia:08}, 
hence the details are omitted.
Our first result is a translation of \cite[Proposition 3.5]{BonDia:08}.
The only difference is that here we consider two transverse heteroclinic points
in $W^\st(P) \pitchfork W^\ut (Q)$ whose orbits can be analysed independently and also deal with two central directions
(indeed, this is not an issue since in \cite[Proposition 3.5]{BonDia:08} this question is also addressed).


\begin{lem}[Simple double type I contours]
\label{l.simple}
Let  $\Upsilon$ be a double type I contour of a  cycle of $f$
 whose
elements are
$Q,P,A, C,D$.
After replacing the heteroclinic  points $A,C,D$ for points in their orbits, 
shrinking the neighbourhoods
$U_\tx$, $\tx\in \mathcal{C}$, and increasing the numbers $n_\tx$, $\tx \in \mathcal{C}\setminus
\{\tp, \tq\}$, there is a perturbation  $g$ of $f$ having the same double type I contour
 $\Upsilon$ 
whose  local maps $g_\tx$, $\tx \in \mathcal{C}$, 
preserve the splitting $E^\sss\oplus E^\ct_1\oplus E^\ct_2 \oplus E^\uuu$ in \eqref{e.domaincyclebis} and such that,
in the local coordinates in $U_\tp$ and $U_\tq$:
\begin{enumerate}
\item[\emph{(1)}]
$g_\tp$ and $g_\tq$ are linear in $U_\tp$ and $U_\tq$;
\item[\emph{(2)}]
for $\tx=\ta,\tc,\td$, 
the maps
$g_\tx \colon U_\tx \to g_\tx (U_\tx)$ are affine and their
derivatives restricted to $E^\ct_1 \oplus E^\ct_2$ are of the form 
$$
Dg_\ta (A)= \left(
\begin{matrix} \epsilon_\ta & 0\\ 0 & \pm 1
\end{matrix}
\right), \qquad
Dg_\td (D)
=
\left(
\begin{matrix} \pm 1 & 0\\ 0 & \epsilon_\td
\end{matrix}
\right), 
\qquad
Dg_\tc (C)=\left(
\begin{matrix}
a_{11} & a_{12}\\
a_{21} & a_{22}
\end{matrix}
\right),
$$
where $ \epsilon_\ta,  \epsilon_\td, a_{11}, a_{22} \ne 0$.
\end{enumerate}
\end{lem}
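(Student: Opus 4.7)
The plan is to follow the $C^1$ perturbation strategy of \cite[Proposition~3.5]{BonDia:08}, adapted to the four-bundle dominated splitting \eqref{e.fourbundles} and to the double type I structure, which adds the transition map $g_\td$ alongside $g_\ta$ and $g_\tc$. The construction is done in three steps.

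\emph{Step 1: Linearization near $P$ and $Q$.} Since $P$ and $Q$ are hyperbolic and the splitting \eqref{e.fourbundles} restricted to them coincides with their eigenspaces, a standard $C^1$ Franks-type perturbation supported in a small neighbourhood of each saddle allows us to replace $f$ by a diffeomorphism $g$ that agrees with its derivative in local coordinates: $g_\tp$ and $g_\tq$ are linear on $U_\tp$ and $U_\tq$. The perturbation is made with a bump function interpolating between $Df(P)$ (resp.\ $Df(Q)$) and $f$, and in the $C^1$ topology it can be arranged to preserve the four invariant bundles $E^\sss\oplus E^\ct_1 \oplus E^\ct_2 \oplus E^\uuu$ along the orbits of the heteroclinic points $A,C,D$ (which are disjoint from the support of the perturbation, up to iterating).

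\emph{Step 2: Replacing heteroclinic points by iterates and shrinking neighbourhoods.} Next, we replace each of $A,C,D$ by a sufficiently high iterate $f^{-m}(X)$, and shrink the corresponding $U_\tx$ to small enough discs in the local coordinates of $U_\tp, U_\tq$. This increases the transition times $n_\ta,n_\tc,n_\td$ but keeps the contour $\Upsilon$ the same set. After this step, each transition map $g_\tx$, $\tx \in \{\ta,\tc,\td\}$, is a composition of a ``short'' transition in the original system with iterates of the now-linear maps $g_\tp$ and $g_\tq$, and its domain can be taken arbitrarily small.

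\emph{Step 3: Affinization and normalization of the central derivative.} By a second $C^1$ Franks-type perturbation supported in a small neighbourhood of each transition orbit and disjoint from $U_\tp \cup U_\tq$, we replace $g_\tx$ on $U_\tx$ by its affine part at $X$; making the neighbourhoods small enough, the $C^1$ distance to $f$ is controlled. This perturbation can also be chosen to preserve the four bundles of the splitting. It remains to normalize the action on the central block $E^\ct_1 \oplus E^\ct_2$ to the prescribed form. For $g_\ta$, the heteroclinic point $A$ lies in $W^\uu(Q)$, so the direction $E^\ct_2$ at $A$ coincides with a one-dimensional expanding eigendirection of $g_\tq$; precomposing $g_\ta$ with a suitable power of $g_\tq$ (and absorbing the resulting scalar by an additional perturbation) allows us to rescale the $(2,2)$-entry to $\pm 1$ (the sign being forced by the geometry), while leaving $E^\ct_1$ with a free nonzero coefficient $\epsilon_\ta$. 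A symmetric argument, using that $D \in W^\ss(P)$ so that $E^\ct_1$ at $D$ is contained in an eigendirection of $g_\tp$, yields the prescribed form of $Dg_\td(D)$. The transition $g_\tc$ carries no manifold-of-invariance constraint on its central block, so we only need the diagonal entries $a_{11},a_{22}$ to be nonzero, which holds generically after a further small perturbation; these nondegeneracy conditions are exactly the Shilnikov-type conditions (3) built into the definition of a double type I contour.

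The main technical point is ensuring that all the successive perturbations in Steps 1--3 can be performed simultaneously while preserving the dominated splitting \eqref{e.fourbundles}; this is where the $C^1$ topology is crucial, since the bump-function interpolation between a diffeomorphism and its linearization is compatible with dominated splittings but typically not with higher-order regularity. The remaining bookkeeping (disjointness of supports, smallness of each $C^1$ bump, and compatibility with the local coordinates furnished by the double type I structure) is routine and mirrors \cite[Proposition~3.5]{BonDia:08}.
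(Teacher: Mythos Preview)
Your approach is essentially the same as the paper's, and the overall structure (linearize at the saddles, deepen the transitions, then affinize and normalize) is correct. However, your Step~3 explanation of how to force the $(2,2)$-entry of $Dg_\ta$ to $\pm 1$ is imprecise in a way that matters.

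You write that ``precomposing $g_\ta$ with a suitable power of $g_\tq$ \dots\ allows us to rescale the $(2,2)$-entry to $\pm 1$''. But precomposition with $g_\tq$ only \emph{expands} $E^\ct_2$ (since $|\beta_2|>1$), so it moves the modulus of the entry away from $1$, not toward it. The actual mechanism, which the paper makes explicit, is a \emph{two-sided} balancing: one replaces $A$ by $f^{-i}(A)$ (adding $i$ iterations in $U_\tq$, where $E^\ct_2$ is expanded) \emph{and} pushes the image forward by $f^j$ (adding $j$ iterations in $U_\tp$, where $E^\ct_2$ is contracted). Since the middle piece $Df^{n_\ta}$ has bounded derivative along $E^\ct_2$, one can choose $i$ and $j$ so that the total $E^\ct_2$-multiplier is close to $1$ in modulus. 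Only then, because the number of iterations $i+j$ is large, can a $C^1$-small perturbation spread along the orbit bring the modulus to exactly $1$. Your phrase ``absorbing the resulting scalar by an additional perturbation'' would otherwise require a perturbation that changes the derivative by a bounded-away-from-$1$ factor at a single point, which is not $C^1$-small. The same remark applies symmetrically to $Dg_\td$ and the $E^\ct_1$ direction.

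Your remark that $a_{11},a_{22}\ne 0$ comes from the double type~I conditions is correct; the paper phrases this as following from ``transversality conditions'', which amounts to the same thing.
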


Following \cite[Definition 3.4]{BonDia:08},
we call a cycle as in Lemma~\ref{l.simple} {\em{simple double type I.}} To avoid heavy notation, in what follows we call these cycles
just simple.

It follows the sketch of the proof of the lemma.
The first step is to linearize (after a perturbation) in neighbourhoods of $Q$ and $P$. By shrinking the neighbourhoods $U_\tq$ and $U_\tp$, we may assume that the maps $f_\tq$ and $f_\tp$ are both linear.

The second step uses domination. In the neighbourhoods $U_\tq$ and $U_\tp$, we consider the invariant splittings given by the derivatives. We replace the point $A$ by a point of the form $A' = f^{-i}(A)$ and redefine $n_\ta$ as $n_{\ta'} \eqdef i + n_\ta + j$, for suitably large $i$ and $j$. After a new perturbation, we can assume that the linearizing splitting in $U_\tq$ is mapped into the linearizing splitting in $U_\tp$.
Adjusting the values of $i$ and $j$, and taking into account that
\begin{itemize}
\item
    $Df^i$ expands $E^\ct_2$ in $U_\tq$,
\item
    $Df^{n_\ta}$ has derivative along $E^\ct_2$ bounded above and below, and
 \item
  $Df^j$ contracts $E^\ct_2$ in $U_\tp$,
\end{itemize}
we can choose $i$ and $j$ such that the total derivative along $E^\ct_2$ has modulus close to one. 
As the number of iterations can be done arbitrarily large, a final small perturbation yields derivative of modulus one.
From this point on, we replace $A$ by $A'$ and $n_\ta$ by $n_{\ta'}$. The argument for the other points proceeds in a similar way. 
The fact that in the derivative of $Dg_\yc(0,0)$ the numbers $a_{11}, a_{22}\ne 0$ follows from the transversality  conditions.
This completes our sketch.
 
 \begin{remark}
 In the case of diffeomorphisms, the matrix corresponding to $Dg_\tc$ can be chosen diagonal. Aiming applications to skew-products,
 we stated the previous, a bit more general, version. Note that in the skew-product,  the fiber component 
 of the
 orbit of the heteroclinic point going from $P$ to $Q$ is finite. Therefore, the previous argument can not applied.
 \end{remark}
 
\subsection{Quotient dynamics}
\label{ss.associatedquotient}
Recall the alphabet $\mathcal{C}$ in~\eqref{e.alphabets} and the local maps $f_{\tx}$ in~\eqref{e.thefamilyF}. 
In what follows, we assume the contour $\Upsilon$ of $f$ is simple cycle. For each $\tx\in \mathcal{C}$,
consider the quotient of the neighbourhood 
 $\qU_\tx$  of
$U_\tx$ by the $E^\ss\oplus E^\uu$,
that we identify with subsets of $\mathbb{R}^2$. 
The quotient points are denoted by $\qQ, \qP, \qA, \qC, \qD$. 
We denote by $\varphi_\tx$ the quotient maps induced by $f_\tx$ and consider the quotient family of $\mathfrak{F}$,
\begin{equation}
\label{e.thequotientfamily}
{\mathcal{F}} =\{ \varphi_{\tx} \colon \tx \in \mathcal{C} \}, \qquad
 \varphi_{\tx} \colon \qU_\tx \to \mathbb{R}^2.
 \end{equation}
By Lemma~\ref{l.simple}, the maps 
in $\mathcal{F}$
have the following form:

\noindent
(QD1) {\em{Local dynamics at $\qP$ and $\qQ$.}}  The points $\qQ$ and $\qP$ are identified with $(0,0)$ and
$\varphi_\tq \colon \qU_\tq  \to \mathbb{R}^2$ and $ \varphi_\tp \colon  \qU_\tp  \to \mathbb{R}^2$,
 are linear maps 
\begin{equation}
\label{e.fzero}
 \varphi_\tq(x,y)= (\beta_1\, x, \,\beta_2 \, y) 
\qquad \mbox{and} \qquad 
 \varphi_\tp(x,y)=
(\alpha_1\, x, \,\alpha_2 \, y),
\end{equation}
where $0< \vert \alpha_1 \vert< \vert\alpha_2 \vert< 1< \vert \beta_1 \vert< \vert\beta_2 \vert$.

\noindent
(QD2) \emph{Transverse heteroclinic transitions.}
We can assume that 
\begin{equation}
\label{e.fzerodos}
\qA=(0,1)\in \qU_\ta \subset  \qU_\tq  \qquad \mbox{and} \qquad
\varphi_\ya (\qA)\eqdef \qA' =(0,1) \in \qU_\tp ,
\end{equation}
and that $\varphi_\ya \colon \qU_\ta \to \qU_\tp$ is an affine map of the form
\begin{equation}
\label{e.fzerotres}
\varphi_\ya (x,1+y) = (\epsilon_\ta x, 1 \pm y),\quad \mbox{where}\quad \epsilon_\ta\ne 0.
\end{equation}

We let $\qD'= \varphi_\yd^{-1} (\qD)=(1,0)$ and $\qU_\td^\prime =\varphi_\yd^{-1} (\qU_\yd)$.
We assume similar conditions for the transition $\varphi_\yd^{-1} \colon \qU_\yd^\prime \to \qU_\tp$.

\smallskip

\noindent
(QD3)
\emph{Nontransverse heteroclinic transitions.}
\label{sss.cyclemap}
 In the local charts, 
the point $\qC$ is identified with $(0,0)$ and
the cycle map $\varphi_\yc\colon \qU_\yc\to \qU_\tq$ is given by 
\begin{equation}
\label{e.Fic}
\varphi_\yc (x,y)=  
\left(
\begin{smallmatrix}
a_{11} & a_{12}\\
a_{21} & a_{22}
\end{smallmatrix}\right) \bigl (\begin{smallmatrix}x\\ y\end{smallmatrix}\bigr),\quad \mbox{where}\quad a_{11}, a_{22}>0.
\end{equation}

\begin{rem}
\label{r.qtransitions}
The family
$\mathfrak{F}=\{f_\tx \colon \tx\in \mathcal{C}\}$  and its quotient family 
$\mathcal{F}=\{\varphi_\tx \colon  \tx\in \mathcal{C}\}$ have the same transition matrix $T$ defined in \eqref{e.matrixT}.
\end{rem}

\section{Skew-products and quotient dynamics}
\label{ss.skewassociated}
Throughout this section, $f$ denotes a diffeomorphism with a simple contour as in Section~\ref{ss.simplecycles}.  
We begin by defining a special type of perturbation of $f$ and its associated skew-product (see Section~\ref{ss.adapted}).  
In Section~\ref{ss.moreonquotient}, we derive properties of the dynamics of diffeomorphisms from those of the skew-product.  
Using these relations and strong homoclinic points, in Section~\ref{ss.strongesthomoclinic}, we state sufficient conditions for the occurrence of robust cycles of co-index one.


\subsection{Adapted perturbations} 
\label{ss.adapted}
Recall the family $\mathfrak{F}$ in~\eqref{e.thefamilyF}, the point $C$ in Section~\ref{ss.severalthings}, and the alphabet $\mathcal{C}$ in~\eqref{e.alphabets}.

\begin{defi}
\label{d.adaptedperturbation}
An {\em adapted perturbation of $f$ with respect to $\mathfrak{F}$} is a $C^1$ perturbation $g$ of $f$ such that:
\begin{itemize}
  \item $g$ and $f$ coincide on the set $\Upsilon \setminus \bigcup_{i\in \mathbb{Z}} f^{i}(C)$;
  \item $g$ coincides with $f$ outside a neighbourhood of $\Upsilon$;
  \item $g$ preserves the foliations $\mathcal{F}^\sss$ and $\mathcal{F}^\uuu$ in a neighbourhood of $\Upsilon$; and
  \item 
   for every $\tx \in \mathcal{C}$, the quotient map $\psi_\tx$ of $g_\tx$ (defined in the obvious way) is a $C^1$ perturbation of  $\varphi_\tx$.
 \end{itemize} 
Moreover,  when $g$ coincides with $f$ in the whole set $\Upsilon$, we say that {\em{preserves the contour}}, and hence the cycle 
which is
also of double type I (but not simple, since affinity in the central direction may be lost).
\end{defi}

For an adapted perturbation $g$ of $f$ 
 with respect to $\mathfrak{F}$
(in what follows we will omit this dependence)
 we consider the family $\mathfrak{G}= \{ g_\tx \colon \tx \in \mathcal{C}\}$,
where $g_\tx$ is defined in the obvious way, the sets $\Lambda_{T, \mathfrak{G}}$, 
and the skew-product $F_{T, \mathfrak{G}}$ defined as in \eqref{e.setfordiffeo} and \eqref{e.theskewproductfordiffeo}.
We also consider the maximal $g$-invariant set  in $U_\Upsilon$,
\begin{equation} 
\label{e.gmaximal}
\Lambda_g (U_\Upsilon) \eqdef \bigcap_{i\in \mathbb{Z}} g^i (U_\Upsilon).
\end{equation}

Since $g$ preserves the splitting, 
it allows us to consider the corresponding quotient family
\begin{equation}
\label{e.thehappyfamilyG}
 \mathcal{G} = \mathcal{G}_g= \{\psi_\tx \colon \tx \in \mathcal{C}\},
 \quad 
  \mbox{with}
  \quad 
  \psi_{\tx} \colon\qU_{\tx,\mathcal{G}}\to \mathbb{R}^2.
 \end{equation}
By construction, the quotient maps satisfy
\[
\psi_\yx(\qX) = \varphi_\yx(\qX), \qquad \mbox{for every}
  \quad  \yx  \in \mathcal{C}.
\]
For simplicity, and with a slight abuse of notation, we say that the family 
{\em{$\mathcal{G}$  
is an adapted perturbation of the family $\mathcal{F}$ in~\eqref{e.thequotientfamily}.}}
Also, again with an abuse of notation,
we denote the sets $\qU_{\tx,\mathcal{G}}$ in \eqref{e.thehappyfamilyG} 
corresponding to $\mathcal{G}$
 by $\qU_\tx$.
%

Consider the matrix $T$ in \eqref{e.matrixT}, similarly as in
\eqref{e.setfordiffeo} and
 \eqref{e.theskewproductfordiffeo}, we consider the set
\begin{equation}
\label{e.theskewproductin}
\Gamma_{{T},\mathcal{G}} \eqdef \{ ( \,\underline{\ti}\,, \qX)\in \Sigma_{T}\times \qU_\mathcal{C}
\colon \qX\in \qU_{i_0}\}, \qquad \mbox{where} \qquad \qU_\mathcal{C} \eqdef \bigcup_{\ti \in \mathcal{C}} \qU_{\ti},
\end{equation}
and the skew-product
\begin{equation}
\label{e.theskewproductforquotien}
\Psi=\Psi_{{T}, \mathcal{G}} \colon \Gamma_{{T},\mathcal{G}} \to  \Sigma_{T}\times \qU_\mathcal{C},
\quad \Psi(\,\underline{\ti}\,,\qX)=(\sigma_{T} (\,\underline{\ti}\,), \psi_{i_0} (\qX)).
\end{equation}
The fiber dynamics of this skew-product corresponds to the model families in \cite[Section 3.2]{BonDia:08}\footnote{We chose to follow the skew-product approach, which is somewhat more abstract and general.}.
We will now focus on its study.

\subsection{A dictionary between quotient and global dynamics}
\label{ss.moreonquotient}
In what follows, $g$ stands for an adapted perturbation of $f$ as in the previous section.  
We derive relations between the dynamics of $g$ and those of its associated skew-product $\Psi$, whose fiber maps are denoted by $\psi_\tj$, with $\tj \in \Sigma^{\ast}_T$.

\begin{remark}
\label{r.firstword}
A periodic point $(\,\underline{\ti}\,,\qX)$ of $\Psi$ is \emph{hyperbolic} if 
$\underline{\ti}= \ti^\mathbb{Z}$ for some word $\ti \in \Sigma^{\ast}_T$, $\psi_{\ti}(\qX) = \qX$, and all eigenvalues of 
$D\psi_\ti (\qX)$ are different from $\pm 1$. 
Note that, due to domination, these eigenvalues are real and have distinct moduli.  
This point may be attracting, a saddle, or repelling. 
We denote by
$W^\ast_{\loc}(\qA,\psi_\ti)$, $\ast = \st, \ut$, the corresponding local invariant manifolds of $\qA$. 
If $\qA$ is attracting, then $W^\st_{\loc}(\qA,\psi_\ti)$ is a disk and 
$W^\ut_{\loc}(\qA,\psi_\ti) = \{\qA\}$, and  
from the previous comments, $W^\ss_{\loc}(\qA,\psi_\ti)$ is a well defined curve. 
The analogous description holds for saddle and  repelling points.  
\end{remark}

We can also define heteroclinic and homoclinic intersections for hyperbolic periodic points. 
Specific examples of these relations were given in Section~\ref{ss.toynonescaping}. 
The remark bellow follows as in \cite[Section~2.1]{DiaEstRoc:16} and hence 
details are omitted.

\begin{remark}[Hetero and homoclinic relations]
\label{r.homohetero}
Let $(\ti^\mathbb{Z}, \qA)$ and $(\tj^\mathbb{Z}, \qB)$ be hyperbolic periodic points of $\Psi$ 
such that there exists an admissible word $\tr$ with 
\begin{equation}
\label{e.oneintersection}
\qA \in \qU_\tr \qquad \mbox{and} \qquad
\psi_\tr(\qA) \in W^\st_{\loc}(\qB,\psi_\tj).
\end{equation}
Then, exactly as in \eqref{e.twopoints} and \eqref{eq.skewcycle}, 
\begin{equation}
\label{e.heteroclinicskew}
(\ti^{-\mathbb{N}}. \tr \tj^{\mathbb{N}}, \qA)\in
W^\ut
\left(
(\ti^\mathbb{Z}, \qA), \Psi \right)
\cap W^\st
\left(
(\tj^\mathbb{Z}, \qB)
, \Psi \right).
\end{equation} 
This allows us to define homoclinic relations  
(again, no transversality is required, trans\-versality-like properties follow from the dominated splitting of the central bundle).  Note \eqref{e.heteroclinicskew} provides intersections in the set  $\Gamma_{{T}, \mathcal{G}}$ (see equation~\eqref{e.theskewproductin}). 

When the points have different $\ut$-indices this leads to heterodimensional cycles,
which may have co-index one or two.
\end{remark}

Given $g$ as above, the following two remarks establish a dictionary between its quotient, its skew-product, and its dynamics.

\begin{remark}
\label{r.secondarydynamics}
To each point in $(\,\underline{\ti}\,, \qX)\in \Gamma_{{T}, \mathcal{G}}$ corresponds a point in 
$X=X_{(\,\underline{\ti}\,, \qX)} \in \Lambda_g (U_\Upsilon)$ with the ``same itinerary''   (we refrain to formalize this).
If $(\,\underline{\ti}\,, \qX)$ is periodic and hyperbolic,  the same holds for $X$, where its $\ut$-index
is the sum of $\dim E^\uuu$ and the $\ut$-index of $\qX$. 
Intersections between the invariant sets of the skew-product translate to the same intersection for $g$, where the
intersection points are contained in $\Lambda_g (U_\Upsilon)$. 
\end{remark}

%
%
%
%
%
%
%
%

\begin{remark}[Strongest and strong manifolds]
\label{r.thegoodintersections}
Let  $A=A_{({\ti}^\mathbb{Z}, \qA)}$ be a periodic point of $g$
such that $({\ti}^\mathbb{Z}, \qA)$ is central repelling for $\Psi$. Then
each connected component of $W^\uu_{\loc}(A,g) \setminus W^\uuu_{\loc} (A,g)$ projects (by $\mathcal{F}^\uuu$)
into one component of $W^\uu_{\loc} (\qA, \psi_{\ti})\setminus \{\qA\}$. 
Similarly, if $({\ti}^\mathbb{Z}, \qA)$ is central repelling
 then each component of $W^\ss_{\loc}(A,g) \setminus W^\sss_{\loc} (A,g)$ projects (by $\mathcal{F}^\sss$)
into one component of $W^\ss_{\loc} (\qA, \psi_{\ti})\setminus \{\qA\}$. 
\end{remark}

\subsection{Strong homoclinic intersections and robust cycles of co-index one}
\label{ss.strongesthomoclinic} 
We recall the definition of {\em{strong homoclinic points}} from \cite[Definition~3.2]{BonDia:08}. Given a nonhyperbolic periodic point 
$S$ with a splitting $E^\ss \oplus E^\ct \oplus E^\uu$, where $E^\ss$ and $E^\uu$ are uniformly contracting and expanding and $E^\ct$ is one-dimensional, we can consider 
the strong manifolds 
$W_{E^\ss}(S)$ and $W_{E^\uu}(S)$ of $S$ tangent to $E^\ss$ and $E^\uu$ (we use this notation to prevent overlap  with the notation
in \eqref{e.invariantstableunstablemanifolds}, this interference wil appear below). 
A point $X \in W_{E^\ss} (S) \cap W_{E^\uu}(S)$ such that $T_X W_{E^\ss} (S) \oplus T_X W_{E^\uu}(S)$ is called a 
{\em{quasi-transverse strong homoclinic point of $S$}}. 
The relevance of this configuration comes from  \cite[Theorem~2.4]{BonDia:08}:
   
  \begin{lem}[Robust co-index one cycles from strong homoclinic intersection] 
  \label{l.robustcycles}
Let $h$ be a diffeomorphism with a quasi-transverse strong homoclinic intersection $X$ associated with a nonhyperbolic periodic point $S$. Then, for every neighbourhood $V$ of the closure of the orbit of $X$, there exists a $C^1$ perturbation of $h$ with $C^1$ robust heterodimensional cycles of co-index one and type $(\ell, \ell+1)$ whose contours are contained in $V$, where $\ell = \dim E^\uu(S)$. 
\end{lem}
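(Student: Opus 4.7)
The plan is to follow the blender-based strategy of \cite{BonDia:08}. The key idea is to exploit the nonhyperbolicity of $S$ together with the quasi-transverse strong homoclinic intersection at $X$ to produce, by a single $C^1$ perturbation supported in a neighbourhood of the orbit closure of $X$, two hyperbolic periodic points $S^-$ and $S^+$ of $\ut$-indices $\ell$ and $\ell+1$, both contained in a single hyperbolic set $\Lambda$ that plays the role of a blender, together with a $C^1$ robust cycle between them.

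First, I would linearize $h$ near $S$ and reduce the transition along the orbit of $X$ to an affine map in suitable local charts, in the spirit of Lemma~\ref{l.simple}. The quasi-transversality hypothesis $T_X W_{E^\ss}(S) \oplus T_X W_{E^\uu}(S) = T_X M$ then translates into a non-degenerate cross-form expression for the return map, with a one-dimensional center direction inherited from $E^\ct(S)$. This yields an affine horseshoe-like return map whose maximal invariant set is a partially hyperbolic set $\Lambda$ containing the continuation of $S$, with splitting $E^\ss \oplus E^\ct \oplus E^\uu$ of the required dimensions.

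Next, by a perturbation of the central part of $h$ in a neighbourhood of $S$, I would split the central eigenvalue $\pm 1$ of $S$ into two distinct hyperbolic eigenvalues, producing two periodic saddles $S^-$ (of $\ut$-index $\ell$) and $S^+$ (of $\ut$-index $\ell+1$) inside $\Lambda$. After this perturbation, the return map restricted to the center direction becomes an affine iterated function system of two maps whose images jointly cover an interval containing both fixed points. This covering property is the superposition property that makes $\Lambda$ behave simultaneously as a $\cu$-blender adapted to the stable manifold of $S^+$ and as a $\cs$-blender adapted to the unstable manifold of $S^-$.

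Since the blender property is $C^1$-open, the non-transverse intersections between the invariant manifolds of $S^-$ and $S^+$ produced by the blender are $C^1$ robust. Combined with the transverse-like intersection coming from the persistent crossing of $W_{E^\ss}(S)$ and $W_{E^\uu}(S)$ at $X$ (which provides the other side of the cycle), this yields a $C^1$ robust heterodimensional cycle of co-index one and type $(\ell, \ell+1)$ whose contour is contained in $V$. The main obstacle lies in the second step: verifying the covering property of the central IFS after the splitting of $S$ and ensuring that the whole construction is localized in the prescribed neighbourhood $V$; this is precisely the technical core carried out in \cite[Theorem 2.4]{BonDia:08}.
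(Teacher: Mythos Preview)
The paper does not prove this lemma: it is stated as a direct citation of \cite[Theorem~2.4]{BonDia:08}, with no argument given beyond the reference. Your proposal is therefore not being compared against a proof in the paper but against the original source, and your sketch is a faithful outline of the blender construction carried out there; in particular, you correctly identify the linearization/affine reduction, the splitting of the neutral central eigenvalue into two hyperbolic saddles $S^\pm$ of adjacent $\ut$-indices, the covering property of the central one-dimensional IFS that turns the resulting horseshoe into a blender, and the localization in $V$ as the technical point. One small correction: the cycle is not between $S^-$ and $S^+$ inside a single set $\Lambda$, but rather between a blender (containing one of them) and a saddle of the complementary index; the blender supplies the robust nontransverse intersection, while the other intersection is genuinely transverse.
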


We now explain how strong homoclinic points  arise in our context
and are detected using the skew-product.
First, recall the dominated splitting  
$
E^\sss \oplus E^\ct_1 \oplus E^\ct_2 \oplus E^\uuu
$
from \eqref{e.domaincyclebis}, together with the strong invariant manifolds  
$W^\sss (S), W^\uuu(S),$ 
in \eqref{e.invariantstableunstablemanifolds}.  
Recall also the definition of $\iota (\Upsilon)$ in Definition~\ref{d.doubletypeI}.

\begin{remark}
\label{r.thefollowingcasesnh}
 There are the following cases for a nonhyperbolic periodic point $S$: 
\begin{enumerate}
\item
If $E^\ct_2(S)$ is nonhyperbolic (thus $E^\ct_1(S)$ is contracting), then  $E^\ss = E^\sss \oplus E^\ct_1$, 
$E^\ct = E^{\ct}_2$,
$E^\uu = E^\uuu$ and thus $ W_{E^{\uu}}(S)=W^\uuu (S)$.
Hence if $S$ has a quasi-transverse strong homoclinic point it generates robust cycles of
type $(\iota (\Upsilon), \iota (\Upsilon)+1)$.\item 
If $E^\ct_1(S)$ is nonhyperbolic (thus $E^\ct_2(S)$ is expanding), then  
$E^\ss = E^\sss$ and thus $ W_{E^{\ss}}(S)=W^\sss (S)$, $E^\ct = E^{\ct}_1$, and $E^\uu = E^\ct_2 \oplus E^\uuu$.
Hence if $S$ has a quasi-transverse strong homoclinic point it generates robust cycles of
type $(\iota (\Upsilon)+1, \iota (\Upsilon)+2)$. \end{enumerate}
\end{remark}

We now focus on points $A = A_{(\ti^\mathbb{Z}, \qA)}$ in case (1) above, case (2) is analogous.

\begin{lem}
\label{l.robustcyclesfromstrong}
Let $(\ti^\mathbb{Z}, \qA)$ be a periodic point of $\Psi$ such that
 the eigenvalues of $D\psi_\ti (\qA)$ have modulus one and less than one
 (hence  $W^\ss_{\loc} (\qA, \psi_\ti)$ is well defined and one-dimensional)
  and 
  there is an admissible word $\tr$ such that 
$$
\psi_{\tr}(\qA)\in  W^\ss_{\loc} (\qA, \psi_\ti), \quad \mbox{$\tr$ is not a prefix of $\ti$}.
$$
Then there is a perturbation $h$ of $g$ having a robust heterodimensional cycle
of type $(\iota (\Upsilon), \iota (\Upsilon)+1)$ associated with sets contained in $U_\Upsilon$. 
\end{lem}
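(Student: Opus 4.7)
The plan is to identify, for $g$ itself, a quasi-transverse strong homoclinic intersection associated with a nonhyperbolic periodic point lifted from the skew-product data, and then invoke Lemma~\ref{l.robustcycles} to produce the robust co-index one cycle. The two central tasks are (i)~translating the fiber homoclinic relation in the quotient into an intersection of $W^\ss(\cdot,g)$ with $W^\uuu(\cdot,g)$, and (ii)~verifying that this intersection is quasi-transverse.

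\smallskip

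First I would identify the geometric type of $A = A_{(\ti^\mathbb{Z},\qA)}$. By the dominated splitting $E^\ct_1 \oplus E^\ct_2$ of the quotient (with $E^\ct_1$ more contracting), the eigenvalue of $D\psi_\ti(\qA)$ of modulus strictly less than one corresponds to $E^\ct_1$ while the neutral eigenvalue corresponds to $E^\ct_2$. Thus, by Remark~\ref{r.secondarydynamics}, the lifted periodic point $A$ of $g$ is nonhyperbolic with a one-dimensional neutral central direction $E^\ct_2(A)$, and we are in case (1) of Remark~\ref{r.thefollowingcasesnh}: $E^\ss(A)=E^\sss(A)\oplus E^\ct_1(A)$ and $W_{E^\uu}(A)=W^\uuu(A,g)$ are well defined, of dimensions $n-\ell-2+1=n-\ell-1$ and $\ell=\iota(\Upsilon)$ respectively.

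\smallskip

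Next I would produce the candidate strong homoclinic point. Consider the $\Psi$-orbit of $(\,\underline{\mathtt{k}}\,,\qA)$ with $\underline{\mathtt{k}}=\ti^{-\mathbb{N}}.\,\tr\,\ti^{\mathbb{N}}$. The backward symbolic tail is $\ti^{-\mathbb{N}}$ and, since $\psi_\ti(\qA)=\qA$, the backward fiber orbit stays at $\qA$, so this skew-product point belongs to the strongest unstable set $W^{\uuu}\big((\ti^\mathbb{Z},\qA),\Psi\big)=W^\ut(\ti^\mathbb{Z},\sigma_T)\times\{\qA\}$. The hypothesis $\psi_\tr(\qA)\in W^\ss_\loc(\qA,\psi_\ti)$ forces the forward fiber orbit to converge exponentially to the orbit of $\qA$, placing the point in the strong stable set of $(\ti^\mathbb{Z},\qA)$ in $\Psi$. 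The hypothesis that $\tr$ is not a prefix of $\ti$ guarantees that this is not merely a point of the periodic orbit itself. Pulling back through the dictionary of Section~\ref{ss.moreonquotient} and Remark~\ref{r.thegoodintersections}, the corresponding point $X_0\in\Lambda_g(U_\Upsilon)$ satisfies
$$
X_0\in\bigl(W^\uuu(A,g)\cap W^\ss(A,g)\bigr)\setminus\mathcal{O}_g(A).
$$

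\smallskip

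I would then check quasi-transversality. The tangent space to $W^\ss(A,g)$ at any of its points is tangent to the subbundle $E^\sss\oplus E^\ct_1$, and the tangent space to $W^\uuu(A,g)$ is tangent to $E^\uuu$; these are complementary bundles of the dominated splitting of $g$ along the closure of the orbit of $X_0$, so their sum has dimension
$$
(n-\ell-1)+\ell=n-1,
$$
missing precisely the neutral central direction $E^\ct_2(X_0)$. Hence the intersection is quasi-transverse in the sense of \cite{BonDia:08}, and $X_0$ is a quasi-transverse strong homoclinic point associated with the nonhyperbolic periodic point $A$ whose u-index (from $E^\uu=E^\uuu$) is $\ell=\iota(\Upsilon)$. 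Lemma~\ref{l.robustcycles} now produces an arbitrarily $C^1$ small perturbation $h$ of $g$ with $C^1$ robust heterodimensional cycles of type $(\iota(\Upsilon),\iota(\Upsilon)+1)$ whose contours are contained in any prescribed neighbourhood of the closed orbit of $X_0$; choosing that neighbourhood inside $U_\Upsilon$ yields the statement.

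\smallskip

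The genuinely delicate step is the translation in the second paragraph: while the homoclinic relation in $\Psi$ is straightforward, identifying the correct lifted manifolds at $X_0$ requires using that the quotient foliations $\mathcal{F}^{\sss}$ and $\mathcal{F}^{\uuu}$ are preserved by $g$ (built into Definition~\ref{d.adaptedperturbation}) and that the $E^\ct_1$ component of $E^\ss(A)$ is genuinely contracting (ensured by the eigenvalue of modulus strictly less than one). Once this is in place, the remaining verifications are dimensional and follow directly from the dominated splitting \eqref{e.fourbundles} and the dictionary established in Section~\ref{ss.moreonquotient}.
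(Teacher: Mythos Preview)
Your proof is correct and follows essentially the same route as the paper: lift $(\ti^{\mathbb{Z}},\qA)$ to the nonhyperbolic periodic point $A$ of $g$, take $X_0=X_{(\ti^{-\mathbb{N}}.\tr\ti^{\mathbb{N}},\qA)}$ as the candidate strong homoclinic point, identify it as quasi-transverse via the dictionary of Remarks~\ref{r.secondarydynamics}, \ref{r.thegoodintersections}, and~\ref{r.thefollowingcasesnh}, and invoke Lemma~\ref{l.robustcycles}. The paper's version is simply more compressed, citing the remarks rather than unpacking them; your explicit identification of the bundles and the dimension count $(n-\ell-1)+\ell=n-1$ is exactly what those remarks encode. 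One small addition in the paper worth noting: it observes that the perturbation from Lemma~\ref{l.robustcycles} is local near the orbit of $X_0$ and therefore can be taken to preserve the original co-index two cycle of $g$---this is not needed for the lemma as stated but is used later in Theorem~\ref{tp.finallyarobustcycle}.
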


\begin{proof}
Let  $A = A_{(\ti^\mathbb{Z}, \qA)}$ and note that it is a nonhyperbolic periodic point of $g$.
Let
$$
X = X_{(\ti^{-\mathbb{N}}.\tr \ti^{\mathbb{N}}, \qA)} \in \Lambda_g (U_\Upsilon).
$$
Recalling that $W^\uuu (A,g)$ projects into points,
adapting Remarks~\ref{r.homohetero} and \ref{r.secondarydynamics},
and using Remark~\ref{r.thefollowingcasesnh}, we get that $X$ is a quasi-transverse strong homoclinic point of $A$.
Lemma~\ref{l.robustcycles} implies the assertion after noting that the final perturbation is local around the 
quasi-transverse point and hence can be done preserving the
co-index two cycle of $g$.  
\end{proof}

\section{Periodic points with neutral directions and consequences}
\label{ss.occurrence}
Using the quotient dynamics from the previous sections, we construct adapted perturbations that preserve the contour and generate periodic points with neutral derivative in the central direction $E^\ct_1$ (close to $\pm 1$) within a neighbourhood of the contour.
An analogous family with neutral derivative in the $E^\ct_2$ direction also arises; see Proposition~\ref{p.muitos}.
The proof of this proposition begins in Section~\ref{sss.neutral} and is completed in Section~\ref{ss.proofofpmuitos}.

For simplicity, we introduce a compact notation for words
\begin{equation}
\label{e.notationforwords}
\ti_{(m,n)}\eqdef  \ta \tp^{m} \yc \tq^{n}
 \qquad \mbox{and} \qquad
 \tj_{(r,t)} \eqdef \tp^{r} \yc \tq^{t} \yd.
\end{equation}

 \begin{prop}[Neutral periodic points  and strong homoclinic intersections]
\label{p.muitos}
Let $f$ be diffeomorphism with a simple  contour and  consider its  quotient family.
Then
for every
$\ell_0\in \mathbb{N}$,
there exists an arbitrarily small
adapted perturbation $g$ of $f$, which preserves the contour, and
whose associated quotient family
 $\mathcal{G}$
 satisfies the following properties:
 
 For $k=1,\dots,4$ there
  are natural numbers  $m_k, n_k, r_k, t_k> \ell_0$ (increasing with $k$),
 words  $\ti_k=\ti_{(m_k, n_k)}, \tj_k= \tj_{(r_k, t_k)}\in  \Sigma_T^\ast$,
 and points  $\qA_{\ti_{k}}, \qD'_{\tj_{k}}$
such that, for $k=1,2, 3$
 \[
 \qA_{\ti_{k}}=(a_{\ti_{k},} 1) \to \qA =(0,1)\in \qU_\tq, \quad \mbox{and}\quad
 \qD'_{\tj_{k}} = (1, d_{\tj_{k}}) \to \qD=(1,0)
 \in \qU_\tp,
 \]
when $m_k,n_k,r_k,t_k \to \infty$, and
\begin{equation} 
\label{e.masmenos}
\begin{split}
&\psi_{\ti_k}(\qA_{\ti_k})=\qA_{\ti_k}\quad \mbox{with} \quad
D\psi_{\ti_k} (\qA_{\ti_k})=
\left(
\begin{smallmatrix}
\theta_{\ti_{k}} & 0\\
0 & 1
\end{smallmatrix}
\right)
\quad \mbox{and} \quad
 \theta_{\ti_{k}}
\to 0,
\\
&\psi_{\tj_{k}} (\qD'_{\tj_{k}})=\qD_{\tj_{k}}
\quad \mbox{with} \quad
D'\psi_{\tj_k} (\qD'_{\tj_k})=
\left(
\begin{smallmatrix}
1 & 0\\
0 & \kappa_{{\tj_{k} }}
\end{smallmatrix}
\right)
\quad \mbox{and} \quad
 \kappa_{\tj_{k}}\to \infty.
\end{split}
\end{equation}
Moreover,
\begin{equation}
\label{e.stronghomq}
\psi_{\ti_{4}} (\qA_{\ti_{1}})
\in W^\ss_\loc (\qA_{\ti_{1}}, \psi_{\ti_{1}} ),\quad \mbox{and} \quad
(\psi_{\tj_{4}})^{-1} (\qD'_{\tj_{{1}}}) 
\in W^\uu_\loc (\qD'_{\tj_{{1}}}, \psi_{\tj_{{1}}} ).
\end{equation}
\end{prop}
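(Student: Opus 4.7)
The plan is to use the explicit affine form of the quotient family $\mathcal{F}$ on a simple double type I contour, the dominated estimates $|\alpha_1|<|\alpha_2|<1<|\beta_1|<|\beta_2|$, and a sequence of arbitrarily $C^1$-small adapted perturbations of $\varphi_\yc$ supported away from the orbit of $C$ so as to preserve the contour. I treat the $\ti$ family in detail; the $\tj$ family is dual.

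For $\ti_{(m,n)}=\ta\tp^m\yc\tq^n$, using (QD1)--(QD3) the composition $\psi_{\ti_{(m,n)}}=\varphi_\tq^n\circ\varphi_\yc\circ\varphi_\tp^m\circ\varphi_\ya$ is affine, with
\[
D\psi_{\ti_{(m,n)}} = \mathrm{diag}(\beta_1^n,\beta_2^n)\cdot\left(\begin{smallmatrix}a_{11}&a_{12}\\ a_{21}&a_{22}\end{smallmatrix}\right)\cdot\mathrm{diag}(\alpha_1^m,\alpha_2^m)\cdot\mathrm{diag}(\epsilon_\ta,\pm 1).
\]
Writing $p_{(m,n)}=\alpha_1^m\beta_1^n$, $q_{(m,n)}=\alpha_2^m\beta_2^n$, $r_{(m,n)}=\alpha_2^m\beta_1^n$, $s_{(m,n)}=\alpha_1^m\beta_2^n$, domination forces $p_{(m,n)},r_{(m,n)},s_{(m,n)}\ll q_{(m,n)}$ whenever $q_{(m,n)}$ stays bounded. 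Given $\ell_0$, Dirichlet's theorem applied to the sequence $m\log|\alpha_2|+n\log|\beta_2|\bmod\log|a_{22}|^{-1}$ yields four pairs $(m_k,n_k)$, strictly increasing in $k$, with $\min\{m_k,n_k\}>\ell_0$ and $a_{22}\,q_{(m_k,n_k)}$ arbitrarily close to $+1$; the sign issue coming from the $\pm 1$ in $\varphi_\ya$ is absorbed by prescribing parities of $m_k,n_k$ or, when needed, by inserting an extra loop at $P$ or $Q$ in the word. For each pair $I-D\psi_{\ti_k}$ is invertible, so $\psi_{\ti_k}$ has a unique fixed point $\qA_{\ti_k}$, and the domination estimate on the translation vector gives $\qA_{\ti_k}\to\qA$.

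The key geometric observation underlying the perturbation is that, for $m_k, n_k$ sufficiently large, the four orbits of $\ti_1,\ldots,\ti_4$ visit the transition neighbourhood $\qU_\yc$ at pairwise disjoint points. This allows four independent, arbitrarily $C^1$-small perturbations of $\varphi_\yc$, each supported on a small neighbourhood of the $k$-th passage (and disjoint from $C$, so the contour is preserved): the $k$-th perturbation zeroes the off-diagonal entries of $\varphi_\yc$ at that passage and rescales its $(2,2)$-entry so that $a_{22}\,q_{(m_k,n_k)}=1$ exactly. Assembling them into one adapted perturbation yields
\[
D\psi_{\ti_k}(\qA_{\ti_k})=\mathrm{diag}(\theta_{\ti_k},1),\qquad \theta_{\ti_k}=\epsilon_\ta a_{11}\,p_{(m_k,n_k)}\to 0,
\]
which is the $\ti$ half of \eqref{e.masmenos}. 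The leaf $W^\ss_\loc(\qA_{\ti_1},\psi_{\ti_1})$ is the curve tangent to $E^\ct_1$ through $\qA_{\ti_1}$ (since $E^\ct_2$ is the neutral direction). After the diagonalisation above, the explicit formula gives $(\psi_{\ti_4}(\qA_{\ti_1}))_y=a_{22}\,q_{(m_4,n_4)}+O(p_{(m_1,n_1)})=1+o(1)$, and a last independent local perturbation of $\varphi_\yc$ on the fourth passage corrects the residual, placing $\psi_{\ti_4}(\qA_{\ti_1})$ on $W^\ss_\loc(\qA_{\ti_1},\psi_{\ti_1})$ and establishing the first identity of \eqref{e.stronghomq}.

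The analogous construction for $\tj_{(r,t)}=\tp^r\yc\tq^t\yd$ is completely symmetric after interchanging $E^\ct_1\leftrightarrow E^\ct_2$: one now forces $a_{11}\,\alpha_1^{r_k}\beta_1^{t_k}=1$, domination then gives $\kappa_{\tj_k}=a_{22}\,\alpha_2^{r_k}\beta_2^{t_k}\to\infty$, and the second identity of \eqref{e.stronghomq} follows by the backward-time version of the previous argument. The delicate point throughout is precisely the disjointness-of-passages observation, which decouples what would otherwise be a strongly coupled system of transcendental conditions (four neutral eigenvalue equations plus one homoclinic equation on a single affine transition matrix) into finitely many independent codimension-one conditions, each realisable by an arbitrarily $C^1$-small local adapted perturbation.
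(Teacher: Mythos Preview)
Your overall strategy---compute the affine composition explicitly, use a Dirichlet-type argument to find $(m_k,n_k)$ with $a_{22}\,\alpha_2^{m_k}\beta_2^{n_k}\approx 1$, then localise the perturbations at pairwise disjoint points along the four orbits---is close in spirit to the paper's, and your observation that the passages of the $\ti_k$-orbits through $\qU_\yc$ occur at distinct points (near $(0,\alpha_2^{m_k})$) is correct. The paper localises differently: it perturbs the eigenvalues $\alpha_2,\beta_2$ of $\varphi_\tp,\varphi_\tq$ so that $a_{22}\bar\alpha_2^{m_1}\bar\beta_2^{n_1}=1$ holds exactly, and for subsequent $k$ it places the supports of the new perturbations ever deeper near $\qP$ and $\qQ$ (beyond the reach of the earlier orbits). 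Both localisation schemes succeed in decoupling the conditions for different $k$, and your ``disjoint passages at $\qU_\yc$'' picture is essentially the cuspidal-regions argument the paper uses to separate the $\ti$ and $\tj$ families.

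There is, however, a genuine gap in your diagonalisation step. You claim that an arbitrarily $C^1$-small local perturbation of $\varphi_\yc$ at the $k$-th passage ``zeroes the off-diagonal entries of $\varphi_\yc$''. But this requires changing $D\varphi_\yc$ at that point by the fixed matrix $\left(\begin{smallmatrix}0 & -a_{12}\\ -a_{21} & 0\end{smallmatrix}\right)$, which is generically $O(1)$, not small. The off-diagonals of the full composition $D\psi_{\ti_k}$ are small only because they carry the damping factors $\alpha_2^{m_k}\beta_1^{n_k}$ and $\alpha_1^{m_k}\beta_2^{n_k}$; pulling the correction back to the single factor $\varphi_\yc$ strips that damping away. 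The paper avoids this by exploiting the length $|\ti_k|$: once $D\psi_{\ti_k}$ is already close to $\mathrm{diag}(\theta_{\ti_k},1)$, the remaining small adjustment (making the near-vertical eigendirection exactly vertical, the near-one eigenvalue exactly one, and placing the fixed point on $\{y=1\}$) is spread along the many iterates of $\varphi_\tp$ and $\varphi_\tq$, so each individual modification is arbitrarily small. Your argument can be repaired either by invoking the diffeomorphism case in which $D\varphi_\yc$ may be taken diagonal from the outset (so no off-diagonal zeroing is needed and only the small $(2,2)$-rescaling remains), or by distributing the diagonalisation over the orbit as the paper does.
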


Observe that the conditions in \eqref{e.masmenos} allow us to consider the strong manifolds in \eqref{e.stronghomq}.

This proposition has two key consequences. First, in Section~\ref{ss.heteroandhomorobust}, using the points $A_{\ti_1}$ and $D'_{\tj_1}$ together with their strong homoclinic intersections obtained in \eqref{e.stronghomq}, we prove Theorem~\ref{tp.finallyarobustcycle}, which establishes the existence of the pair of robust co-index one cycles in Theorem~\ref{t.robustcycles}.
Second, in Section~\ref{ss.constructionofrectangles}, we derive a family of preliminary rectangles from the points $A_{\ti_2},
A_{\ti_3}, D'_{\tj_2},D'_{\tj_3}$. These rectangles will later serve as the basis for the preblending machines built in Section~\ref{ss.preblenders}.

\begin{remark}
\label{r.estimatesthetakappa}
There are the following estimates for $\theta_{\ti_k}$ and $\kappa_{\tj_k}$
in \eqref{e.masmenos}\footnote{The notation $a(n) \approx b(n)$ means  that two quantities are of the same order
as $n\to \infty$, that is, $\lim_{n\to \infty} \frac{a(n)}{b(n)}$ exists and is different from $0$.} 
$$
|\theta_{\ti_k}| \approx  | \alpha_1^{m_k} \beta_1^{n_k}| ,\qquad
 |\kappa_{\tj_k}| \approx   | \alpha_2^{r_k} \beta_2^{t_k}|.
$$
The estimate for $|\theta_{\ti_k}|$ follow from \eqref{e.derivativeofpsi} and \eqref{e.thingsgoingtozero}
in the proof of the proposition. The estimates for  $|\kappa_{\tj_k}|$ follows symmetrically.
We also have, 
$$
|a_{\ti_k}| \approx  \left(  \frac{|\beta_1|}{|\beta_2|}\right)^{n_k}, \qquad 
|d_{\tj_k}| \approx  \left(  \frac{|\alpha_1|}{|\alpha_2|}\right)^{r_k}. 
$$
The estimate for $a_{\ti_k}$ follows from \eqref{e.solamenteunavez}.  The estimates for  $|d_{\tj_k}|$ follows symmetrically.
\end{remark}

\subsection{Periodic points with neutral directions}
\label{sss.neutral}

The main step in the proof of Proposition~\ref{p.muitos} is the next lemma:

 \begin{lem}
 \label{lp.pp2di}
For every  $\ell_0\in \mathbb{N}$,
there exists an arbitrarily small adapted perturbation $g$ of $f$
such that
 there are  admissible words $\ti= \ti (m,n)$, 
with $|\ti|>\ell_0$,
and points $\qA_\ti=(a_{\ti},1)\to \qA=(0,1)$ when $|\ti|\to \infty$,  
 with
$
\psi_{\ti} (\qA_{\ti}) = \qA_\ti$. Moreover, the derivative satisfies
\begin{equation}
\label{e.goodmorning}
D\psi_{\ti} (\qA_{\ti})=
\left(
\begin{smallmatrix}
\delta^1_{\ti} & \delta^2_{\ti}\\
\delta^3_{\ti} & 1
\end{smallmatrix}
\right), \quad 
\mbox{where} 
\quad 
\delta^j_\ti \to 0
\quad 
\mbox{as} 
\quad 
 |\ti|\to \infty.
\end{equation}
\end{lem}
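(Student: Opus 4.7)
The plan involves unpacking the unperturbed composition $\varphi_\ti$, then performing a sequence of arbitrarily small adapted perturbations to install an algebraic relation among the central eigenvalues and, finally, to enforce the prescribed fixed point with the required derivative form.

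First, I would compute $\varphi_\ti$ using (QD1)--(QD3). A crucial observation is that in the two-dimensional quotient the point $\qC$ coincides with $\qP$, since $E^\ut(P) = E^\uuu(P)$ is entirely projected out; consequently the transition $\varphi_\yc$ maps the origin of $\qU_\yc$ to the origin of $\qU_\tq$ without translation. Taking (for concreteness) the $+$ case in the sign of $\varphi_\ya$, a direct composition gives
\[
\varphi_\ti(x,z) = L_{(m,n)} (x,z)^\top,
\qquad
L_{(m,n)} = \begin{pmatrix} \beta_1^n a_{11}\alpha_1^m\epsilon_\ta & \beta_1^n a_{12}\alpha_2^m \\ \beta_2^n a_{21}\alpha_1^m\epsilon_\ta & \beta_2^n a_{22}\alpha_2^m \end{pmatrix}.
\]
The dominated inequalities $|\alpha_1|<|\alpha_2|$ and $|\beta_1|<|\beta_2|$ imply that, whenever the $(2,2)$-entry $L_{22} = a_{22}\alpha_2^m\beta_2^n$ remains of unit order, the three remaining entries decay geometrically as $|\alpha_1/\alpha_2|^m$ or $|\beta_1/\beta_2|^n$.

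Fixing coprime positive integers $p,q$, I would perform an arbitrarily small adapted perturbation that adjusts the central eigenvalues so that $|\alpha_2|^q|\beta_2|^p = 1$ and tunes $a_{22}$ so that $a_{22}\alpha_2^q\beta_2^p = 1$ exactly (with the correct sign). Each of these is a codimension-one condition on the quotient family and is achievable by modifying $\psi_\tp$ near $\qP$, $\psi_\tq$ near $\qQ$, and the linear part $M$ of $\psi_\yc$, all arbitrarily close to the original. For the family of words $\ti_k = \ti(qk,pk)$ with $k \in \mathbb{N}$ and $|\ti_k| = (p+q)k + 2 > \ell_0$, this choice yields $L_{22}^{(qk,pk)} \equiv 1$ and $L^{(qk,pk)}_{ij}\to 0$ for every $(i,j) \neq (2,2)$.

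At this stage $\psi_{\ti_k}$ is the linear map $L_{(qk,pk)}$, and the fixed-point equations $L_{11}a + L_{12} = a$ and $L_{21}a = 0$ are mutually incompatible when $L_{21}L_{12}\neq 0$, so no fixed point of the form $(a,1)$ yet exists. I would remedy this with a final arbitrarily small adapted perturbation of $\psi_\yc$ supported in small shrinking bumps around each iterate point $\xi_k = (\alpha_1^{qk}\epsilon_\ta a_{\ti_k},\alpha_2^{qk})$ (which accumulate at $\qC$); these bumps add a translation $-L_{21}L_{12}/(1-L_{11})$ to the second coordinate of $\psi_{\ti_k}$ without altering the derivative at $\xi_k$, since bumps peaked at $\xi_k$ have vanishing gradient there. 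The required translation sizes decay as $|L_{12}L_{21}|\approx|\alpha_1^{qk}\beta_1^{pk}|\to 0$, so the cumulative bump function can be made arbitrarily $C^1$-small. The resulting fixed point is $\qA_{\ti_k} = (a_{\ti_k},1)$ with $a_{\ti_k} = L_{12}/(1-L_{11})\to 0$, and its derivative equals $L_{(qk,pk)}$, giving the required matrix form with $\delta^j_{\ti_k}\to 0$. The main technical difficulty, which I expect to be the principal obstacle, is coordinating all these perturbations to remain simultaneously arbitrarily $C^1$-small, adapted (preserving the strongest foliations $\mathcal{F}^{\sss}$ and $\mathcal{F}^{\uuu}$), and compatible with the accumulation of iterate points at $\qC$; the bump-localization strategy together with the geometric decay estimates resolves this obstacle.
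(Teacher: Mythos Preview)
Your computation of the linearized composition and the asymptotics of the entries of $L_{(m,n)}$ is correct, and the overall strategy (force the $(2,2)$-entry to equal $1$, then locate a fixed point with the same derivative) is the same as the paper's. However, the way you arrange $L_{22}=1$ contains a genuine gap.

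You impose simultaneously $|\alpha_2|^q|\beta_2|^p=1$ and $a_{22}\alpha_2^q\beta_2^p=1$ for a \emph{fixed} pair $(p,q)$. In the positive-eigenvalue case these two conditions together force $a_{22}=1$, which is not achievable by a small perturbation of $\psi_\yc$ unless $a_{22}$ was already close to $1$. More fundamentally, your family strategy requires $L_{22}^{(qk,pk)}=a_{22}(\alpha_2^q\beta_2^p)^k=1$ for \emph{all} $k$, and this forces $\alpha_2^q\beta_2^p=1$ and $a_{22}=1$ simultaneously; there is no way around this with a fixed ratio $m{:}n=q{:}p$. The paper avoids the problem by not fixing a ratio: for each large $\ell_0$ it chooses large $m_1,n_1$ (not along a prescribed direction) and then perturbs $\alpha_2,\beta_2$ slightly to $\bar\alpha_2,\bar\beta_2$ so that $a_{22}\bar\alpha_2^{m_1}\bar\beta_2^{n_1}=1$ exactly (this is the content of Claims~\ref{cl.racionais}--\ref{cl.oxo}). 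The freedom in $m_1,n_1$ is what absorbs an arbitrary value of $a_{22}$.

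A secondary issue: even if the first gap were patched, the $C^1$-smallness of your infinite union of bumps near the accumulation point $\qC$ is not automatic. The bump at $\xi_k$ must have support of radius at most the gap $\approx|\alpha_2|^{qk}$ to the neighbouring $\xi_{k\pm1}$, while the translation it carries (after pulling back through $\psi_\tq^{pk}$) has size $\approx|\alpha_1^{qk}\beta_1^{pk}/\beta_2^{pk}|$; the resulting $C^1$-norm $\approx (|\alpha_1/\alpha_2|^q|\beta_1|^p)^k$ need not tend to $0$ for arbitrary dominated data. The paper sidesteps this entirely: it only produces \emph{one} word per perturbation (the convergence $\qA_\ti\to\qA$ in the statement is as $\ell_0\to\infty$, with the perturbation varying), and it finds the fixed point not by an explicit translation but by a Markovian box argument showing $\psi_\ti(\Delta_{\qA,\varrho})$ crosses $\Delta_{\qA,\varrho}$, followed by a final small perturbation spread along the long orbit to set the vertical multiplier to $1$ exactly.
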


The proof of this lemma relies on the iterations of the point $\qA$  near $\qP$ and $\qQ$ under 
$\varphi_\tp$ and $\varphi_\tq$, see equation~\eqref{e.fzero}.   
The construction involves a sequence of adapted perturbations $g$ of $f$, standard in the $C^1$ topology (see \cite[Section~4.1.2]{BonDia:12} for details).  
We denote the quotient maps of these perturbations by $\mathcal{G}$, keeping the same notation after each perturbation.


 
 Before going into details, we sketch a heuristic proof of the lemma.
Here, the point $\qA$ serves as the ``base''.
To find fixed points of $\psi_{\ya \tp^m \yc \tq^n}$  close to $\qA$  we consider large $m,n\in \mathbb{N}$, 
and returns of boxes centered at $\qA$:
\begin{equation}
\label{e.boxes}
\Delta_{\qA, \varrho} = \qA + \Delta_\varrho \subset \qU_\tq,
\quad
\Delta_\varrho = [-\varrho, \varrho]^2.
\end{equation}
After adjusting the eigenvalues of $\psi_\tp$ and $\psi_\tq$, an adapted perturbation yields
$\psi_{\ya \tp^m \yc \tq^n}(\bar \qA) = \bar \qA$
for some $\bar \qA$ close to $\qA$.
We will also control the derivatives.

 To perform the perturbations, observe first  that  a direct calculation from equations
 \newline
 ~\eqref{e.fzero},~\eqref{e.fzerotres},\eqref{e.Fic}, gives 
$$
\varphi_{ \ya \tp^m \yc \tq^n}(0,1)=
\big(a_{12}\alpha^m_2\beta^n_1,
a_{22} \alpha^m_2\beta^n_2\big).
$$
We first select arbitrarily  large $m$ and the associated  $n$  
defined by
$$
| \alpha_2^m \beta_2^{n-1}| < 1 \qquad \mbox{and} \qquad |\beta_2| \geq | \alpha_2^m \beta_2^{n}| \geq 1.
$$
This choice  implies that if $m\to \infty$ then $n\to \infty$ and
\begin{equation}
\label{l.estimatesinextremis}
|\alpha^m_2\beta^n_1| 
= \left| \alpha^m_2 {\beta_2^n} \frac{\beta^n_1}{\beta_2^n} \right| \leq |\beta_2| \left| \left( \frac{\beta_1}{\beta_2} 
\right)^n \right|
\to 0 \quad \mbox{as}\quad m,n\to \infty.
\end{equation}

The idea now is to replace $\alpha_2$ and $\beta_2$ by close values
$\bar \alpha_2$ and $\bar \beta_2$ 
(these values converge to $\alpha_2$ and $\beta_2$ when $m,n\to \infty$)
such that 
$$
a_{22} \bar \alpha^m_2 \bar \beta^n_2=1.
$$
This choice implies that
$$
a_{12}\bar\alpha^m_2\beta^n_1= \frac{a_{12}}{a_{22}} \left(\frac{\beta_1}{\bar\beta_2}\right)^n \to 0
 \quad \mbox{as}\quad n\to \infty.
$$ 
Thus, $\qA$ is ``almost" a fixed point of  
the perturbation $\psi_{\ya\tp^m \yc \tq^n}$. The final step is to see that
the image of the square $\Delta_{\qA,\varrho}$ by $\psi_{\ya\tp^m \yc \tq^n}$ intersects $\Delta_{\qA,\varrho}$
in a Markovian way, providing the fixed point close to $\qA$.
We now go into the details.

%
 We begin with some simple facts. The first result is the following remark:

\begin{claim}
\label{cl.racionais}
Given $\alpha, \beta, \gamma \in \mathbb{R}$, $\varepsilon>0$, and $n_0 \in \mathbb{N}$  there are $\bar \alpha$, $\bar \beta$ with $\bar\alpha\in (\alpha-\varepsilon,\alpha+\varepsilon)$ and $\bar\beta \in (\beta-\varepsilon,\beta+\varepsilon)$ 
and natural numbers
$m_1,n_1>n_0$
  such that
  $$
\bar{\alpha}^{m_1}\bar{\beta}^{n_1} =|\gamma|.
$$
 Moreover, if either $\alpha<0$ or $\beta<0$ we may have  
$\bar{\alpha}^{m_1}\bar{\beta}^{n_1} =\gamma$. 
\end{claim}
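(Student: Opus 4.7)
The plan is to take logarithms and reduce the claim to solving a single linear equation in the exponents, absorbing the residue by a small perturbation of one of the parameters. The claim is invoked in the context of \eqref{e.fzero}, where $|\alpha|<1<|\beta|$, so I work under the regime $\log|\alpha|<0<\log|\beta|$ and seek $\bar\alpha,\bar\beta$ satisfying
\[
m_1\log|\bar\alpha|+n_1\log|\bar\beta|=\log|\gamma|.
\]

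First, I would select the integers. Fix $m_1>n_0$ and set
\[
n_1 \eqdef \left\lceil \frac{\log|\gamma|-m_1\log|\alpha|}{\log|\beta|}\right\rceil,
\]
which is a positive integer exceeding $n_0$ once $m_1$ is sufficiently large, since $-m_1\log|\alpha|\to+\infty$. By construction the discrepancy
\[
\eta \eqdef m_1\log|\alpha|+n_1\log|\beta|-\log|\gamma|
\]
lies in $[0,\log|\beta|)$, so $\eta/n_1\to 0$ as $m_1\to\infty$. Then I take $\bar\alpha=\alpha$ and define $\bar\beta$ by $|\bar\beta|=|\beta|\,e^{-\eta/n_1}$ with $\mathrm{sgn}(\bar\beta)=\mathrm{sgn}(\beta)$. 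For $m_1$ large enough we have $|\bar\beta-\beta|<\varepsilon$, and a direct computation yields
\[
|\bar\alpha^{m_1}\bar\beta^{n_1}|=|\alpha|^{m_1}|\beta|^{n_1}e^{-\eta}=|\gamma|,
\]
which establishes the first assertion.

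For the sign statement, note that $\mathrm{sgn}(\bar\alpha^{m_1}\bar\beta^{n_1})=(\mathrm{sgn}\,\alpha)^{m_1}(\mathrm{sgn}\,\beta)^{n_1}$. When $\alpha<0$ (respectively $\beta<0$), shifting the parity of $m_1$ (resp.\ $n_1$) by at most one unit and recomputing the companion exponent by the ceiling formula above produces the prescribed sign, so we can match $\mathrm{sgn}(\gamma)$. The only mild obstacle is ensuring that the companion index still exceeds $n_0$ after such a parity adjustment; this is automatic by taking $m_1$ large enough at the outset, so I do not anticipate any genuine difficulty.
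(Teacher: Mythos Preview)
Your argument is correct and matches the paper's approach: both take logarithms to reduce $\bar\alpha^{m_1}\bar\beta^{n_1}=|\gamma|$ to the linear relation $m_1\log|\bar\alpha|+n_1\log|\bar\beta|=\log|\gamma|$, choose large integers that solve it up to a bounded error, and absorb the error by a small perturbation of one parameter; the sign statement is handled in both cases by choosing the parity of the relevant exponent. One cosmetic point: your displayed equality gives $|\bar\alpha^{m_1}\bar\beta^{n_1}|=|\gamma|$, so when $\alpha$ or $\beta$ is negative you also need a parity choice already for the first assertion (to make the product positive), but this is covered by exactly the same adjustment you describe for the second part.
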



\begin{proof}
It is enough to slightly  modify $\alpha, \beta$ to get large numbers $n_1, m_1$ with
$$
n_1 \, \frac{\log \bar \beta}{\log \bar \alpha} = -m_1 + \frac{\log |\gamma|}{\log \bar \alpha}.
$$
 The second part of the claim  follows considering odd natural numbers.
\end{proof}


\begin{claim} 
\label{cl.oxo}
Assume configuration $(+,+)$.
Given $\varepsilon, \delta>0$ and $n_0\in \mathbb{N}$ there are  $\bar{\alpha}_2\in (\alpha_2-\delta,\alpha_2+\delta)$, 
$\bar{\beta}_2\in (\beta_2-\delta,\beta_2+\delta)$, 
and $m_1$, $n_1\geqslant  n_0$ such that
$$
\mathrm{(i)} \qquad
\vert  \bar{\alpha}^{m_1}_2 {\beta}^{n_1}_1  \vert < \varepsilon/100 \qquad \mbox{and} \qquad
\mathrm{(ii)} \qquad a_{22} \bar{\alpha}^{m_1}_2\bar{\beta}^{n_1}_2=1.
$$
%
\end{claim}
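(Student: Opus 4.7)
The plan is to decouple the two requirements: condition (ii) is an exact algebraic identity that we can solve using Claim~\ref{cl.racionais} (with $\gamma=1/a_{22}$), while condition (i) is an asymptotic smallness statement that will be automatic once the exponents $m_1,n_1$ are pushed sufficiently far. The link between the two is the observation, already exploited in \eqref{l.estimatesinextremis}, that the factor $|\beta_1/\bar\beta_2|$ is strictly less than $1$ whenever $\bar\beta_2$ lies in a small neighbourhood of $\beta_2$, since $1<|\beta_1|<|\beta_2|$.

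First I would shrink the tolerance, fixing $\delta'\in(0,\delta)$ small enough that every $\bar\alpha_2\in(\alpha_2-\delta',\alpha_2+\delta')$ and every $\bar\beta_2\in(\beta_2-\delta',\beta_2+\delta')$ remains positive, and moreover
\[
r \;\eqdef\; \sup\bigl\{\,|\beta_1/\bar\beta_2| : \bar\beta_2\in(\beta_2-\delta',\beta_2+\delta')\,\bigr\} \;<\; 1.
\]
Next, choose an integer $N\geq n_0$ so large that $|a_{22}|^{-1} r^{N} < \varepsilon/100$. Then I would apply Claim~\ref{cl.racionais} with parameters $\alpha=\alpha_2$, $\beta=\beta_2$, $\gamma=1/a_{22}$, tolerance $\delta'$, and threshold $N$. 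Because of the configuration $(+,+)$ and the constraint $\delta'$, the perturbed numbers $\bar\alpha_2,\bar\beta_2$ produced by the claim are positive, so the resulting exponents $m_1,n_1 > N \geq n_0$ satisfy
\[
\bar\alpha_2^{m_1}\bar\beta_2^{n_1} \;=\; \tfrac{1}{a_{22}},
\]
which, recalling that $a_{22}>0$ from (QD3), is precisely condition (ii).

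To verify (i), one computes
\[
\bigl|\bar\alpha_2^{m_1}\beta_1^{n_1}\bigr|
\;=\;
\bigl|\bar\alpha_2^{m_1}\bar\beta_2^{n_1}\bigr|\cdot\Bigl|\tfrac{\beta_1}{\bar\beta_2}\Bigr|^{n_1}
\;\leq\;
\tfrac{1}{|a_{22}|}\,r^{n_1}
\;\leq\;
\tfrac{1}{|a_{22}|}\,r^{N}
\;<\;
\tfrac{\varepsilon}{100}.
\]
The only delicate point in the argument is the ordering of quantifiers: the tolerance $\delta'$ and the exponent threshold $N$ must be chosen \emph{before} invoking Claim~\ref{cl.racionais}, so that the claim's output automatically lives in the regime where $r^{n_1}$ is already below the target. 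No substantive obstacle arises beyond this bookkeeping, since the sign assumption $(+,+)$ makes the signed version of Claim~\ref{cl.racionais} unnecessary and ensures $\bar\alpha_2,\bar\beta_2>0$ throughout.
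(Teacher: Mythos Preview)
Your proof is correct and follows essentially the same approach as the paper: invoke Claim~\ref{cl.racionais} with $\gamma=1/a_{22}$ to get (ii), then deduce (i) from the identity $\bar\alpha_2^{m_1}\beta_1^{n_1}=(1/a_{22})(\beta_1/\bar\beta_2)^{n_1}$ and $|\beta_1/\bar\beta_2|<1$. The only difference is cosmetic: you make the quantifier order explicit by fixing the threshold $N$ before applying the claim, whereas the paper phrases (i) as a limit and leaves this step implicit.
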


\begin{proof}
Let $\alpha=\alpha_2$, $\beta=\beta_2$, $\gamma=1/a_{22}>0$ and take  $\bar\alpha=\bar\alpha_2$, $\bar\beta=\bar\beta_2$ and $m_1, n_1$ as in Claim~\ref{cl.racionais} (associated to $\gamma$),
hence 
$$
 a_{22} \bar{\alpha}^{m_1}_2\bar{\beta}^{n_1}_2 =1.
$$
This implies (ii). 
Finally, noting that $|{\beta_1}|<|\bar{\beta_2}|$, 
we get
$$
\bar{\alpha}^{m_1}_2 {\beta}^{n_1}_1 =
\bar{\alpha}^{m_1}_2\bar{\beta}^{n_1}_2 \left(\frac{{\beta}_1}{\bar{\beta_2}}\right)^{n_1} =
\frac{1}{a_{22}}\,
\left(\frac{{\beta}_1}{\bar{\beta_2}}\right)^{n_1} \to 0, 
 \quad \mbox{as}\quad
n_1\to \infty,
$$
that implies item (i).
\end{proof}

We now consider an adapted perturbation $g$ of $f$ such that 
$\psi_\yj= \varphi_\yj$, for $\yj\ne \tq, \tp$, and such that
and $\psi_\tq$ and $\psi_\tp$ are (in the local coordinates) of the form 
$\psi_\tq (x,y) =(\beta_1x , \bar \beta_2 y)$ and
$\psi_\tp (x,y) =(\alpha_1x , \bar \alpha_2 y)$, where $\bar\alpha_2$ and $\bar\beta_2$ are as in 
Claim~\ref{cl.oxo}.

We now study 
the images  of the boxes $\Delta \eqdef \Delta_{\qA, \varrho}$ in \eqref{e.boxes}
by the maps 
$\psi_{\ya\tp^{m_1} \yc \tq^{n_1}}$.
In the following calculations, and without loss of generality, we will assume that $\epsilon_a$ in \eqref{e.fzerotres} are
positive.
Note that
\[
\begin{split}
\psi_\ya (\Delta)&=[-\epsilon_a \varrho,\epsilon_a \varrho]\times [1-\varrho,1+\varrho]\subset \qU_\tp
\\
\psi_{\ya \tp^{m}}(\Delta)&=
[-\epsilon_a \varrho \alpha^m_1,\epsilon_a \varrho \alpha^m_1]\times [ \bar \alpha^m_2- \varrho \bar\alpha^m_2,\bar \alpha^m_2 + \varrho
\bar \alpha^m_2]\subset \qU_\tp,
\\
\psi_{\ya \tp^{m} \yc } (\Delta)&=
\{(a_{12}\bar\alpha^m_2+a_{11} x+a_{12} y, a_{22}\bar \alpha^m_2+a_{21}
x+a_{22} y) \colon \vert x \vert< \epsilon_a \varrho \alpha^m_1,\,\, \vert y \vert< \varrho \bar \alpha^m_2 \}.
\end{split}
\]

Consider the projections $p_1(x,y)=x$ and $p_2(x,y)=y$ on $\qU_\tq$. We have
\[
\begin{split}
p_1\big( \psi_{\ya \tp^{m} \yc }  (\Delta)\big) &=
\{a_{12}\bar \alpha^m_2+a_{11}x+a_{12} y \colon \vert x  \vert< \epsilon_a 
\varrho \alpha^m_1,\quad \vert y \vert< \varrho \bar {\alpha}^m_2\},\\
p_2\big( \psi_{\ya \tp^{m} \yc }  (\Delta)\big)&=
\{a_{22}\bar \alpha^m_2+a_{21}x+a_{22} y \colon \vert x \vert< \epsilon_a \varrho 
\alpha^m_1,\quad \vert y \vert< \varrho \bar \alpha^m_2\}
\end{split}
\]
and
$$
\psi_{\ya \tp^{m} \yc }  (\Delta) \subset \Delta_m, \quad \mbox{where} \quad
\Delta_{m}\eqdef p_1\big( \psi_{\ya \tp^{m} \yc }  (\Delta) \big)
\times p_2\big(  \psi_{\ya \tp^{m} \yc }  (\Delta) \big)\subset U_{\tq}.
$$
Denote by $\ell_i(m)$ the length of  $p_i\big( \psi_{\ya \tp^{m} \yc }  (\Delta)\big)$, $i=1,2$.

We now study the boxes $\psi^{n_1}_\tq \big( \Delta_{m_1}\big)$  for the values 
$\bar\alpha_2, \bar\beta_2, m_1,n_1$ in Claim~\ref{cl.oxo}.  Note that $n_1 =n_1(m_1)\to \infty $ as $m_1\to \infty$.
We see that  the lengths of the basis of these boxes
go to zero as $m_1\to \infty$ (Claim~\ref{cl.vamos}) 
 while the
heights have uniform size (Claim~\ref{cl.vamosquevamos}).

\begin{claim}
\label{cl.vamos}
$\beta_1^{n_1} \ell_1(m_1) \to 0$ as $m_1 \to \infty$.
\end{claim}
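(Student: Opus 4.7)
The plan is a direct estimate using the explicit formula for $\ell_1(m)$ together with the key identity obtained in the proof of Claim~\ref{cl.oxo}. First, from the definition of $p_1\bigl(\psi_{\ya \tp^m \yc}(\Delta)\bigr)$ as the set
\[
\{a_{12}\bar{\alpha}_2^m + a_{11} x + a_{12} y \colon |x| < \epsilon_a \varrho \, \alpha_1^m,\ |y| < \varrho\, \bar{\alpha}_2^m\},
\]
its length is bounded by
\[
\ell_1(m) \leq 2|a_{11}|\epsilon_a \varrho \, |\alpha_1|^m + 2|a_{12}|\varrho\, |\bar{\alpha}_2|^m.
\]
Since by hypothesis $0 < |\alpha_1| < |\alpha_2| < 1$ and $\bar{\alpha}_2$ is close to $\alpha_2$, for sufficiently small $\delta$ one has $|\alpha_1| < |\bar{\alpha}_2| < 1$, so there is a constant $C>0$ (independent of $m$) with $\ell_1(m) \leq C |\bar{\alpha}_2|^m$.

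Next, I would plug this into the quantity of interest to obtain
\[
|\beta_1|^{n_1}\, \ell_1(m_1) \,\leq\, C\, |\bar{\alpha}_2|^{m_1}\,|\beta_1|^{n_1}.
\]
At this point the key observation, already made in the proof of Claim~\ref{cl.oxo}, is that the exponents $m_1,n_1$ provided there were chosen precisely so that $a_{22}\bar{\alpha}_2^{m_1}\bar{\beta}_2^{n_1} = 1$, hence
\[
\bar{\alpha}_2^{m_1}\beta_1^{n_1}
\;=\; a_{22}\,\bar{\alpha}_2^{m_1}\bar{\beta}_2^{n_1}\cdot\frac{1}{a_{22}}\left(\frac{\beta_1}{\bar{\beta}_2}\right)^{\!n_1}
\;=\;\frac{1}{a_{22}}\left(\frac{\beta_1}{\bar{\beta}_2}\right)^{\!n_1}.
\]
Since $|\beta_1|<|\bar{\beta}_2|$ and $n_1\to \infty$ as $m_1\to\infty$, the right-hand side tends to $0$, which yields $|\beta_1|^{n_1}\ell_1(m_1)\to 0$ and proves the claim.

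There is no genuine obstacle: the argument is a bookkeeping exercise, and the only subtlety is to remember that $\bar{\alpha}_2,\bar{\beta}_2$ are not arbitrary but are precisely the values calibrated in Claim~\ref{cl.oxo} to make the $y$-component of $\varphi_{\ya\tp^{m_1}\yc\tq^{n_1}}(0,1)$ equal to $1$. This calibration is exactly what provides the decay above; otherwise one would only have boundedness (as in Claim~\ref{cl.oxo}(i)) instead of convergence to zero.
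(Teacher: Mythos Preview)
Your argument is correct and follows essentially the same route as the paper: both reduce the claim to the estimate $|\bar\alpha_2|^{m_1}|\beta_1|^{n_1}\to 0$, which is precisely Claim~\ref{cl.oxo}(i) (you re-derive it from item (ii), as in the proof of that claim). A minor cosmetic difference is that you bound the length $\ell_1(m_1)$ directly so the constant offset $a_{12}\bar\alpha_2^{m_1}$ drops out, while the paper bounds each point of $p_1(\psi_{\ya\tp^{m_1}\yc}(\Delta))$ in absolute value and hence carries that extra term; both work.
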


\begin{proof}
As $n_1\to \infty$ as $m_1\to \infty$, it is enough to check that for every  $\vert x \vert< \epsilon_a \varrho {\alpha}^{m_1}_1$ and
$\vert y \vert< \varrho \bar{\alpha}^{m_1}_2$ it holds
$$
{\beta}^{n_1}_1  ( a_{21} \bar\alpha^{m_1}_2+a_{11} x+a_{12} y) \to 0, \quad \mbox{as} \quad  m_1\to \infty .
$$
Using that $|\alpha_1|< |\bar \alpha_2|$ we get  $|\alpha_1^{m_1}\beta_1^{n_1} |< |\bar \alpha_2^{m_1}\beta_1^{n_1}  |$ and by 
Claim~\ref{cl.oxo}-(i) the later term can be chosen arbitrarily small if $m_1$ (and hence $n_1$) is large enough.  
Noting that
$$
\vert a_{12} \bar{\alpha}^{m_1}_2+a_{11}x+a_{12} y\vert
\leqslant
\vert a_{12} \bar{\alpha}^{m_1}_2\vert +\vert a_{11} \epsilon_a \varrho {\alpha}^{m_1}_1 \vert+\vert a_{12}   \varrho 
\bar{\alpha}^{m_1}_2\vert,
$$
the claim follows from the previous observations. 
\end{proof}

\begin{claim}
\label{cl.vamosquevamos}
$|\bar \beta_2^{n_1}| \ell_2(m_1) \to 2 \varrho$ as $m_1 \to \infty$.
\end{claim}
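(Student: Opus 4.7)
The plan is a direct computation, parallel to that of Claim~\ref{cl.vamos} but exploiting Claim~\ref{cl.oxo}(ii) to produce the non-vanishing contribution. First I would write the second projection explicitly: from the formula for $\psi_{\ya\tp^m\yc}(\Delta)$ given just above Claim~\ref{cl.vamos}, the set
$p_2\big(\psi_{\ya\tp^m\yc}(\Delta)\big)$ is the interval of $y$-values
$\{a_{22}\bar\alpha_2^{m} + a_{21}x + a_{22}y : |x|<\epsilon_a\varrho\,\alpha_1^m,\ |y|<\varrho\,\bar\alpha_2^m\}$,
so its length is
\[
\ell_2(m) = 2\,|a_{21}|\,\epsilon_a\,\varrho\,|\alpha_1^{m}| + 2\,|a_{22}|\,\varrho\,|\bar\alpha_2^{m}|.
\]

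Next I would multiply by $|\bar\beta_2^{n_1}|$ and split into the two resulting terms:
\[
|\bar\beta_2^{n_1}|\,\ell_2(m_1) = 2\,|a_{21}|\,\epsilon_a\,\varrho\,|\alpha_1^{m_1}\,\bar\beta_2^{n_1}| + 2\,|a_{22}|\,\varrho\,|\bar\alpha_2^{m_1}\,\bar\beta_2^{n_1}|.
\]
By Claim~\ref{cl.oxo}(ii) we have $a_{22}\,\bar\alpha_2^{m_1}\,\bar\beta_2^{n_1}=1$, so the second summand equals exactly $2\varrho$ for every $m_1$. Thus it only remains to show that the first summand tends to $0$.

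For the first term, I would factor as
\[
|\alpha_1^{m_1}\,\bar\beta_2^{n_1}| = \Big|\tfrac{\alpha_1}{\bar\alpha_2}\Big|^{m_1}\cdot|\bar\alpha_2^{m_1}\,\bar\beta_2^{n_1}| = \frac{1}{|a_{22}|}\,\Big|\tfrac{\alpha_1}{\bar\alpha_2}\Big|^{m_1},
\]
again using Claim~\ref{cl.oxo}(ii). Since $0<|\alpha_1|<|\alpha_2|$ by (QD1) and $\bar\alpha_2$ may be chosen arbitrarily close to $\alpha_2$ (in particular so that $|\alpha_1/\bar\alpha_2|<1$), the ratio raised to $m_1$ goes to $0$ as $m_1\to\infty$, and recalling that $n_1=n_1(m_1)\to\infty$ as $m_1\to\infty$, we conclude $|\bar\beta_2^{n_1}|\,\ell_2(m_1)\to 2\varrho$, as required.

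There is no real obstacle here: the argument is essentially an algebraic rearrangement whose only substantive inputs are the normalization $a_{22}\bar\alpha_2^{m_1}\bar\beta_2^{n_1}=1$ from Claim~\ref{cl.oxo}(ii) and the domination $|\alpha_1|<|\alpha_2|$ from (QD1). This is why the vertical length of the image rectangle stays of order one after the $n_1$ iterates of $\psi_\tq$, while Claim~\ref{cl.vamos} showed that the horizontal length collapses; together they give the Markovian covering of $\Delta_{\qA,\varrho}$ needed in the sequel.
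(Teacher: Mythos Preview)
Your proof is correct and follows essentially the same route as the paper's: both compute $\ell_2(m_1)$ explicitly as $2|a_{21}|\epsilon_a\varrho|\alpha_1^{m_1}|+2|a_{22}|\varrho|\bar\alpha_2^{m_1}|$, multiply by $|\bar\beta_2^{n_1}|$, use the normalization $a_{22}\bar\alpha_2^{m_1}\bar\beta_2^{n_1}=1$ from Claim~\ref{cl.oxo}(ii) to make the main term equal $2\varrho$, and kill the cross term via $|\alpha_1|<|\bar\alpha_2|$. Your factorization $|\alpha_1^{m_1}\bar\beta_2^{n_1}|=|a_{22}|^{-1}|\alpha_1/\bar\alpha_2|^{m_1}$ is a slightly tidier way to see the vanishing of the cross term than the paper's phrasing, but the content is identical.
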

\begin{proof} 
Note that  the supremum and the infimum of $a_{21} x+a_{22} y$ restricted to the values 
$\vert x \vert< \epsilon_a \varrho  |{\alpha}_1|^{m_1}$ and $\vert y \vert< \varrho |\bar{\alpha}_2|^{m_1}$ are respectively
$$
\vert a_{21}  \epsilon_a \varrho {\alpha}^{m_1}_1 \vert +\vert a_{22}  \varrho \bar{\alpha}^{m_1}_2\vert \qquad\mbox{and}\qquad 
-\vert a_{21}  \epsilon_a \varrho {\alpha}^{m_1}_1 \vert -\vert a_{22}  \varrho \bar{\alpha}^{m_1}_2\vert .
$$
Using 
$$
\sup(I+J)=\sup(I)+\sup(J) \qquad \mbox{and} \qquad \inf(I+J)=\inf(I)+\inf(J)
$$
and that 
$|\bar\alpha_2^{m_1} \bar\beta_2^{n_1}|$ is bounded,
 it follows
$$
|\bar{\beta}^{n_1}_2 |\sup\{  a_{21}x+a_{22} y\}-|\bar{\beta}^{n_1}_2 |\inf\{ a_{21}x+a_{22} y \}= 2\, |\bar{\beta}^{n_1}_2|  \big(
\vert a_{21}  \epsilon_a \varrho {\alpha}^{m_1}_1 \vert +\vert a_{22}  \varrho \bar{\alpha}^{m_1}_2\vert).
$$
As $\bar{\beta}^{n_1}_2 \alpha_1^{m_1} \to 0$ and $\bar{\beta}^{n_1}_2 \bar\alpha_2^{m_1} =a_{22}^{-1}$
the claim follows.
\end{proof}

\subsubsection{End of the proof of Lemma~\ref{lp.pp2di}}
\label{ss.kmma}
We got an itinerary 
$\ti_{(m_1,n_1)}$ 
of $\qA=(0,1)$ such that
  $$
  \psi_{\ti}(0,1)= (a_{12}\bar\alpha^{m_1}_2\beta^{n_1}_1,1),
  \quad
  \mbox{where} \quad
  a_{12}\bar\alpha^{m_1}_2\beta^{n_1}_1=\frac{a_{12}}{a_{22}} \left(\frac{\beta_1}{\bar\beta_2}\right)^{n_1}.
  $$
  Note that the later term goes to $0$ as $n_1\to \infty$.
To get the assertion on the derivative (see~\eqref{e.goodmorning}), note that
\begin{equation}
\label{e.derivativeofpsi}
D\psi_{\ti} (\qA)=
\left(
\begin{matrix}
a_{11} \epsilon_a \,\beta_1^{n_1}\, \alpha_1^{m_1} & \pm a_{12} \,\beta_1^{n_1}\, \bar \alpha_2^{m_1}
\\
a_{21}\,\epsilon_a\,\bar \beta_2^{n_1}\, \alpha_1^{m_1} & 1
\end{matrix}
\right),
\end{equation}
where 
\begin{equation}
\label{e.thingsgoingtozero}
|a_{11} \epsilon_a \,\beta_1^{n_1}\, \alpha_1^{m_1}|, \,|a_{12} \,\beta_1^{n_1}\, \bar \alpha_2^{m_1}|,\,
|a_{21} \,\epsilon_a\,\bar \beta_2^{n_1}\, \alpha_1^{m_1} | \to 0
\quad
\mbox{as} \quad m_1, n_1\to \infty.
\end{equation}
 Thus, the eigenvalues
of $D\psi_{\ti} (\qA)$
 are close to $0$ and $1$.
By domination, the eigenspace associated to $1$ is almost vertical. Therefore, after a new perturbation we can assume that the vertical
direction is invariant.
We now modify the maps along the orbit of $\qA$ to see 
that $\psi_{\yi} (\Delta)$ intersects $\Delta$ in a Markovian way (see Figure~\ref{f.itineraries}). This provides a fixed point with derivative
close to one. As the number of iterates is arbitrarily large a new perturbation provides a point $\qA_\ti$ of the form  $\qA_\ti=(a_{\ti}, 1)$ with $\psi_\yi (\qA_\ti)=\qA_\ti$
and multiplier equal to $1$. Note that the other eigenvalue is arbitrarily small  (hence of contracting type).
We observe that by construction 
\begin{equation}
\label{e.solamenteunavez} 
\vert a_{\ti}\vert \approx
\left( \frac{ |\beta_1|}{|\bar \beta_2|}\right)^{n_1}. 
\end{equation}
 The proof
of the lemma is now complete. \hfill \qed

\begin{figure}[h]
\begin{overpic}[scale=0.38,
]{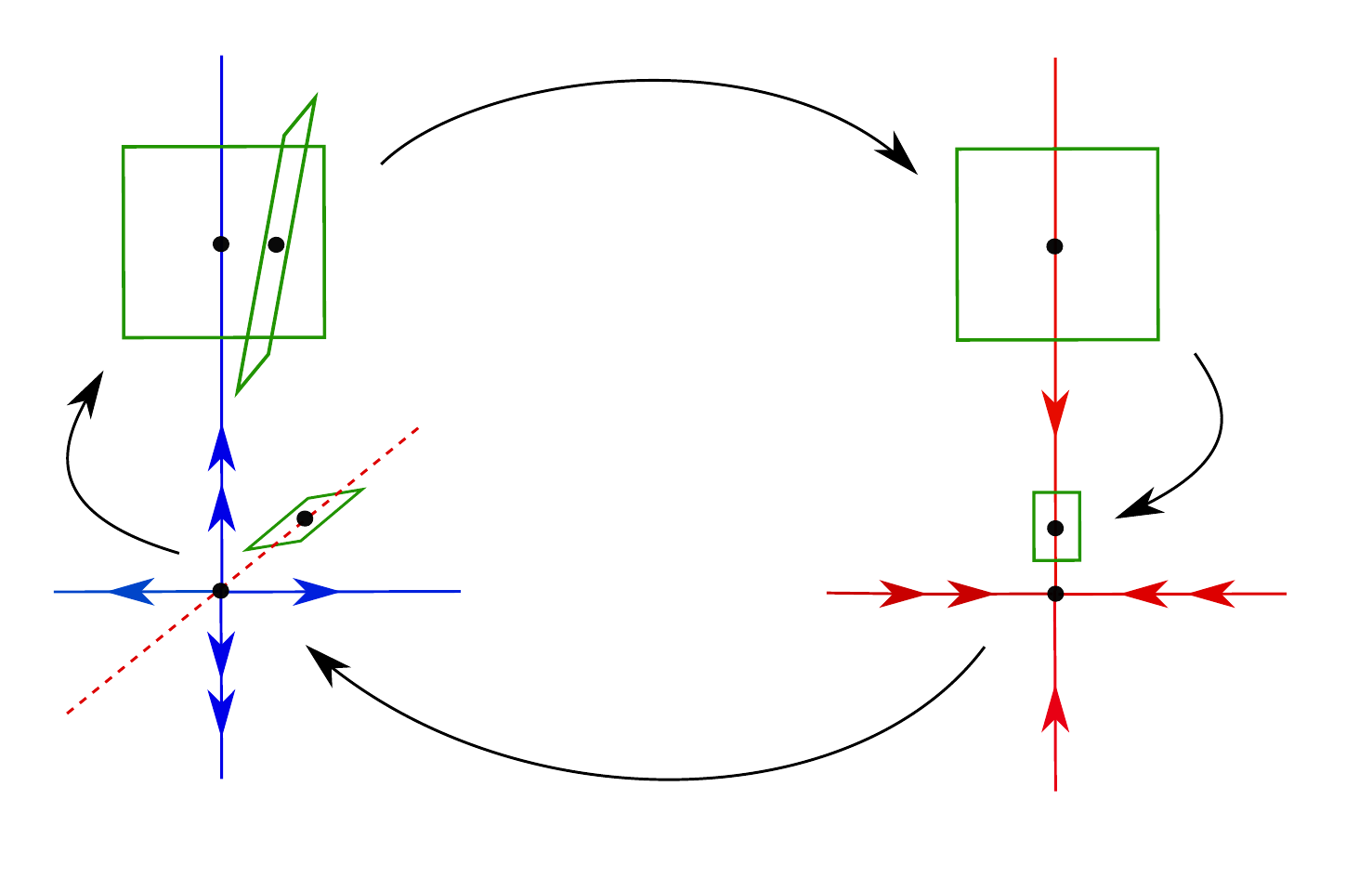}
         \put(7,55){\Large  $\Delta$}   
         \put(-3,30){\Large $\psi_{\tq}^n$}   
         \put(29,27){\large $a_{22}>0$} 
         \put(26,45){\Large  $\psi_{ \ya \tp^m \yc \tq^n}{\large (\Delta)}$}   
         \put(45,62){\Large  $\psi_{\ya}$}
         \put(45,11){\Large  $\psi_{\yc}$}
         \put(91,31){\Large $\psi_{\tp}^m$}
         \put(11,45){\large ${A}$}   
         \end{overpic}
         \vspace{-.3cm}
         \caption{Itineraries $\ti= \ya \tp^m \yc \tq^n$}
         \label{f.itineraries}
\end{figure}

\subsection{End of the proof of Proposition~\ref{p.muitos}}
\label{ss.proofofpmuitos}
Suppose that $\ti_1$ and $A_{\ti_1}$ are defined.
We now explain  how to get large $m_2,n_2$ such that,
taking $\ti_2= \ti_{(m_2,n_2)}$,
 there is a fixed point
$\qA_{\ti_2}$ of  $\psi_{\ti_2}$ 
satisfying \eqref{e.masmenos}.
We observe that the orbit of $\qA_{\ti_1}$ is ``separated" from both $\qQ$ and $\qP$, and both, the cycle and transitions, are preserved.
As the cycle is not destroyed,
we can  repeat the construction choosing $n_2\gg n_1$ and $m_2 \gg m_1$.  This implies that a new perturbation can 
be made ``deeper" than the first one (done to get $\qA_{\ti_1}$), meaning there are points in the orbit of $\qA_{\ti_2}$ much closer to $\qQ$ and $\qP$. 
Consequently, a new itinerary with a fixed point can be obtained without altering the orbit of $\qA_{\ti_1}$.
See Figure~\ref{f.itineraries}.
This can be repeated to get a  new itinerary $\ti_3$ and a point $\qA_{\ti_3}$.

\begin{remark}
\label{r.uniformsizeofinvariantmanifolds1}
By construction, for large $m_k,n_k$, $k=1,2,3,$ (see Figure~\ref{f.relations})
$$
W^\ss (\qA_{\ti_{k}}, \psi_{{\ti_{k}}})\pitchfork W^\uu (\qQ, \psi_{\tq}) \ne\emptyset, \quad \ti_k =\ti_{(m_k,n_k)}.
$$
\end{remark}

To get the relations in~\eqref{e.stronghomq} for a word $\ti_4$,  we proceed similarly to the previous argument.  This time, we increase the number
$m_4$ so that we are closer to $\qP$ than the periodic orbits of $\qA_{\ti_1},\qA_{\ti_2},\qA_{\ti_3}$. This allows us to perform a deeper perturbation without modifying the orbits of $\qA_{\ti_1},\qA_{\ti_2},\qA_{\ti_3}$ in a such a way, that, for an appropriate and sufficiently large $n_4,m_4$, we have 
$\psi_{\ti_4} (\qA_{\ti_{1}})\in W^\ss_\loc (\qA_{\ti_1}, \psi_{\ti_{1}} )$, here we use Remark~\ref{r.uniformsizeofinvariantmanifolds1}.


 Finally note that, by construction, to the words $\ti_2$ and $\ti_3$ there are associated
fixed points $\qA_{\ti_2}$ and $\qA_{\ti_3}$ satisfying \eqref{e.masmenos}. This completes the proof for the points $\qA_{\ti_k}$.
\begin{figure}[h]
\begin{overpic}[scale=0.35,
]{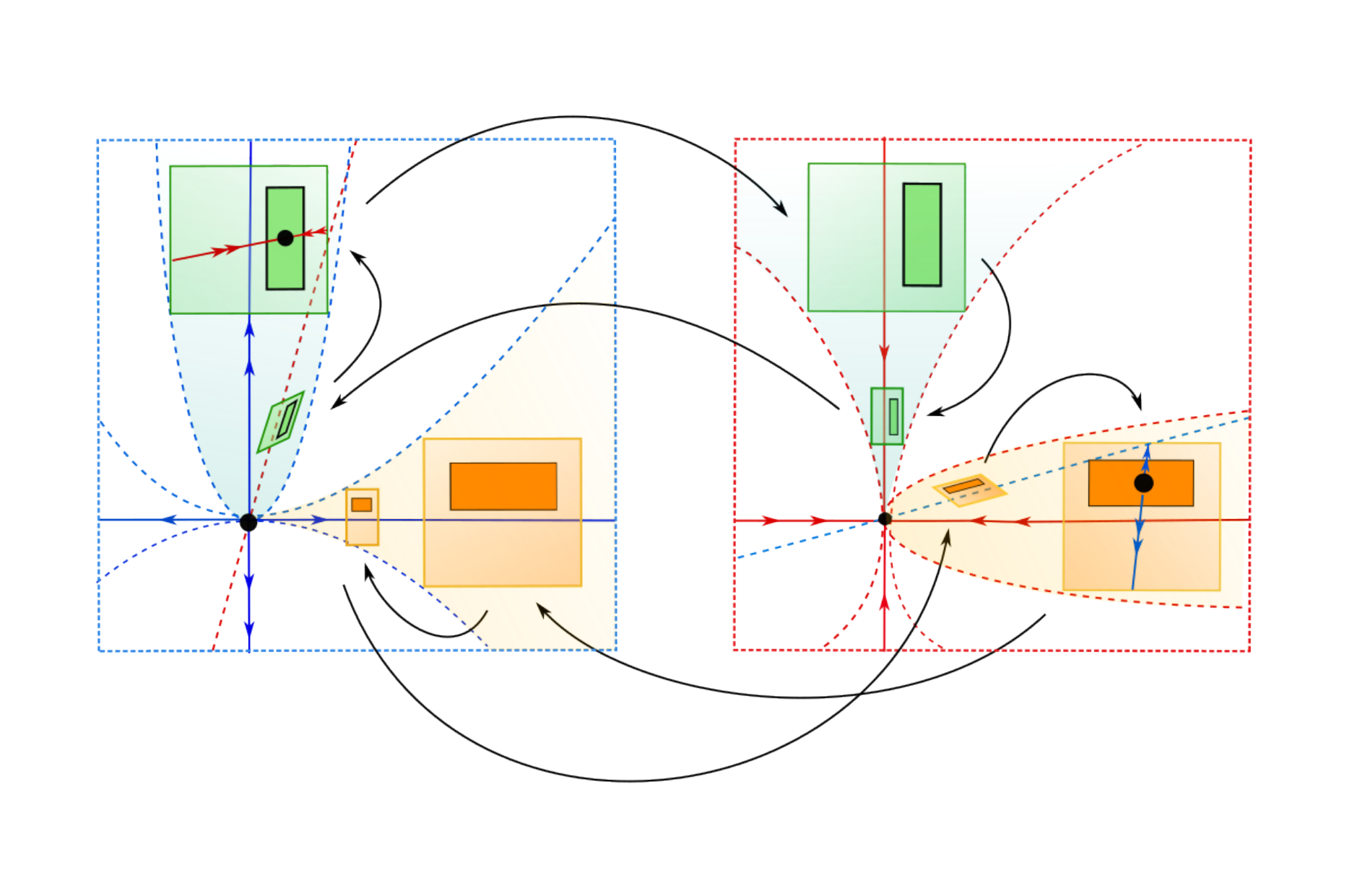}  
 \put(14,50){\large $\qA_{\ti}$}   
         \put(40,60){\large  $\psi_{ \ya}$}   
         \put(48,38.5){\large  $\psi_{\yc}$}
           \put(75,45){\large $\psi_{\tp}^m$}
\put(85.5,24.5){\large $\qD'_{\tj}$}   
         \put(46,19){\large  $\psi_{\yd}^{-1}$}
         \put(45,4.5){\large  $\psi_{\yc}^{-1}$}
         \put(83,39){\large  $\psi_{\tp}^{-r}$}
           \put(30,16){\large  $\psi_{\tq}^{-t}$}
         \end{overpic}
         \caption{Cuspidal regions for compatible perturbations}
         \label{f.Soportes}
\end{figure}
 
To get the points $\qD'_{\yj_k}$, with a neutral direction and strongly expanding eigenvalues, we proceed as above, but now considering
the transition  $\varphi^{-1}_\yd$. This construction can be done preserving the points $\qA_{\yi_k}$ and their homoclinic intersections.
Note that the perturbations in the construction of these points are done in the two disjoint cuspidal regions depicted in Figure~\ref{f.Soportes},
thus they can be performed independently. 
This ends the proof of the proposition. \hfill \qed

\subsection{Transverse heteroclinic intesections}
\label{ss.heteroclinicrelations}

The next remark will be used to get the transverse intersection
between the invariant manifolds of sets 
 in cycles of co-index two.

\begin{remark}
\label{r.uniformsizeofinvariantmanifolds}
By construction, for large $m_k,n_k, r_k, t_k$ and  $k=1,2,3$,
$$
W^\ss (\qA_{\ti_{k}}, \psi_{{\ti_{k}}})\pitchfork W^\uu (\qQ, \psi_{\tq}) \ne\emptyset,
\quad
W^\uu (\qD'_{\tj_{k}}, \psi_{\tj_{k,}})\pitchfork W^\ss (\qP, \psi_{\tp}) \ne\emptyset,
$$
see Figure~\ref{f.relations}. Noting  that, for large $n,m$,
$$
\mbox{if} \quad
|\beta_2^n \alpha_2^m|\approx 1 
\quad
\mbox{then}
\quad
|\beta_1^n \alpha_1^m|\approx 0.
$$
we obtain that
\begin{equation}
\label{e.22}
\psi_\tq^n \circ \psi_\tc \circ \psi_\tp^m 
\left(W^\uu (\qD'_{\tj_k}, \psi_{\tj_k})\right) \pitchfork
 W^\ss (\qA_{\ti_k}, \psi_{\ti_k}) \ne \emptyset.
\end{equation}
Thus, if $\qA_{\ti_k}$ and $\qD_{\tj_k}$ are hyperbolic points, then 
 $$
  W^\st \left( (\ti_{k}^\mathbb{Z}, \qA_{\ti_{k}}), \Psi \right)  
  \pitchfork   W^\ut \left( (\tj_{k}^\mathbb{Z}, \qD'_{\tj_{k}}), \Psi \right) 
  \ne \emptyset.
  $$
As a consequence of Remark~\ref{r.secondarydynamics}, the periodic points $A_{(\ti_k^\mathbb{Z}, \qA_{\ti_k})}$ and $D'_{(\tj_k^\mathbb{Z}, \qD'_{\ti_k})}$
of 
the diffeomorphism $g$ associated to $\Psi$ are hyperbolic and satisfy 
 $$
  W^\st ( A_{(\ti_k^\mathbb{Z}, \qA_{\ti_k})}, g)  
  \pitchfork   W^\ut ( D'_{(\tj_k^\mathbb{Z}, \qD'_{\ti_k})}, g ) 
  \ne \emptyset.
  $$
\end{remark}

\begin{figure}[h]
\begin{overpic}[scale=0.45,
]{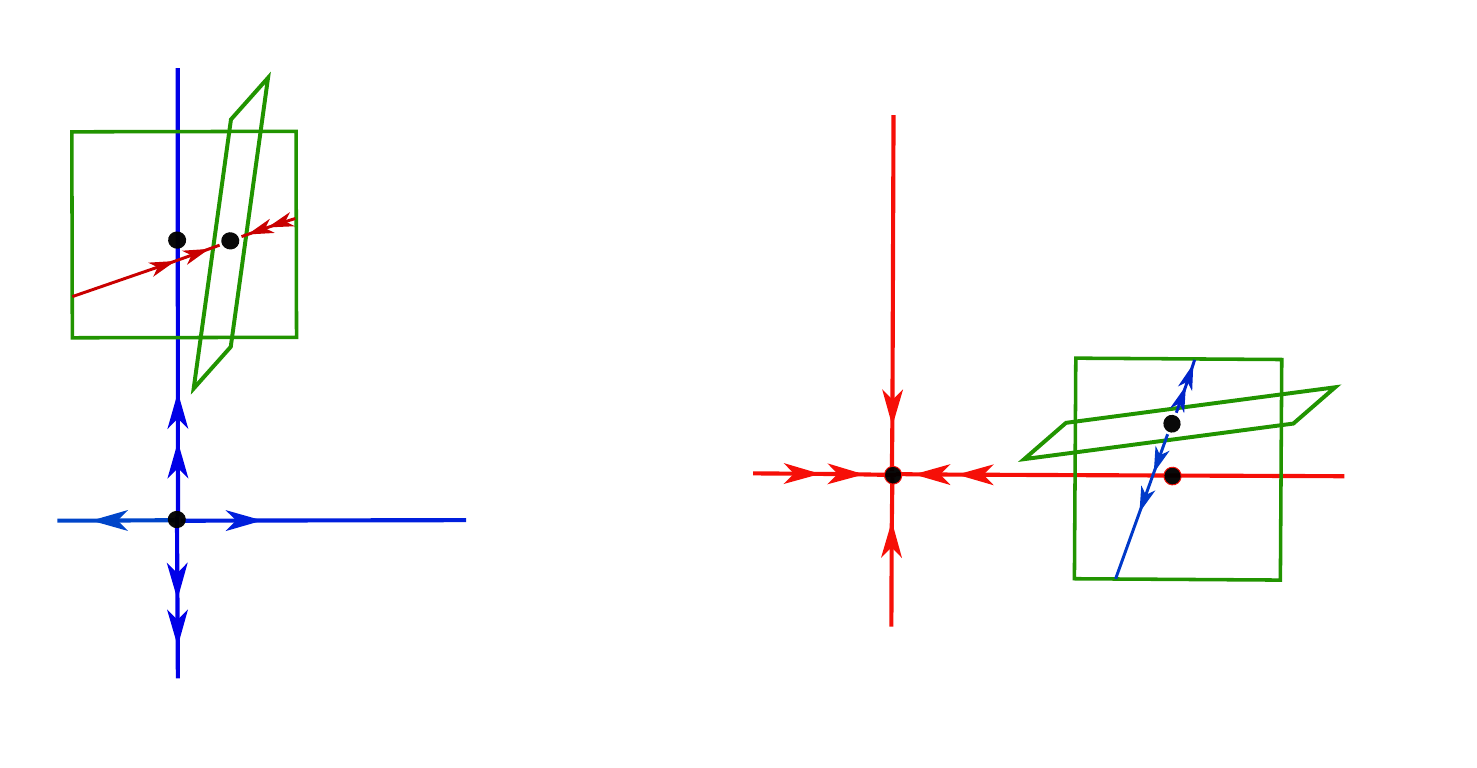}  
 \put(8,36.5){\large $\qA$}   
  \put(17,32){\large $\qA_{\ti_{k}}$}   
         \put(0,40){\Large  $\Delta_{\qA}$}   
                  \put(20,46.5){\large  $\psi_{{\ti_{k}}}(\Delta_{\qA})$} 
      \put(21,37){\large  $W^\ss (\qA_{\ti_{k}}, \psi_{{\ti_{k}}})$} 
           \put(2,50){\large $W^\uu (\qQ, \psi_{\tq})$}
         \put(14,12){\large  $\qQ$}
\put(80,16){\large $\qD'$}   
  \put(74,26){\large $\qD'_{\tj_{k}}$}   
         \put(88,15){\Large  $\Delta_{\qD'}$}   
                  \put(89,28){\large  $\psi_{{\tj_{k}}}(\Delta_{\qD'})$} 
      \put(64,8){\large  $W^\uu (\qD'_{\tj_{k}}, \psi_{{\tj_{k}}})$} 
           \put(44,22.5){\large $W^\ss (\qP, \psi_{\tp})$}
         \put(62,16){\large  $\qP$}
         \end{overpic}
         \caption{Transverse intersections $W^\ss (\qA_{\ti_{k}}, \psi_{{\ti_{k}}})\pitchfork W^\uu (\qQ, \psi_{\tq}) \ne\emptyset$
and 
$W^\uu (\qD'_{\tj_{k}}, \psi_{\tj_{k,}})\pitchfork W^\ss (\qP, \psi_{\tp}) \ne\emptyset$}
         \label{f.relations}
\end{figure}

\subsection{Co-index one robust cycles}
\label{ss.heteroandhomorobust}
We now derive the occurrence of robust cycles from Proposition~\ref{p.muitos}.
 
\begin{thm}\label{tp.finallyarobustcycle}
Let $f$ be a diffeomorphism with a simple contour $\Upsilon$.
Then, for every neighbourhood $U_\Upsilon$ of $\Upsilon$, there exists a diffeomorphism $h$ arbitrarily $C^1$ close to $f$ 
having simultaneously a contour of double type I  contained in $U_\Upsilon$
and  a pair of $C^1$ robust heterodimensional cycles of co-index one, of types $(\iota (\Upsilon),\iota (\Upsilon)+1)$ and 
$(\iota (\Upsilon)+1,\iota (\Upsilon)+2)$, associated to hyperbolic sets contained in $U_\Upsilon$.
\end{thm}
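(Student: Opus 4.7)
The plan is to obtain the two robust co-index one cycles as independent consequences of Proposition~\ref{p.muitos}, one for each central sub-bundle of the contour, and then to observe that the required perturbations can be localized in disjoint regions so as to be performed simultaneously while preserving the original contour. Up to replacing $f$ by an arbitrarily small perturbation, Lemma~\ref{l.everyneistypeI} and Lemma~\ref{l.simple} allow us to assume that $\Upsilon$ is a simple double type I contour, so that the quotient family $\mathcal{F}$ and its skew-product $\Psi$ satisfy (QD1)--(QD3). Apply Proposition~\ref{p.muitos} with $\ell_0$ chosen large enough that the orbits of the points produced stay inside $U_\Upsilon$ (via Remark~\ref{r.transitionsetc}). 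This provides an adapted perturbation $g$ that preserves $\Upsilon$, together with admissible words $\ti_1,\ti_4,\tj_1,\tj_4$ and the associated periodic points $\qA_{\ti_1}$, $\qD'_{\tj_1}$ of $\Psi$ whose derivatives have the neutral form prescribed by~\eqref{e.masmenos}, as well as the strong homoclinic relations~\eqref{e.stronghomq}.

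For the periodic point $(\ti_1^{\mathbb{Z}},\qA_{\ti_1})$, the derivative $D\psi_{\ti_1}(\qA_{\ti_1})$ has eigenvalues $\theta_{\ti_1}$ and $1$, associated respectively with $E^{\ct}_1$ and $E^{\ct}_2$. Thus $E^{\ct}_2$ is nonhyperbolic and $E^{\ct}_1$ is contracting, placing us in case (1) of Remark~\ref{r.thefollowingcasesnh}. The strong homoclinic relation $\psi_{\ti_4}(\qA_{\ti_1}) \in W^{\ss}_{\loc}(\qA_{\ti_1},\psi_{\ti_1})$ in~\eqref{e.stronghomq} together with the fact that $\ti_4$ is not a prefix of $\ti_1$ (which is automatic if we choose $|\ti_4|$ large enough) puts us in the hypothesis of Lemma~\ref{l.robustcyclesfromstrong}. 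That lemma, applied in a neighbourhood of the closure of the orbit of the corresponding strong homoclinic point of $g$, produces a perturbation $h_1$ of $g$ exhibiting a $C^1$ robust heterodimensional cycle of type $(\iota(\Upsilon),\iota(\Upsilon)+1)$ whose contour lies in $U_\Upsilon$.

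The construction for $(\tj_1^{\mathbb{Z}},\qD'_{\tj_1})$ is symmetric: the eigenvalues of $D\psi_{\tj_1}(\qD'_{\tj_1})$ are $1$ and $\kappa_{\tj_1}$, so we are in case (2) of Remark~\ref{r.thefollowingcasesnh}, with $E^{\ct}_1$ neutral and $E^{\ct}_2$ expanding. Applying the $\Psi^{-1}$-version of Lemma~\ref{l.robustcyclesfromstrong} to the strong homoclinic relation $(\psi_{\tj_4})^{-1}(\qD'_{\tj_1}) \in W^{\uu}_{\loc}(\qD'_{\tj_1},\psi_{\tj_1})$ provided by~\eqref{e.stronghomq} yields a perturbation producing a $C^1$ robust heterodimensional cycle of type $(\iota(\Upsilon)+1,\iota(\Upsilon)+2)$, again with contour inside $U_\Upsilon$.

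The main point to check, and the only genuinely delicate one, is that these two perturbations and the simultaneity of all the objects obtained can be arranged without mutual interference and without destroying the initial contour. Here Remark~\ref{r.estimatesthetakappa} together with the choice of depths $m_k,n_k,r_k,t_k$ in the proof of Proposition~\ref{p.muitos} are essential: the orbits of $\qA_{\ti_1}$, $\qD'_{\tj_1}$, and of the heteroclinic points realizing~\eqref{e.stronghomq}, live in pairwise disjoint cuspidal regions of the form depicted in Figure~\ref{f.Soportes}, separated from the orbits of $P$, $Q$, $A$, $D$, $C$ defining $\Upsilon$. Consequently, the two local perturbations required by Lemma~\ref{l.robustcyclesfromstrong} can be carried out in pairwise disjoint open sets, producing a single diffeomorphism $h$ that simultaneously exhibits both robust cycles and still contains $\Upsilon$ as a double type I contour in $U_\Upsilon$, completing the proof.
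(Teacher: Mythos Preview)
Your proof is correct and follows essentially the same approach as the paper: invoke Proposition~\ref{p.muitos} to obtain the neutral periodic points $\qA_{\ti_1}$, $\qD'_{\tj_1}$ and the strong homoclinic relations~\eqref{e.stronghomq}, then apply Lemma~\ref{l.robustcyclesfromstrong} (and its inverse version) to produce the two robust co-index one cycles. Your discussion of the disjoint cuspidal supports and preservation of the contour is a welcome elaboration of points the paper leaves implicit; the only superfluous step is your opening appeal to Lemmas~\ref{l.everyneistypeI} and~\ref{l.simple}, since the hypothesis already assumes $\Upsilon$ is a simple contour.
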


 \begin{proof}
  Consider the
  points $\qA_{\ti_{1}}$ and $\qD'_{\tj_{1}}$ and the words ${\ti_{4}}$ and ${\tj_{4}}$ 
  given by Proposition~\ref{p.muitos} such that
$\psi_{\ti_{4}} (\qA_{\ti_{1}})
\in W^\ss_\loc (\qA_{\ti_{1}}, \psi_{\ti_{1}})$ and  $(\psi_{\ti_{4}})^{-1}(\qD'_{\tj_{1}})
\in W^\uu_\loc (\qD'_{\tj_{1}}, \psi_{\tj_{1}})$.
Using the first condition,
 Lemma~\ref{l.robustcyclesfromstrong} provides the cycle of type $(\iota(\Upsilon), \iota (\Upsilon)+1)$.
 The second condition and the lemma provides the cycle of type $(\iota(\Upsilon)+1, \iota (\Upsilon)+2)$.
\end{proof}

%
%
%

\subsection{Selection of rectangles}
 \label{ss.constructionofrectangles}
  We 
 use the points $A_{\ti_2}, A_{\ti_3}, D'_{\tj_2},$ and $D'_{\tj_3}$  as base points of a family of rectangles that will be used in the
construction of preblending machines.
We assume that $\alpha_j, \beta_i$ in \eqref{e.fzero} are both positive.
We assume that  the numbers $m_k, n_k, r_k, t_k$ in the  words $\ti_k =\ti_{({m_k},{n_k})}$ 
and
$\tj_k =\tj_{({r_k}, {t_k})}$ are arbitrarily large. Recall that
$\qA_{\ti_k}=(a_{\ti_k},1)$, where $a_{\ti_k} \approx  \left(\frac{\beta_1}{\beta_2}\right)^{n_k} >0$ 
 (recall Remark~\ref{r.estimatesthetakappa}).
 
We denote by $|J|$ the lenght of an interval $J$.

 \begin{claim}
 \label{cl.rectangulos}
 Given $\varepsilon>0$ there is
 $\vartheta_0>0$ such that for every $\vartheta\in (0,\vartheta_0)$
 there is  an $\varepsilon$ adapted perturbation of $\mathcal{G}$ (whose quotient maps will be denoted as those of $\mathcal{G}$)
 and closed intervals
 $$
\qV(\vartheta) \eqdef [1-\vartheta, 1+\vartheta],
\quad 
\qH_{\ti_2}, 
\quad 
\qH_{\ti_3}, 
\quad 
\mbox{and}
\quad
\qH_{\ti_2,\ti_3}
$$ 
satisfying the following for $k=2,3$:
 \begin{enumerate}
 \item 
 $a_{\ti_k}\in \mathrm{int} (\qH_{\ti_k})$ and $\qH_{\ti_2}\cup \qH_{\ti_3} \subset \mathrm{int}(\qH_{\ti_2,\ti_3})$,
  \item
  $|\qH_{\ti_2,\ti_3}|\approx  |\qH_{\ti_2}|$
and $|\qH_{\ti_k}| \approx 
   |\alpha_1^{m_k} \beta_1^{n_k}| < \left(\frac{\beta_1}{\beta_2}\right)^{n_k}$, 
\item
the restriction of $\psi_{\ti_k}$ to the rectangle $\qH_{\ti_k}  \times \qV(\vartheta)\subset \qU_\tq$
preserves the horizontal and vertical lines, it acts as the identity in the vertical direction, and
\item
$\psi_{\ti_k} (\qH_{\ti_2,\ti_3} \times \qV (\vartheta))= \qH_{\ti_k} \times \qV (\vartheta)$.
\end{enumerate}
\end{claim}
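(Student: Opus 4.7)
The plan is to carry out one further adapted $C^1$ perturbation on top of the one produced by Proposition~\ref{p.muitos}, supported in small neighbourhoods of the fixed points $\qA_{\ti_k}$, $k=2,3$, so that each return $\psi_{\ti_k}$ is put in a convenient normal form on a product rectangle around $\qA_{\ti_k}$ and so that a common horizontal interval $\qH_{\ti_2,\ti_3}$ can be chosen whose image under each $\psi_{\ti_k}$ hits exactly $\qH_{\ti_k}\times\qV(\vartheta)$. This perturbation acts in cuspidal regions disjoint from the orbits of $\qA_{\ti_1}$, $\qD'_{\tj_1}$ and from the cycle, so none of the objects built in Proposition~\ref{p.muitos} and Theorem~\ref{tp.finallyarobustcycle} is destroyed; compare Figure~\ref{f.Soportes}.

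First, using Lemma~\ref{l.simple} together with the arguments of Proposition~\ref{p.muitos}, each $\psi_{\ti_k}$ is affine near $\qA_{\ti_k}$ with derivative $\mathrm{diag}(\theta_{\ti_k},1)$ in appropriate coordinates, and the eigendirections of this derivative differ from the coordinate axes by a rotation of order $|\theta_{\ti_k}|$. A first $C^1$-small adapted perturbation, compactly supported near each $\qA_{\ti_k}$, straightens these eigendirections to the horizontal and vertical axes, giving the preservation of horizontal and vertical lines demanded in condition (3). A second compactly supported perturbation then rectifies the vertical action to the identity on a rectangle $\qH_{\ti_k}\times\qV(\vartheta)$; this is possible because the vertical multiplier is already exactly one and an orientation-preserving interval diffeomorphism that is $C^1$-close to the identity is $C^1$-isotopic to it through a compactly supported isotopy. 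Both steps complete condition (3) and fix $\vartheta_0$ small enough that the rectification survives for every $\vartheta\in(0,\vartheta_0)$.

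Next, I would choose $\qH_{\ti_2,\ti_3}$ to be a horizontal interval containing $a_{\ti_2}$ and $a_{\ti_3}$ in its interior, with length comparable to $|\theta_{\ti_2}|$. The separation $|a_{\ti_2}-a_{\ti_3}|\approx (|\beta_1|/|\beta_2|)^{n_2}$ from Remark~\ref{r.estimatesthetakappa} is strictly smaller than $|\theta_{\ti_2}|=|\alpha_1^{m_2}\beta_1^{n_2}|$, since $|\alpha_1|<|\alpha_2|$ implies $(|\beta_1|/|\beta_2|)^{n_2}<|\alpha_1^{m_2}\beta_1^{n_2}|$; this makes such a choice of $\qH_{\ti_2,\ti_3}$ effective. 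Finally, a further adapted $C^1$ perturbation supported on $(\qH_{\ti_2,\ti_3}\setminus\qH_{\ti_k})\times\qV(\vartheta)$, preserving the horizontal and vertical foliations already secured, reshapes the horizontal action of $\psi_{\ti_k}$ so that $\psi_{\ti_k}(\qH_{\ti_2,\ti_3})=\qH_{\ti_k}$ exactly, yielding condition (4). The $C^1$ size of this reshaping perturbation is controlled by $|\qH_{\ti_2,\ti_3}|$, which tends to zero as $m_k,n_k\to\infty$. Conditions (1) and (2) then follow from the construction and from Remark~\ref{r.estimatesthetakappa}, since the fixed points lie in the interiors by design and $|\qH_{\ti_k}|\approx|\theta_{\ti_k}|=|\alpha_1^{m_k}\beta_1^{n_k}|<(|\beta_1|/|\beta_2|)^{n_k}$.

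The main obstacle is the simultaneous coordination of conditions (3) and (4): the map must act as the identity in the vertical direction on the inner rectangle while realising the prescribed horizontal image on the outer rectangle, in a $C^1$-small way and compatibly with the perturbations used to produce the fixed points $\qA_{\ti_k}$ and $\qD'_{\tj_k}$ of Proposition~\ref{p.muitos}. This is possible only because $m_k,n_k$ can be chosen arbitrarily large after $m_{k-1},n_{k-1}$ are fixed, so that the rectangles nest at geometrically decreasing scales and each perturbation can be localised in a thin neighbourhood of the corresponding fixed point, away from every previously constructed orbit.
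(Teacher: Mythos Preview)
Your third paragraph contains a reversed inequality that breaks the construction. You assert that $|\alpha_1|<|\alpha_2|$ implies $(|\beta_1|/|\beta_2|)^{n_2}<|\alpha_1^{m_2}\beta_1^{n_2}|$, but the implication goes the other way. From the choice of $m_k,n_k$ one has $|\alpha_2^{m_k}\beta_2^{n_k}|\approx 1$, hence $|\beta_2|^{-n_k}\approx |\alpha_2|^{m_k}$, and therefore
\[
\left(\frac{|\beta_1|}{|\beta_2|}\right)^{n_k}\;\approx\; |\alpha_2|^{m_k}\,|\beta_1|^{n_k}\;>\;|\alpha_1|^{m_k}\,|\beta_1|^{n_k}\;\approx\;|\theta_{\ti_k}|,
\]
which is precisely the inequality recorded in condition~(2) of the claim itself. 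Consequently the separation $|a_{\ti_2}-a_{\ti_3}|\approx(|\beta_1|/|\beta_2|)^{n_2}$ is \emph{larger} than $|\theta_{\ti_2}|$, not smaller, so a horizontal interval $\qH_{\ti_2,\ti_3}$ of length comparable to $|\theta_{\ti_2}|$ cannot contain both $a_{\ti_2}$ and $a_{\ti_3}$. Your whole selection of scales for the rectangles collapses at this point, and the subsequent ``reshaping'' perturbation has nothing correct to reshape to.

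The paper proceeds in the opposite order of scales. It first fixes auxiliary intervals $\widehat\qH_{\ti_k}$ of the \emph{larger} size $(|\beta_1|/|\beta_2|)^{n_k}$ containing $a_{\ti_k}$, performs the straightening perturbation there (so condition~(3) is secured on the big intervals), and only then selects $\qH_{\ti_2,\ti_3}\subset\widehat\qH_{\ti_2}\cap\widehat\qH_{\ti_3}$ and \emph{defines} $\qH_{\ti_k}$ as the image of $\qH_{\ti_2,\ti_3}$ under the already-normalised $\psi_{\ti_k}$. Condition~(4) is then automatic, with no further reshaping, and the size relation $|\qH_{\ti_k}|\approx|\theta_{\ti_k}|\cdot|\qH_{\ti_2,\ti_3}|$ is what produces the small intervals of order $|\alpha_1^{m_k}\beta_1^{n_k}|$. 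The point you missed is that the normalisation must be done on a region large enough to contain $\qH_{\ti_2,\ti_3}$, and $\qH_{\ti_k}$ is obtained as an image, not prescribed in advance.
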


\begin{proof}
Recall that $\ti_2 =\ti_{(m_2, n_2)}$ and $\ti_3 =\ti_{(m_3, n_3)}$, the proof follows taking the $m_k,n_k$ sufficiently large.
Also recall $m_2<m_3$ and $n_2<n_3$.
We start with a pair of auxiliary intervals $\widehat{\qH}_{\ti_k}$ 
of size $\left(\frac{\beta_1}{\beta_2}\right)^{n_k}$ containing $a_{\ti_k}$.  
After performing an small adapted perturbation of $\mathcal{G}$, we may assume that the restriction of $\psi_{\ti_k}$ to  
$\qH_{\ti_k} \times \qV(\vartheta)$ preserves both horizontal and vertical directions, and acts as the identity along vertical lines. The set 
$\qH_{\ti_k}$
 in the claim will be a subset of 
$\widehat{\qH}_{\ti_k}$, and thus it satisfies condition (3). To get the intervals $\qH_{\ti_k}$ and  $\qH_{\ti_2,\ti_3}$,
note that the contraction of $\psi_{\ti_k}$ in the horizontal direction is of order of
$\alpha_1^{m_k} \beta_1^{n_k}$.
Therefore the horizontal size of  $\psi_{\ti_k} (\widehat{\qH}_{\ti_k} \times V(\vartheta) )$ is of order of
$$
\alpha_1^{m_k} \beta_1^{n_k}  \left( \frac{\beta_1}{\beta_2} \right)^{n_k}.
$$
The previous number can be take arbitrarily close to $0$ taking $m_k, n_k$ large enough.
This allows us to chose an  interval
$\qH_{\ti_2,\ti_3} \subset \widehat{\qH}_{\ti_2}\cap \widehat{\qH}_{\ti_3}$ and intervals
$\qH_{\ti_2}, \qH_{\ti_3} \subset \mathrm{int}(\qH_{\ti_2,\ti_3})$
such that $a_{\ti_k}\in \mathrm{int} (\qH_{\ti_k})$, 
with $|\qH_{\ti_k}| \approx  |\alpha_1^{m_k} \beta_1^{n_k}|  \leq |\qH_{\ti_2,\ti_3}| \approx |\alpha_1^{m_2} \beta_1^{n_2}|$, and 
$$
\psi_{\ti_k} (\qH_{\ti_2,\ti_3} \times \qV (\vartheta))= \qH_{\ti_k} \times \qV (\vartheta).
$$
Hence (1), (2), and (4) hold.
\end{proof}

We let $\widehat{{\qR}}_{\ti_k}\eqdef \qH_{\ti_k} \times \qV (\vartheta)$
and consider the canonical projections
 $$
 P_\yr,
   Q_\yr \colon \qU_\yr \to \mathbb{R}, \quad P_\yr(x,y)=x,\quad
   Q_\yr(x,y)=y, \quad \yr =\tq, \tp.
$$
Let $\mathcal{G}$ be a perturbation given in the Claim~\ref{cl.rectangulos}. After a new (standard)  perturbation one gets:

\begin{claim} \label{cl.pepinodecapri}
Given $\varepsilon>0$ there is
 $\vartheta_0>0$ such that for every $\vartheta\in (0,\vartheta_0)$, there is  an $\varepsilon$ perturbation of $\mathcal{G}$ (whose quotient maps will be denoted as in  $\mathcal{G}$)
 and rectangles
 \begin{equation}
 \label{e.irectangle}
 {\qR}_{\ti_k} \eqdef  \qH_{\ti_k} \times \qV_k(\vartheta), 
\quad\mbox{with}\quad \qV_k(\vartheta)\quad\mbox{close to}\quad \qV(\vartheta), 
 \quad k=2,3
 \end{equation}
 containing $\qA_k$ in their interiors such that
\begin{enumerate}
\item
$\qV(\vartheta)   \subset \mathrm{int} ( Q_\tq ({\qR}_{\ti_2}) \cup Q_\tq ({\qR}_{\ti_3}))$,
\item
the restriction of
$\psi_{\ti_k}$ 
 to $\qR_{\ti_k}$ is affine with 
 derivative
 $\left(
 \begin{smallmatrix}
\rho_{\ti_k}& 0\\
0
& \theta_{\ti_k}
\end{smallmatrix} \right)$,
where 
$|\rho_{\ti_k}|$ is of order of $\alpha_1^{m_k} \beta_1^{n_k}$ (hence arbitrarily small)
and $|\theta_{\ti_k}| \in (0.99,1)$,
 \item
 $\qR_{\ti_k}\subset 
\psi_{\ti_k}^{-1}(\qH_{\ti_2,\ti_3} \times \mathrm{int} (\qV(\vartheta))$, where 
$\qH_{\ti_2,\ti_3}$  is the interval in Claim~\ref{cl.rectangulos} containing $\qH_{\ti_2} \cup \qH _{\ti_3}$ in its interior.  
 \end{enumerate}
  \end{claim}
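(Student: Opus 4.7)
The plan is to start from the family $\mathcal{G}$ produced by Claim~\ref{cl.rectangulos}, where $\psi_{\ti_k}$ restricted to $\widehat{\qR}_{\ti_k}=\qH_{\ti_k}\times\qV(\vartheta)$ preserves horizontal and vertical lines and acts as the identity in the vertical direction, and then to perform a further small adapted perturbation that turns the vertical (central-stable) action at $\qA_{\ti_k}$ into a weak contraction with multiplier $\theta_{\ti_k}$, $|\theta_{\ti_k}|\in(0.99,1)$. The resulting margin then allows us to enlarge the vertical intervals to $\qV_k(\vartheta)\supsetneq \qV(\vartheta)$ while keeping the images of $\psi_{\ti_k}$ strictly inside $\qV(\vartheta)$.

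Concretely, I apply the same $C^1$ perturbation scheme along the orbit of $\qA_{\ti_k}$ used in the proof of Lemma~\ref{lp.pp2di}: a tiny adjustment of a central eigenvalue of $\psi_\tq$ or $\psi_\tp$ at a point of the orbit of $\qA_{\ti_k}$ that is separated from the orbit of $\qA_{\ti_{k'}}$, $k\ne k'$, using the independence and separation of these orbits established in Section~\ref{ss.proofofpmuitos}. This replaces the vertical eigenvalue of $D\psi_{\ti_k}(\qA_{\ti_k})$, which was $1$ by Claim~\ref{cl.rectangulos}(3), by any prescribed $\theta_{\ti_k}$ with $|\theta_{\ti_k}|\in(0.99,1)$, chosen independently for $k=2,3$. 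The horizontal contraction remains of order $\alpha_1^{m_k}\beta_1^{n_k}$ by Remark~\ref{r.estimatesthetakappa}. After shrinking the neighborhood of $\qA_{\ti_k}$ if needed, $\psi_{\ti_k}$ becomes affine there with diagonal derivative $\left(\begin{smallmatrix}\rho_{\ti_k}&0\\0&\theta_{\ti_k}\end{smallmatrix}\right)$, yielding item (2).

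For items (1) and (3), set $\qV(\vartheta)=[1-\vartheta,1+\vartheta]$ and choose $L_k\in(\vartheta,\vartheta/|\theta_{\ti_k}|)$, which is non-empty since $|\theta_{\ti_k}|<1$. Define $\qV_k(\vartheta)\eqdef[1-L_k,1+L_k]$ and $\qR_{\ti_k}\eqdef \qH_{\ti_k}\times\qV_k(\vartheta)$. Then $\qV(\vartheta)\subset \mathrm{int}(\qV_k(\vartheta))=\mathrm{int}(Q_\tq(\qR_{\ti_k}))$ for each $k=2,3$, giving item (1). Because $\qA_{\ti_k}=(a_{\ti_k},1)$ is fixed by the affine map $\psi_{\ti_k}$, the vertical image of $\qR_{\ti_k}$ equals $[1-|\theta_{\ti_k}|L_k,1+|\theta_{\ti_k}|L_k]\subset\mathrm{int}(\qV(\vartheta))$ by the choice of $L_k$. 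Meanwhile $a_{\ti_k}\in\mathrm{int}(\qH_{\ti_k})\subset\mathrm{int}(\qH_{\ti_2,\ti_3})$ by Claim~\ref{cl.rectangulos}(1), and the horizontal image of $\qR_{\ti_k}$ is an interval of length $|\rho_{\ti_k}|\cdot|\qH_{\ti_k}|$, much smaller than $|\qH_{\ti_2,\ti_3}|$, centered at $a_{\ti_k}$; it therefore sits inside $\qH_{\ti_2,\ti_3}$. Together, these two estimates give item (3).

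The only point requiring care is the compatibility of this final perturbation with the affine structure already obtained in Claim~\ref{cl.rectangulos}. As in Section~\ref{ss.proofofpmuitos}, I exploit that the orbits of $\qA_{\ti_2}$ and $\qA_{\ti_3}$ are pairwise disjoint and separated from $\qQ$ and $\qP$, so the perturbation can be localized, made arbitrarily small in the $C^1$ topology, and chosen adapted in the sense of Definition~\ref{d.adaptedperturbation}. A slight shrinking of $\qH_{\ti_k}$ and $\qH_{\ti_2,\ti_3}$ preserves all the estimates in Claim~\ref{cl.rectangulos}, so that after the perturbation the rectangles $\qR_{\ti_k}$ defined above satisfy (1)--(3) simultaneously.
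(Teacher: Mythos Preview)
Your argument is correct and is precisely the ``standard perturbation'' the paper alludes to without writing out: the paper simply states ``After a new (standard) perturbation one gets'' Claim~\ref{cl.pepinodecapri}, so your proof supplies exactly the missing details---perturbing the vertical multiplier from $1$ to $\theta_{\ti_k}\in(0.99,1)$ along the separated orbits, then enlarging $\qV(\vartheta)$ to $\qV_k(\vartheta)$ using the resulting slack.

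Two small remarks. First, when you enlarge the vertical interval to $\qV_k(\vartheta)\supsetneq\qV(\vartheta)$, you need the affine form of $\psi_{\ti_k}$ to hold on the enlarged box $\qH_{\ti_k}\times\qV_k(\vartheta)$, not just on $\qH_{\ti_k}\times\qV(\vartheta)$ as given by Claim~\ref{cl.rectangulos}; this is harmless since the linearizing perturbation in that claim can be arranged on a slightly larger box from the outset, but it is worth saying explicitly rather than writing ``after shrinking'', which goes the wrong way. Second, the horizontal image is not literally centered at $a_{\ti_k}$ unless $a_{\ti_k}$ is the midpoint of $\qH_{\ti_k}$; what you need (and what holds) is simply $\psi_{\ti_k}(\qH_{\ti_k})\subset\qH_{\ti_k}\subset\qH_{\ti_2,\ti_3}$, which already follows from item~(4) of Claim~\ref{cl.rectangulos} since $\psi_{\ti_k}(\qH_{\ti_2,\ti_3})=\qH_{\ti_k}$.
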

      
  Arguing as above, replacing the points $\qA_{\ti_k}$ by  $\qD'_{\tj_k}$ and having again in mind the independence of the adapted perturbations to get the periodic points, it follows:
      
 \begin{claim} \label{cl.pepinodemodena}
 Given $\varepsilon>0$ there exist
 $\delta_0>0$ such that for every $\delta\in (0,\delta_0)$
there are an $\varepsilon$ adapted perturbation $\mathcal{G}$ of $\mathcal{F}$
satisfying Claim~\ref{cl.pepinodecapri}
and  rectangles
 \begin{equation}
 \label{e.jrectangle}
 {\qR}_{\tj_k} \eqdef \qH_k(\delta) \times \qV_{\tj_k}, 
 \quad\mbox{with}\quad \qH_k(\vartheta)\quad\mbox{close to} \quad
  \qH(\delta)  \eqdef [1-\delta,1+\delta],
 \quad k=2,3.
 \end{equation}
 containing $\qD_k$ in their interiors such that
\begin{enumerate}
\item
$\qH(\delta)  \subset \mathrm{int} ( P_\tp ({\qR}_{\tj_2}) \cup P_\tp ({\qR}_{\tj_3}))$,
\item
the restriction of
$\psi_{\tj_k}$ 
 to $R_{\tj_k}$ is affine with 
 derivative
 $\left(
 \begin{smallmatrix}
\lambda_{\tj_k}& 0\\
0
& \eta_{\tj_k}
\end{smallmatrix} \right)$
where 
$|\lambda_{\tj_k}|\in (1,1.01)$ and 
 $|\eta_{\tj_k}|$ is of order of $\alpha_2^{t_k}\beta_2^{r_k}$ (hence arbitrarily large),
 \item
 $\psi_{\tj_k} (\qR_{\tj_k})\subset 
 \mathrm{int} (\qH(\delta)) \times \qV_{\tj_2,\tj_3}$, where  $\qV_{\tj_2,\tj_3}$ is an interval 
 whose interior contains
  $\qV_{\tj_2} \cup  \qV_{\tj_3}$.
 \end{enumerate}
  \end{claim}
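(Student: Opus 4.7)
\emph{Plan.} The strategy is to mirror the proof of Claim~\ref{cl.pepinodecapri}, interchanging the roles of the horizontal and vertical directions and of contraction and expansion, applied to the orbit of $\qD'_{\tj_k} = (1, d_{\tj_k}) \in \qU_\tp$ in place of $\qA_{\ti_k}$. The relevant structural inputs come from Proposition~\ref{p.muitos} and Remark~\ref{r.estimatesthetakappa}: at $\qD'_{\tj_k}$ the neutral direction is horizontal with multiplier in $(1,1.01)$ after perturbation, the expanding direction is vertical with multiplier $\kappa_{\tj_k}\approx |\alpha_2^{r_k}\beta_2^{t_k}|$ arbitrarily large, and $d_{\tj_k}\approx (\alpha_1/\alpha_2)^{r_k}\to 0$, which will drive all the size estimates.

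Starting from the perturbation $\mathcal{G}$ produced in Claim~\ref{cl.pepinodecapri}, I perform an additional adapted perturbation whose support is contained in the cuspidal region around the orbit of $\qD'_{\tj_k}$ depicted in Figure~\ref{f.Soportes}. Since this region is disjoint from the cuspidal region used to normalize $\psi_{\ti_k}$ on $\qR_{\ti_k}$ (the independence of these two perturbations is precisely what was used in Section~\ref{ss.proofofpmuitos}), the affine structure and rectangles $\qR_{\ti_k}$ obtained in Claim~\ref{cl.pepinodecapri} are preserved, and the contour and all the periodic data from Proposition~\ref{p.muitos} remain intact. After this perturbation, $\psi_{\tj_k}$ is affine on a preliminary rectangle based at $\qD'_{\tj_k}$, preserves horizontal and vertical lines, and acts as the identity along the horizontal (neutral) direction with diagonal derivative $\mathrm{diag}(\lambda_{\tj_k},\eta_{\tj_k})$ of the prescribed type.

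To finish, I choose the intervals as follows. For the horizontal side, take $\qH_k(\delta)$ close to $\qH(\delta)=[1-\delta,1+\delta]$ and slightly smaller, so that $\psi_{\tj_k}(\qH_k(\delta)\times\{d_{\tj_k}\})\subset \mathrm{int}(\qH(\delta))$ while still arranging
\[
\qH(\delta)\subset \mathrm{int}\bigl(\qH_2(\delta)\cup \qH_3(\delta)\bigr);
\]
this is possible because both intervals are centered near $x=1$, the neutral multiplier is essentially $1$, and $\qH_k(\delta)$ differ from $\qH(\delta)$ only by an amount controlled by $|\lambda_{\tj_k}-1|$. For the vertical side, pick $\qV_{\tj_k}$ to be a small interval containing $d_{\tj_k}$ of size of order $(\alpha_1/\alpha_2)^{r_k}$; its image under the expansion by $\eta_{\tj_k}\approx \alpha_2^{r_k}\beta_2^{t_k}$ then has vertical size of order $\alpha_1^{r_k}\beta_2^{t_k}$, which can be made arbitrarily small, and therefore fits inside a prescribed small window $\qV_{\tj_2,\tj_3}$ chosen to contain $\qV_{\tj_2}\cup \qV_{\tj_3}$ in its interior. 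Properties (1), (2), (3) follow immediately from these choices and the affine normal form.

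\emph{Main obstacle.} The only genuinely delicate point is the simultaneous compatibility with Claim~\ref{cl.pepinodecapri}: one has to make sure that the second adapted perturbation does not disturb the affine normal form of $\psi_{\ti_k}$ on $\qR_{\ti_k}$, and does not displace any of the periodic points $\qA_{\ti_k},\qD'_{\tj_k}$ nor the transverse intersections from Remark~\ref{r.uniformsizeofinvariantmanifolds}. This is resolved exactly as in Section~\ref{ss.proofofpmuitos} by using the disjointness of the two cuspidal support regions in Figure~\ref{f.Soportes}; once this is invoked, the argument reduces to the symmetric version of Claim~\ref{cl.pepinodecapri} carried out above.
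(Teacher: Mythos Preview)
Your proposal is correct and follows essentially the same approach as the paper: the paper's entire proof of this claim is the single sentence ``Arguing as above, replacing the points $\qA_{\ti_k}$ by $\qD'_{\tj_k}$ and having again in mind the independence of the adapted perturbations to get the periodic points, it follows,'' and your sketch is precisely this symmetric argument together with the disjointness of the cuspidal supports from Figure~\ref{f.Soportes}. Your write-up in fact supplies more detail than the paper does; the one caveat is that your explicit estimate for the vertical image size (taking $|\qV_{\tj_k}|\approx(\alpha_1/\alpha_2)^{r_k}$ and concluding the image has size $\alpha_1^{r_k}\beta_2^{t_k}$) is not quite the right bookkeeping---the cleaner choice, mirroring Claim~\ref{cl.rectangulos}, is to fix $\qV_{\tj_2,\tj_3}$ first and then take $|\qV_{\tj_k}|\approx|\qV_{\tj_2,\tj_3}|/\eta_{\tj_k}$---but this does not affect the validity of the approach.
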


\section{Split blending machines}
\label{s.splitblendermachines}
We introduce {\em split blending machines}, extending the concept of a blending machine from \cite[Section 2.4]{Asa:22}, with the key distinction of an additional ``center direction''. Accordingly, we revisit and adapt the tools developed in \cite{Asa:22}.
Remark~\ref{r.onthedefbm} provides a comparison of these two concepts.

These machines provide an abstract setting that captures the mechanism behind blenders constructed using the covering property (see Remark \ref{r.dc=0}). 
The setting involves local dynamics, invariant splittings, and systems of rectangles. Although the presence of splittings suggests partial hyperbolicity, no hyperbolicity is assumed. The system of rectangles displays intersection properties similar to those found in blenders, placing split blending machines as nonhyperbolic counterparts to blenders, see Section~\ref{preblendingmachinesintheworld}.
Before, we provide some preliminaries in Section \ref{ss.prerequisites}, followed by the definition and main properties in Section \ref{ss.blendermachines}.

\subsection{Splittings, cones, and rectangles}
\label{ss.prerequisites}
 For $n\geq 1$, denote  the box norm in $\mathbb{R}^n$ by
$$
\|x\|=
\|(x_1,\dots, x_n)\|\eqdef \max \{\,|x_1|,\dots, |x_n|\,\}, \qquad x=(x_1,\dots, x_n)\in \mathbb{R}^n.
$$ 
We identify the tangent space $T_x\mathbb{R}^n$ with $\mathbb{R}^n$.

Given manifolds $M_1$ and $M_2$, an open set  $U$ of $M_1$, and   a
$C^1$ map $f \colon U \to M_2$, we denote by $Df$ the tangent
map of $f$ between the tangent bundles $TU$ and $TM_2$. For notational simplicity, when no confusion can arise, given  $x\in U$ and $v\in T_xU$, we simply write $Df(v)$ instead $D_xf(v)$.

In what follows,
fix an open Riemannian manifold~$N$ and
positive integers $p$ and $q$.

\begin{defi}[Pairs, splittings, rectangles, and cones]
Given $C^1$ maps
$P \colon N \to \mathbb{R}^p$ and $Q \colon N \to \mathbb{R}^q$,
consider the map $\fS=(P,Q)$ defined by
$$
\fS \colon N\to \mathbb{R}^{p}\times \mathbb{R}^q, \quad \fS(x) \eqdef \big(P(x),Q(x)\big).
$$
We say that $\fS$ is a \emph{$(p,q)$-pair} of $N$.
Associated to $\fS=(P,Q)$, we consider the $(q,p)$-pair $\fS^t\eqdef (Q,P)$. 

 We say that $\fS$ is a {\em{$(p,q)$-splitting}} of $N$  if it
is a diffeomorphism onto its image. In particular,  the dimension of $N$ is $p+q$.

 A subset $R$ of $N$ is called a {\em{$\fS$-rectangle}} if the $(p,q)$-pair $\fS=(P,Q)$
 is a diffeomorphism between $R$   and $P(R) \times Q(R)$. For $\theta>0$ and $x\in N$, we define the \emph{$(\fS,\theta)$-cone} at $x$ by
\begin{equation}
\label{e.conocono}
C_\theta(x, \fS) \eqdef  \left\{ v\in T_x N \colon \, \|DQ (v)\|\leq \theta \|DP (v)\| \right\}.
\end{equation}
\end{defi}

Throughout the paper, we will use the terms ``splitting" and ``splitting pair" interchangeably. In particular, we will refer to $\fS$ as a splitting pair.

\begin{rem} We always assume that the $(p,q)$-pairs $\fS=(P,Q)$ are {\em{non-degenerate,}}
meaning that both kernels, $\mathrm{Ker}\,D_xP$ and $\mathrm{Ker}\,D_xQ$, are not equal to $T_xN$ for every $x\in N$.
\end{rem}

\begin{defi}[Invariance, dilatation, and domination]
\label{d.invariance}
Let 
 $\fS_\tau=(P_\tau,Q_\tau)$ be a $(p,q)$-pair of an open manifold $N_\tau$, with $\tau=1,2$. 
Consider an open set $U$ of $N_1$,
a $C^1$ embedding  $f \colon U \to N_2$,  a subset $R\subset U$, and constants
 $\mu, \gamma, \theta>0$.
\begin{enumerate}[leftmargin=1cm]
\item
The map $f$ satisfies the
\emph{$(\fS_1, \fS_2,\theta)$-cone invariance condition} on  $R$  if there exists $0 < \theta' < \theta$ such that for every $x \in R$ it holds
$$
 Df^{-1}\big(C_\theta(f(x), \fS_2)\big) \subset C_{\theta'}(x,\fS_1) \quad \text{and} \quad
Df\big(C_\theta(x,\fS^t_1)\big) \subset C_{\theta'}(f(x),\fS^t_2).
$$ 
\item
The map $f$  satisfies
{\em{$(\fS_1,\fS_2,\theta, \mu,\gamma)$-dilatation condition on $R$}} if the following holds
\begin{align*}
\|D(P_1\circ f^{-1})(w)\| &> \mu \|DP_2(w)\| \quad  \text{for every $w\in C_{\theta}\big(f(x),\fS_2\big)\setminus\{0\}$ and $x\in R$}, \\
 \|D(Q_2\circ f)(v)\| &> \gamma \|DQ_1(v)\|  \quad   \text{for every $v\in C_{\theta}(x,\fS^t_1)\setminus\{0\}$ and $x\in R$.}
\end{align*} 
\item
The map $f$ satisfies the \emph{$(\fS_1,\fS_2, \theta, \mu, \gamma)$-cone  condition} on $R$ if\
it satisfies the $(\fS_1, \fS_2,\theta)$-cone invariance  and the 
$(\fS_1,\fS_2, \theta, \mu,\gamma)$-dilatation conditions on $R$.
\item
The map $f$ is
 \emph{$(\fS_1,\fS_2, \mu, \gamma)$-dominated} on $R$
 if it satisfies the $(\fS_1, \fS_2,\theta,\mu,\gamma)$-cone condition on $R$ for every $\theta>0$.
\end{enumerate}
When $N=N_1 = N_2$
and $\fS=\fS_1=\fS_2$, we refer to these conditions as $(\fS,\theta)$-cone invariance, $(\fS,\theta, \mu, \gamma)$-cone, and  $(\fS,\theta, \mu, \gamma)$-dilatation and $(\fS, \mu, \gamma)$-domination. 
\end{defi}

Using  the notation and  terminology  in Definition~\ref{d.invariance},  we have the following:

\begin{lem} \label{l.new-dilatation}
Suppose that $f$ is $(\fS_1,\fS_2, \mu, \gamma)$-dominated on $R$. Then
 for every $x\in R$ and every  $v, w\in T_xU$ such that
 $v\not\in \mathrm{Ker}\, D_x(P_2\circ f)$ and
$w\not\in  \mathrm{Ker}\, D_xQ_1$ it holds
 \begin{equation}\label{e.lambda-gamma-dilatation}
\|D(P_2\circ f )(v)\|  < \mu^{-1}\, \|DP_1(v)\| \quad \text{and} \quad \|D(Q_2\circ f)(w)\| >
{\gamma} \, \|DQ_1(w)\|.
\end{equation}
When $\mu \gamma >1$,  the inequalities in \eqref{e.lambda-gamma-dilatation}
imply that
$f$ is $(\fS_1,\fS_2, \mu, \gamma)$-dominated on $R$.
\end{lem}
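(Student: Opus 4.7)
The plan is to unwind both implications from the definition of domination, exploiting the freedom that the cone condition is postulated at every $\theta > 0$. For the forward direction, I would fix $x \in R$ and a vector $v$ with $v \notin \mathrm{Ker}\, D_x(P_2 \circ f)$, set $u = Df(v)$, and note that $DP_2(u) = D(P_2 \circ f)(v) \neq 0$ forces $u \in C_\theta(f(x), \fS_2)$ for every $\theta$ larger than $\|DQ_2(u)\|/\|DP_2(u)\|$. Plugging such a $u$ into the $(\fS_1, \fS_2, \theta, \mu, \gamma)$-dilatation condition and using $D(P_1 \circ f^{-1})(u) = DP_1(v)$ yields $\|DP_1(v)\| > \mu \|D(P_2 \circ f)(v)\|$. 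The same recipe, applied to $w \notin \mathrm{Ker}\, DQ_1$ which lies in $C_\theta(x, \fS_1^t)$ for $\theta$ large, delivers the second inequality directly.

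For the converse, I fix an arbitrary $\theta > 0$ and verify the full $(\fS_1, \fS_2, \theta, \mu, \gamma)$-cone condition. The two dilatation clauses come almost for free: a nonzero $w \in C_\theta(f(x), \fS_2)$ cannot have $DP_2(w) = 0$, since the cone inequality would then force $DQ_2(w) = 0$, contradicting non-degeneracy of the splitting $\fS_2$; hence $v = Df^{-1}(w)$ lies outside $\mathrm{Ker}\, D(P_2 \circ f)$ and \eqref{e.lambda-gamma-dilatation} rearranges to $\|D(P_1 \circ f^{-1})(w)\| > \mu \|DP_2(w)\|$. A mirror non-degeneracy argument for $\fS_1$ handles the $Q$-dilatation. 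For cone invariance I chain the two inequalities: given $w \in C_\theta(f(x), \fS_2)\setminus\{0\}$ and $v = Df^{-1}(w)$,
\begin{equation*}
\|DQ_1(v)\| < \gamma^{-1} \|DQ_2(w)\| \leq \gamma^{-1} \theta \|DP_2(w)\| < \frac{\theta}{\mu \gamma}\, \|DP_1(v)\|,
\end{equation*}
so $v \in C_{\theta/(\mu\gamma)}(x, \fS_1)$, which is strictly contained in $C_\theta(x, \fS_1)$ because $\mu \gamma > 1$. The symmetric chain starting from $v \in C_\theta(x, \fS_1^t) \setminus \{0\}$ sends $Df(v)$ into $C_{\theta/(\mu\gamma)}(f(x), \fS_2^t)$, completing the cone invariance.

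The only delicate point I anticipate is the kernel bookkeeping in the converse, where one must check that the hypothesis $v \notin \mathrm{Ker}\, D(P_2 \circ f)$ of \eqref{e.lambda-gamma-dilatation} is genuinely met for every nonzero vector drawn from the cone; this is precisely where non-degeneracy of the splittings $\fS_1$ and $\fS_2$ enters, while the strict contraction of cone aperture from $\theta$ to $\theta/(\mu\gamma)$ is the only place that consumes the hypothesis $\mu \gamma > 1$. Everything else reduces to algebraic rearrangement and to choosing $\theta$ large (forward) or arbitrary (converse).
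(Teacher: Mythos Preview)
Your argument is correct and mirrors the paper's proof closely: both directions hinge on the same two ideas, namely that a vector with $D(P_2\circ f)(v)\neq 0$ lies in $C_\theta(f(x),\fS_2)$ for large $\theta$ (forward), and that chaining the two inequalities contracts the cone aperture by the factor $(\mu\gamma)^{-1}$ (converse, with $\theta'=\theta/(\mu\gamma)$ exactly as in the paper). One terminological caution: when you appeal to ``non-degeneracy of the splitting $\fS_2$'' to rule out $DP_2(w)=DQ_2(w)=0$ for $w\neq 0$, you are really using that $D\fS_2$ is injective (i.e.\ that $\fS_2$ is a splitting, a diffeomorphism onto its image), which is stronger than the paper's definition of non-degenerate pair; the paper's own proof makes the same implicit use via its equation~\eqref{eq.kernel}, so this is not a divergence but worth stating precisely.
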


\begin{proof}
We prove the first inequality in \eqref{e.lambda-gamma-dilatation},
the other one follows analogously.
By hypothesis,  the map
 $f$ satisfies the $(\fS_1, \fS_2,\theta,\mu,\gamma)$-cone  condition on $R$ for every
 $\theta>0$. In particular, 
 $f$ satisfies the $(\fS_1, \fS_2,\theta,\mu,\gamma)$-dilatation  condition on $R$,  therefore, 
 \begin{equation} 
 \label{e.lambda} 
 \|DP_1(v)\| > \mu \|D(P_2\circ f)(v)\|  
 \quad 
 \text{for every $x\in R$ and $v\in Df^{-1}\big(C_{\theta}(f(x),\fS_2)\big)\setminus\{0\}$}. 
\end{equation}
Since $Df^{-1}$ is an isomorphism  and 
\begin{equation} 
\label{eq.kernel}
\bigcup_{\theta>0}  C_{\theta}\big(f(x),\fS_2\big)\setminus\{0\} =T_{f(x)}f(U) \setminus \mathrm{Ker} \,D_{f(x)}P_2
\end{equation}
condition
\eqref{e.lambda} holds for every $v\in T_x U \setminus Df^{-1}(\mathrm{Ker} \, D_{f(x)}P_2)$,
implying the first inequality and proving the first part of the lemma.

Assuming now that $\mu\gamma>1$ and equation \eqref{e.lambda-gamma-dilatation} holds,
 we prove that   $f$ satisfies the $(\fS_1,\fS_2,\theta,\mu,\gamma)$-cone condition on ${R}$
 for every $\theta>0$.
 To obtain the $(\fS_1, \fS_2,\theta)$-cone invariance condition on $R$ take
 \begin{equation}
\label{e.thetalinha}
 \theta'\eqdef \theta  \, (\mu\gamma)^{-1} < \theta.
  \end{equation}
To get $Df\big(C_\theta(x,\fS^t_1)\big) \subset C_{\theta'}\big(f(x),\fS^t_2\big)$,
note that
\begin{equation}
\label{e.semluz}
\|DP_1(v)\| \leq   \theta \, \|DQ_1(v)\|,
\end{equation}
for every $x\in R$ and  every $v\in C_\theta(x,\fS^t_1)$. Hence, 
 \begin{align*}
\|D({P}_2\circ f)(v)\| 
&\overset{\eqref{e.lambda-gamma-dilatation}}{<}   \mu^{-1} \|DP_1({v})\| \overset{\eqref{e.semluz}}{\leq} \mu^{-1} \theta \, \|DQ_1(v)\|  \\ 
 &\overset{\eqref{e.lambda-gamma-dilatation}}{<}  \mu^{-1} \gamma^{-1} \theta \, \|D({Q}_2\circ f)({v})\| \overset{\eqref{e.thetalinha}}{=} \theta' \, \|D({Q}_2\circ f)({v})\|.
     \end{align*}
Thus, $Df(v)\in C_{\theta'}\big(f(x),\fS_2^t\big)$, proving the inclusion.
To get  $Df^{-1}(C_\theta\big(f(x), \fS_2)\big) \subset C_{\theta'}(x,\fS_1)$, observe that
  \begin{equation}
  \label{e.chuvaa}
 \|DQ_2({w})\| \leq \theta \, \|DP_2({w})\| \,\,\mbox{for every $w \in C_{\theta}\big(f(x),\fS_2\big)$}.
  \end{equation}

 Take any ${w} \in C_{\theta}\big(f(x),\fS_2\big)$  and consider
 ${v} =Df^{-1}({w}) \in T_xN_1$. Then
  \begin{align*}
       \|D{Q}_1 ( v)\|  &\overset{\eqref{e.lambda-gamma-dilatation}}{\leq} 
       \gamma^{-1} \|D(Q_2\circ f)({v})\| = \gamma^{-1} \|DQ_2({w})\|   \overset{\eqref{e.chuvaa}}{\leq}  \gamma^{-1} \theta \, \|DP_2( w)\|
       \\
       &= \gamma^{-1} \theta \, \|D(P_2\circ f)( v)\| \overset{\eqref{e.lambda-gamma-dilatation}}{\leq}   \gamma^{-1} \mu^{-1}  \theta \, \|D{P}_1({v})\| \overset{\eqref{e.thetalinha}}{=}  \theta' \, \|D{P}_1({v})\|. 
     \end{align*}
 This implies that $Df^{-1}( w) \in C_{\theta'}(x,\fS_1)\setminus\{0\}$. Thus,  the $(\fS_1,\fS_2,\theta)$-cone invariance condition on $R$ holds for $f$.

 It remains to prove the dilatation condition in the cones. For
 $w\in C_{\theta}\big(f(x),\fS_2\big)\setminus\{0\}$, letting  ${v} =Df^{-1}({w})$,
 equation ~\eqref{eq.kernel} implies that $\|D(P_2\circ f)(v)\|=\|DP_2(w)\|\not =0$. Thus, $v\in T_xU\setminus \mathrm{Ker}\,D_x(P_2\circ f)$ and
equation ~\eqref{e.lambda-gamma-dilatation}  implies
\begin{align*}
\|D(P_1\circ f^{-1})(w)\| &> \mu \|DP_2(w)\| \quad  \text{for every $w\in C_{\theta}(f(x),\fS_2)\setminus\{0\}$}.
\end{align*}
Similarly, if  $v\in C_{\theta}(x,\fS_1^t) \subset T_xU \setminus \mathrm{Ker}\, D_xQ_1$, then $\|DQ_1(v)\|\not =0$ and
from~\eqref{e.lambda-gamma-dilatation}  follows that
$$
\|D(Q_2\circ f)(v)\| > \gamma \|DQ_1(v)\|  \quad   \text{for every $v\in C_{\theta}(x,\fS^t_1)\setminus\{0\}$.}
$$
Therefore, $f$ satisfies the $(\fS_1,\fS_2,\theta,\mu,\gamma)$-cone  condition on $R$
for every $\theta>0$, ending the proof.
\end{proof}


\begin{rem}
\label{r.robustness}
 In general, $(\fS_1,\fS_2,\mu,\gamma)$-domination of $f$ does not hold
for small $C^1$ perturbations of $f$. In that sense, $(\fS_1,\fS_2,\mu,\gamma)$-domination is not  a robust property. However,  fixed any $\theta>0$,
the $(\fS_1,\fS_2,\theta,\mu,\gamma)$-cone  condition on a compact set  is a $C^1$ robust property. This sort
of robustness is enough in our setting.
\end{rem}

The next two lemmas
 will play a key role in
Section~\ref{s.connectingsplitblenders}.
The first one is a version of \cite[Corollary 2.7]{Asa:22} in our setting, it
 deals with diameters of images of rectangles by compositions of pairs and maps.

Given a  set $S\subset \mathbb{R}^n$, we denote by $\diam(S)$  its  diameter (with respect
to the metric induced by the box norm).

\begin{lem} \label{lem.new.asaoka}
Let $N_\tau$ be open manifolds and
$\fS_\tau=(P_\tau,Q_\tau)$ a $(p,q)$-splitting of $N_\tau$, where $\tau=1,2$. Let
 $U$ be an open subset of $N_1$, $f \colon U\to N_2$ a $C^1$ embedding, and 
 $R$ a
$(P_1,Q_2 \circ f)$-rectangle in $U$.

Consider $C^1$ maps $P\colon \mathbb{R}^p \to \mathbb{R}^\ell$ and $Q \colon\mathbb{R}^q \to \mathbb{R}^{m}$, where  $1\leq \ell < p$ and $1\leq m < q$, and
the $(\ell,m)$-pairs $\tfS_\tau=(P\circ P_\tau,{Q} \circ Q_\tau)$ of $N_\tau$ with $\tau=1,2$.

Suppose that $f$ satisfies 
the $(\tfS_1,\tfS_2, \theta, \mu, \gamma)$-cone condition  on $R$. Then,
\begin{equation}
 \label{meq:4}
\begin{split}
\diam \big( Q\circ Q_1(R)\big) & \leq \theta\, \diam  \big( P\circ P_1(R)  \big)+ \gamma^{-1} \,
\diam  \big( Q\circ Q_2(f(R)) \big), \\
\diam  \big(P\circ P_2(f(R)) \big) & \leq   \theta\, \diam  \big(Q\circ Q_2(f(R))  \big)+ \mu^{-1}\,
\diam  \big(P\circ P_1(R) \big).
\end{split}
\end{equation}
\end{lem}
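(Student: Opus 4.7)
\emph{Plan.} The plan is to run a triangle-inequality argument built on the $(P_1, Q_2\circ f)$-rectangle structure of $R$, bounding each resulting leg using one of the two halves of the $(\tfS_1,\tfS_2,\theta,\mu,\gamma)$-cone condition. I focus on the first inequality in \eqref{meq:4}; the second is symmetric. For any $x,y\in R$, the rectangle property produces an intermediate corner $z\in R$ with $P_1(z)=P_1(y)$ and $Q_2(f(z))=Q_2(f(x))$, and the triangle inequality splits $\|Q\circ Q_1(y)-Q\circ Q_1(x)\|$ into two pieces, one comparing $x,z$ and the other comparing $z,y$, that I estimate separately.

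For the piece comparing $x$ and $z$, I join them by a smooth path $\gamma_1\subset R$ lying in the slice $\{w\colon Q_2(f(w))=Q_2(f(x))\}$, parameterized so that $P\circ P_1\circ\gamma_1$ is a straight Euclidean segment in $P\circ P_1(R)$. Tangent vectors $v=\dot\gamma_1$ satisfy $D(Q_2\circ f)(v)=0$, hence $Df(v)\in\mathrm{Ker}\,DQ_2\subset C_\theta(f(\gamma_1),\tfS_2)$, and the $(\tfS_1,\tfS_2,\theta)$-cone invariance forces $v\in C_{\theta'}(\gamma_1,\tfS_1)$ for some $\theta'<\theta$, giving the pointwise inequality $\|D(Q\circ Q_1)(v)\|\leq \theta\,\|D(P\circ P_1)(v)\|$. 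Integrating along $\gamma_1$ yields
\[
\|Q\circ Q_1(z)-Q\circ Q_1(x)\| \;\leq\; \theta\,\|P(P_1(z))-P(P_1(x))\| \;\leq\; \theta\,\diam(P\circ P_1(R)).
\]
Symmetrically, I join $z$ to $y$ by a path $\gamma_2\subset R$ in the slice $\{P_1\equiv P_1(y)\}$, parameterized so that $Q\circ Q_2\circ f\circ\gamma_2$ is a straight segment. Its tangent $v'$ has $DP_1(v')=0$, hence $D(P\circ P_1)(v')=0$, placing $v'$ trivially in $C_\theta(\gamma_2,\tfS_1^t)$. The dilatation half of the hypothesis (reformulated as in Lemma~\ref{l.new-dilatation}) gives $\|D(Q\circ Q_1)(v')\|\leq \gamma^{-1}\,\|D(Q\circ Q_2\circ f)(v')\|$, and integrating produces $\|Q\circ Q_1(y)-Q\circ Q_1(z)\|\leq \gamma^{-1}\,\diam(Q\circ Q_2(f(R)))$. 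Summing the two bounds and taking the supremum over $x,y\in R$ yields the first inequality of \eqref{meq:4}; the second follows by running the same scheme after swapping the roles of $(P_1,P\circ P_1)$ and $(Q_2\circ f,Q\circ Q_2\circ f)$, i.e.\ using the other half of the cone condition with $f^{-1}$.

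The only genuinely delicate point is the construction of $\gamma_1,\gamma_2$ with the prescribed straight-segment images. This requires the Euclidean segments from $P(P_1(x))$ to $P(P_1(z))$, and from $Q(Q_2(f(z)))$ to $Q(Q_2(f(y)))$, to lie inside $P\circ P_1(R)$ and $Q\circ Q_2(f(R))$ respectively—a mild convexity condition that is automatic in the paper's intended applications, where $R$ is a box in the $(P_1,Q_2\circ f)$-coordinates and $P,Q$ are linear projections, so the projected images are themselves boxes.
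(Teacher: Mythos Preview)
Your argument is essentially the paper's own: fix two points, use the $(P_1,Q_2\circ f)$-rectangle structure to produce an intermediate corner, and estimate each leg via one half of the cone condition (cone invariance on the $Q_2\circ f$-constant slice, dilatation on the $P_1$-constant slice). The only cosmetic difference is that the paper phrases the two legs as diameter bounds on the full slices $R^-,R^+$ rather than as length estimates along individually constructed curves $\gamma_1,\gamma_2$; the integration step and the pointwise inequalities are identical.

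Your closing caveat about needing the straight segment to lie in the projected image is accurate and in fact applies equally to the paper's presentation: deducing the diameter inequality $\diam(Q\circ Q_1(R^-))\leq\theta\,\diam(P\circ P_1(R^-))$ from the pointwise length comparison tacitly uses that one can realize the distance in $P\circ P_1(R^-)$ by (the image of) a curve in $R^-$, which is exactly your convexity/box hypothesis. So you have not introduced an extra assumption; you have simply made explicit what the paper leaves implicit, and it is indeed automatic in every application in the paper.
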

\begin{proof}
Fix any pair of points $x, x'\in R$ and  the point
$\big(P_1(x), Q_2(f (x'))\big)\in P_1(R) \times Q_2\big(f(R)\big)$.
Since $R$ is a $(P_1,Q_2\circ f)$-rectangle, there exists $x_*\in R$
such that
$$
P_1(x_*) = P_1(x) \quad \text{and} \quad Q_2 \big( f(x_*)\big) = Q_2 \big( f(x')\big).
$$
Define the following submanifolds
$$
R^- \eqdef \{y \in R : \,  Q_2(f(y)) = Q_2(f(x_*))\} \quad \text{and} \quad
R^+ \eqdef \{y \in R : \, P_1(y) = P_1(x_*)\}.
$$
Note that  $\{x', x_*\} \subset  R^-$ and $\{x, x_*\} \subset  R^+$. In particular,
\begin{equation}
\label{e.igualdad}
Q \circ Q_2 \big( f(y)\big) 
=Q \circ Q_2 \big( f(x_*)\big)
\quad \text{for every $y\in R^-$}.
\end{equation}

Given any vector $v\in T_yR^-$,
equation \eqref{e.igualdad} implies that $D(Q\circ Q_2\circ f)(v)=0$. Thus
 $Df(v)\in C_\theta \big(f(y),\tfS_2\big)$. Since $f$ satisfies the $(\tfS_1,\tfS_2, \theta)$-cone invariance condition on $R$, it holds
$$
v \in Df^{-1}\big(C_\theta(f(y),\tfS_2)\big)\subset C_\theta(y,\tfS_1).$$
Therefore, by the definition of $(\tfS_1,\theta)$-cone (recall equation \eqref{e.conocono}),  we have
$$
   \|D (Q\circ Q_1) (v)\| \leq  \theta \, \|D (P\circ P_1) (v)\| \quad \text{for every $y\in R^-$ and $v\in T_yR^-$.}
$$
We claim that this
 implies
 that
\begin{equation} \label{eq:QRmenos}
    \diam  \big(Q\circ Q_1(R^-) \big) \leq \theta\, \diam  \big(P\circ P_1(R^-) \big).
\end{equation}
In fact, 
 take  any regular curve $t\in [0,1] \mapsto \rho(t)\in R^-$ and consider  $\alpha(t)\eqdef
 Q\circ Q_1
 \big(\rho(t)\big)$ and  $\beta(t)\eqdef P\circ P_1\big(\rho(t)\big)$. The inequality in~\eqref{eq:QRmenos}
 is obtained from the following relation between the lengths of $\alpha$ and $\beta$:
$$
  \int_{0}^1 \|{\alpha}'(t)\|\, dt = \int_0^1 \|D
  (Q\circ Q_1)
  ( {\rho}'(t))\|\, dt \leq \theta \int_0^1 \|D
  (P\circ P_1)({\rho}'(t))\| \, dt =
  \theta \int_{0}^1 \|{\beta}'(t)\|\, dt.
$$

Similarly, for every $y' \in R^+$ and every vector $v'\in T_{y'}R^+$ it holds $D(P\circ P_1)(v') = 0$,
and hence $v'\in C_\theta(y',\tfS^t_1)$. The $(\tfS_1,\tfS_2,\theta,\mu,\gamma)$-dilatation condition implies that
$$
\|D(Q\circ Q_1) (v')\| \leq \gamma^{-1}\,\| D(Q\circ Q_2 \circ f)(v')\|.
$$
As above, this implies that
\begin{equation} \label{eq:QRmas}
\diam  \big(Q\circ Q_1(R^+) \big) \leq  \gamma^{-1} \diam  \big(Q\circ Q_2(f(R^+)) \big).
\end{equation}
Since $\{x', x_*\}\subset R^-$ and $\{x, x_*\} \subset R^+$, it follows  that
\[
\begin{split}
\|(Q\circ Q_1)(x') - (Q\circ Q_1)(x)\| 
&\leq
\|  Q\circ Q_1 (x')-Q\circ Q_1 (x_*)  \| + \|  Q\circ Q_1(x_*)  -Q\circ Q_1 (x) \|
\\
& \leq
\diam  \big(Q\circ Q_1 (R^-) \big) + \diam  \big(Q\circ Q_1(R^+) \big)
 \\
&
\leq 
\theta\, \diam  \big( P\circ P_1(R^-) \big) + \gamma^{-1}\,\diam  \big(Q\circ Q_2 (f(R^+)) \big)
 \\
&\,\,\leq
\theta\, \diam  \big( P\circ P_1(R) \big) + \gamma^{-1}\,\diam  \big(Q\circ Q_2 (f(R)) \big).
\end{split}
\]
where the third inequality follows from~\eqref{eq:QRmenos} and~\eqref{eq:QRmas} and
the last inequality follows from  $R^-, R^+ \subset R$. As the points $x,x'\in R$ are arbitrary, the first inequality in~\eqref{meq:4} follows.

The second
inequality in~\eqref{meq:4} follows similarly. This ends the proof of the lemma.
\end{proof}

The following result is  a version of  Lemma 2.5 in \cite{Asa:22} in our setting, it  deals with dynamically defined rectangles under compositions of embeddings and pairs.

\begin{lem}[Lemma 2.5 in \cite{Asa:22} with $N=3$]
\label{l.simpleasa}
Consider open manifolds $N_i$, open subsets $U_i$ of $N_i$, and
$(p, q)$-splittings $\fL_i=(P_i,Q_i)$
of $N_i$, where $i=1,\dots, 4$.
Consider, for $i=1,2,3$, embeddings $g_i\colon U_i \to N_{i+1}$ and
compact
 $(P_i, Q_{i+1} \circ g_i)$-rectangles $R_i$ in $U_i$ such that
$$
P_{j+1} (g_j (R_j)) \subset \mathrm{int} ( P_{j+1} (R_{j+1}))
\quad
\text{and}
\quad
 Q_{j+1} (R_{j+1}) \subset \mathrm{int} (Q_{j+1} (g_j (R_j)), \quad j=1,2.
$$
Let
$$
 G_3\eqdef g_3 \circ g_2\circ  g_1,
 \quad G_2\eqdef g_2\circ  g_1,
\quad
 G_1\eqdef g_1
$$
be defined on the maximal open subset of $U_1$ where the composition is well defined.
Suppose that there is $0<\theta\leq1$ such that $g_i$ satisfies the $(\fL_i,\fL_{i+1},\theta)$-cone invariance condition on $R_i$ for $i=1,2,3$.
Then the set
$$
R_\ast \eqdef R_1 \cap G_1^{-1}(R_2) \cap G_2^{-1} (R_3)
$$
is a $(P_1,Q_4\circ G_3)$-rectangle
such that
$$
P_1 (R_\ast)=P_1(R_1) \quad \text{and} \quad
Q_4 \circ G_3 (R_\ast) = Q_4 \circ g_3 (R_3).
$$
In particular, the set $R_\ast$ is non-empty.
\end{lem}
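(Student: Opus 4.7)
The plan is to establish by induction on $k=1,2,3$ the stronger statement that the partial intersection
$R^{(k)}\eqdef R_1\cap G_1^{-1}(R_2)\cap\cdots\cap G_{k-1}^{-1}(R_k)$
is a $(P_1,Q_{k+1}\circ G_k)$-rectangle with $P_1$-projection equal to $P_1(R_1)$ and $(Q_{k+1}\circ G_k)$-projection equal to $Q_{k+1}(g_k(R_k))$. The case $k=1$ is the standing hypothesis on $R_1$, while $k=3$ yields the lemma; non-emptiness of $R_\ast$ is then automatic since both projections are non-empty.

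For the inductive step $k\mapsto k+1$, I would fix $a\in P_1(R_1)$ and consider the vertical leaf $L_a\eqdef\{z\in R^{(k)}:P_1(z)=a\}$, which by the induction hypothesis is a $q$-dimensional submanifold on which $Q_{k+1}\circ G_k$ is a diffeomorphism onto $Q_{k+1}(g_k(R_k))$. Push it forward to $V_a\eqdef G_k(L_a)\subset g_k(R_k)\subset N_{k+1}$, parametrized by $s\in Q_{k+1}(g_k(R_k))$ through $s\mapsto w_s$ with $Q_{k+1}(w_s)=s$. Tangent vectors to $L_a$ annihilate $DP_1$ and therefore lie in every $\fL_1^t$-cone; successive use of the $(\fL_i,\fL_{i+1},\theta)$-cone invariance of $g_1,\ldots,g_k$ shows that tangent vectors to $V_a$ lie in the $\fL_{k+1}^t$-cone of angle $\theta$. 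The next step is to analyse $h(w)\eqdef Q_{k+2}(g_{k+1}(w))$ on $V_a$. Cone invariance of $g_{k+1}$ sends $TV_a$ into $\fL_{k+2}^t$-cones of angle $\theta$, whence $Dh|_{TV_a}$ is injective: if $Dh(v)=0$ for some nonzero $v\in T_wV_a$, the cone bound forces $\|D(P_{k+2}\circ g_{k+1})(v)\|\leq \theta\cdot 0=0$, so $Dg_{k+1}(v)$ is annihilated by both $DP_{k+2}$ and $DQ_{k+2}$; as $\fL_{k+2}$ is a splitting (so $D\fL_{k+2}$ is injective) and $g_{k+1}$ is an embedding, this gives $v=0$, a contradiction. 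Hence $h$ is a local diffeomorphism between manifolds of the same dimension $q$.

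What remains is to upgrade $h$ to a diffeomorphism from $V_a\cap R_{k+1}$ onto $B^2\eqdef Q_{k+2}(g_{k+1}(R_{k+1}))$, and this is where the strict interior containments of the Markov hypotheses do their essential work. The inclusion $P_{k+1}(g_k(R_k))\subset\mathrm{int}(P_{k+1}(R_{k+1}))$ forces $P_{k+1}(V_a)\subset\mathrm{int}(P_{k+1}(R_{k+1}))$, so the condition $w_s\in R_{k+1}$ reduces to $h(s)\in B^2$. Dually, $Q_{k+1}(R_{k+1})\subset\mathrm{int}(Q_{k+1}(g_k(R_k)))$ guarantees that for $s$ on the boundary of the relevant sub-parameter box the corresponding points $w_s$ lie outside $R_{k+1}$, providing the boundary control required to invoke a degree or domain-invariance argument and conclude that $h$ maps $V_a\cap R_{k+1}$ diffeomorphically onto $B^2$. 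Letting $a$ vary smoothly over $P_1(R_1)$ then assembles these fibrewise diffeomorphisms into the desired $(P_1,Q_{k+2}\circ G_{k+1})$-rectangle structure on $R^{(k+1)}$.

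The main obstacle is precisely this last topological step: the cone argument alone produces only a local diffeomorphism, and promoting it to a global one onto the full target $B^2$ requires careful exploitation of the two asymmetric strict Markov inclusions together with the transversality supplied by cone invariance, in order to set up the correct degree-theoretic argument. This is essentially the content of Asaoka's \cite[Lemma~2.5]{Asa:22}, adapted here to handle the composed pairs $(P_i,Q_{i+1}\circ g_i)$ in place of the bare splittings used there.
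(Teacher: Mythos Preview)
The paper does not prove this lemma; it is imported directly from \cite{Asa:22} (specialised to three maps) and invoked as a black box in the proof of Lemma~\ref{l.asaokamodificado}. So there is no in-paper proof to compare against.

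Your inductive scheme via vertical leaves $L_a$, together with the cone-invariance argument yielding injectivity of $Dh|_{TV_a}$, is correct and is the natural skeleton of any proof. Two remarks, however. First, the reduction ``$w_s\in R_{k+1}$ iff $h(w_s)\in B^2$'' is not immediate from the rectangle hypothesis alone: the definition only guarantees that $(P_{k+1},Q_{k+2}\circ g_{k+1})$ is a diffeomorphism \emph{on} $R_{k+1}$, not that $R_{k+1}$ coincides with the full preimage of the product $P_{k+1}(R_{k+1})\times B^2$; one must invoke the cone conditions to extend this to a local diffeomorphism on a neighbourhood containing the relevant part of $V_a$ before the equivalence holds. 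Second, and more to the point, you yourself identify the decisive topological step---promoting the local diffeomorphism $h$ to a global one onto $B^2$---as ``essentially the content of Asaoka's \cite[Lemma~2.5]{Asa:22}''. But that is exactly the statement under consideration. As written, your proposal is a well-organised outline that correctly isolates the crux, yet it ultimately defers the resolution of that crux back to the very result it set out to establish; it is therefore a proof sketch rather than an independent proof.
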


\subsection{Split blending machines}
\label{ss.blendermachines}
We now extend the definition of a blending machine from \cite[Section~2.4]{Asa:22} by adding a central direction.  
To this end, we introduce conditions (BM3) and (BM4) in Definition~\ref{def:cu-blender}.  
For further discussion, see Remark~\ref{r.onthedefbm}.

Given two sets $A$ and $B$, the notation $A\Subset B$ means that the closure of ${A}$ is contained in the interior
of $B$.

\begin{defi}[Split blending machine] \label{def:cu-blender}
Let  $f\colon U \to N$ be a $C^1$ map defined on an open subset $U$ of a manifold $N$. Let $d_s$, $d_c$, and $d_u$ be positive integers and set $d_{cs}\eqdef d_s+d_c$. Consider (see Figure
~\ref{fig:Machine})
\begin{enumerate}[label=$\bullet$, leftmargin=0.5cm ]
\item
 a family
  $\blR=\{R_i\}_{i\in I}$
 of compact subsets of $U$, where $I$ is a finite set with at least two elements,
 \item
 a $(d_{cs},d_{u})$-splitting $\fS=(P_{cs},Q_u)$ of {$N$},
 \item
a $(d_s,d_c)$-splitting $\fC=(P_{s},P_{c})$ of {$\mathbb{R}^{d_{cs}}$},
\item
a   $\fS$-rectangle $Z\subset U$, and
\item a real number {$0<\theta<1$}.
\end{enumerate}
The tuple
$\cB_f= (
\blR, Z, \fS, \fC, \theta)$ is a {\em{split blending machine for $f$}}
 if the following holds:
\begin{enumerate}[itemsep=0.2cm,leftmargin=1.2cm]
\item[(BM1)]
 for every $i\in I$, the restriction of $f$ to some neighbourhood of $R_i$ is a $C^1$ embedding satisfying the
$(\fS,\theta)$-cone invariance condition,
\item[(BM2)]
 for every $i \in I$, the set $R_i$ is a $(P_{cs},Q_u \circ f)$-rectangle such that
$$
Q_u(R_i) \Subset Q_u(Z), \quad (Q_{u}\circ f)(R_i) = Q_u(Z), \quad \text{and} \quad
(P_{cs}\circ f)(R_i) \Subset P_{cs}(Z),
$$
\item[(BM3)]
 for every $i \in I$, the set $P_{cs}(R_i)$ is a $\fC$-rectangle such that
$P_s\circ  P_{cs} (Z)\Subset P_s\circ P_{cs}(R_i)$,
\item[(BM4)]
 $\{\mathrm{int}\big((P_c\circ  P_{cs})(R_i)\big) \}_{i\in I}$ is a covering
  of  $\overline{P_c\circ  P_{cs}(Z)}$
  having  a  Lebesgue number $\Delta$
  such that
 $$
 \theta\, {\|DP_c\|} \, \mathrm{diam} \, Q_u(Z)<\Delta.
 $$
\end{enumerate}
We call     $\fS$ the
 {\em{{stable center splitting}}},
   $\fC$ the
   {\em{center splitting,}} {$P_c$ the \emph{central projection}},
     $Z$
   the {\em{superposition domain,}}
   $\theta$  the {\em{cone width,}}
   and $\Delta$
   {the} {\em{Lebesgue number}}
   of the split
   blending machine.
 We say that the split blending machine $\cB_f$ has dimensions  $(d_s, d_c, d_u)$.
\end{defi}

\begin{figure}
\centering
\begin{overpic}[scale=0.3,
]{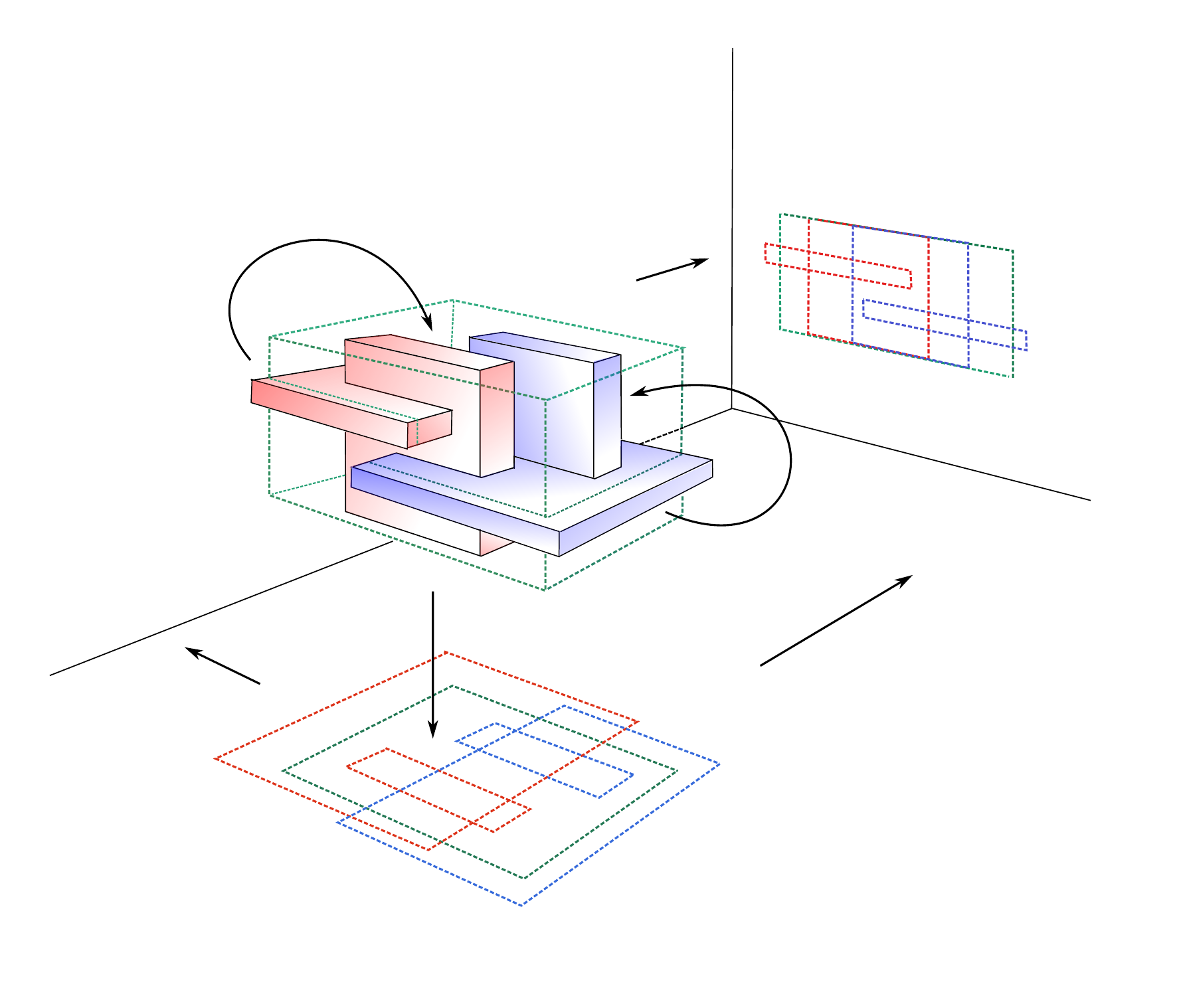}
    \put(03,31){  \Large {$\mathbb{R}^{d_s}$}}
     \put(21,30){\Large {$P_{s}$}} 
    \put(63,78){ \Large {$\mathbb{R}^{d_u}$}}
      \put(15,52){\Large {$R_1$}}
          \put(50,36){\Large{$R_2$}}
      \put(38,32){\Large {$P_{cs}$}} 
        \put(53,65){{$Q_{u}$}}
         \put(24,68){\large{$f$}}
          \put(68,43){\large{$f$}}
    \put(90,45){\Large {$\mathbb{R}^{d_c}$}}
    \put(71,28){\Large {$P_{c}$}}
    \put(130,195){\large{$Z$}}
    \put(159,117){{$R_2$}}
 \end{overpic}
 \vspace{-0.9cm}
 \caption{Split blending machine}
 \label{fig:Machine}
\end{figure}

\begin{rem}[Split blending  machines versus blending machines]~
\label{r.onthedefbm}
\begin{enumerate}[label=(\roman*), itemsep=0.2cm,leftmargin=0.8cm]
\item
As  $d_s, d_c, d_u\geq 1$ then $\dim (N) \geq 3$.
\item
The definition of a blending machine in \cite{Asa:22} is recovered from 
 Definition~\ref{def:cu-blender} taking $d_{cs}=d_c$  and $\fC=(P_{s},P_{c})$,
with $P_s$ the trivial projection onto $\{\bar 0\}$ and $P_{c}$ the identity map in $\mathbb{R}^{d_{cs}}$.

\item
Blending machines in \cite{Asa:22} are defined in dimension at least two.
In particular, any horseshoe of a surface diffeomorphism satisfies the definition of
blending machine. To avoid this case, for that we assume that $d_s\geq 1$ and hence $d_{c}<d_{cs}$.
\item
Condition $\theta<1$ is implicitly used in \cite{Asa:22}.
Here this condition is explicitly used in several  proofs along the paper.
\end{enumerate}
\end{rem}


Next remark states the robustness of split blending machines, see  \cite[Remark~2.9]{Asa:22}.

\begin{rem}[Robustness of split blending machines]
Let $\cB_f = (\blR, Z, \fS, \fC, \theta)$ be a split blending machine, where $\blR = \{R_i\}_{i\in I}$.
For every $i\in I$, consider a
 neighbourhood $U_i$ of $R_i$ and a compact subset $K_i \subset \mathrm{int}(R_i)$. 
 Then  there is a $C^1$ neighbourhood $\mathcal{U}$ of $f$ such that every $g \in \mathcal{U}$ has
 a split blending machine $\cB_g = (\blR', Z, \fS, \fC, \theta)$,  where  $\blR' = \{R'_i\}_{i\in I}$ is a family of sets with $K_i \subset  \mathrm{int} (R'_i) \subset U_i$ for every $i\in I$.
\end{rem}

In what follows, if no confusion arises, we will refer to split blending machines simply as blending machines.

\subsection{Superposition properties of split blending machines}
\label{preblendingmachinesintheworld}
Blending machines were introduced to generate $\cu$-blenders (see \cite[Proposition 2.10]{Asa:22}). In Proposition~\ref{p.alsogenerates}, we show a similar for split blending machines. To this end, we first recall the notion of a {\em{$\mathrm{cu}$-blender}} and introduce some  terminology.

\begin{defi}[cu-blender, Definition~3.1 in \cite{BocBonDia:16}]
\label{d.cublender}
Let  $\Gamma$ be a hyperbolic compact set of a diffeomorphism $f$ with
a $Df$-invariant dominated splitting $E^s\oplus E^c\oplus E^{uu}$ into three non-trivial bundles, where $E^s$ and $E^u = E^c \oplus E^{uu}$ are the stable and unstable bundles of $\Gamma$.
Let $\mathscr{D}$ be an open set of
  disks\footnote{Here the elements of the set
 $\mathscr{D}$  are images of embedded disks
 and a suitable  metric is considered in the space of these disks, see \cite[Section 3.1]{BocBonDia:16} for details.}
 of dimension $d_{uu}=\dim (E^{uu})$.
The pair  $(\Gamma, \mathscr{D})$ is a cu-\emph{blender}  if
there is a $C^1$ neighbourhood $\mathcal{U}$ of $f$
such that
\begin{equation}
\label{e.intersectionestable}
D\cap W^s_{\mathrm{loc}}(\Gamma_g) \not=\emptyset \quad  \text{for every $g\in \mathcal{U}$ and every $D\in \mathscr{D}$,}
\end{equation}
where $\Gamma_g$ is the continuation of $\Gamma$ for $g$.
The number $d_c=\dim (E^c)>0$
is the \emph{central dimension} 
and  the set $\mathscr{D}$ is the \emph{superposition region} of the cu-blender.  With a slight abuse of notation (when the role of 
$\mathscr{D}$ is omitted), we also call the set $\Gamma$ a blender.
\end{defi}

Let $X$ be a topological space, $Y$ an open set of $X$, 
 $K$ a compact subset of $Y$,
and $f \colon Y\to X$ a continuous map.
The {\em{maximal forward invariant set}} $\Lambda^s(K, f)$ of $f$ in $K$ is 
\begin{equation}
\label{e.setLambda}
\Lambda^s(K,f)
\eqdef
\bigcap_{n\geq 0} f^{-n}(K).
\end{equation}
For invertible maps $f$, we also define
$$
\Lambda^u(K,f) \eqdef \Lambda^s(K,f^{-1}) \quad \mbox{and} \quad
\Lambda (K,f) \eqdef \Lambda^s(K,f) \cap \Lambda^u (K,f).
$$
Given a split blending machine $\fB_{f}=(\mathcal{R}, Z, \fS, \fC, \theta)$ with $\mathcal{R}=\{R_{i}\}_{i\in I}$, its
{\em{maximal forward invariant set}} is the set given by
\begin{equation}
\label{e.stablesetblender}
\Lambda^s(\fB_{f})\eqdef \Lambda^s\big(\bigcup_{i \in I} R_{i} ,f\big).
\end{equation}

The next result is a variation of \cite[Proposition~2.10]{Asa:22} for split blending machines, where the intersection property in~\eqref{e.intersectionestable} is reformulated: the family of disks is replaced by a family of graphs, and the local stable set of the blender by the forward invariant set of the split blending machine.  
This proposition also gives a sufficient condition for a split blending machine to generate a blender.

\begin{prop}
\label{p.alsogenerates} Let $f \colon U\to N$ be a $C^1$ embedding defined on an open subset $U$ of $N$ and
$\cB_f= (\mathcal{R}, Z, \fS, \fC, \theta)$ a split blending machine
for $f$ with dimensions $(d_s, d_c, d_u)$.
Let $\fS=(P_{cs},Q_u)$ and $\fC=(P_s,P_c)$.
Consider the set  $\mathscr{S}$ of $C^1$ maps $\sigma \colon Q_u(Z)\to \mathrm{int} \big(P_{cs}(Z)\big)$ with $\|D\sigma\| < \theta$. 
Then, for every $\sigma \in \mathscr{S}$, it holds
$$
  \mathrm{graph}(\sigma) \cap \Lambda^s(\cB_f) \ne \emptyset
\qquad
\text{where} \qquad  \mathrm{graph}(\sigma) \eqdef \{x\in Z \colon P_{cs}(x) = \sigma\big(Q_u(x)\big)\}.
$$
Moreover, assume that
\begin{itemize}
\item
 $\mathscr{D}$ is a set of
 $C^1$ embedded compact disks of dimension $d_{u}$ such that for every $D\in \mathscr{D}$ there is
 $\sigma \in \mathscr{S}$ with
$\mathrm{graph}(\sigma) \subset \mathrm{int} (D)$, and
\item
 $\Gamma$ is a hyperbolic set of $f$ such that
\begin{enumerate}[label=$\bullet$, leftmargin=0.5cm]
\item[\emph{(a)}]
there is a neighbourhood $K$ of\, 
$\bigcup{R_i}$, $R_i\in \mathcal{R}$,
with $\Lambda (K, f) \subset \Gamma$, and
\item[\emph{(b)}]
 $\Gamma$ has a  partially  hyperbolic splitting
  $E^s\oplus E^c \oplus E^{uu}$, where $E^s$ and $E^u=E^c\oplus E^{uu}$ are the stable and unstable bundles of $\Gamma$ and
  $\dim (E^s)=d_s$,
   $\dim (E^c)=d_c$, and $\dim (E^{uu})=d_{u}$.
\end{enumerate}
\end{itemize}
Then $(\Gamma, \mathscr{D})$ is a $cu$-blender with central dimension $d_c$.
\end{prop}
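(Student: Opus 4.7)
The statement has two parts. For the first one, I plan a blender-style graph transform: starting from $\sigma_0=\sigma\in\mathscr{S}$, I will build inductively indices $i_n\in I$ and graphs $\sigma_{n+1}\in\mathscr{S}$ with $\mathrm{graph}(\sigma_{n+1})\subset f(\mathrm{graph}(\sigma_n)\cap R_{i_n})$, and then extract a point in $\mathrm{graph}(\sigma)\cap\Lambda^s(\cB_f)$ by a nested-compactness argument. For the second (the cu-blender conclusion), I will combine the first part with the robustness of split blending machines and the hyperbolic structure on $\Gamma$.

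\textbf{Choosing $i_n$ and trapping the graph in $P_{cs}(R_{i_n})$.} Given $\sigma_n\in\mathscr{S}$, the bound $\|D\sigma_n\|<\theta$ and the mean-value inequality give
\begin{equation*}
\diam\bigl(P_c\circ\sigma_n(Q_u(Z))\bigr)\leq \theta\,\|DP_c\|\cdot\diam(Q_u(Z))<\Delta,
\end{equation*}
where the last inequality is the size constraint in (BM4). The Lebesgue-number property of the covering $\{\mathrm{int}(P_c\circ P_{cs}(R_i))\}_{i\in I}$ of $\overline{P_c\circ P_{cs}(Z)}$ then furnishes $i_n\in I$ with $P_c\circ\sigma_n(Q_u(Z))\subset\mathrm{int}(P_c\circ P_{cs}(R_{i_n}))$. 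Since $P_{cs}(R_{i_n})$ is a $\fC$-rectangle and $P_s\circ P_{cs}(Z)\Subset P_s\circ P_{cs}(R_{i_n})$ by (BM3), this refines to $\sigma_n(Q_u(Z))\subset\mathrm{int}(P_{cs}(R_{i_n}))$: the center coordinate is covered by the Lebesgue-number argument and the stable coordinate is absorbed by the $\Subset$ inclusion. This is the moment where (BM3)--(BM4) combine to do the essential work.

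\textbf{Graph transform (main obstacle).} The step I expect to be the main technical obstacle is producing $\sigma_{n+1}$ defined on all of $Q_u(Z)$, because of the coordinate mismatch: $R_{i_n}$ is a $(P_{cs},Q_u\circ f)$-rectangle rather than an $\fS$-rectangle, so $Q_u(R_{i_n})\Subset Q_u(Z)$ strictly. The resolution exploits $(Q_u\circ f)(R_{i_n})=Q_u(Z)$ from (BM2) together with the $(\fS,\theta)$-cone invariance in (BM1): the tangent spaces of $\mathrm{graph}(\sigma_n)$ lie in $C_\theta(\cdot,\fS^t)$, which is transverse to the fibers $\{Q_u\circ f=\mathrm{const}\}$ of $R_{i_n}$. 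A standard graph-transform / contraction argument along these fibers (the contraction rate being bounded by $\theta\cdot\theta'<1$) gives a unique point of $\mathrm{graph}(\sigma_n)\cap R_{i_n}$ over each fiber; hence $f(\mathrm{graph}(\sigma_n)\cap R_{i_n})$ is the graph of a well-defined $\sigma_{n+1}\colon Q_u(Z)\to\mathrm{int}(P_{cs}(Z))$ with $\|D\sigma_{n+1}\|<\theta$ by cone contraction, so $\sigma_{n+1}\in\mathscr{S}$. Iterating, the nested non-empty compact sets
\begin{equation*}
K_n\eqdef\mathrm{graph}(\sigma)\cap R_{i_0}\cap f^{-1}(R_{i_1})\cap\cdots\cap f^{-n}(R_{i_n})
\end{equation*}
satisfy $\bigcap_n K_n\neq\emptyset$, and any element belongs to $\mathrm{graph}(\sigma)\cap\Lambda^s(\cB_f)$.

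\textbf{From graph intersection to cu-blender.} For the second part, every $D\in\mathscr{D}$ contains $\mathrm{graph}(\sigma)$ for some $\sigma\in\mathscr{S}$. By the robustness of split blending machines, each $g$ sufficiently $C^1$-close to $f$ admits a machine $\cB_g$ whose rectangles $R_i^g$ are contained in prescribed neighbourhoods of the original $R_i$, in particular inside an isolating neighbourhood of $\Gamma$ (which one may assume after shrinking $K$ a priori). The first part applied to $g$ produces $x_g\in D\cap\Lambda^s(\cB_g)$; since the forward $g$-orbit of $x_g$ is trapped in $\bigcup_i R_i^g\subset K$, hypothesis~(a) combined with continuity of the maximal invariant set gives $\omega_g(x_g)\subset\Lambda(K,g)\subset\Gamma_g$. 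The partially hyperbolic splitting in hypothesis~(b) then places $x_g$ in $W^s_\loc(\Gamma_g)$, where ``local'' refers to the chosen neighbourhood of $\Gamma_g$ containing $K$. Therefore $D\cap W^s_\loc(\Gamma_g)\neq\emptyset$ for every such $g$, which is precisely the cu-blender condition for $(\Gamma,\mathscr{D})$ with central dimension $d_c$.
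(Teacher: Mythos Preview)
Your proposal is correct and follows essentially the same route as the paper: the Lebesgue-number/(BM3)--(BM4) argument to locate $i_n$, the cone-invariance contraction on the $(P_{cs},Q_u\circ f)$-fibers to run the graph transform (the paper packages this as a Banach fixed point for the map $G(s,t)=(\sigma(t),g(s))$ with $\|Dg\|<\theta$, which is exactly your ``contraction along fibers''), and the nested-compactness extraction. Your handling of the second part is in fact more explicit than the paper's one-line conclusion, since you spell out the robustness step needed to verify the cu-blender condition for all nearby $g$.
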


The proof of this proposition follows the ideas in \cite[Proposition~2.10]{Asa:22}
adapting the argument of graph transform in ~\cite{BarKiRai:14}. We proceed to the details.

Take any $\sigma\in \mathscr{S}$ and note that $\mathrm{graph} (\sigma) \subset Z$. Then
\begin{equation}
\label{e.diameterofthegraph}
\begin{split}
\diam \big( (P_{c} \circ P_{cs}) (\mathrm{graph} (\sigma) ) \big) 
&\leq \|DP_c\|\,\, \diam \big(P_{cs}(\mathrm{graph} (\sigma)) \big) =\\
&= \|DP_c\|\,\, \diam \big(\sigma( Q_{u}(\mathrm{graph}(\sigma)))\big) \\
&\leq \|DP_c\|\,\|D\sigma\|\,\, \diam \big(Q_{u}(\mathrm{graph}(\sigma))\big) \\
&< \|DP_c\|\, \theta \, \,\diam \big(Q_u(Z) \big)
\overset{\text{(BM4)}}{<} \Delta ,
\end{split}
\end{equation}
where the first equality follows from the definition
of $\mathrm{graph} (\sigma)$ and  $\Delta$ is the Lebesgue number of the split blending machine $\cB_f$, see Definition~\ref{def:cu-blender}.

We need the following lemma
used instead  of  the Lebesgue number of the covering  $\{\mathrm{int} \big(P_{cs}(R_i)\big)\}_{i\in I}$ of $\overline{P_{cs}(Z)}$ in ~\cite{Asa:22}.

\begin{lem} \label{lclaim:b3}
 Consider any  set $K\subset \overline{P_{cs}(Z)}$
with $\diam \big(P_c(K)\big)<\Delta$. Then there is $i\in I$ such that $K \subset \mathrm{int} \big(P_{cs}(R_i)\big)$.
\end{lem}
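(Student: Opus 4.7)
The plan is to combine the Lebesgue number property from condition (BM4), which controls the central projection, with the strict inclusion $\Subset$ from condition (BM3), which controls the stable projection, and then reassemble the two partial conclusions using the $\fC$-rectangle structure of $P_{cs}(R_i)$ that is also built into (BM3).

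First I would apply $P_c$ to the inclusion $K \subset \overline{P_{cs}(Z)}$ to obtain $P_c(K) \subset \overline{P_c \circ P_{cs}(Z)}$. Since by hypothesis $\diam\bigl(P_c(K)\bigr) < \Delta$ and $\Delta$ is, by (BM4), a Lebesgue number of the open cover $\{\mathrm{int}(P_c \circ P_{cs}(R_i))\}_{i \in I}$ of $\overline{P_c \circ P_{cs}(Z)}$, there exists an index $i \in I$ with
\[
P_c(K) \subset \mathrm{int}\bigl(P_c \circ P_{cs}(R_i)\bigr).
\]
Next, applying $P_s$ to the same inclusion $K \subset \overline{P_{cs}(Z)}$ yields $P_s(K) \subset \overline{P_s \circ P_{cs}(Z)}$. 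By condition (BM3), $P_s \circ P_{cs}(Z) \Subset P_s \circ P_{cs}(R_i)$, so after taking closures on the left we still have $\overline{P_s \circ P_{cs}(Z)} \subset \mathrm{int}(P_s \circ P_{cs}(R_i))$, and therefore $P_s(K) \subset \mathrm{int}\bigl(P_s \circ P_{cs}(R_i)\bigr)$ for this same $i$.

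To finish, I would use the $\fC$-rectangle property from (BM3), which means that $\fC = (P_s, P_c)$ is a diffeomorphism between $P_{cs}(R_i)$ and $P_s \circ P_{cs}(R_i) \times P_c \circ P_{cs}(R_i)$. In particular, a point of $\mathbb{R}^{d_{cs}}$ whose $P_s$- and $P_c$-projections lie in the interiors of the corresponding factor sets must lie in $\mathrm{int}\bigl(P_{cs}(R_i)\bigr)$. Applying this pointwise to $K$, the two inclusions obtained above combine to give $K \subset \mathrm{int}\bigl(P_{cs}(R_i)\bigr)$, as required.

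The main obstacle is conceptual rather than technical: one must correctly identify which hypothesis controls which projection (Lebesgue number for $P_c$, strict inclusion for $P_s$) and observe that the $\fC$-rectangle structure is precisely what allows the two independent inclusions to be glued into a single one in $\mathrm{int}(P_{cs}(R_i))$. Once this is noted, the proof is essentially bookkeeping and does not require any estimate beyond the definitions.
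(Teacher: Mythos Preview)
Your proposal is correct and follows essentially the same approach as the paper's proof: use the Lebesgue number from (BM4) to find $i$ with $P_c(K)\subset \mathrm{int}(P_c\circ P_{cs}(R_i))$, use the strict inclusion from (BM3) to get $P_s(K)\subset \mathrm{int}(P_s\circ P_{cs}(R_i))$, and then invoke the $\fC$-rectangle structure of $P_{cs}(R_i)$ to combine both into $K\subset \mathrm{int}(P_{cs}(R_i))$. The paper carries out the last step by writing the chain $K\subset \fC^{-1}(P_s(K)\times P_c(K))\subset \fC^{-1}(\fC(P_{cs}(R_i)))=\mathrm{int}(P_{cs}(R_i))$ explicitly, which is exactly your pointwise argument made formal.
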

\begin{proof}
Note that $P_c(K) \subset P_c \big(\overline{( P_{cs})(Z)}  \big)\subset \overline{(P_c \circ P_{cs})(Z)}$ and (recall (BM4)) $\diam \big(P_c(K)\big) < \Delta$. As $\Delta$ is a Lebesgue number of the covering $\{\mathrm{int} (P_c \circ P_{cs})(R_i)\}_{i\in I}$ of
$\overline{(P_c \circ P_{cs})(Z)}$, there exists $i\in I$ such
that 
\begin{equation}
\label{e.sala}
P_c(K) \subset \mathrm{int} \big(P_c \circ P_{cs} (R_i)\big).
\end{equation}
Moreover, 
\begin{equation}
\label{e.chuva}
{P_{s}(K)} \subset \overline{(P_{s} \circ P_{cs})(Z)}
 \overset{\text{(BM3)}}{\subset} 
\mathrm{int} (P_{s} \circ P_{cs}(R_i)).
\end{equation}
As $P_{cs}(R_i)$  is a $\fC$-rectangle (BM3) and $\fC=(P_s,P_c)$, these inclusions imply that
\begin{align*}
    K  & \subset \fC^{-1}\big( P_s(K)\times P_c(K) \big) 
\overset{\eqref{e.chuva},
\eqref{e.sala}}{\subset}    
     \fC^{-1}\big(\mathrm{int}\, (P_{s} \circ P_{cs})(R_i)\times \mathrm{int}\, (P_{c} \circ P_{cs})(R_i) \big) \\
    &= \mathrm{int} \big( \fC^{-1} \big( (P_s \circ P_{cs} )(R_i)\times
    (P_c \circ P_{cs})(R_i) \big) \big) \\
    &= \mathrm{int} \big( \fC^{-1}(\fC(P_{cs}(R_i))\big) = \mathrm{int} (P_{cs}(R_i)),
\end{align*}
proving the lemma.
\end{proof}

Equation \eqref{e.diameterofthegraph} allows us to apply
Lemma~\ref{lclaim:b3} to the set
$P_{cs}\big(\mathrm{graph} (\sigma)\big) \subset P_{cs}(Z)$ to get $i_0\in I$ such that $P_{cs}\big(\mathrm{graph} (\sigma)\big) \subset \mathrm{int} \big( P_{cs}(R_{i_0})\big)$.
For each $t \in Q_u(Z)$ consider  the set
$$
D(t) \eqdef \{x \in R_{i_0}\colon Q_u \circ f(x) = t \}.
$$

\begin{lem}
\label{l.isapoint}
The intersection
$\mathrm{graph}(\sigma)\cap D(t)$ is a singleton for every $t \in Q_u(Z)$.
\end{lem}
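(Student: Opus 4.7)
My plan is to use the rectangle structure of $R_{i_0}$ to reduce the lemma to a fixed-point problem on the compact set $P_{cs}(R_{i_0})$. Since $R_{i_0}$ is a $(P_{cs},Q_u\circ f)$-rectangle and, by (BM2), $(Q_u\circ f)(R_{i_0})=Q_u(Z)$, the map $(P_{cs},Q_u\circ f)\colon R_{i_0}\to P_{cs}(R_{i_0})\times Q_u(Z)$ is a $C^1$ diffeomorphism; let $\Psi$ denote its inverse. A point $x\in R_{i_0}$ lies in $\mathrm{graph}(\sigma)\cap D(t)$ precisely when $x=\Psi(p,t)$ for some $p\in P_{cs}(R_{i_0})$ with $p=\sigma(Q_u(\Psi(p,t)))$. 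Thus, setting $T_t(p)\eqdef \sigma(Q_u(\Psi(p,t)))$, the lemma reduces to showing that $T_t$ has a unique fixed point on $P_{cs}(R_{i_0})$.

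The self-map property is immediate: by definition $T_t(p)\in \sigma(Q_u(Z)) = P_{cs}(\mathrm{graph}(\sigma))$, and the choice of $i_0$ made via Lemma~\ref{lclaim:b3} ensures $P_{cs}(\mathrm{graph}(\sigma))\subset \mathrm{int}(P_{cs}(R_{i_0}))$. Hence $T_t$ sends the compact set $P_{cs}(R_{i_0})$ into its interior.

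The key step is proving that $T_t$ is a contraction, which is where the cone condition (BM1) and the slope bound $\|D\sigma\|<\theta$ combine. For $u\in T_p P_{cs}(R_{i_0})$, the vector $v\eqdef (\partial_p\Psi)(p,t)\cdot u$ at $x=\Psi(p,t)$ satisfies $DP_{cs}(v)=u$ and $D(Q_u\circ f)(v)=0$. The latter gives $Df(v)\in \ker DQ_u\subset C_\theta(f(x),\fS)$, so (BM1) yields some $\theta'<\theta$ with $v\in C_{\theta'}(x,\fS)$, hence $\|DQ_u(v)\|\le \theta'\|DP_{cs}(v)\|=\theta'\|u\|$. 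Combining with $\|D\sigma\|<\theta$ and the standing condition $\theta<1$ (Remark~\ref{r.onthedefbm}(iv)),
\[
\|DT_t(u)\| = \|D\sigma(DQ_u(v))\| \le \|D\sigma\|\cdot\|DQ_u(v)\| \le \theta\,\theta'\,\|u\| < \|u\|.
\]

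The Banach fixed-point theorem then applies to the contraction $T_t$ on the compact (hence complete) metric space $P_{cs}(R_{i_0})$, producing a unique $p_\ast$ with $T_t(p_\ast)=p_\ast$; the corresponding $\Psi(p_\ast,t)$ is the unique element of $\mathrm{graph}(\sigma)\cap D(t)$. The step I expect to require the most care is the derivative estimate above, since it is the only place where the geometric hypotheses of a split blending machine are really used; the remainder is a direct unpacking of the rectangle structure together with the containment already obtained before the lemma was stated.
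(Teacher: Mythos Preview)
Your proof is correct and essentially the same as the paper's. Both use the rectangle structure and the cone invariance (BM1) to show that the slice $D(t)$ is a graph of slope at most $\theta'<\theta$ over $P_{cs}(R_{i_0})$, and then conclude by a contraction argument; the only cosmetic difference is that the paper packages the contraction as the product map $G(s,t')=(\sigma(t'),g(s))$ on $P_{cs}(R_{i_0})\times Q_u(R_{i_0})$, whereas you compose directly to the self-map $T_t=\sigma\circ\big(Q_u\circ\Psi(\cdot,t)\big)$ on $P_{cs}(R_{i_0})$.
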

\begin{proof}
Fix $t \in Q_u(Z)$.
As $R_{i_0}$ is a $(P_{cs},Q_u\circ f)$-rectangle (BM2) we can consider the $C^1$ map
$g \colon P_{cs}(R_{i_0}) \to Q_u(R_{i_0})$
defined by $g=Q_u \circ (P_{cs}|_{D(t)})^{-1}$.

\begin{claim}
\label{c.Dgmenorque}
$\|Dg\|<\theta$.
\end{claim}

We are postponing the proof of the claim for now to conclude the proof of the lemma.

Recall that $\fS=(P_{cs}, Q_u)$ and observe that $\fS(D(t))$ is the graph of
$g$,
\begin{equation}\label{eq:graf}
   \fS(D(t))=\{(s,g(s)): s \in P_{cs}(R_{i_0}) \}.
\end{equation}

From $P_{cs}\big(\mathrm{graph} (\sigma)\big) \subset \mathrm{int} \big(P_{cs}(R_{i_0})\big)$ and the definition of $g$, we define the map
$$
G \colon P_{cs}(R_{i_0}) \times Q_u(R_{i_0}) \to
P_{cs}(R_{i_0}) \times Q_u(R_{i_0}), \quad G(s, t) \eqdef (\sigma(t),g(s)).
$$

Since $\|Dg\|$ and $\|D\sigma\|$ are less than $\theta<1$, the map $G$ is a uniform contraction. Hence, by the compactness of its domain, $G$ has a unique fixed point.

Note that $x\in \mathrm{graph} (\sigma) \cap D(t)$ if and only if $\big(P_{cs}(x),Q_u(x)\big)$ can be simultaneously written as $(\sigma(t),t)$ and $(s,g(s))$, for some $s$ and $t$. This implies that $\big(P_{cs}(x),Q_u(x)\big)$ is a fixed point of $G$. As $G$ has a unique fixed point, this intersection is a singleton, proving the lemma. It remains to prove the claim.

\begin{proof}[Proof of Claim~\ref{c.Dgmenorque}]
Consider  $x\in D(t)$  and any vector $v\in T_xD(t)$. Since $Q_u\circ f (x)=t$, it follows that
$D({Q}_u\circ f)(v)=0$ and thus $Df(v)\in C_\theta\big(f(x),\fS\big)$. As $f$ satisfies the $(\fS, \theta)$-cone invariance condition on $R_{i_0}$, we get $v \in Df^{-1}\big(C_\theta(f(x),\fS)\big)\subset C_\theta(x,\fS)$.
Therefore, by the definition of  $C_\theta(x,\fS)$ in \eqref{e.conocono},  it holds that
$$
   \|D{Q}_u(v)\| \leq  \theta \|D{P}_{cs}(v)\| \quad \text{for every $x\in D(t)$ and $v\in T_x \, D(t)$.}
$$
This inequality and $g=Q_u \circ (P_{cs}|_{D(t)})^{-1}$  imply that $\|Dg\|<\theta$, proving the claim.
\end{proof}
The proof of the lemma is now complete.
\end{proof}

We are now ready to complete the proof of  Proposition~\ref{p.alsogenerates}.

\begin{proof}[Proof of  Proposition~\ref{p.alsogenerates}]
Fix any $\sigma_0\in \mathscr{S}$.
Lemma~\ref{l.isapoint}
 allows us to define a $C^1$ map 
 $$
 \sigma_1 \colon Q_u(Z)\to P_{cs}(Z)
 \quad \mbox{such that} \quad 
 f\big(\mathrm{graph}(\sigma_0) \cap R_{i_0}\big) = \mathrm{graph}(\sigma_1).
 $$ 
 Using that  
$P_{cs}\big(f(R_{i_0} )\big) \Subset P_{cs}(Z)$, (BM2),  and that $f$ satisfies the
$(\fS,\theta)$-cone invariance
condition on $R_{i_0}$, (BM1), one obtain that $\sigma_1\in \mathscr{S}$.
Iterating this process, one gets  sequences $\{i_n\}_{n}$ in $I$ and $\{\sigma_n\}_{n}$
in $\mathscr{S}$ such that
$$
f\big(\mathrm{graph}(\sigma_n) \cap R_{i_n}\big) = \mathrm{graph}(\sigma_{n+1}) \quad   \text{for every $n \geq  0$}.
$$
By construction,
$$
\emptyset \not= \bigcap_{n\geq 0} f^{-n}\big(\mathrm{graph}(\sigma_{n})\big) \subset \bigcap_{n\geq 0} f^{-n}(R_{i_n}) \cap \mathrm{graph}(\sigma_0)  \subset
 \Lambda^s(\fB_f)\cap \mathrm{graph}(\sigma_0),
$$
which proves the first part of the proposition.

To prove the second part of the proposition, it suffices to observe that, by definition of $\Gamma$, it holds $\Lambda^s(\fB_f) \subset W^s_{\mathrm{loc}}(\Gamma)$, ending the proof.
\end{proof}

\section{Split blending machines for skew-products}
\label{s.generationofsplitblendingmachines}

In the spirit of Sections~\ref{s.quotientdynamics}-\ref{ss.occurrence},
we consider skew-products whose base and fiber  dynamics are given by an affine horseshoe map and
a family of  local diffeomorphisms, see Section~\ref{ss.affine}. These systems arise from diffeomorphisms with simple contours and their
adapted perturbations. In Section~\ref{ss:preblendermachines}, we define \emph{preblending machines}, which serve as germs of split blending machines. Section~\ref{ss.rumboalasblendingmachines} contains Theorem~\ref{thm:blending-yield-blenders}, showing how preblending machines yield split blending machines for skew-products.

\subsection{Skew-products over affine horseshoes}
\label{ss.affine}


\begin{defi}[Locally affine horseshoe maps] \label{ld.affine}  Fix integers
$k, d_s, d_u\geq 1$ and $\nu\in (0,1)$. Let
\begin{equation}
\label{e:setD}
\mathsf{D}\eqdef H\times V,
\quad
\text{where  $H\eqdef [0,1]^{d_s}$ and  $V\eqdef [0,1]^{d_u}$.}
\end{equation}
 A  \emph{locally affine $(k,d_s,d_u,\nu)$-horseshoe map}  (for short a
 {\em{$(k,d_s,d_u,\nu)$- or a $(k,\nu)$-horseshoe map}} if the roles of  $d_s$ and
 $d_u$ are not relevant)  
is a diffeomorphism \mbox{${A}\colon \mathbb{R}^{d_s+d_u} \to
\mathbb{R}^{d_s+d_u}$}  such that
there are families:
\begin{enumerate}[label=$\bullet$, leftmargin=0.5cm]
\item
$(V_i)_{i=1}^k$ of
pairwise disjoint compact disks of dimension $d_u$  contained in $\mathrm{int}(V)$,
\item
$(H_i)_{i=1}^k$ of pairwise disjoint of compact disks of dimension $d_s$ contained in  $\mathrm{int}(H)$,
\item
$(A_i^{s})_{i=1}^k$  and  $(A_i^{u})_{i=1}^k$ of affine maps
$A_i^{s} \colon \mathbb{R}^{d_s} \to  \mathbb{R}^{d_s}$
and
$
A_i^{u} \colon \mathbb{R}^{d_s} \to  \mathbb{R}^{d_s},
$
with 
\begin{equation} \label{eq:nu}
\max \left\{  \|DA^{s}_i\|, \|(DA^{u}_i)^{-1}\|, \, i\in\{1, \dots, k\}\right\}< \nu
\end{equation}
\end{enumerate}
such that for every  $i=1,\dots, k$ it holds:
\begin{enumerate}[label=(\alph*), leftmargin=0.8cm, itemsep=0.2cm]
\item
$A(H \times V_i)= H_i \times V$,
\item
the restriction $A|_{\mathsf{H}_i}$ of $A$  to $\mathsf{H}_i\eqdef H\times V_i$
coincides with 
$
 {A}_i \eqdef A^{s}_i\times A^{u}_i.
$
\end{enumerate}
The number $\nu$ is called a {\em{stretching}} of $A$.
\end{defi}

%
%
%
%

\begin{rem}[Families of legs]\label{r.horseshoeafin}
We let $\Sigma_k=\{1,\dots,k\}^\mathbb{Z}$ and $\Sigma_k^\ast$ the family of finite words. 
Given $\ti \in \Sigma^\ast_k$,
using the notation for composition of maps in \eqref{e.notation},
define the sets
\begin{equation}
\label{e.HiVi}
H_\ti \eqdef A_\ti^{s} (H)
\quad
\mbox{and}
\quad
V_\ti \eqdef  (A_{\ti}^{u})^{-1}(V)
\end{equation}
and 
\begin{equation} 
\label{e.otrosHi}
\mathsf{H}_\ti \eqdef H \times V_{\ti}
\quad
\mbox{and}
\quad
\mathsf V_\ti \eqdef H_\ti \times V
=A_\ti (\mathsf{H}_\ti).
\end{equation}
Note that for $\ti=i_0\dots i_n$ it holds
$$
A^j (\mathsf{H}_{\ti} )\subset \mathsf{H}_{i_j} \quad \text{for every $j=0,1,\dots,n-1$.}
$$
Furthermore,  $A^j$ is a $(k^j,d_s,d_u,\nu^j)$-horseshoe map whose restriction to $\mathsf{H}_{\ti}$,  $A^j|_{\mathsf{H}_{\ti}}$,  with $|\ti|=j$, coincides with
$A_\ti=A^s_\ti \times A^u_\ti$. We refer to the family  $(\mathsf{H}_{\ti})_{\ti \in \Sigma^*_k}$ of $\mathsf{D}$ as the \emph{legs} of the  horseshoe map $A$.
\end{rem}

\begin{remark}[Inverse horseshoe maps]
\label{rem:inversehmap}
Given a   $(k,d_s,d_u,\nu)$-horseshoe map $A$, its inverse
$B=A^{-1}$ is the
 $(k,d_u,d_s,\nu)$-horseshoe map obtained considering the
 maps
 $(B_i^{s})_{i=1}^k$  and  $(B_i^{u})_{i=1}^k$
 given by $B_i^s= (A_i^{u})^{-1}$ and
$B_i^u= (A_i^{s})^{-1}$.
\end{remark}


Consider a manifold $M$, pairwise disjoint open sets $(U_i)_{i=1}^k$ of $M$, and 
a family of local diffeomorphisms  
$\mathcal{F}=\{\phi_i \colon U_i \to M, \, i=1, \dots, k\}$.
Consider the compositions $\phi_\ti$, $\ti \in \Sigma_k^\ast$, 
and their domains $U_\ti$ as in \eqref{e.neighUi}. We also consider the corresponding transition matrix
$T$ as in \eqref{e.matrixT} and the shift space $\Sigma_T$ and the set $\Sigma_T^\ast$ of admissible words.

\begin{defi}[Skew-product over an affine horseshoe]
\label{d.skew}
Let $A$ be a horseshoe map as in Definition \ref{ld.affine} 
and 
$\mathcal{F}=\{\phi_i\colon U_i \to M, \, i=1, \dots, k\}$ a familiy of local $C^1$ diffeomorphisms of $M$.
We define the skew-product with base map $A$ and fiber maps $\mathcal{F}$ by
\begin{equation}
\label{e.AtimesF}
\Phi \eqdef A\ltimes \mathcal{F} \colon
\bigcup_{i=1}^k (H_i \times U_i)
\to \mathbb{R}^{d_{s}+d_{u}}\times M,
\qquad
\Phi_{|_{\mathsf{H}_i\times U_i}} \eqdef {A}_i\times \phi_i.
\end{equation}
\end{defi}

\begin{rem} \label{rem:Phij}
Note that $\Phi^j|_{\mathsf{H}_{\ti}\times U_\ti}= A_\ti \times \phi_\ti$ for every $\ti \in \Sigma_k^*$ with $|\ti|=j$.
\end{rem}

\subsubsection{Adapted stretching and partial hyperbolicity}
\label{ss.adaptedstretching}
We now introduce mild assumptions on a skew-product $\Phi = A\ltimes \mathcal{F}$ and on the pair  $(\tfS_1, \tfS_2)$ to guarantee that the locally affine horseshoe map $A^{|\ti|}$ is $(\tfS_1,\tfS_2)$-adapted to $\phi_\ti$ for every $\ti\in \Sigma_k^*$.

\begin{defi}[Adapted stretching]
\label{d.stretching}
Let $\phi$ be a $(\tfS_1,\tfS_2,\mu,\gamma)$-dominated  $C^1$ local diffeomorphism on a subset $R$ of $N$. 
A $(k,\nu)$-horseshoe map $A$ (or its stretching $\nu$)  is \emph{$(\tfS_1,\tfS_2)$-adapted to $\phi$ on $R$} if
$\nu < \mu^{-1} < \gamma < \nu^{-1}$.
If $\tfS = \tfS_1 = \tfS_2$, we say it is $\tfS$-adapted.
\end{defi}

\begin{lem} \label{c.pp} Consider a family
$\mathcal{F}=\{\phi_1,\dots, \phi_k\}$
 of $(\lambda,\rho)$-Lipschitz  local
diffeomorphisms of $M$ (see \eqref{e.nu}). Let $\tfS_\tau=(P_\tau,Q_\tau)$,
$\tau=1,2$, be a splitting of an open set $N_\tau\subset M$  such that
there is a constants $C_\tau\geq 1$ with
\begin{equation} \label{e.C}
    \frac{1}{C_\tau}\, \|v\| \leq \|D\tfS_\tau(v)\| \leq  C_\tau \, \|v\| \quad \text{for every $x\in N_{\tau}$ and  $v\in T_x N_{\tau}$.}
\end{equation}
Suppose that $\phi_\ti$, $\ti \in \Sigma^\ast_k$, is $(\tfS_1,\tfS_2,\mu,\gamma)$-dominated
 on a subset $R$ of $N_1$. Then
$$
  \frac{\lambda^{|\ti|}}{C} \leq \mu^{-1}  \quad\text{and} \quad \gamma \leq C \, \rho^{|\ti|} \quad \text{where} \ \ C\eqdef C_1C_2.
$$
Furthermore, let $A$ be a $(k,\nu)$-horseshoe map such that
\begin{equation}
 \label{e.stv}
\nu^{|\ti|} < \frac{\lambda^{|\ti|}}{C} \quad \text{and} \quad C \,\rho^{|\ti|} < \nu^{-|\ti|}
\end{equation}
then  $A^{|\ti|}$ is $(\tfS_1,\tfS_2)$-adapted to $\phi_{\ti}$ on $R$.
\end{lem}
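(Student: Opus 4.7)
The plan is to pass from $(\tfS_1,\tfS_2,\mu,\gamma)$-domination of $\phi_\ti$ to the pair of dilatation inequalities \eqref{e.lambda-gamma-dilatation} via Lemma~\ref{l.new-dilatation}, and then to test those inequalities on tangent vectors built by inverting the splittings $\tfS_\tau$. The central tool will be the bi-Lipschitz estimate \eqref{e.C}, which makes $D_x\tfS_\tau$ an isomorphism whose norm and inverse norm are both bounded by $C_\tau$; consequently any prescribed ``profile'' $(a,b)\in\mathbb{R}^{d_{cs}}\times\mathbb{R}^{d_u}$ is realized by a tangent vector of norm at most $C_\tau\|(a,b)\|$.

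For the bound $\gamma\leq C\rho^{|\ti|}$, I will fix $x\in R$ and a unit $u\in\mathbb{R}^{d_u}$, and set $w\eqdef D_x\tfS_1^{-1}(0,u)$. By construction $DP_1(w)=0$, $\|DQ_1(w)\|=1$, $\|w\|\leq C_1$, and in particular $w\notin\ker DQ_1$. The upper Lipschitz bound $\|D\phi_\ti(w)\|\leq\rho^{|\ti|}\|w\|$ together with \eqref{e.C} then gives $\|D(Q_2\circ\phi_\ti)(w)\|\leq C_2\rho^{|\ti|}\|w\|\leq C\rho^{|\ti|}$, and the second line of \eqref{e.lambda-gamma-dilatation} closes the argument. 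For the symmetric bound $\lambda^{|\ti|}/C\leq\mu^{-1}$, at $y\eqdef\phi_\ti(x)$ and for a unit $s\in\mathbb{R}^{d_{cs}}$, set $v\eqdef D_x\phi_\ti^{-1}\bigl(D_y\tfS_2^{-1}(s,0)\bigr)$. Then $D(P_2\circ\phi_\ti)(v)=s$ (so $v\notin\ker D(P_2\circ\phi_\ti)$) and $\|D\phi_\ti(v)\|\leq C_2$; the lower Lipschitz bound yields $\|v\|\leq C_2\lambda^{-|\ti|}$, whence $\|DP_1(v)\|\leq C\lambda^{-|\ti|}$, and the first line of \eqref{e.lambda-gamma-dilatation} then reads $1<\mu^{-1}C\lambda^{-|\ti|}$, as desired.

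For the ``Furthermore'' part, Remark~\ref{r.horseshoeafin} identifies $A^{|\ti|}$ as a $(k^{|\ti|},d_s,d_u,\nu^{|\ti|})$-horseshoe map, so by Definition~\ref{d.stretching} the adaptedness claim reduces to the chain $\nu^{|\ti|}<\mu^{-1}<\gamma<\nu^{-|\ti|}$. The outer inequalities follow at once from the bounds already proved combined with \eqref{e.stv}: $\nu^{|\ti|}<\lambda^{|\ti|}/C\leq\mu^{-1}$ and $\gamma\leq C\rho^{|\ti|}<\nu^{-|\ti|}$; the middle inequality $\mu^{-1}<\gamma$, equivalent to $\mu\gamma>1$, is the standard non-triviality hypothesis implicit in the notation $(\mu,\gamma)$-domination (it is precisely the condition extracted in the proof of Lemma~\ref{l.new-dilatation} via $\theta'=\theta(\mu\gamma)^{-1}<\theta$). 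The main obstacle I anticipate is the careful construction of the test vectors $v$ and $w$: in each case one must simultaneously annihilate one projection while prescribing the other, keeping control of the ambient norm through \eqref{e.C}. Once these ingredients are set up, the rest is a routine bookkeeping exercise between the Lipschitz constants of the fiber maps and the bi-Lipschitz constants of the splittings.
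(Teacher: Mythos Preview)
Your proof is correct and follows essentially the same approach as the paper's: both test the dilatation inequalities from Lemma~\ref{l.new-dilatation} on vectors chosen so that one $\tfS_\tau$-projection dominates (the paper uses vectors in the cone $C_1$, you use vectors with one projection exactly zero, which is a clean special case that spares you the explicit appeal to cone invariance in the second estimate), and then chain the bi-Lipschitz bound~\eqref{e.C} with the $(\lambda,\rho)$-Lipschitz estimate~\eqref{e.nu}. Your handling of the ``Furthermore'' part is also in line with the paper, which simply declares that it ``follows immediately''; you are right that the middle inequality $\mu^{-1}<\gamma$ is being tacitly assumed.
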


\begin{proof}
The $(\lambda,\rho)$-Lipschitz of the family $\mathcal{F}$ means  
\begin{equation} \label{e.nu}
   \lambda^{|\ti|} < m(D_x\phi_\ti) \leq  \|D_x\phi_\ti\| < \rho^{|\ti|} \quad \text{for every $x\in M$ and
    $\ti\in \Sigma^\ast_k$.}
\end{equation}

Fix  $x\in  R$ and (recall \eqref{e.conocono}) consider any vector in
$v\in C_1(x,\tfS^t_1)\setminus
\mathrm{Ker}(D_x Q_{\color{violet} 1})$ (note that this vector always exist).
 Then $\|DP_1(v)\|\leq \|DQ_1(v)\|$ and thus 
 \begin{equation}
\label{e.copo}
 \|D\tfS_1(v)\|=\|DQ_{1}(v)\|.
  \end{equation}
Since $\phi_\ti$ is $(\tfS_1\tfS_2,\mu,\gamma)$-dominated on $R$, we get 
\begin{align*}
{\gamma\, \|DQ_1(v)\|} &\overset{\eqref{e.lambda-gamma-dilatation}}{\leq} {\|D(Q_2\circ \phi_\ti)(v)\|} \leq
\|D(\tfS_2\circ \phi_\ti)(v)\| \overset{\eqref{e.C}}{\leq} C_2\, \|D\phi_\ti (v)\|  \\ 
&\overset{\eqref{e.nu}}{\leq} C_2\, \rho^{|\ti|}\, \| v\|\overset{\eqref{e.C}}{\leq} C_2C_1\, \rho^{|\ti|}\, \|D\tfS_1(v)\| \overset{\eqref{e.copo}}{=} C\, \rho^{|\ti|} \|DQ_1(v)\|.
\end{align*}
As $\|DQ_1(v)\|\ne 0$ we get $\gamma \leq C \rho^{|\ti|}$ and hence the second inequality in the lemma.

Similarly, take any $w \in C_1(\phi_\ti(x),\tfS_2)\setminus
\mathrm{Ker}(D_{\phi_\ti(x)} P_2)$.
 Hence, $\|DQ_2(w)\|\leq \|DP_2(w)\|$ and $\|D\tfS_2(w)\|=\|DP_2(w)\|$.   As $\phi_\ti$ satisfies $(\tfS_1,\tfS_2,\mu,\gamma)$-dominated condition on $R$,  by definition, it  satisfies the
$(\tfS_1,\tfS_2,1)$-cone invariance condition on $R$.
Thus, $v=D\phi_\ti^{-1}(w) \in C_{\theta'}(x,\tfS_1)$ for some $0<\theta'<1$. This implies that $\|DQ_1(v)\|\leq \|DP_1(v)\|$ and hence $\|D\tfS_1(v)\|=\|DP_1(v)\|$.  As above, Lemma~\ref{l.new-dilatation} and equations \eqref{e.C} and \eqref{e.nu} imply that
\begin{align*}
\mu^{-1} \, \|DP_1(v)\| &\geq  \|D(P_2\circ \phi_\ti)(v)\| = \|DP_2(w)\|= \|D\tfS_2(w)\| \overset{\eqref{e.C}}{\geq} \frac{1}{C_2}\,\|w\| \\  
&\geq  \frac{1}{C_2}\, \|D\phi_\ti(v)\|  
\overset{\eqref{e.nu}}{\geq}
 \frac{\lambda^{|\ti|}}{C_2}\, \|v\| 
\overset{\eqref{e.C}}{\geq}
 \frac{\lambda^{|\ti|}}{C_1C_2} \,\|D\tfS_1(v)\| = \frac{\lambda^{|\ti|}}{C}\,\|DP_1(v)\|.
\end{align*}
As $\|DP_1(v)\|\ne 0$, this implies that $C\mu^{-1} \geq \lambda^{|\ti|}$, which concludes the first part of the lemma. The second part of the lemma follows immediately, concluding the proof.
\end{proof}

\begin{defi}
Given a
$(k,\nu)$-horseshoe map $A$ and
a finite family $\mathcal{F}$ of $(\lambda,\rho)$-Lipschitz local
diffeomorphisms,
the  map $\Phi=A\ltimes \mathcal{F}$  in~\eqref{e.AtimesF} is
{\em{$(\nu,\lambda,\rho)$-partially hyperbolic}} if
\begin{equation} \label{eq:PH}
\nu < \lambda <  \rho <\nu^{-1}.
\end{equation}
\end{defi}

\begin{rem} \label{rem.pp} 
Consider a family $\mathcal{F}$, a pair $(\tfS_1,\tfS_2)$, and a constant $C$
as in  Lemma~\ref{c.pp}. Let $A$ be a $(k, \nu)$-horseshoe
such that the skew-product $\Phi=A\ltimes \mathcal{F}$ is $(\nu, \lambda, \rho)$-partially hyperbolic and
consider $\ti \in  \Sigma^\ast_k$ such that
$$
|\ti| > \max\left \{ \frac{2\,\log C}{\log (\lambda\, \nu^{-1})}, \frac{2\,\log C}{-\log (\rho\,\nu)}   \right\}.
$$
Since this inequality implies~\eqref{e.stv} in Lemma~\ref{c.pp}, the horseshoe map $A^{|\ti|}$ is $(\tfS_1,\tfS_2)$-adapted to $\phi_{\ti}$ on $R$. Furthermore, if we have $C=1$ (as in the case of canonical projections), then it suffices to take $|\ti|\geq 1$. In particular, if $\Phi=A\ltimes \mathcal{F}$ is partially hyperbolic, then $A$ is adapted to
any preblending  machine $\tfB_{\mathcal{F}}=(\scR,B, \tfS)$ for $\mathcal{F}$ (see Definition \ref{def:cu-blending}) with splitting $\tfS=(P,Q)$ given by canonical projections $P$ and $Q$.
\end{rem}

\subsection{Preblending machines}
\label{ss:preblendermachines}
We now introduce preblending machines and state their robustness, see Remark~\ref{r.robustpreblender}.

\begin{defi}[Preblending machine] \label{def:cu-blending}
Let
$\mathcal{F}=\{\phi_i \colon U_i \to M, \, i=1, \dots, k\}$ be a family  of local
 diffeomorphisms of a manifold $M$, an open subset $N$ of $M$, and
a positive integer $d_c < d=\dim (M)$. Consider
\begin{enumerate}[label=$\bullet$, leftmargin=0.5cm]
\item
a family
$\scR=\{R_{\ti}\}_{\ti \in I}$
 of (nonempty)  compact subsets of $N$,
 where $R_\ti \subset U_\ti$ and $I$ is  a finite subset
of $\Sigma^\ast_k$ with at least two elements,
\item
a $(d_{c},d-d_{c})$-splitting $\tfS=(P,Q)$  of $N$, and
\item
a  $\tfS$-rectangle $B \subset N$.
\end{enumerate}
The three-tuple $ \tfB_{\mathcal{F}}=(\scR,B, \tfS)$ is a
{\em{preblending machine}} for $\mathcal{F}$  if:
\begin{enumerate}[itemsep=0.2cm,leftmargin=1.2cm, label=(PB\arabic*)]
    \item
    for every $\ti \in I$ there are
    $\mu_{\ti}, \gamma_{\ti}$, with
    $0<\mu_{\ti}^{-1}<\gamma_{\ti}$, such that
    the map $\phi_{\ti}$ is a $C^1$ embedding from an open neighbourhood  of $R_{\ti}$ into
 $N$
    which is $(\tfS,\mu_{\ti},\gamma_{\ti})$-dominated on $R_{\ti}$, 
    \item
for every $\ti \in I$,  the set $R_{\ti}$ is a $(P,Q\circ \phi_{\ti})$-rectangle such that
$$
Q(R_{\ti}) \Subset Q(B), \quad (Q\circ \phi_{\ti})(R_{\ti}) = Q(B), \quad \text{and} \quad
(P\circ \phi_{\ti}) (R_{\ti}) \Subset P(B),
$$
\item
the family $\{\mathrm{int}(P(R_{\ti})) \}_{\ti\in I}$ covers $\overline{P(B)}$.
\end{enumerate}
We call $B$ the {\em{preblending region}},
$I$ the {\em{symbols}},
$d_c$  {\em{the central dimension}}, and 
 $(\mu_\ti)_{\ti \in I}$, $(\gamma_\ti)_{\ti \in I}$
  the {\em{dominated constants}} of  $\tfB_{\mathcal{F}}$.
A {\em{Lebesgue number}} of  $\tfB_{\mathcal{F}}$ is any
Lebesgue number of the covering  $\{\mathrm{int}(P(R_{\ti}))\}_{\ti \in I}$. 
The preblending machine is {\em{expanding}} if  $\mu_\ti>1$ and $\gamma_\ti>1$ for every
$\ti\in I$.  
\end{defi}

\begin{rem}[Robustness of preblending machines]
\label{r.robustpreblender}
Let $ \tfB_{\mathcal{F}}=(\scR,B, \tfS)$ be a preblending machine for the family 
$\mathcal{F}=\{ \phi_i\colon U_i \to M, i=1,\dots, k\}$, where $\scR = \{R_{\ti}\}_{\ti\in I}$.
For each $\ti \in I$, consider a
 small neighbourhood $V_{\ti}$ of $R_{\ti}$, $\overline V_\ti \subset U_\ti$, and a compact subset $K_{\ti} \subset \mathrm{int}(R_{\ti})$. 
 Then, for every  $i=1,2,\dots, k$, there exist a $C^1$ neighbourhood $\mathcal{U}_{i}$ of $\phi_i$ such that every family $\mathcal{G}=\{ \varphi_i\colon U_i \colon M,  i=1,\dots , k\}$ with $\varphi_i \in \mathcal{U}_i$, has
 a preblending machine 
 $ \tfB_{\mathcal{G}}=(\scR',B, \tfS)$ where  $\scR' = \{ R'_{\ti} \}_{\ti \in I}$ is a family of sets with $K_{\ti}\subset R'_{\ti} \subset V_{\ti}$, $\ti \in I$, where $V_{\ti}$ is contained in $U_{\mathcal{G}, \ti}$ (the set $U_\ti$ corresponding to the
 family $\mathcal{G}$).
\end{rem}

\begin{rem}\label{r.dc=0}
%
%
Considering  $d_c=d$, $P=\mathrm{id}$, and $Q$ as the trivial projection, Definition~\ref{def:cu-blending} can be restated as follows:
\begin{enumerate}[label=(PB'\arabic*), leftmargin=1.2cm, itemsep=0.2cm]
\item For every $\ti\in I$, the map $\phi_\ti$ is a $C^1$ embedding on a neighborhood $U$ of $R_{\ti}$ satisfying $\|D\phi_\ti(v)\|\leq \mu^{-1}_{\ti} \|v\|$ for every $v\in T_xU$.
\item For every $\ti\in I$, it holds that $\phi_{\ti}(R_\ti)\Subset B$.
\item \label{e.coverPB3} $\overline{B} \subset \bigcup_{\ti\in I} \mathrm{int}(R_\ti)$.
\end{enumerate}
If the family $\mathcal{F}$ is invertible, the conditions above recover the definition of a blending region for~$\mathcal{F}^{-1}\eqdef \{\phi_1^{-1},\dots,\phi_k^{-1}\}$ without assuming contraction for~$\mathcal{F}^{-1}$ (i.e., expansion for $\mathcal{F}$). Contracting blending regions were introduced in~\cite{NasPuj:12,BarKiRai:14} (see also~\cite[Definition 6.1]{BaFaMaSa:17}). Later, in \cite[Definition~5.2]{BarRai:17}, the authors considered blending regions that contract in some directions and expand in others, while still providing a cover as in (PB'3). Note that Definition~\ref{def:cu-blending} requires neither contraction nor expansion. Furthermore, the covering condition~(PB3) is of a different nature, as it only requires covering the projection of the set $B$. These differing features are depicted in Figure~\ref{fig:blending-regions}.
\end{rem}

\begin{figure}[h]
\begin{overpic}[scale=.4,
]{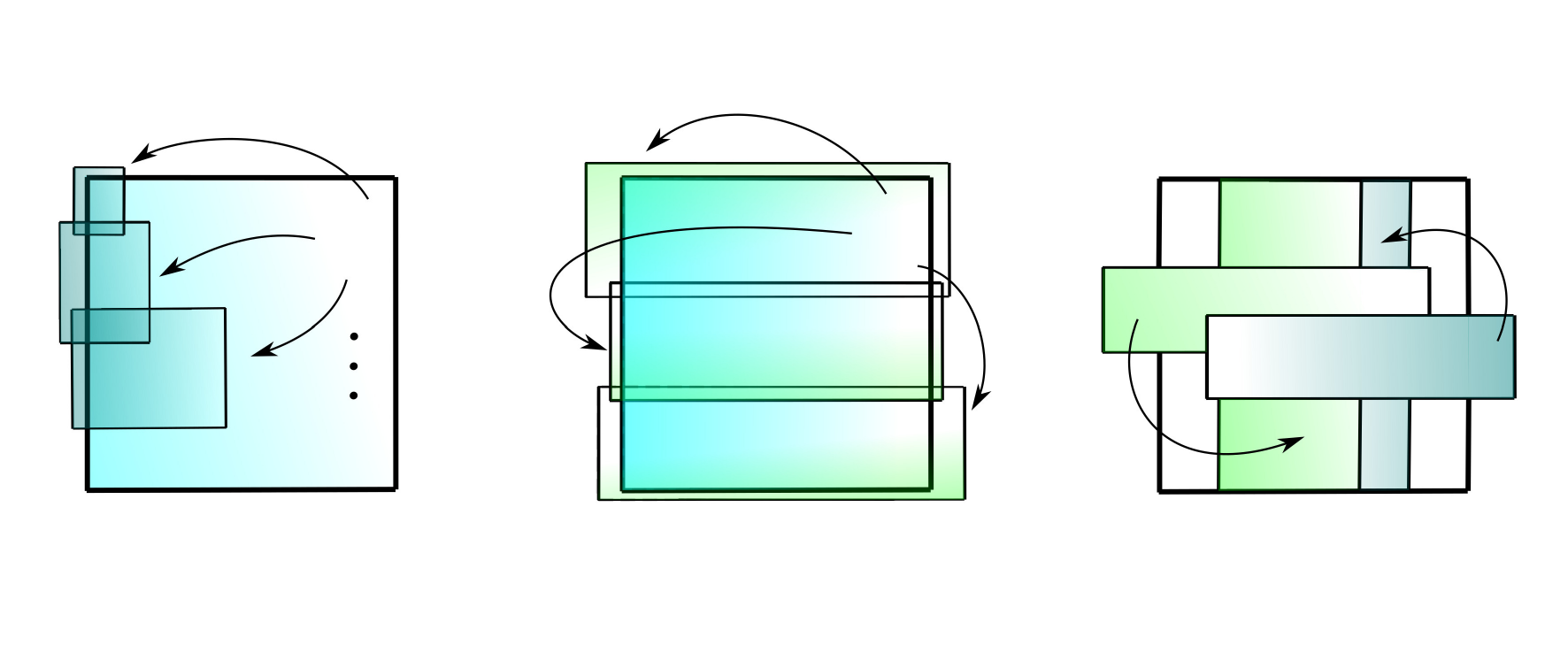}
		\put(13,35){\large $\phi_1$}
		\put(12,27.5){\large $\phi_2$}	
		\put(16,22){\large $\phi_3$}	
		\put(21,25){\Large $B$}
	        \put(13.5,5){\large (a)}
		\put(46,36){\large $\phi_1$}
		\put(32,26){\large $\phi_2$}	
		\put(62,23.5){\large $\phi_3$}	
	        \put(55,25){\Large $B$}
	        \put(48,5){\large (b)}
		\put(95.5,27){\large $\phi_1$}
		\put(69,13){\large $\phi_2$}	
		\put(90.5,27.5){\Large $B$}
		  \put(82.5,5){\large (c)}
                 \end{overpic}
		\vspace{-.6cm}
  \caption{(a) Contracting blending region. (b) Non-contractive blending region.  (c) Preblender region}\label{fig:blending-regions}
		\label{fig:blending-regions}
\end{figure}

\subsection{Preblending machines leading to split blending machines}
\label{ss.rumboalasblendingmachines}
Theorem~\ref{thm:blending-yield-blenders} states how 
 preblending machines associated to the fiber maps of skew-product over horseshoes  lead to 
split blending machines of tower maps. 

We start by defining the latter element.
%
Given a skew-product map $\Phi={A}\ltimes \mathcal{F}$, 
where $\mathcal{F}= \{\phi_i\colon U_i \to M, i=1, \dots, k\}$,
as in Definition~\ref{d.skew},
consider  the tupla
$\torT=(\Phi,
\mathcal{R}, I)$, where
$I$ is a finite subset of $\Sigma^\ast_k$,
$\mathcal{R}=\{R_{\mathtt{i}}\}_{\mathtt{i}\in I}$ is a family  of
(nonempty)
 compact subsets of $\mathsf{D}\times M$
(recall \eqref{e:setD})
 with
$
{R}_{\mathtt{i}} \subset \mathsf{H}_{\ti}\times U_\ti$ for every $\ti \in I$.

\begin{defi}[Skew-product tower map]
\label{d.r.towermap}
The {\em{skew-product tower map $F=F_\torT$ associated to $\torT=(\Phi,
\mathcal{R}, I )$}} is defined by
\begin{equation}
\label{e.towermap}
F  \colon
\bigcup_{\ti \in I} {R}_{\ti} \to \mathsf{D}  \times M,
\qquad
F|_{{R}_{\ti}}  \eqdef \Phi^{|\ti|}|_{{R}_\ti} =
{A}_{\ti} \times \phi_{\ti},
\end{equation}
\end{defi}
By Remark~\ref{rem:Phij}, this map is well defined.

\begin{thm} 
[Split blending machines for tower maps]
\label{thm:blending-yield-blenders}
Let $\mathcal{F} =\{\phi_1,\dots, \phi_k\}$ be
a family of
$C^1$ local diffeomorphisms of a manifold $M$ 
and $A$ a $(k,d_{s},d_{u},\nu)$-horseshoe map with domain
 $\mathsf{D}=[0,1]^{d_{s}}\times [0,1]^{d_{u}}= \mathsf{H} \times \mathsf{V}$
 with elements as in Definition~\ref{ld.affine}.
 Consider
\begin{enumerate}[label=$\bullet$, leftmargin=0.5cm]
\item
a preblending  machine $\tfB_{\mathcal{F}}=(\scR,B,\tfS)$
 with central dimension
$d_c\leq d= \dim (M)$  for
$\mathcal{F}$, where $\scR=\{R_{\ti}\}_{\ti\in I}$, $I \subset \Sigma^\ast_k$, and $\tfS=(P,Q)$,
\item  the skew-product tower map $F$ associated to
$({A} \ltimes \mathcal{F}, \mathcal{R},I)$,  with
$$
\mathcal{R}=\{\bfR_{\ti}\}_{\ti \in I}, \qquad
\bfR_{\ti}\eqdef \mathsf{H}_{\ti}\times R_{\ti},  \quad  \mbox{where $\mathsf{H}_\ti$ as in \eqref{e.otrosHi}.}
$$
\end{enumerate}
 
 Assume that
the stretching of $A$ is  adapted to $\tfB_{\mathcal{F}}$. Then there is $\theta_0>0$ such that for every $0<\theta\leq \theta_0$ the tuple
$\fB_{\tfB_{\mathcal{F}}, F}=(\mathcal{R},
Z, \fS, \fC, \theta)$
 is a split blending machine for  $F$ with  dimension $(d_s,d_{c},d-d_c+d_u)$,
where
\begin{enumerate}
\item [\emph{(1)}]
$Z=\mathsf{D} \times B$, 
\item [\emph{(2)}]
$\fS=({P}_{cs},Q_u)$
is the $(d_{s}+d_c, d-d_c+ d_{u})$-splitting of
$\mathbb{R}^{d_{s}} \times \mathbb{R}^{d_{u}}\times M$
given by
\begin{equation}
\label{eq.nopair}
\begin{split}
P_{cs}(x^{s},x^{u},x)
&\eqdef \big(\Pi_{s}(x^{s},x^{u}),P(x)\big)
=\big(x^{s},P(x)\big)
\in \mathbb{R}^{d_{s}} \times \mathbb{R}^{d_c}  \quad \text{and} \quad  \\
Q_{u}(x^{s},x^{u},x)
&\eqdef \big(Q(x),\Pi_{u}(x^{s},x^{u})\big)=
\big(Q(x),x^{u}\big)
\in \mathbb{R}^{d-d_c} \times \mathbb{R}^{d_{u}},
\end{split}
\end{equation}
 where $(x^{s},x^{u},x)\in \mathbb{R}^{d_{s}} \times \mathbb{R}^{d_{u}}\times M$ and
 \item[\emph{(3)}]
 $\fC=(\pi_s,\pi_c)$ is the
  $(d_{s},d_{c})$-splitting  of $\mathbb{R}^{d_{cs}}$, $d_{cs}\eqdef d_{c}+d_s$,  given by the standard projections
 from $\mathbb{R}^{d_{cs}}= \mathbb{R}^{d_s} \times  \mathbb{R}^{d_c}$
 into $\mathbb{R}^{d_{s}}$ and $\mathbb{R}^{d_c}$ respectively.
\end{enumerate}
 \end{thm}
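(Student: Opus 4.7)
The proof is a verification of the four conditions (BM1)--(BM4) of Definition~\ref{def:cu-blender} for the tuple $\fB_{\tfB_\mathcal{F}, F}$. Three of these follow by routine bookkeeping from the preblending machine axioms (PB1)--(PB3) together with the product structure $F|_{\bfR_\ti} = A_\ti \times \phi_\ti$, while (BM1) is the substantive point where the horseshoe hyperbolicity and the $\tfS$-domination of $\phi_\ti$ must be combined via the adapted stretching hypothesis.

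I would begin with the easier conditions. Using $\mathsf{H}_\ti = H\times V_\ti$ and the formulas for $P_{cs}, Q_u$ in \eqref{eq.nopair}, one has $P_{cs}(\bfR_\ti) = H\times P(R_\ti)$, $Q_u(\bfR_\ti) = Q(R_\ti)\times V_\ti$, $P_{cs}\circ F(\bfR_\ti) = H_\ti\times P(\phi_\ti(R_\ti))$, and $Q_u\circ F(\bfR_\ti) = Q(\phi_\ti(R_\ti))\times A_\ti^u(V_\ti)$. By (PB2) and Definition~\ref{ld.affine}, the inclusions $Q(R_\ti)\Subset Q(B)$, $Q(\phi_\ti(R_\ti)) = Q(B)$, $V_\ti\Subset V$, $A_\ti^u(V_\ti) = V$, $H_\ti\Subset H$, and $P(\phi_\ti(R_\ti))\Subset P(B)$ together yield (BM2), once $\bfR_\ti$ is recognized as a $(P_{cs}, Q_u\circ F)$-rectangle, which follows from the product structure since $R_\ti$ is a $(P, Q\circ\phi_\ti)$-rectangle (PB2) and $\mathsf{H}_\ti$ is itself a product. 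For (BM3), the set $P_{cs}(\bfR_\ti) = H\times P(R_\ti)$ is trivially a $\fC$-rectangle because $\fC = (\pi_s,\pi_c)$ are the canonical projections; the containment $P_s\circ P_{cs}(Z)\Subset P_s\circ P_{cs}(\bfR_\ti)$ reduces to an identity in the horseshoe factor and is absorbed by enlarging the rectangles slightly in the $\mathbb{R}^{d_s}$-direction, as permitted by Remark~\ref{r.robustpreblender}. For (BM4), $P_c\circ P_{cs}(\bfR_\ti) = P(R_\ti)$ and $P_c\circ P_{cs}(Z) = P(B)$, so the covering requirement is precisely (PB3), with associated Lebesgue number $\Delta > 0$; since $\|DP_c\| = \|D\pi_c\| = 1$, setting $\theta_0 \eqdef \Delta / (2\,\mathrm{diam}(Q_u(Z)))$ guarantees $\theta\,\|DP_c\|\,\mathrm{diam}(Q_u(Z)) < \Delta$ for every $\theta\leq\theta_0$.

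The core of the proof is (BM1): the $(\fS,\theta)$-cone invariance of $F|_{\bfR_\ti}$. Since $DF$ is block diagonal with respect to $\mathbb{R}^{d_s}\oplus\mathbb{R}^{d_u}\oplus T_xM$, a vector $(v^s,v^u,w)$ lies in $C_\theta(\,\cdot\,,\fS)$ exactly when $\max(\|DQ(w)\|,\|v^u\|) \leq \theta\,\max(\|v^s\|,\|DP(w)\|)$, with a symmetric description for $C_\theta(\,\cdot\,,\fS^t)$. I would verify the invariance under $DF^{-1}$ as follows: Definition~\ref{ld.affine} gives $\|(DA_\ti^u)^{-1}v^u\|\leq \nu^{|\ti|}\|v^u\|$ and $\|(DA_\ti^s)^{-1}v^s\|\geq \nu^{-|\ti|}\|v^s\|$, while the $(\tfS,\mu_\ti,\gamma_\ti)$-domination of $\phi_\ti$ from (PB1), together with Lemma~\ref{l.new-dilatation}, yields $\|DQ(D\phi_\ti^{-1}w)\|\leq \gamma_\ti^{-1}\|DQ(w)\|$ and $\|DP(D\phi_\ti^{-1}w)\|\geq \mu_\ti\|DP(w)\|$ on the appropriate cone-restricted vectors. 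The adapted stretching hypothesis $\nu^{|\ti|}<\mu_\ti^{-1}<\gamma_\ti<\nu^{-|\ti|}$ (see Definition~\ref{d.stretching} and Remark~\ref{rem.pp}) is exactly what is needed to deduce
$$
\max\bigl(\gamma_\ti^{-1},\,\nu^{|\ti|}\bigr)\,<\,\min\bigl(\mu_\ti,\,\nu^{-|\ti|}\bigr),
$$
so that the cone ratio strictly contracts by a factor $\theta'/\theta<1$ that, since $I$ is finite, is uniform in $\ti$. The $\fS^t$-invariance is handled symmetrically using the dual inequalities of Lemma~\ref{l.new-dilatation}. Shrinking $\theta_0$ if necessary so that the smaller cone still lies inside $C_\theta$ completes (BM1).

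The main obstacle is the bookkeeping inside (BM1): the $\fS$-cones intertwine the horseshoe stable and unstable directions with the $P$- and $Q$-directions of $\phi_\ti$, and the dilatation bounds in Lemma~\ref{l.new-dilatation} are only effective on vectors outside the kernels of $DP$ and $DQ\circ\phi_\ti$. I would therefore split cone vectors $(v^s,v^u,w)$ according to whether the horseshoe coordinate or the center coordinate dominates the $P_{cs}$-norm (respectively the $Q_u$-norm), estimate each regime via the appropriate inequality from the adapted stretching, and patch the estimates into a single uniform contraction of the cone ratio. Verifying that the resulting constant $\theta'/\theta$ is bounded away from one uniformly over $\ti\in I$ is the most delicate step; this is where the finiteness of $I$, the block-diagonal structure of $DF$, and the strict inequalities defining the adapted stretching all come essentially into play.
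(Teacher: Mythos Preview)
Your proposal is correct and follows essentially the same route as the paper: verify (BM2)--(BM4) by direct computation from (PB1)--(PB3) plus the product structure, and establish (BM1) by combining the horseshoe estimates $\|DA_\ti^s\|,\|(DA_\ti^u)^{-1}\|<\nu^{|\ti|}$ with the dilatation inequalities for $\phi_\ti$ from Lemma~\ref{l.new-dilatation}, using the adapted stretching $\nu^{|\ti|}<\mu_\ti^{-1}<\gamma_\ti<\nu^{-|\ti|}$.

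The one organisational difference worth noting concerns (BM1). The paper does not verify cone invariance for a specific small $\theta$ with a case split; instead it isolates Lemma~\ref{l.cone}, which shows that $A_\ti\times\phi_\ti$ is $(\fS,\mu_\ti,\gamma_\ti)$-\emph{dominated} on $\bfR_\ti$. The proof of that lemma checks the two pointwise inequalities of Lemma~\ref{l.new-dilatation} for the product map (this is where the same case analysis you describe---which coordinate realises the max---appears), and then invokes the \emph{converse} direction of Lemma~\ref{l.new-dilatation} (using $\mu_\ti\gamma_\ti>1$) to conclude domination, i.e.\ the $(\fS,\theta,\mu_\ti,\gamma_\ti)$-cone condition for \emph{every} $\theta>0$. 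Thus (BM1) holds for all $\theta$, and the only constraint producing $\theta_0$ is (BM4). Your ``shrinking $\theta_0$ if necessary'' for (BM1) is therefore unnecessary; packaging the computation as domination via the reverse implication of Lemma~\ref{l.new-dilatation} avoids the ad hoc cone bookkeeping you flag as the main obstacle. For (BM3), the paper simply records the equality $(\pi_s\circ P_{cs})(\bfR_\ti)=H=(\pi_s\circ P_{cs})(Z)$ and moves on, without the enlargement argument you sketch.
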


We say that the split blending machine $\fB_{\tfB_{\mathcal{F}}, F}$ is associated 
to $\tfB_{\mathcal{F}}$ and $F$.

\begin{proof}
We begin with a technical lemma.

\begin{lem} \label{l.cone}
Consider  $(p,q)$-splittings $\tfS_\tau=(P_\tau,Q_\tau)$ of open manifolds $N_\tau$, $\tau=1,2$,
an open set $U$ of $N_1$, a subset
$R\subset U$, and  a $C^1$ embedding  $\phi \colon U \to N_2$.
  Assume that:
\begin{enumerate}[label=$\bullet$, leftmargin=0.5cm]
\item $\phi$ is $(\tfS_1,\tfS_2,\mu,\gamma)$-dominated on $R$,
\item there exists a $(k,d_s,d_u,\nu)$-horseshoe map $A$  with legs  $\{\mathsf{H}_{\ti}\}_{\ti\in \Sigma^\ast_k}$,
\item for some $j\geq 1$, the $(k^j,d_s,d_u,\nu^j)$-horseshoe map $A^j$ is $(\tfS_1,\tfS_2)$-adapted to $\phi$ on $R$.
\end{enumerate}
Let  $\Pi_{s}$ and $\Pi_{u}$ be the canonical
projections from $\mathbb{R}^{d_{s}} \times \mathbb{R}^{d_{u}}$ into $\mathbb{R}^{d_{s}}$ and $\mathbb{R}^{d_{u}}$, respectively. Define the maps
\begin{align*}
P_{cs,\tau}(x^{s},x^{u},x)
&\eqdef\big(\Pi_{s}(x^{s},x^{u}),P_\tau(x)\big)
=\big(x^{s},P_\tau(x)\big)
\in \mathbb{R}^{d_{s}} \times \mathbb{R}^{p}  \quad \text{and} \quad  \\
Q_{u,\tau}(x^{s},x^{u},x)
&\eqdef\big(Q_{\tau}(x),\Pi_{u}(x^{s},x^{u})\big)=
\big(Q_{\tau}(x),x^{u}\big)
\in \mathbb{R}^{q} \times \mathbb{R}^{d_{u}},
\end{align*}
 where $(x^{s},x^{u},x)\in \mathbb{R}^{d_{s}} \times \mathbb{R}^{d_{u}}\times N_\tau$ with  $\tau=1,2$.

 Then for every $\ti \in \Sigma^*_k$ with $|\ti|=j$, the map $A_{\ti} \times \phi$ is $(\fS_1,\fS_2,\mu,\gamma)$-dominated  on ${R}_\ti\eqdef \mathsf{H}_\ti \times R$, where
$\fS_\tau\eqdef ({P}_{cs,\tau},Q_{u,\tau})$
is a $(d_{s}+p,q+d_{u})$-splitting of  $\mathbb{R}^{d_{s}}\times \mathbb{R}^{d_{u}}\times N_\tau$, $\tau=1,2$.
\end{lem}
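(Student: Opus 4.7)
The plan is to verify the $(\fS_1, \fS_2, \mu, \gamma)$-domination of $A_\ti \times \phi$ on $R_\ti = \mathsf{H}_\ti \times R$ by checking the two dilatation inequalities of Lemma~\ref{l.new-dilatation} for this product map, and then invoking the converse direction of that lemma to pass from dilatation to domination. The converse direction requires $\mu\gamma > 1$, which is guaranteed by the adaptedness assumption $\nu^j < \mu^{-1} < \gamma < \nu^{-j}$. The proof is thus a careful but essentially routine computation exploiting the product structure; the only subtlety is the case analysis in the box-norm maxima.

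Fix $\ti \in \Sigma^*_k$ with $|\ti| = j$, a point $y = (x^s, x^u, x) \in R_\ti$, and a tangent vector $v = (v^s, v^u, v^M)$. The product structure of $A_\ti \times \phi = (A^s_\ti \times A^u_\ti) \times \phi$ and the definitions of $P_{cs,\tau}$, $Q_{u,\tau}$ give, in the box norm,
\[
\|DP_{cs,1}(v)\| = \max\bigl(\|v^s\|,\, \|DP_1(v^M)\|\bigr),
\]
\[
\|D(P_{cs,2} \circ (A_\ti \times \phi))(v)\| = \max\bigl(\|DA^s_\ti v^s\|,\, \|D(P_2 \circ \phi)(v^M)\|\bigr),
\]
with analogous formulas for $Q_{u,1}$ and $Q_{u,2} \circ (A_\ti \times \phi)$ involving $A^u_\ti$ and $Q_\tau$. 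From \eqref{eq:nu} and Remark~\ref{r.horseshoeafin}, the horseshoe factor contributes $\|DA^s_\ti v^s\| \leq \nu^j \|v^s\|$ and $\|DA^u_\ti w^u\| \geq \nu^{-j} \|w^u\|$. Since $\phi$ is $(\fS_1, \fS_2, \mu, \gamma)$-dominated on $R$, the first part of Lemma~\ref{l.new-dilatation} yields
\[
\|D(P_2 \circ \phi)(v^M)\| < \mu^{-1} \|DP_1(v^M)\|, \qquad \|D(Q_2 \circ \phi)(w^M)\| > \gamma \|DQ_1(w^M)\|,
\]
for $v^M \notin \ker D(P_2 \circ \phi)$ and $w^M \notin \ker DQ_1$, respectively.

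Combining the horseshoe bounds with these fiber bounds, and using the adaptedness inequalities $\nu^j < \mu^{-1}$ and $\gamma < \nu^{-j}$, a short case analysis (splitting according to whether the horseshoe component, respectively the fiber component, vanishes or lies in the relevant kernel) shows
\[
\|D(P_{cs,2} \circ (A_\ti \times \phi))(v)\| < \mu^{-1} \|DP_{cs,1}(v)\| \quad \text{if } v \notin \ker D(P_{cs,2} \circ (A_\ti \times \phi)),
\]
\[
\|D(Q_{u,2} \circ (A_\ti \times \phi))(w)\| > \gamma \|DQ_{u,1}(w)\| \quad \text{if } w \notin \ker DQ_{u,1}.
\]
For instance, in the first inequality, when both $v^s \ne 0$ and $v^M \notin \ker D(P_2 \circ \phi)$ each of the two maxima is bounded strictly by $\mu^{-1}$ times its counterpart; when only one component is active, the bound follows from the corresponding single-factor estimate (the horseshoe one using $\nu^j < \mu^{-1}$). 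The second inequality is handled symmetrically using $\gamma < \nu^{-j}$.

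Since $\mu^{-1} < \gamma$ gives $\mu \gamma > 1$, the converse part of Lemma~\ref{l.new-dilatation} applies to the map $A_\ti \times \phi$ and the splittings $\fS_1, \fS_2$, upgrading the two dilatation inequalities above to the full $(\fS_1, \fS_2, \mu, \gamma)$-domination on $R_\ti$. The main (only) obstacle is bookkeeping the case distinctions so that both sides of each inequality are interpreted correctly when some component vanishes, which is what makes strict inequality propagate through the maxima.
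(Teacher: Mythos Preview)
Your proposal is correct and follows essentially the same route as the paper's proof: both apply the forward direction of Lemma~\ref{l.new-dilatation} to $\phi$ to obtain the fiber dilatation bounds, combine them with the horseshoe estimates $\|DA^s_\ti v^s\|\le\nu^j\|v^s\|$ and $\|DA^u_\ti v^u\|\ge\nu^{-j}\|v^u\|$ via the adaptedness inequalities $\nu^j<\mu^{-1}<\gamma<\nu^{-j}$, carry out the same case analysis on the box-norm maxima to secure strict inequality outside the relevant kernels, and then invoke the converse direction of Lemma~\ref{l.new-dilatation} (using $\mu\gamma>1$) to conclude domination.
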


\begin{proof} Fix $\ti \in \Sigma^*_k$ with $|\ti|=j$.
 Write  $\bar x=(x^{s}, x^{u},x)\in \mathbb{R}^{d_{s}} \times \mathbb{R}^{d_{u}} \times N_1$
 and  $\mathbf{v}=(v^{s},v^{u},v)\in \mathbb{R}^{d_{s}}\times \mathbb{R}^{d_{u}}  \times  T_xN_1 $.
Since $\phi$ is $(\tfS_1,\tfS_2,\mu,\gamma)$-dominated  on $R$, 
Lemma~\ref{l.new-dilatation} implies that
\begin{equation}
\label{e.first-second}
\|D(P_2\circ \phi )(v)\| \leq \mu^{-1} \|DP_1(v)\| \quad \text{and} \quad \|D(Q_2\circ \phi)(v)\| \geq  \gamma \|DQ_1(v)\|.
\end{equation}
Using  these inequalities and $\nu^{|\ti|} < \mu^{-1} < \gamma < \nu^{-|\ti|}$, recalling Definition \ref{d.stretching}, and setting $f=A_\ti\times \phi$, we get
%
\begin{equation}\label{e.tercera}
\begin{aligned}
   \|D({P}_{cs,2}\circ f)(\mathbf{v})\|  &= \|\big(\nu^{|{\ti}|} v^{s}, D(P_2\circ \phi)(v) \big )\| 
   \overset{\eqref{e.first-second}}{\leq} 
   \|
   \big(\nu^{|{\ti}|} v^{s}, \mu^{-1} DP_1(v)\big )\| \\
     &\leq   \mu^{-1} \|\big(v^{s},DP_1(v)\big)\| = \mu^{-1} \|DP_{cs,1}(\mathbf{v})\|
\end{aligned}
\end{equation}
and
\begin{equation}\label{e.quarta}
\begin{aligned}
   \|D({Q}_{u,2}\circ f)(\mathbf{v})\| &= \|\big(D(Q_2\circ \phi)(v),\nu^{-|{\ti}|} v^{u} \big)\| \overset{\eqref{e.first-second}}{\geq}  \|
   \big(\gamma DQ_1(v), \nu^{-|{\ti}|} v^{u} \big)\| \\
     &\geq  \gamma \|\big(DQ_1(v),v^{u}\big)\| = \gamma \|DQ_{u,1}(\mathbf{v})\|.
\end{aligned}
\end{equation}

 Take $\mathbf{v} =(v^{s}, v^{u},v) \not\in \mathrm{Ker}\, D_{\bar{x}}(P_{cs,2}\circ f)$.
 We claim that the inequality in~\eqref{e.tercera} is strict for $\mathbf{v}$.
Note first that if $\|\nu^{|{\ti}|} v^{s}\|\leq \|D(P_2\circ \phi)(v)\|$, then $\|D(P_{cs,2}\circ f)(\mathbf{v})\|=\|D(P_2\circ \phi)(v)\|$
and consequently   $v\not \in \mathrm{Ker}\, D_x(P_2\circ\phi)$.  Thus, by Lemma~\ref{l.new-dilatation}, the first inequality in~\eqref{e.first-second} is strict and hence, the first inequality in~\eqref{e.tercera} is also strict. On the other hand, if $\|D(P_2\circ \phi)(v)\| < \|\nu^{|\ti|}v^{s}\|$, then $v^{s} \not =0$, thus the second inequality in~\eqref{e.tercera} is strict since $\nu^{|\ti|} < \mu^{-1}$.
Arguing analogously, we get that if $\mathbf{v}\not\in \mathrm{Ker}\, D_{\bar x} Q_{u,1}$ then the inequality in~\eqref{e.quarta} is strict.
Lemma~\ref{l.new-dilatation} now implies that   $f=A_{\ti}\times \phi$ is $(\fS_1,\fS_2,\mu,\gamma)$-dominated on ${R}_\ti$.
 \end{proof}


To prove the theorem,
  we wiil see that (PB1)-(PB3) in
  Definition~\ref{def:cu-blending}
 for $\tfB_{\mathcal{F}}$ imply (BM1)-(BM4) in
  Definition~\ref{def:cu-blender} for $\fB_{F}$ for  every $\theta>0$
  small enough. 

  \noindent{{\em{Proof of \emph{(BM1)}:}}} As the stretching $\nu$ of $A$ is adapted to $\tfB_{\mathcal{F}}$, it follows that
$\phi_\ti$ is $(\tfS,\mu_\ti,\gamma_\ti)$-dominated  on $R_\ti$ with $\nu^{|\ti|}<\mu^{-1}_\ti<\gamma_\ti<\nu^{-|\ti|}$   for every $\ti \in I$, (PB1).
 Lemma~\ref{l.cone} implies that $A_{\ti} \times \phi_\ti$ satisfies the $(\fS,\theta)$-cone invariance condition on $\bfR_\ti$ for 
 every $\theta>0$. As
 $F|_{\bfR_\ti}=A_\ti\times \phi_\ti$, it follows that $F$ satisfies the $(\fS,\theta)$-cone invariance condition on $\bfR_\ti$.

  \noindent{\em{Proof of \emph{(BM2)}:}}
  Recall $H_\ti$ and $V_\ti$ in \eqref{e.HiVi}.
  Note that $R_{\ti}$ is a $(P,Q\circ \phi_{\ti})$-rectangle for every $\ti\in I$, (PB2).
As
$F|_{\bfR_{\ti}} =
{A}_{\ti} \times \phi_{\ti}$,
it follows that  every $\bfR_{\ti}$ is a $({P}_{cs},{Q}_u\circ F)$-rectangle. Similarly, 
we get
 \begin{align*}
 {Q}_u(\bfR_{\ti}) &= Q(R_{\ti})\times  V_{{\ti}} \overset{\text{(PB2)}}{\Subset} Q(B)  \times  V \overset{\eqref{eq.nopair}}{=} {Q}_u(Z), \\
 ({Q}_u\circ F)(\bfR_{\ti}) &=(Q\circ \phi_\ti)({{R_{\ti}}}) \times  V  = Q(B) \times V \overset{\eqref{eq.nopair}}{=}  {Q}_u(Z), \\
 ({P}_{cs}\circ F)(\bfR_{\ti}) &= H_{\ti} \times (P\circ \phi_{\ti})(R_{\ti}) \Subset H \times P(B) \overset{\eqref{eq.nopair}}{=}  {P}_{cs}(Z),
 \end{align*}
 proving  (BM2).

\noindent{{\em{Proof of \emph{(BM3)}:}}}
 Note that ${P}_{cs}\colon \mathbb{R}^{d_{s}+d_{u}}\times  M \to \mathbb{R}^{d_{s}+d_c}= \mathbb{R}^{d_{cs}} $.
 Thus we can consider the composition $\pi_s\circ {P}_{cs}$ to get
\begin{align*}
(\pi_s\circ {P}_{cs})(\bfR_{\ti}) =
\pi_s (H \times R_\ti)
= H= \pi_s( H \times B)= (\pi_s \circ P_{cs})(Z),
\end{align*}
obtaining (BM3).

\noindent{{\em{Proof of \emph{(BM4)}:}}}
Note that we have
\begin{align*}
(\pi_c\circ {P}_{cs})(Z) &\overset{\eqref{eq.nopair}}{=} P(B) \overset{\text{(PB3)}}{\Subset} \bigcup_{i\in I} P (R_{\ti}) = \bigcup_{i\in I} (\pi_c\circ {P}_{cs})(\bfR_{\ti}).
\end{align*}

Let  $\Delta$ be a Lebesgue number of the
covering $\{\mathrm{int}\,(\pi_c\circ {P}_{cs})(\bfR_{\ti})\}_{\ti \in I}$ of $(\pi_c\circ P_{cs})(Z)$.
Taking $\theta_0$ small enough such that $\Delta > \theta_0 \, \mathrm{diam}\, Q_{u}(Z)$,
condition
(BM4) holds for every $\theta \in (0,\theta_0]$. This completes the proof of the theorem.
 \end{proof}

\section{Connecting split blending machines}
\label{s.connectingsplitblenders}
We present a refined version of \cite[Theorem 2.13]{Asa:22}, establishing a robust intersection between the stable sets 
of two split blending machines\footnote{We follow the approach in \cite{Asa:22}, where these machines are defined and connected in an abstract way for two different dynamics, which is why their stable sets may intersect.}, see Theorem~\ref{t.Asaoka} stated in Section~\ref{ss.csbm} and whose proof is completed  in Section~\ref{ss.keylemma}. These two machines model, respectively, a $\cs$-blender  and a $\cu$-blender. The construction relies on a transition map connecting the two machines, see  Section~\ref{ss.transitionsbetweenmachines}. The conditions satisfied by this transition map (see Definition~\ref{d.adaptedt}) resemble those in \cite[Theorem 2.13]{Asa:22}. The core of the argument
 is Lemma~\ref{l.asaokamodificado}, proved in Section~\ref{ss.keylemma}.

 \subsection{Transitions between split blending machines}
 \label{ss.transitionsbetweenmachines}
 For what follows, 
 recall the notation for a split blending machine $\fB = (\mathcal{R}, Z, \fS, \fC, \theta)$,
 where $\mathcal{R}= \{ R_{\ti}\}_{\ti \in I}$,
  in Definition~\ref{def:cu-blender}. 

\begin{defi}[Adapted transitions between split blending machines]
\label{d.adaptedt}
Let $p,q, d_{c,1}, d_{c,2}$ be positive integers with $d_{c,1}<p$  and $d_{c,2} < q$. For $\tau=1,2$, consider open 
 manifolds
$N_\tau$, open subsets $U_\tau$ of $N_\tau$, and split blending machines $\fB_{\tau}=(\mathcal{R}_\tau,
Z_\tau, \fS_\tau, \fC_\tau,  \theta)$
  for  $C^1$ maps $f_\tau \colon U_\tau \to N_\tau$, where
 \begin{enumerate}[label=$\bullet$, leftmargin=0.5cm]
\item
$\fS_1=(P_{cs,1},Q_{u,1})$
is  a $(p,q)$-splitting  of $N_1$,
\item
$\fS_2=(Q_{cs,2},P_{u,2})$  is a
$(q, p)$-splitting  of $N_2$,
\item
$\fC_1=(P_s,P_c)$  is a $(p-d_{c,1},d_{c,1})$-splitting  of $\mathbb{R}^p$,
and
\item
$\fC_2=(Q_s,Q_c)$ is a $(q-d_{c,2},d_{c,2})$-splitting of $\mathbb{R}^q$.
\end{enumerate}

Consider
a compact set $R_\sharp$,
 an open neighbourhood $U_\sharp$ of $R_\sharp$,
with $U_\sharp \subset U_2$,
 and a $C^1$ embedding  $f_\sharp \colon U_\sharp \to N_1$.
We say that the 
triple $(R_\sharp, U_\sharp, f_\sharp)$
is a 
{\em{adapted transition between the split blending machines $\fB_{1}$ and $\fB_{2}$}}
if there are
 positive numbers $\mu_\tau$, $\gamma_\tau$, $\tau \in \{ 1, 2, \sharp\}$,
satisfying conditions (AT1)--(AT5) below: 
\begin{enumerate}[label=(AT\arabic*),  leftmargin=1cm] 
\item
$\mu_1\gamma_2 >1$ and $\mu_2\gamma_1 > 1$,
\item for $\tau=1,2$, the map $f_\tau$ satisfies 
the $(\tfS_\tau,\theta,\mu_\tau,\gamma_\tau)$-cone condition  on every set $R_{\tau,\ti}$ of  $\mathcal{R}_\tau\eqdef \{R_{\tau,\ti}\}_{\ti\in I_{\tau}}$,
where
\begin{equation}\label{e.bh}
\begin{split}
&\mbox{$\tfS_1 \eqdef (P_c\circ P_{cs,1},Q_{c}\circ Q_{u,1})$ is a $(d_{c,1},d_{c,2})$-pair of $N_1$ and}\\
&\mbox{
$\tfS_2 \eqdef (Q_c \circ Q_{cs,2},P_{c}\circ P_{u,2})$ is a $(d_{c,2},d_{c,1})$-pair of $N_2$,
}
\end{split}
\end{equation}
\item
$R_\sharp$ is a ($P_{u,2},Q_{u,1} \circ f_\sharp)$-rectangle such that
\begin{align*}
&\mathrm{(a)}\,\, Q_{cs,2}(R_\sharp) \Subset Q_{cs,2}(Z_2), & & \mathrm{(b)}\,\,P_{u,2}(R_\sharp) = P_{u,2}(Z_2), \\
 & \mathrm{(c)}\,\,(P_{cs,1}\circ f_\sharp)(R_\sharp)\Subset P_{cs,1}(Z_1),
&   &\mathrm{(d)}\,\, (Q_{u,1}\circ f_\sharp)(R_\sharp) = Q_{u,1}(Z_1),
\end{align*}
\item $f_\sharp$ satisfies the 
$(\tfS_2^t,\tfS^{}_1,\theta, \mu_\sharp, \gamma_\sharp)$-cone condition~on~$R_\sharp$,
\item
 the Lebesgue number
 $\Delta_\tau$  of $\fB_\tau$  satisfy
\begin{align*}
\theta\, \diam ( Q_{c}\circ Q_{u,1})(Z_1) &+ \mu_{\sharp}^{-1} \diam (P_{c}\circ P_{u,2})(Z_2) < \Delta_1, \\
\theta \,\diam (P_{c}\circ P_{u,2})(Z_2) &+ \gamma^{-1}_\sharp \diam (Q_{c}\circ Q_{u,1})(Z_1) < \Delta_2.
\end{align*}
\end{enumerate}
\end{defi}

\begin{remark}
\label{r.dimensionsofmachines}
The split blender machines $\fB_{1}$ and $\fB_{2}$ in Definition~\ref{d.adaptedt} 
have dimensions   $(p-d_{c,1}, d_{c,1}, q)$  and
 $(q-d_{c,2}, d_{c,2}, p)$, respectively.
\end{remark}

\begin{remark}[Definition~\ref{d.adaptedt} versus  Theorem 2.13 in \cite{Asa:22}]
\label{r.adapted}
Each condition (ATi) mimics hypothesis (i) in \cite{Asa:22}.
The main difference occurs between (AT5) and (5).
Condition (5)   in \cite{Asa:22} deals with the diameter of the unstable projection of the superposition region of the blending machine.
This condition is not suitable for our purposes (as these diameters cannot be small enough). 
In contrast, condition (AT5)
depends only on the central projection, which
in our setting can be done as small as required.
\end{remark}

\begin{remark}
\label{r.compareasaoka}
Bearing in mind item (ii) in Remark~\ref{r.onthedefbm},
if in Definition~\ref{d.adaptedt} we take
$P_c=\mathrm{Id}$, $Q_c=\mathrm{Id}$ and $P_s$ and $Q_s$ the trivial projections, we recover
the conditions in \cite{Asa:22}. 
\end{remark}

\subsection{Connecting split blending machines} 
\label{ss.csbm}
 Recall the definition of the stable set $ \Lambda^s(\fB_f)$ in
\eqref{e.stablesetblender}. The theorem below does not explicitly involve 
the dimensions of the machines.
The fact that these dimensions match is implicitly contained in the definition of the transition map, see also
Remark~\ref{r.dimensionsofmachines}.

\begin{thm}\label{t.Asaoka}
For $\tau=1,2$, consider  split blender machines $\fB_{\tau}$
  for  $C^1$ maps $f_\tau \colon U_\tau \to N_\tau$.
Let $(R_\sharp, U_\sharp, f_\sharp)$ be an adapted transition between $\fB_{1}$ and 
$\fB_{2}$. Then
 $$
 \Lambda^s(\fB_1) \cap f_\sharp(R_\sharp \cap \Lambda^s(\fB_2))\not = \emptyset.
 $$
Moreover, this intersection is robust:
given a compact neighbourhoods $K_\tau$ of\,
$\bigcup_{\ti \in I_\tau}R_{\tau,\ti}$,
$\tau=1,2$,
there exist $C^1$ neighbourhoods $\mathcal{U}_\tau$ of $f_\tau$,
 $\tau=1,2,\sharp$,
 such that
$$
\Lambda^s(K_1,g_1) \cap g_\sharp(\Lambda^s(K_2,g_2)) \not=\emptyset \quad
\text{for every $g_\tau \in \mathcal{U}_\tau$.}
$$
\end{thm}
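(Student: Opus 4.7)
The plan is to adapt Asaoka's proof of \cite[Theorem 2.13]{Asa:22} by running a forward iteration scheme through both blending machines simultaneously, connected by the transition $f_\sharp$, and using the covering conditions (BM3)--(BM4) \emph{only along the central projections} $P_c$ and $Q_c$, which is exactly what the split setting requires. The target is to construct, by induction on $n \geq 0$, a nested family of nonempty compact rectangles $R^{(n)}$ whose diameters along the relevant central directions shrink sufficiently at each step; any accumulation point of this chain will be the witness to the claimed intersection.

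More precisely, I would proceed as follows. Fix symbol sequences $(\ti_n)_{n\geq 1}\subset I_1$ and $(\tj_n)_{n\geq 1}\subset I_2$ to be chosen inductively. At the $n$-th stage, denote
\[
G_n \eqdef f_1^{\,n-1}\circ \cdots \circ f_1 \quad \text{and} \quad H_n \eqdef f_2^{\,n-1}\circ \cdots \circ f_2,
\]
and look for a $(P_{u,2}, Q_{u,1}\circ f_\sharp)$-rectangle $R^{(n)}\subset R_\sharp$ with the property that $H_n(R^{(n)})$ is contained in $R_{2,\tj_n}$ and $G_n(f_\sharp(R^{(n)}))$ is contained in $R_{1,\ti_n}$. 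The base case $n=0$ is just $R^{(0)}=R_\sharp$, which by (AT3) is a $(P_{u,2},Q_{u,1}\circ f_\sharp)$-rectangle compatible with both $Z_1$ and $Z_2$. The inductive step uses Lemma~\ref{l.simpleasa} applied to the chain of embeddings alternating $f_2^{n}$, $f_\sharp$, $f_1^{n}$ acting on the already-built rectangles. To invoke Lemma~\ref{l.simpleasa}, the new indices $\ti_{n+1}$ and $\tj_{n+1}$ must be chosen so that the images $(P_c\circ P_{u,2})(H_n(R^{(n)}))$ and $(Q_c\circ Q_{u,1})(G_n(f_\sharp(R^{(n)})))$ are contained in the interior of $\pi_c$-projections of the rectangles of $\fB_1$ and $\fB_2$, respectively.

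The crux — and the main obstacle — is controlling these central diameters uniformly in $n$, which is exactly what (AT5) is designed for. Applying Lemma~\ref{lem.new.asaoka} to the cone condition in (AT4) and to (AT2) yields the recursive bounds
\[
\begin{split}
\diam\bigl((Q_c\circ Q_{u,1})(G_n(f_\sharp(R^{(n)})))\bigr) &\leq \theta\,\diam\bigl((P_c\circ P_{u,2})(H_n(R^{(n)}))\bigr) \\
&\quad + \mu_1^{-1}\,\diam\bigl((Q_c\circ Q_{u,1})(Z_1)\bigr),
\end{split}
\]
and a symmetric bound for the other projection using $\gamma_\sharp^{-1}$ and $\gamma_2^{-1}$. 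Iterating these and using (AT1), i.e.\ $\mu_1\gamma_2>1$ and $\mu_2\gamma_1>1$, together with $\theta<1$, forces each diameter to eventually satisfy
\[
\theta\,\diam\,(P_c\circ P_{u,2})(Z_2)+\gamma_\sharp^{-1}\,\diam\,(Q_c\circ Q_{u,1})(Z_1) < \Delta_2
\]
and the analogous bound below $\Delta_1$. By Lemma~\ref{lclaim:b3} (the Lebesgue-number covering lemma from the proof of Proposition~\ref{p.alsogenerates}, applied separately to each machine along the central projection), there exist admissible choices of $\ti_{n+1}$ and $\tj_{n+1}$ carrying the induction forward. This is the content of the auxiliary Lemma~\ref{l.asaokamodificado} alluded to in Section~\ref{ss.keylemma}.

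Finally, the sets $R^{(n)}$ form a nested family of nonempty compact rectangles, by Lemma~\ref{l.simpleasa}. Take any $y_* \in \bigcap_{n\geq 0} R^{(n)} \subset R_\sharp$. By construction, $H_n(y_*) \in R_{2,\tj_n}$ for all $n$, so $y_* \in R_\sharp\cap \Lambda^s(\fB_2)$; and $G_n(f_\sharp(y_*))\in R_{1,\ti_n}$ for all $n$, so $f_\sharp(y_*)\in \Lambda^s(\fB_1)$. This proves the first assertion. For robustness, observe that all inequalities appearing in (AT1)--(AT5), in the cone conditions of (BM1), and in the strict inclusions of (BM2)--(BM3) are $C^1$-open; by shrinking the $R_{\tau,\ti}$ slightly so that their interiors still cover the relevant projections and choosing $\mathcal{U}_\tau$ small enough that the perturbed rectangles remain inside $K_\tau$ and still satisfy (AT5) with slightly modified constants, the same iterative construction goes through for any $g_\tau\in \mathcal{U}_\tau$, yielding the persistent intersection $\Lambda^s(K_1,g_1)\cap g_\sharp(\Lambda^s(K_2,g_2))\neq\emptyset$.
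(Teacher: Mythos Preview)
Your overall architecture matches the paper's: build a nested family of nonempty compact rectangles by iterating Lemma~\ref{l.simpleasa}, choosing indices at each step via the Lebesgue-number covering along the central projections (your appeal to Lemma~\ref{lclaim:b3} is exactly the right mechanism, and is what the paper does). The gap is in how you control the central diameters.

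The paper does \emph{not} argue that central diameters ``eventually'' fall below the (AT5) thresholds after iterating recursive inequalities. Instead it isolates an invariant condition $(\sharp)$ on pairs $(R,h)$, with $h=f_1^n\circ f_\sharp\circ f_2^{-n}$, that is preserved at each step with the \emph{same} constants $\mu_\sharp,\gamma_\sharp$. The point is that when one passes from $h$ to $h'=f_1\circ h\circ f_2^{-1}$, the composed cone condition has constants $\mu_1\mu_\sharp\gamma_2\geq\mu_\sharp$ and $\gamma_1\gamma_\sharp\mu_2\geq\gamma_\sharp$ by (AT1), so $(\sharp2)$ is preserved; and the equalities $P_{u,2}(R)=P_{u,2}(Z_2)$, $(Q_{u,1}\circ h)(R)=Q_{u,1}(Z_1)$ in $(\sharp3)$ are also preserved (this is checked via (BM2)). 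With this invariant in hand, Lemma~\ref{lem.new.asaoka} applied to $(R,h)$ gives at \emph{every} step
\[
\diam\bigl((P_c\circ P_{cs,1})(h(R))\bigr)\;\leq\;\theta\,\diam\bigl((Q_c\circ Q_{u,1})(Z_1)\bigr)+\mu_\sharp^{-1}\,\diam\bigl((P_c\circ P_{u,2})(Z_2)\bigr)\;<\;\Delta_1,
\]
and symmetrically for $\Delta_2$. Your displayed bound has the wrong projection on the left (you need the $P_c\circ P_{cs,1}$-diameter of the image in $N_1$, which is what governs the choice of $\ti_{n+1}$, not the $Q_c\circ Q_{u,1}$-diameter), the wrong constant ($\mu_\sharp$, not $\mu_1$), and relies on an unproven recursive convergence. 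Without identifying the preserved invariant and the role of (AT1) in maintaining the cone constants, the inductive step is not justified as written.
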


\begin{proof}
The proof follows modifying the inductive process in the proof of~\cite[Theorem~2.13]{Asa:22}.
For completeness, we will repeat the steps of this proof and explain the changes.
Write
\begin{enumerate}[label=$\bullet$, leftmargin=0.5cm]
\item
$\fS_1=(P_{cs,1},Q_{u,1})$
is  a $(p,q)$-splitting  and
$\fS_2=(Q_{cs,2},P_{u,2})$  is a
$(q, p)$-splitting,
\item
$\fC_1=(P_s,P_c)$  is a $(p-d_{c,1},d_{c,1})$-splitting 
and
$\fC_2=(Q_s,Q_c)$ is a $(q-d_{c,2},d_{c,2})$-splitting.
\end{enumerate}

Let $R$ be a
compact subset of $U_2$ and
$h$ a
$C^1$ embedding 
from an open neighbourhood of
 $R$ to $U_1$. We say that the pair $(R,h)$
satisfies the {\em{condition $(\sharp)$}}  if:
\begin{enumerate}
  \item [$(\sharp1)$]$R$ is a $(P_{u,2},Q_{u,1} \circ h)$-rectangle;
  \item [$(\sharp2)$] $h$ satisfies 
  the $(\tfS^t_2,\tfS_1, \theta , \mu_\sharp,\gamma_\sharp)$-cone condition~on~$R$, where $\tfS_1$ and $\tfS_2$ are as in~\eqref{e.bh};
  \item [$(\sharp3)$] it holds
  \begin{align*}
& \mathrm{(a)}\,\, Q_{cs,2}(R) \Subset Q_{cs,2}(Z_2), &  &\mathrm{(b)}\,\,  P_{u,2}(R) = P_{u,2}(Z_2),
\\
&   \mathrm{(c)}\,\,(P_{cs,1}\circ h)(R) \Subset P_{cs,1}(Z_1), &
& \mathrm{(d)}\,\,(Q_{u,1}\circ h)(R) = Q_{u,1}(Z_1).
\end{align*}
\end{enumerate}

\begin{remark} 
\label{r.satisfiessharp}
By (AT3)--(AT4), the pair
$(R_\sharp, f_\sharp)$ satisfies
condition $(\sharp)$.
\end{remark}

\begin{lem}
\label{l.asaokamodificado}
Consider a pair $(R, h)$ satisfying condition ($\sharp$).
Then there are $\ti \in I_1$ and $\tk \in I_2$ such that the
pair $(R',h')$  given by
$$
R' \eqdef f_2\big(
 R_{2,\tk} \cap
 R
 \cap
h^{-1}(R_{1,\ti})
\big) \quad \text{and} \quad  h'\eqdef f_1 \circ h \circ f^{-1}_2
$$
satisfies  condition ($\sharp$).
\end{lem}


We postpone the proof of the lemma and
complete the proof of the theorem assuming it.  By Remark~\ref{r.satisfiessharp},
the pair $(R_0, h_0) =(R_\sharp, f_\sharp)$ satisfies condition ($\sharp$). Let $(R_1, h_1)$ be the pair
 obtained from $(R_0, h_0)$ using Lemma~\ref{l.asaokamodificado}. Since this pair also satisfies
 condition ($\sharp$), we can argue inductively to get a sequence of pairs $(R_n, h_n)$, where each
 $(R_n, h_n)$ is obtained from $(R_{n-1}, h_{n-1})$ using Lemma~\ref{l.asaokamodificado}.
 In this construction we have
 $$
 h_{n} \eqdef  f_1 \circ h_{n-1} \circ f_{2}^{-1} =f_1^n \circ f_\sharp \circ f_2^{-n} 
 \quad \mbox{and} \quad  
 R_{n} \eqdef  f_2\big(R_{2,\tk_{n-1}}\cap R_{n-1} \cap h_{n-1}^{-1}(R_{1,\ti_{n-1}})  \big).
 $$
 where $\ti_{n} \in I_1$ and $\tk_n \in I_2$.
 Each set $R_n$ is a non-empty compact subset of $U_2$, then $\bigcap_{n\geq 0} f_\sharp \circ f_2^{-n} (R_n)\neq \emptyset$. 
 Finally, we have that
 $$
  \bigcap_{n\geq 0}  f_1^{-n} (R_{1,\ti_n}) \cap f_\sharp \left(R_\sharp \cap
   \bigcap_{n\geq 0} f_2^{-n} (R_{2,\tk_n})
   \right) 
=
\bigcap_{n\geq 0} f_\sharp \circ f_2^{-n} (R_n)   
   \ne \emptyset .
 $$
Recalling \eqref{e.stablesetblender},   note that
 $$
  \bigcap_{n\geq 0}  f_1^{-n} (R_{1,\ti_n}) \subset  \Lambda^s(\fB_1)
  \quad \mbox{and} \quad  \bigcap_{n\geq 0} f_2^{-n} (R_{2,\tk_n})
  \subset \Lambda^s(\fB_2)
$$
implies that 
$$
 \Lambda^s(\fB_1) \cap f_\sharp(R_\sharp \cap \Lambda^s(\fB_2))\not = \emptyset,
 $$
proving the first part of the theorem.

The robustness statement is standard and follows exactly as in \cite{Asa:22}.
\end{proof}

To complete the proof the theorem we prove the above lemma.

\subsection{Proof of Lemma \ref{l.asaokamodificado}}
\label{ss.keylemma}
According to  (b) and (d) in condition $(\sharp3)$, we have that 
\begin{equation}
\label{e.agua}
P_{u,2}(R) = P_{u,2}(Z_2)\qquad\mbox{and}\qquad (Q_{u,1}\circ h)(R) = Q_{u,1}(Z_1).
\end{equation}
Since 
$h$ satisfies 
  the $(\tfS^t_2,\tfS_1, \theta , \mu_\sharp,\gamma_\sharp)$-cone condition~on~$R$ (condition $(\sharp2)$), then 
Lemma~\ref{lem.new.asaoka},  \eqref{e.agua}, and condition (AT5) 
imply that
\begin{equation*}
\begin{split}
\diam (P_{c}\circ P_{cs,1})(h(R)) \leqslant \theta\, \diam ( Q_{c}\circ Q_{u,1})(Z_1) &+ \mu_{\sharp}^{-1} \diam (P_{c}\circ P_{u,2})(Z_2) \overset{\text{(AT5)}}{<}  \Delta_1, \\
\diam (Q_{c}\circ Q_{cs,2})(R) \leqslant\theta \,\diam (P_{c}\circ P_{u,2})(Z_2) &+ \gamma^{-1}_\sharp \diam (Q_{c}\circ Q_{u,1})(Z_1) \overset{\text{(AT5)}}{<} \Delta_2,
\end{split}
\end{equation*}
where 
$\Delta_1$ and $\Delta_2$ are
 Lebesgue numbers of the coverings $\{\mathrm{int}\, \big(P_{c}\circ P_{cs,1} (R_{1,\ti})\big)\}_{\ti\in I_1}$ 
 of $\overline{P_{c}\circ P_{cs,1}(Z_1)}$ and 
  $\{\mathrm{int}\,\big(Q_{c}\circ Q_{cs,2}(R_{2,\tk})\big)\}_{\tk\in I_2}$ of $\overline{Q_{c}\circ Q_{cs,2} (Z_2)}$, respectively. 
  Thus, there exist 
$\ti\in I_{1}$ and $\tk \in  I_2$ such that
\begin{equation} \label{eq.1rev}
P_{c} \circ P_{cs,1}(h(R))
\subset \mathrm{int}\,\big(P_{c}  \circ P_{cs,1}(R_{1,\ti}) \big)
\quad
\text{and}
\quad
 Q_{c} \circ Q_{cs,2}(R)
\subset \mathrm{int}\, \big(Q_{c}\circ Q_{cs,2}(R_{2,\tk})\big).
\end{equation}

Now we will verify that $(R',h')$ satisfy the condition ($\sharp$) for the rectangles $R_{1,\ti}, R_{2,\tk}$ obtained above.  We begin with condition ($\sharp$1).
Consider
\begin{itemize}
\label{e.splittingsytodoeso}
\item the $(p,q)$-splittings $ \fL_1, \fL_2$ of $N_2$ define by 
$$\fL_i= (P_i, Q_i)\eqdef (P_{u,2},Q_{cs,2})= \fS_2^t, \quad i=1,2,$$
\item the $(p,q)$-splittings $ \fL_3, \fL_4$ of $N_1$ defined by
$$
\fL_j=(P_j,Q_j)\eqdef (P_{cs,1}, Q_{u,1})=\fS_1, \quad j=3,4,
$$ 
\item the compact sets $R_1\eqdef f_2(R_{2,\tk})$,  $R_2\eqdef R$ and $R_3=R_{1,\ti}$,
\item the $C^1$ embeddings 
\begin{itemize}
\item $g_1=\mbox{the restriction of $f_2^{-1}$ to an open neighbourhood of $R_1$}$,
\item $g_2=h$,
\item $g_3=\mbox{the restriction of $f_1$ to an open neighbourhood of $R_3$}$,
\end{itemize}
and define $G_1\eqdef g_1$, $G_2\eqdef g_2\circ g_1$, and $G_3\eqdef  g_3\circ g_2\circ g_1=h'$.
\end{itemize}
If for $i=1,2,3$, the set $R_i$ is a $(P_i, Q_{i+1} \circ g_i)$-rectangle such that
$$
P_{i+1}\big(g_i(R_i)\big) \subset \mathrm{int}\big(P_{i+1}(R_{i+1})\big)\quad\mbox{and}\quad 
Q_{i+1}(R_{i+1}) \subset \mathrm{int}\big(Q_{i+1}(g_i(R_{i}))\big),
$$
then Lemma~\ref{l.simpleasa} guarantees that 
$$
R_*\eqdef R_1\cap G_1^{-1}(R_2)\cap G_2^{-1}(R_3)=f_2\big(
 R_{2,\tk} \cap
 R
 \cap
h^{-1}(R_{1,\ti})
\big)=R'
$$
is a $(P_{u,2},Q_{u,1}\circ h')$-rectangle with
$$
P_{u,2}(R') =  P_{u,2}\big(f_2(R_{2,\tk})\big)\quad\mbox{and}\quad (Q_{u,1}\circ h')(R') = Q_{u,1}\big(f_1(R_{1,\ti})\big),
$$
 obtaining condition ($\sharp$1), as desired. We will now check the part ``if'' above.

Conditions ``$R_i$ is a $(P_i, Q_{i+1} \circ g_i)$-rectangle" translate to:
\begin{enumerate}[itemsep=0.2cm,leftmargin=1.2cm, label=(r\arabic*)]
 \item
 $f_2(R_{2,\tk})$ is a $(P_{u,2}, Q_{cs,2} \circ f_2^{-1})$-rectangle,
 \item
 $R$ is a $(P_{u,2}, Q_{u,1} \circ h)$-rectangle, and
  \item
 $R_{1,\ti}$ is a
 $(P_{cs,1}, Q_{u,1} \circ f_1)$-rectangle.
\end{enumerate}
The inclusion conditions are translated as follows:
\begin{enumerate}[itemsep=0.2cm,leftmargin=1.2cm, label=(i\arabic*)]
\item
$P_2 \big(g_1(R_1)\big) \subset \mathrm{int} \big(P_2 (R_2)\big) \mapsto P_{u,2} \big(f_2^{-1}(f_2 (R_{2,\tk}) )\big)= P_{u,2} (R_{2,\tk}) \subset
\mathrm{int} \big(P_{u,2}(R)\big)$,
\item
$P_3 \big(g_2(R_2)\big) \subset \mathrm{int} \big(P_3 (R_3)\big) \mapsto P_{cs,1} \circ h (R) \subset \mathrm{int} \big(P_{cs,1} (R_{1,\ti})\big)$,
\item
$Q_2 (R_2) \subset \mathrm{int} \big(Q_2 (g_1 (R_1))\big) \mapsto
Q_{cs,2} (R)  \subset \mathrm{int} \big(Q_{cs,2}(f_2^{-1} (f_2 (R_{2,\tk}) ))\big)=
\mathrm{int} \big(Q_{cs,2} (R_{2,\tk}) \big),
$
\item
$Q_3 (R_3) \subset \mathrm{int} \big(Q_3 (g_2 (R_2))\big) \mapsto Q_{u,1} (R_{1, \ti}) \subset \mathrm{int}\big (Q_{u,1} (h(R))\big)$.
\end{enumerate}

%

\begin{claim}
\label{f.claim}
Conditions $\mathrm{(r1)-(r3)}$ and $\mathrm{(i1)-(i4)}$ hold.
\end{claim}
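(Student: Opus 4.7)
The plan is to verify each of the seven conditions separately, grouping them by which ingredient of the hypothesis they use. Conditions (r2) and (r3) are essentially tautological: (r2) is the first clause $(\sharp 1)$ of condition $(\sharp)$, which we are assuming for the pair $(R,h)$; (r3) is the rectangle condition in (BM2) applied to $R_{1,\ti} \in \mathcal{R}_1$, since $\fS_1 = (P_{cs,1},Q_{u,1})$. For (r1), I would observe that by (BM2) applied to $R_{2,\tk} \in \mathcal{R}_2$ with splitting $\fS_2 = (Q_{cs,2},P_{u,2})$, the set $R_{2,\tk}$ is a $(Q_{cs,2}, P_{u,2}\circ f_2)$-rectangle; the general fact that images of rectangles by the ``transition'' component become rectangles with swapped roles then gives that $f_2(R_{2,\tk})$ is a $(P_{u,2}, Q_{cs,2}\circ f_2^{-1})$-rectangle, which is exactly (r1).

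The two ``easy'' inclusions (i1) and (i4) are obtained by matching a strict inclusion from (BM2) with an equality from condition $(\sharp 3)$. For (i1), condition $(\sharp 3)(b)$ gives $P_{u,2}(R) = P_{u,2}(Z_2)$, while (BM2) applied to $R_{2,\tk}$ in $\fB_2$ yields $P_{u,2}(R_{2,\tk}) \Subset P_{u,2}(Z_2) = P_{u,2}(R)$, hence $P_{u,2}(R_{2,\tk}) \subset \mathrm{int}(P_{u,2}(R))$. The argument for (i4) is symmetric: $(\sharp 3)(d)$ gives $(Q_{u,1}\circ h)(R) = Q_{u,1}(Z_1)$, while (BM2) applied to $R_{1,\ti}$ in $\fB_1$ gives $Q_{u,1}(R_{1,\ti}) \Subset Q_{u,1}(Z_1)$.

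The main obstacle is (i2) and (i3), whose proofs I would model on the argument of Lemma~\ref{lclaim:b3}. These inclusions cannot be read off directly; one must reconstruct an inclusion in $P_{cs,1}$ (respectively $Q_{cs,2}$) out of the two inclusions obtained separately along the two factors of the center splitting $\fC_1$ (respectively $\fC_2$). For (i2), I would proceed in three steps. First, the choice of $\ti \in I_1$ in \eqref{eq.1rev} provides the inclusion along the central factor:
\[
(P_c \circ P_{cs,1})(h(R)) \subset \mathrm{int}\bigl( (P_c \circ P_{cs,1})(R_{1,\ti})\bigr).
\]
Second, along the stable factor, (BM3) for $\fB_1$ gives $(P_s\circ P_{cs,1})(Z_1) \Subset (P_s\circ P_{cs,1})(R_{1,\ti})$, and condition $(\sharp 3)(c)$ gives $P_{cs,1}(h(R)) \Subset P_{cs,1}(Z_1)$; composing with $P_s$ yields
\[
(P_s\circ P_{cs,1})(h(R)) \Subset (P_s\circ P_{cs,1})(R_{1,\ti}).
\]
Third, I would exploit the fact that $P_{cs,1}(R_{1,\ti})$ is a $\fC_1 = (P_s,P_c)$-rectangle (again by (BM3)) to combine the two component inclusions into
\[
P_{cs,1}(h(R)) \subset \fC_1^{-1}\bigl((P_s\circ P_{cs,1})(h(R)) \times (P_c\circ P_{cs,1})(h(R))\bigr) \subset \mathrm{int}\bigl(P_{cs,1}(R_{1,\ti})\bigr),
\]
which is (i2). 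For (i3), the proof is completely parallel, with $R_{2,\tk}$ in place of $R_{1,\ti}$, with splitting $\fC_2 = (Q_s,Q_c)$ in place of $\fC_1$, and using condition $(\sharp 3)(a)$ in place of $(\sharp 3)(c)$, together with the central inclusion from \eqref{eq.1rev}. With all of (r1)--(r3) and (i1)--(i4) in hand, Lemma~\ref{l.simpleasa} applies and yields condition $(\sharp 1)$ for $(R',h')$ as claimed.
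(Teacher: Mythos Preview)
Your proof proposal is correct and follows essentially the same approach as the paper: (r1) is obtained by rewriting the $(Q_{cs,2}, P_{u,2}\circ f_2)$-rectangle property of $R_{2,\tk}$ via the diffeomorphism $f_2$, (i1) and (i4) pair a (BM2) strict inclusion with the equalities in $(\sharp 3)(b),(d)$, and (i2) and (i3) are deduced by combining the central-factor inclusion from \eqref{eq.1rev} with the stable-factor inclusion coming from $(\sharp 3)(a),(c)$ together with (BM3), and then using that $P_{cs,1}(R_{1,\ti})$ (resp.\ $Q_{cs,2}(R_{2,\tk})$) is a $\fC_1$- (resp.\ $\fC_2$-) rectangle. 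The paper's proof differs only in presentation, not in substance.
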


\begin{proof}
We just check condition (r1), conditions (r2)-(r3) follow similarly and their proofs omitted.
Note that $f_2 (R_{2, \tk})$ is diffeomorphic to $R_{2,\tk}$, which is (by definition) a
$(Q_{cs,2}, P_{u,2} \circ f_2)$-rectangle. Therefore
$R_{2,\tk}$ (and hence $f_2 (R_{2, \tk})$) is diffeomorphic to
$$
\big(Q_{cs,2} (R_{2,\tk})\big)\times \big(P_{u,2} \circ f_2 (R_{2,\tk}) \big)=
 \big(Q_{cs,2} \circ f_2^{-1} (f_2(R_{2,\tk}))\big)\times 
\big(P_{u,2} \circ f_2 (R_{2,\tk}) \big).
$$
This precisely means that  $f_2 (R_{2, \tk})$ is a $(P_{u,2}, Q_{cs,2} \circ f_2^{-1})$-rectangle.

We now check the inclusion properties. We begin proving (i2) and (i3).
For that, the condition ($\sharp$3)  part (a) and (c) implies that
$$
P_{cs,1}(h(R)) \Subset P_{cs,1}(Z_1)
\quad
\mbox{and} \quad Q_{cs,2}(R)\Subset
Q_{cs,2} (Z_2)
$$
and  condition (BM3) in Definition \ref{def:cu-blender} of a blender machine
implies that
$$
P_s\circ P_{cs,1}(Z_1)
\Subset
(P_s\circ P_{cs,1})(R_{1,\ti})
 \qquad \text{and} \qquad
Q_s\circ Q_{cs,2}(Z_2)\Subset (Q_s\circ Q_{cs,2})(R_{2,\tk}).
$$
Therefore
\begin{equation}\label{eq.2rev}
(P_s\circ P_{cs,1})(h(R)) \subset \mathrm{int}\, (P_s\circ P_{cs,1})(R_{1,\ti}) \quad
\text{and} \quad (Q_s\circ Q_{cs,2})(R) \subset \mathrm{int}\, (Q_s\circ Q_{cs,2})(R_{2,\tk}).
\end{equation}
Recalling the diffeomorphims $\fC_1=(P_s,P_c)$ and $\fC_2=(Q_s,Q_c)$, it follows that the two inclusions 
~\eqref{eq.1rev} and~\eqref{eq.2rev} are equivalent to 
\[
\begin{split}
 \fC_1 \big(P_{cs,1}(h(R)\big)    &\subset \mathrm{int} \big(\fC_1(P_{cs,1}(R_{1,\ti}))\big) =
 \fC_1\big(\mathrm{int}\, P_{cs,1}(R_{1,\ti})\big)  \quad  \text{and} \\
 \fC_2 \big(Q_{cs,2}(R)\big)& \subset \mathrm{int} \big(\fC_2 (Q_{cs,2}(R_{2,\tk}))\big) =
 \fC_2 \big(\mathrm{int} (  Q_{cs,2}(R_{2,\tk}))\big).
\end{split}
\]
Hence
\begin{equation} \label{eq:pp1}
{P}_{cs,1}(h(R)) \subset  \mathrm{int} \big({P}_{cs,1}(R_{1,\ti} )\big) \quad \text{and}
\quad {Q}_{cs,2}(R) \subset \mathrm{int} \big( {Q}_{cs,2}(R_{2,\tk})\big)
\end{equation}
and inclusions (i2) and (i3) follow.

For inclusions in (i4) and (i1), by ($\sharp$3) part (b) and (d) and the condition (BM2) in  Definition \ref{def:cu-blender}  of a blender machine, it follows
\begin{equation} \label{eq:pp2}
\begin{split}
Q_{u,1}(R_{1,\ti} ) &\subset  \mathrm{int} (Q_{u,1}(Z_1)) \overset{\text{(d)}}{=} \mathrm{int}
(Q_{u,1}(h(R))) \quad \text{and} \\
 P_{u,2}(R_{2,\tk}) &\subset \mathrm{int} (P_{u,2}(Z_2) ) \overset{\text{(b)}}{=} \mathrm{int} (P_{u,2}(R))
 \end{split}
\end{equation}
and thus (i4) and (i1) hold. Thus inclusions (i1)-(i4) hold, proving the claim.
\end{proof}
Therefore, Claim~\ref{f.claim} imply that condition ($\sharp$1) holds for $(R', h')$, that is, 
$$
R' =
f_2\big(R_{2,\tk}\cap R \cap h^{-1}(R_{1,\ti})   \big) 
$$
is a $(P_{u,2},Q_{u,1}\circ h')$-rectangle.
Moreover, 
 \begin{equation}
 \label{e.tobeused}
 P_{u,2}(R') =  P_{u,2}\big(f_2(R_{2,\tk})\big) \quad \text{and} \quad (Q_{u,1}\circ h')(R') = Q_{u,1}\big(f_1(R_{1,\ti})\big).
 \end{equation}

 Note also that, by the definitions of $R'$ and $h'=f_1\circ h \circ f_2^{-1}$ it holds,
 \begin{equation}
 \label{e.reused}
 R'\subset f_2(R_{2,\tk})
\qquad \mbox{and} \qquad
h'(R') \subset  f_1 (R_{1,\ti}).
 \end{equation}
 From \eqref{e.tobeused}, \eqref{e.reused}, and (BM2) in  Definition \ref{def:cu-blender}  of blender machine, it follows
\begin{itemize}
\item[(a')]
$Q_{cs,2}(R') \overset{\eqref{e.reused}}{\subset}  Q_{cs,2}\big(f_2(R_{2,\tk})\big)
\overset{\text{(BM2)}}{\Subset} Q_{cs,2}(Z_2)$,
\item [(b')]
$P_{u,2}(R')  \overset{\eqref{e.tobeused}}{=} P_{u,2}\big(f_2(R_{2,\tk})\big)
\overset{\text{(BM2)}}{=}
 P_{u,2}(Z_2)$,
\item [(c')]
$(P_{cs,1}\circ h')(R') \overset{\eqref{e.reused}}{\subset} P_{cs,1}\big(f_1(R_{1,\ti})\big)
\overset{\text{(BM2)}}{\Subset}
 P_{cs,1}(Z_1)$, and
\item[(d')]
$(Q_{u,1}\circ h')(R') 
\overset{\eqref{e.tobeused}}{=}
 Q_{u,1}\big(f_1(R_{1,\ti})\big)
\overset{\text{(BM2)}}{=}
 Q_{u,1}(Z_1)$ 
\end{itemize}
Note that (a'),(b'), (c'), and (d') correspond, respectively, to  conditions (a), (b), (c), and (d) in  ($\sharp$3).
Therefore, condition ($\sharp$3) holds for $(R', h')$.

It remains to check condition ($\sharp$2) ($h'$ satisfies 
  the $(\tfS^t_2,\tfS_1, \theta , \mu_\sharp,\gamma_\sharp)$-cone condition~on~$R'$).
Note that the cone conditions for $f_1$, $f_2$, and $h$ in (AT2) in Definition \ref{d.adaptedt} and ($\sharp$2)
imply that  $h'=f_1 \circ h \circ f_2^{-1}$
satisfies the $(\tfS_2^t,\tfS_1,\theta, \mu_1 \mu_{\sharp}\gamma_2,\gamma_1 \gamma_\sharp \mu_2)$-cone condition on $R'$.
As $\mu_1\gamma_2 >1$ and $\mu_2\gamma_1 > 1$
(condition (AT1)  in Definition \ref{d.adaptedt}) we have  that  
$h'$ satisfies the $(\tfS_2^t,\tfS_1,\theta, \mu_{\sharp},\gamma_\sharp)$-cone condition on $R'$.
This ends the proof of the lemma. \hfill \qed


\section{Connecting preblending machines}
\label{s.connectingpreblenders}
We introduce adapted transitions for preblending machines and  show that they give rise to
two connected
 split blending machines associated with skew-product tower  maps, see Theorem~\ref{tct.conecting-preblender}.

\subsection{Transitions between preblending machines}
\label{ss.transitionspreblending}

We introduce transitions between preblender machines, following the spirit of Definition~\ref{d.adaptedt}.
Conditions (at1)-(at5) below mimic conditions (AT1)-(AT5) in that definition.

\begin{defi}[Adapted transitions between preblender machines]
\label{d.adpreblender}
Let $M$ be a manifold of dimension $d\geq 2$ and
$N_1$ and $N_2$ open subsets of $M$.
Let $d_c$ be a positive integer with 
$d_c< d$. 
For $\tau=1,2$, consider
\begin{enumerate}[label=$\bullet$, leftmargin=0.5cm]
  \item
  families $\mathcal{F}_\tau=\{{\phi_{\tau,i} } \colon  i=1,\dots, k_{\tau}\}$, $k_\tau \geq 2$, of
 $C^1$ local diffeomorphisms of $M$, and
 \item
 preblending machines $\tfB_\tau=(\mathscr{R}_\tau, B_\tau,\tfS_\tau)$  for $\mathcal{F}_\tau$,
 where  
 $\tfS_1=(P_1,Q_1)$ is a $(d_c,d-d_c)$-splitting of $N_1$ and
 $\tfS_2=(Q_2,P_2)$ is a $(d-d_c, d_c)$-splitting of $N_2$.
\end{enumerate}
Let
$R_\sharp \subset N_2$ be a compact set, $U_\sharp$ an open neighbourhood of $R_\sharp$,
and  $\phi_{\sharp}\colon U_\sharp \to N_1$ a $C^1$ embedding.
The triple $(R_\sharp, U_\sharp, \phi_\sharp)$ is a {\em{adapted transition to
the preblending machines $\tfB_1$ and $\tfB_2$}}
if there are
positive numbers $\mu_\tau$, $\gamma_\tau$, $\tau = 1, 2, \sharp$,
satisfying conditions (at1)-(at5) below:
\begin{enumerate}[label=(at\arabic*), leftmargin=1cm]
\item
 $\mu_1\gamma_2 >1$ and $\mu_2\gamma_1 > 1$,
\item
for $\tau=1,2$ the map
$\phi_{\tau,\ti}$ is $(\tfS_\tau,\mu_\tau, \gamma_\tau)$-dominated on
every set $R_{\tau,\ti}$ of  $\mathscr{R}_\tau\eqdef \{R_{\tau,\ti}\}_{\ti\in I_{\tau}}$,
  \item
 $R_\sharp$ is a $(P_2,Q_1\circ \phi_{\sharp})$-rectangle satisfying
     \begin{align*}
    &Q_2(R_\sharp) \Subset Q_2(B_2), &  &P_2(R_\sharp) = P_2(B_2), \\
    &(P_1\circ \phi_{\sharp})(R_\sharp) \Subset P_1(B_1), &   &(Q_1\circ \phi_{\sharp})(R_\sharp)=Q_1(B_1),
    \end{align*}
\item
$\phi_{\sharp}$ is
$(\tfS_2^t,\tfS^{}_1,\mu_\sharp, \gamma_\sharp)$-dominated on $R_\sharp$,
 \item
 the Lebesgue numbers $\Delta_1$ and $\Delta_2$  of $\tfB_1$ and $\tfB_2$
satisfy
$$
\mu_{\sharp}^{-1} \diam (P_2(B_2)) < {\Delta_1} \quad \mbox{and} \quad
\gamma^{-1}_\sharp \diam (Q_1(B_1)) < {\Delta_2}.
$$
\end{enumerate}
\end{defi}

\subsection{Connecting split blending machines through preblending machines}
\label{ss.connetcingpreblendingandskew}

The next result is a direct consequece of Theorem~\ref{t.Asaoka}.
To state it, recall Definitions~\ref{ld.affine}, ~\ref{d.stretching},
and ~\ref{d.r.towermap} of a horseshoe map,
adapted stretching, and a skew-product tower map.

\begin{figure}[h]
\begin{overpic}[scale=0.4,
]{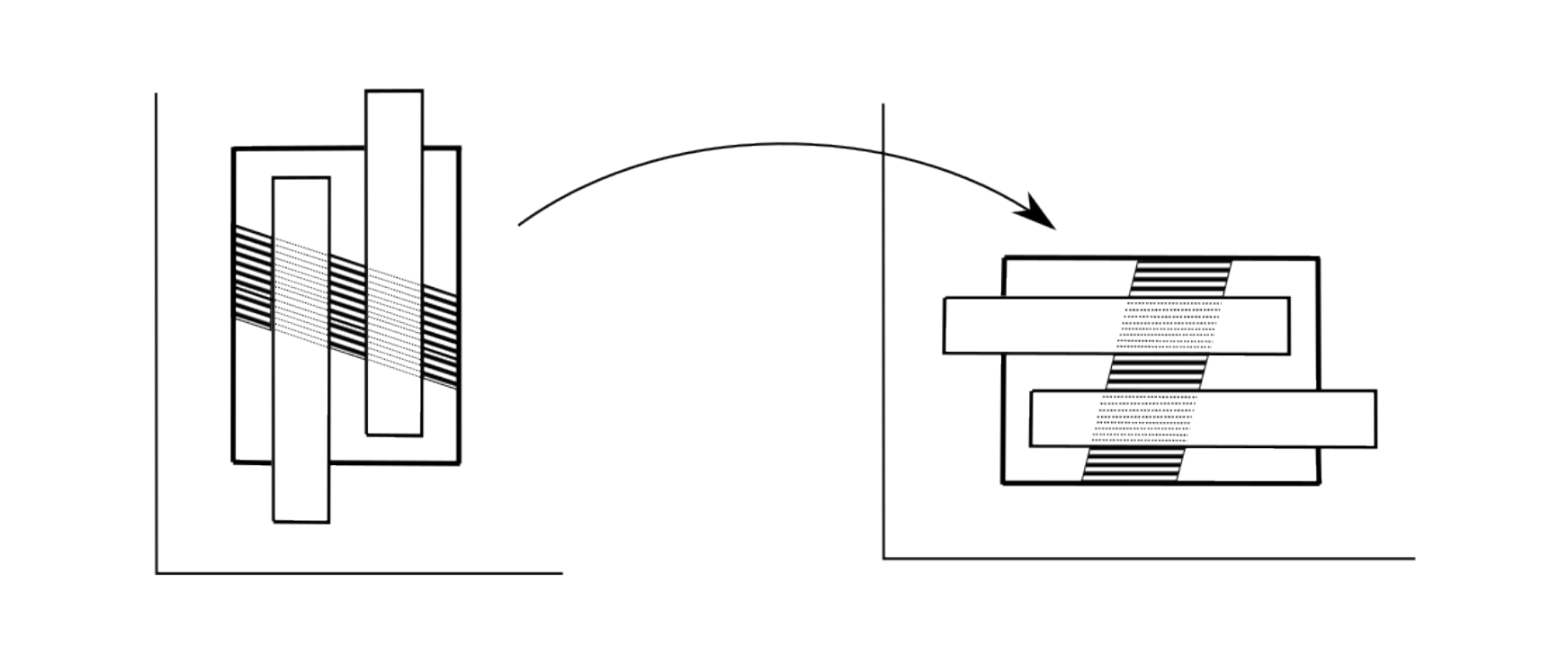}
    \put(35,2){\Large{$\mathbb{R}^p$}}     
    \put(9,38){\Large{$\mathbb{R}^q$}}                                    
     \put(43,36){\large{$f_{\sharp}$}}       
     \put(88,2.5){\Large{$\mathbb{R}^p$}}     
     \put(55.5,37.5){\Large{$\mathbb{R}^q$}} 
    \put(34,20){\Large{$R_{\sharp}$}}    
   \put(14.2,21.5){\small{$\bullet$}}  
     \put(11.5,22){$a$}    
      \put(14.2,27.2){\small{$\bullet$}}  
        \put(11.5,27.5){$d$} 
     \put(28.7,17){\small{$\bullet$}}  
     \put(30.5,16.5){$b$}   
     \put(28.7,22.5){\small{$\bullet$}}   
     \put(30.5,23){$c$}      
     \put(68.3,10.2){\small{$\bullet$}}  
     \put(68,8.5){$a$}    
     \put(74.5,10.5){\small{$\bullet$}}  
     \put(74,8){$b$}    
     \put(78,25.2){\small{$\bullet$}}  
     \put(78,28){$c$}   
     \put(72.5,25.2){\small{$\bullet$}}  
     \put(72,28){$d$}      
 \end{overpic}
\vspace{-0.4cm}
\caption{Connected preblending machines}
\label{fig:AsaOka}
\end{figure}

\begin{thm} \label{tct.conecting-preblender}
Let $M$ be a manifold of dimension $d\geq 2$ and
$N_1$ and $N_2$ open subsets of $M$.
Let $d_c$ be a positive integer with 
$d_c< d$. 
Consider:
 \begin{itemize}[leftmargin=0.5cm]
 \item
 Families  
 $\mathcal{F}_\tau=\{\phi_{\tau,i} \colon  i=1,\dots, k_{\tau}\}$,
  $\tau=1,2$ and
   $k_\tau\geq 2$,
  and $\mathcal{F}_\sharp=\{\phi_{\sharp}\}$,
  of
  $C^1$ local  diffeomorphisms of $M$.
  \item
   Preblending machines $\tfB_\tau=(\mathscr{R}_\tau, B_\tau,\tfS_\tau)$  for $\mathcal{F}_\tau$,
 $\tau=1,2$,
 where
 $\tfS_1$ is a $(d_c,d-d_c)$-splitting of $N_1$,
 $\tfS_2$ is a $(d-d_c, d_c)$-splitting of $N_2$, and  $\mathscr{R}_\tau=\{{R_{\tau,\ti}\}_{\ti \in I_\tau}}$.
  \item
  A compact set $R_\sharp$ and a neighbourhood $U_\sharp$ of it such that
 $(R_\sharp, U_\sharp, \phi_\sharp)$ is an adapted transition between $\tfB_1$ and $\tfB_2$.
\item 
Horseshoe maps
 $A_1, A_2, A_\sharp$ of type
 $(k_1,d_{s},d_{u},\nu_1)$,  $(k_2,d_{u},d_{s},\nu_2)$, and
  $(1,d_{s},d_{u},\nu_\sharp)$, whose legs are $\{\mathsf{H}_{1,\ti}\}_{\ti \in \Sigma^\ast_{k_{1}}}$, $\{\mathsf{H}_{2,\ti}\}_{\ti \in \Sigma^\ast_{k_{2}}}$, and 
  $\{\mathsf{H}_{\sharp}\}$.
  \end{itemize}
  
  Assume that 
 the stretching $\nu_1, \nu_2$  of  $A_1, A_2$
    are adapted to $\tfB_{1}$, $\tfB_{2}$ and the stretching $\nu_\sharp$ of $A_\sharp$ is $(\tfS_2^t,\tfS^{}_1)$-adapted to
 $\phi_{\sharp}$ on $R_\sharp$.

Consider  the skew-product tower maps
 $F_\tau$ associated to
 $\torT_\tau= ({A}_\tau \ltimes \mathcal{F}_\tau,
 \mathcal{R}_\tau, I_\tau)$, $\tau=1,2,\sharp$, where:
  \begin{itemize}
   \item 
   $ \mathcal{R}_\tau= \{\bfR_{\tau,\ti}\}_{\ti \in I_\tau}$, where
   $\bfR_{\tau,\ti}=\mathsf{H}_{\tau,\ti}\times R_{\tau,\ti}$, $\tau=1,2$, and 
\item
   $ \mathcal{R}_\sharp= \{\bfR_{\sharp}\}$, where
$\bfR_{\sharp}=\mathsf{H}_{\sharp}\times R_{\sharp}$,
  \end{itemize}
and
the split blending  machines
$\fB_{\tau}=  \fB_{\tfB_{\mathcal{F}_\tau}, F_\tau}$ 
given by Theorem~\ref{thm:blending-yield-blenders}.

 Then for every sufficiently small
$\theta>0$ holds
$$
   \Lambda^s(\fB_1)  \cap F_{\sharp}(\bfR_{\sharp} \cap \Lambda^s(\fB_2) )  \ne \emptyset,
   \quad \mbox{where} \quad
F_{\sharp} = A_{\sharp} \times \phi_{\sharp} \quad \mbox{and} \quad
\bfR_{\sharp}= \mathsf{H}_\sharp \times R_\sharp.
$$
Moreover, 
for every compact neighbourhood $K_\tau$ of
$
\mathbf{R}_{\torT_\tau}=
\bigcup_{\ti \in I_\tau} \bfR_{\tau,\ti}
$
there is a $C^1$ neighbourhood $\mathcal{U}_\tau$ of $F_\tau$, $\tau=1,2, \sharp$,
such that
$$
\Lambda^s(K_1,G_1) \cap G_\sharp(\Lambda^s(K_2,G_2)) \not=\emptyset,
\quad \mbox{for every $G_\tau \in \mathcal{U}_\tau, \, \tau=1,2,\sharp$.}
$$
\end{thm}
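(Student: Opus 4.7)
\section*{Proof proposal for Theorem~\ref{tct.conecting-preblender}}

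The plan is to reduce the statement to Theorem~\ref{t.Asaoka} by lifting the adapted transition between the preblending machines to an adapted transition (in the sense of Definition~\ref{d.adaptedt}) between the two split blending machines $\fB_1=\fB_{\tfB_{\mathcal{F}_1},F_1}$ and $\fB_2=\fB_{\tfB_{\mathcal{F}_2},F_2}$ produced by Theorem~\ref{thm:blending-yield-blenders}. First I will apply Theorem~\ref{thm:blending-yield-blenders} to each preblender $\tfB_{\mathcal{F}_\tau}$ together with the horseshoe $A_\tau$ (which is stretching-adapted by assumption), obtaining the split blending machines $\fB_\tau=(\mathcal{R}_\tau,Z_\tau,\fS_\tau,\fC_\tau,\theta)$ with the explicit descriptions given there: $Z_\tau=\mathsf{D}_\tau\times B_\tau$, $\fS_\tau$ is the canonical stable-center/unstable splitting built from $\tfS_\tau$ and the horseshoe factors, and $\fC_\tau$ is the standard splitting of the center-stable space into the horseshoe-stable coordinate and the center projection of $\tfS_\tau$. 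A direct inspection of the proof of Theorem~\ref{thm:blending-yield-blenders} (specifically, the verification of (BM4)) shows that the Lebesgue number $\Delta_\tau$ of $\fB_\tau$ may be taken equal to that of $\tfB_{\mathcal{F}_\tau}$, since the center projections in $\fB_\tau$ restrict to the preblender projections $P_\tau$ or $Q_\tau$ on $B_\tau$.

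Next I will set up the lifted transition: take $\bfR_\sharp\eqdef \mathsf{H}_\sharp\times R_\sharp$, the neighbourhood $\mathbf{U}_\sharp\eqdef \mathsf{H}_\sharp\times U_\sharp$, and the transition embedding $F_\sharp\eqdef A_\sharp\times \phi_\sharp$. The triple $(\bfR_\sharp,\mathbf{U}_\sharp,F_\sharp)$ is the candidate adapted transition between $\fB_1$ and $\fB_2$. To apply Theorem~\ref{t.Asaoka}, I need to verify conditions (AT1)--(AT5) with the constants $\mu_\tau,\gamma_\tau$ ($\tau=1,2,\sharp$) inherited from (at1)--(at5). Condition (AT1) is immediate from (at1). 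For (AT2), I will combine (at2) with Lemma~\ref{l.cone}, which is precisely the mechanism used in the proof of Theorem~\ref{thm:blending-yield-blenders}, (BM1): since $\phi_{\tau,\ti}$ is $(\tfS_\tau,\mu_\tau,\gamma_\tau)$-dominated on $R_{\tau,\ti}$ and $A_\tau^{|\ti|}$ is $\tfS_\tau$-adapted to $\phi_{\tau,\ti}$, the lifted map $F_\tau|_{\bfR_{\tau,\ti}}=A_{\tau,\ti}\times\phi_{\tau,\ti}$ is $(\fS_\tau,\mu_\tau,\gamma_\tau)$-dominated, hence satisfies the required cone condition on the central projections for every $\theta>0$. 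Condition (AT3) follows from (at3) and the product structure $\bfR_\sharp=\mathsf{H}_\sharp\times R_\sharp$: since $\mathsf{H}_\sharp$ is a single leg with $A_\sharp(\mathsf{H}_\sharp)$ of product form, the horseshoe factor contributes trivially to the rectangle condition and the inclusion properties, leaving the preblender conditions (at3) to supply the rest. For (AT4) I will apply Lemma~\ref{l.cone} once more, now to $\phi_\sharp$ and the $(\tfS_2^t,\tfS_1)$-adapted stretching $\nu_\sharp$, exactly as in the proof of (BM1) within Theorem~\ref{thm:blending-yield-blenders}.

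The delicate step is (AT5). Using the description of $\fB_\tau$ from Theorem~\ref{thm:blending-yield-blenders}, a direct computation identifies the central projection of the superposition domain in each case:
\begin{equation*}
(Q_c\circ Q_{u,1})(Z_1)=Q_1(B_1),\qquad (P_c\circ P_{u,2})(Z_2)=P_2(B_2).
\end{equation*}
Substituting into (AT5) reduces the required inequalities to
\begin{equation*}
\theta\,\diam Q_1(B_1)+\mu_\sharp^{-1}\diam P_2(B_2)<\Delta_1,\qquad \theta\,\diam P_2(B_2)+\gamma_\sharp^{-1}\diam Q_1(B_1)<\Delta_2.
\end{equation*}
Condition (at5) provides the strict inequalities $\mu_\sharp^{-1}\diam P_2(B_2)<\Delta_1$ and $\gamma_\sharp^{-1}\diam Q_1(B_1)<\Delta_2$, so choosing the common cone width $\theta>0$ small enough in the split blending machines (which is allowed by the final clause of Theorem~\ref{thm:blending-yield-blenders}, fixing $\theta\leq\theta_0$) absorbs the residual terms. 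This is the main technical obstacle, because it simultaneously constrains $\theta$ for both $\fB_1$ and $\fB_2$; but since only finitely many inequalities involving bounded quantities appear, a uniform small $\theta$ works.

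Once (AT1)--(AT5) are established, Theorem~\ref{t.Asaoka} yields
$\Lambda^s(\fB_1)\cap F_\sharp(\bfR_\sharp\cap \Lambda^s(\fB_2))\neq\emptyset$, which is exactly the first conclusion. The robustness statement follows from the robustness clause of Theorem~\ref{t.Asaoka}: given compact neighbourhoods $K_\tau$ of $\mathbf{R}_{\torT_\tau}$, the same theorem provides $C^1$ neighbourhoods $\mathcal{U}_\tau$ of $F_\tau$ ($\tau=1,2,\sharp$) with the required persistence of the intersection, after noting that the split blending machines $\fB_\tau$ and the lifted transition persist under $C^1$ perturbations of the tower maps (by the robustness of split blending machines and preblending machines, see Remark~\ref{r.robustpreblender}). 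This completes the plan.
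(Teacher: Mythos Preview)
Your plan is the paper's: reduce to Theorem~\ref{t.Asaoka} by checking that $(\bfR_\sharp,\mathbf{U}_\sharp,F_\sharp)$ is an adapted transition between $\fB_1$ and $\fB_2$, verifying (AT1)--(AT5) from (at1)--(at5), and then choosing $\theta$ small enough to absorb the residual terms in (AT5). The identification of $(Q_c\circ Q_{u,1})(Z_1)=Q_1(B_1)$ and $(P_c\circ P_{u,2})(Z_2)=P_2(B_2)$, the use of the leg structure of $\mathsf{H}_\sharp$ for (AT3), and the robustness via the second clause of Theorem~\ref{t.Asaoka} all match the paper exactly.

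The one imprecision is in your justification of (AT2) and (AT4). Lemma~\ref{l.cone} yields $(\fS_\tau,\mu_\tau,\gamma_\tau)$-domination of $F_\tau$ with respect to the \emph{full} splitting $\fS_\tau=(P_{cs,\tau},Q_{u,\tau})$, but (AT2) asks for the cone condition with respect to the \emph{central pairs} $\widehat{\tfS}_\tau=(P_c\circ P_{cs,\tau},\,Q_c\circ Q_{u,\tau})$, and the former does not formally imply the latter (composing with projections can in general destroy cone invariance). The paper handles this without Lemma~\ref{l.cone}: it observes that $\widehat{\tfS}_\tau=\tfS_\tau\circ\varpi_\tau$, where $\varpi_\tau\colon\mathbf{N}_\tau\to N_\tau$ is the fibre projection, and that $\varpi_\tau\circ(F_\tau|_{\bfR_{\tau,\ti}})=(\phi_{\tau,\ti}|_{R_{\tau,\ti}})\circ\varpi_\tau$; this intertwining transfers the $(\tfS_\tau,\mu_\tau,\gamma_\tau)$-domination of $\phi_{\tau,\ti}$ from (at2) directly to the $(\widehat{\tfS}_\tau,\theta,\mu_\tau,\gamma_\tau)$-cone condition for $F_\tau$. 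The same observation gives (AT4) from (at4). This is a routine fix that does not affect the rest of your argument.
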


The split blending  machines $\fB_{1}$ and $\fB_{2}$  in Theorem~\ref{tct.conecting-preblender}
have dimensions $(d_s,d_c,d-d_c+d_u)$  and $(d_u,d-d_c,d_c+d_s)$, respectively.


\begin{proof}[Proof of Theorem~\ref{tct.conecting-preblender}]
We will apply Theorem~\ref{t.Asaoka} to a pair of split blending machines
with dimensions $(p-d_{c,1}, d_{c,1},q)$ and $(q-d_{c,2}, d_{c,2},p)$
where
\begin{itemize}
\item
$p=d_s+d_c$, $q=d_u+d-d_c$, $d_{c,1}=d_c$, and $d_{c,2}=d-d_{c}$,
\item
the standard projections
\begin{equation}\label{eq.defPQ}
P \colon \mathbb{R}^p=\mathbb{R}^{d_c} \times \mathbb{R}^{d_s} \to \mathbb{R}^{d_{c}}=
\mathbb{R}^{d_{c,1}} \ \ \mbox{and} \   \
Q \colon \mathbb{R}^q=\mathbb{R}^{d-d_c} \times \mathbb{R}^{d_u} \to \mathbb{R}^{d-d_c}=
\mathbb{R}^{d_{c,2}},
\end{equation}
\item
 $\mathbf{N}_1 =\mathbb{R}^{d_{s}+ d_u} \times N_1$
 and
 $\mathbf{N}_2 =\mathbb{R}^{d_{s}+d_u} \times N_2$,
 \end{itemize}
 to the split blending machines
  $\fB_{\tau}=(\mathcal{R}_\tau,Z_\tau,\fS_\tau, \fC_\tau,\theta)$ for the tower maps
  $F_\tau\colon \bigcup_{R\in \mathcal{R}_\tau} R \to \mathbf{N}_\tau$, $\tau=1,2,$
    provided by
  Theorem~\ref{thm:blending-yield-blenders},
  the  map
  $F_\sharp \colon \mathbf{R}_{\sharp} \to \mathbf{N}_1,$
  the set $\mathbf{R}_{\sharp}\subset \mathbf{N}_2$, and the positive numbers
  $\mu_\tau, \gamma_\tau$, $\tau=1,2,\sharp$.

Let us now recall the elements
  of the split blending machines
  $\fB_{\tau}$:
 \begin{itemize}
 \item
 $Z_\tau=\mathsf{D}\times B_\tau$,
 $\tau=1,2$,
where $\mathsf{D}=[0,1]^{d_s+d_u}$.
\item
  The splitting $\fS_1=({P}_{cs,1},Q_{u,1})$ of  $\mathbf{N}_1$
  is given (in coordinates) by
  $$
  {P}_{cs,1}= (\Pi_{s},P_1) \quad \mbox{and} \quad  {Q}_{u,1}= (Q_1,\Pi_{u}),
  $$
  where $\Pi_{s}$ and $\Pi_{u}$  are the canonical projections
 from $\mathbb{R}^{d_{s}+d_u}=\mathbb{R}^{d_{s}} \times \mathbb{R}^{d_{u}}$
   into $\mathbb{R}^{d_{s}}$ and $\mathbb{R}^{d_{u}}$.\footnote{There is a slight abuse of notation here, as the domain of the coordinate maps are different. We use simplified notation similar to the one in \eqref{eq.nopair} for the sake of brevity. }

 \item
  The splitting
 $\fS_2=({Q}_{cs,2},P_{u,2})$
 of $\mathbf{N}_2$ is given (in coordinates) by
$$
{Q}_{cs,2}= (\Pi_{u},Q_2) \quad \mbox{and} \quad {P}_{u,2}= (P_2, \Pi_{s}).
$$
\item
The splittings
$\fC_1=(\pi_s, P)$ and $\fC_2 =(\pi_{u}, Q)$, where
$\pi_{s}$ and $\pi_{u}$
    are the standard projections
\[
\pi_{s}\colon\mathbb{R}^{d_{s}}\times \mathbb{R}^{d_c} \to  \mathbb{R}^{d_s}  \qquad
\mbox{and}
\qquad
\pi_{u}\colon\mathbb{R}^{d_{u}}\times \mathbb{R}^{d-d_c} \to  \mathbb{R}^{d_u}.
\]
\end{itemize}

We now are ready to check that conditions (AT1)-(AT5) hold. Note that (AT1) and (at1) are the same.

To obtain (AT2) consider the pairs  $\widehat{\tfS}_1$ and $\widehat{\tfS}_2$
given  by
$$
\widehat{\tfS}_1 =(P\circ P_{cs,1},Q \circ Q_{u,1}) \quad \text{and} \quad
\widehat{\tfS}_2 = (Q \circ Q_{cs,2},P \circ P_{u,2}).
$$
Denote by $\varpi_\tau \colon \mathbf{N}_\tau \to  N_\tau$ the standard projection and note that
$\widehat{\tfS}_\tau = {\tfS}_\tau \circ \varpi_\tau$, $\tau=1,2$.  
From this relation and  $\varpi_\tau \circ  (F_\tau|_{\mathbf{R}_{\tau,\ti}}) = (\phi_{\tau,\ti}|_{R_{\tau,\ti}}) \circ \varpi_\tau$,
 it follows from (at2) that $F_\tau$ satisfies the
$(\widehat{\tfS}_\tau,\theta,\mu_\tau,\gamma_\tau)$-cone condition for $\tau=1,2$. Hence (AT2) holds.
Similarly, we get that (at4) implies (AT4).

To get (AT5) note that from the definition of $P$ in \eqref{eq.defPQ} it follows that
$$
P \circ P_{u,2}(Z_2)= P \big(P_2(B_2), \Pi_{s} (\mathsf{D}_{2})\big)= P_2(B_2).
$$
Similarly,
$Q \circ Q_{u,1}(Z_1)=
Q_1(B_1)$,
 it follows from (at5)  that
\begin{align*}
 \mu_{\sharp}^{-1} \diam (P\circ P_{u,2})(Z_2) &= \mu^{-1}_{\sharp} \diam \big(P_2(B_2)\big) < \Delta_1, \\
\gamma^{-1}_\sharp \diam (Q\circ Q_{u,1})(Z_1) &= \gamma_\sharp^{-1} \diam \big(Q_1(B_1)\big) <  \Delta_2.
\end{align*}
Therefore condition  (AT5) follows for  every $\theta>0$ small enough.

It remains to show (AT3), for that recall the notation in Section~\ref{ss.affine} for horseshoe maps.
Observe that $\mathsf{H}_\sharp$ is a leg of the
 $(1,d_{s},d_{u},\nu_\sharp)$-horseshoe map $A_\sharp$, therefore
\begin{equation}
\label{Rsharp}
\begin{split}
&  \mathsf{H}_{\sharp}= [0,1]^{d_{s}}\times V_{\sharp}^{u},  \quad \text{where  $V_{\sharp}^{u}\Subset [0,1]^{d_{u}}$,}\\
 &   \mathsf{V}_{\sharp}=A_{\sharp}(\mathsf{H}_{\sharp})= H_{\sharp}^{s} \times [0,1]^{d_{u}},
    \quad \text{where $H_{\sharp}^{s}\Subset [0,1]^{d_{s}}$}.
    \end{split}
\end{equation}
Recalling that $\mathbf{R}_{\sharp}=\mathsf{H}_{\sharp} \times R_{\sharp}$, it follows that
$F_\sharp(\mathbf{R}_{\sharp})= \mathsf{V}_{\sharp} \times \phi_{\sharp}(R_{\sharp})$.
Using (at3) we get the following relations:
\[
 \begin{split}
 {Q}_{cs,2}(\mathbf{R}_{\sharp}) &= V_{\sharp}^{u} \times Q_2(R_{\sharp})  \overset{\text{(at3)}}{\Subset}  [0,1]^{d_{u}} \times Q_2(B_2) = {Q}_{cs,2}(Z_2), \\
  P_{u,2}(\mathbf{R}_{\sharp}) &= P_2(R_\sharp) \times [0,1]^{d_{s}} \overset{\text{(at3)}}{=}  P_2(B_2) \times [0,1]^{d_{s}} = P_{u,2}(Z_2), \\
 (P_{cs,1}\circ F_\sharp)(\mathbf{R}_{\sharp}) &= H_{\sharp}^{s} \times (P_1\circ \phi_{\sharp})(R_{\sharp})  \overset{\text{(at3)}}{\Subset}
  [0,1]^{d_{s}} \times P_1(B_1) = P_{cs,1}(Z_1), \\
(Q_{u,1}\circ F_\sharp)(\mathbf{R}_{\sharp}) &=  (Q_1 \circ \phi_{\sharp, \tum})(R_{\sharp})
\times [0,1]^{d_{u}} = \, Q_1(B_1) \times [0,1]^{d_{u}}
= {Q}_{u,1}(Z_1).
 \end{split}
\]
As $R_{\sharp}$ is a $(P_2,Q_1\circ \phi_{\sharp})$-rectangle, it  follows from
the equation above that $\mathbf{R}_{\sharp}$ is a $({P}_{u,2},{Q}_{u,1}\circ F_\sharp)$-rectangle
satisfying the claimed properties. This proves that (AT3) holds, concluding the proof of the theorem.
\end{proof}

\section{Proof of Theorem~\ref{t.robustcycles}}
\label{s.transitionmaps}

In view of Lemmas~\ref{l.everyneistypeI} and \ref{l.simple}, it is enough to prove Theorem~\ref{t.robustcycles} for diffeomorphisms
$f$ having 
simple contours $\Upsilon$.  Theorem~\ref{tp.finallyarobustcycle}
provides  a pair of robust cycles of type $(\iota (\Upsilon) , \iota (\Upsilon)+1)$ and $(\iota (\Upsilon)+1, \iota (\Upsilon)+2)$
for a perturbation $g$ of $f$ preserving the contour
was obtained. Thus it remains to obtain robust cycles of co-index two.

\begin{remark}[The hyperbolic sets in the robust cycle]
\label{r.preparacion}
Consider the points $\qA_{\ti_k}$ and $\qD'_{\tj_k}$, $k=2,3$, in Proposition~\ref{p.muitos}, and the corresponding periodic points
(that we assume to be  hyperbolic) given by Remark~\ref{r.secondarydynamics},
$A_k = A_{({\ti_k}^\mathbb{Z}, \qA_{\ti_k})}$ and $D'_k = D'_{({\tj_k}^\mathbb{Z}, \qD'_{\tj_k})}$
for a diffeomorphism $g$ with a simple contour. 
We construct a pair of hyperbolic sets $\Lambda_\tq$ and $\Lambda_\tp$ with $\ut$-indices $\iota(\Upsilon)$ and $\iota(\Upsilon)+2$, such that 
\begin{equation}
\label{e.preparation}
A_k \in \Lambda_\tq, \,\, D'_k \in \Lambda_\tp, 
\quad \mbox{thus, by Remark~\ref{r.uniformsizeofinvariantmanifolds},}
\quad
 W^\st (\Lambda_\tq,g) \pitchfork W^\ut (\Lambda_\tp,g)\ne\emptyset. 
 \end{equation}
 Thus, to prove the theorem, 
 it remains 
to obtain perturbations $h$ of $g$ such that
$$
W^\ut (\Lambda_\tq,h) \cap W^\st (\Lambda_\tp,h)\ne\emptyset
$$ 
 is robust one (see Section~\ref{ss.robustcycle1}).
The construction of the sets $\Lambda_\tq$ and $\Lambda_\tp$
relies on the preblending machines
constructed in Section~\ref{ss.connectingpreblendingmachines}.
\end{remark}

\subsection{Connecting expanding preblending machines} \label{ss.connectingpreblendingmachines}
Consider the family 
$\mathfrak{F}=\{f_{\tx} \colon \tx \in \mathcal{C}\}$ of local maps of $f$ in \eqref{e.thefamilyF} and
its quotient family $\mathcal{F}$ in \eqref{e.thequotientfamily}.
Proposition~\ref{p.t.occonpreblenders-vA} below 
provides an adapted perturbation 
$g$ of $f$ (with respect to $\mathfrak{F}$) 
whose family of local maps $\mathcal{G}$
have a
 pair of expanding preblending machines (Definition~\ref{def:cu-blending}), 
one for $\mathcal{G}$ and the other for $\mathcal{G}^{-1}$, that are ``connected'' in the sense below.
Recall also the adapted transitions between preblending machines in Definition~\ref{d.adpreblender}.

%
%
%
%
%

\begin{defi}[Connected preblending machines]
\label{d.connectedpreblendermachines}
Let $\mathcal{G}_1=\mathcal{G}$, $\mathcal{G}_2=\mathcal{G}^{-1}$
with expanding
preblending machines $\tfB_\tau=(\scR_\tau, B_\tau,\tfS_\tau)$,
$\tau=1,2$, defined on disjoint open sets $N_1, N_2$. We say that 
$\tfB_1$ and $\tfB_2$ are {\em{connected}} if there are $\sharp \in \Sigma_T^\ast$, a
 compact set $R_\sharp$, and an open
set  $U_\sharp$ such that
 $R_\sharp \subset U_\sharp\subset N_2$ and $(R_\sharp, U_\sharp, \psi_\sharp)$ is an adapted transition
from $\tfB_2$ to $\tfB_1$.
 \end{defi}

\begin{prop}
\label{p.t.occonpreblenders-vA}
Consider a diffeomorphism $f$ with a simple contour
and its quotient family $\mathcal{F}$.
Then  there exist an arbitrarily small adapted perturbation $g$ of $f$ preserving the contour
whose quotient family
 $\mathcal{G}$
has
expanding preblending machines $ \tfB_1$
 (for $\mathcal{G}$)
  and $\tfB_2$ (for $\mathcal{G}^{-1}$) 
  which are connected. 
  \end{prop}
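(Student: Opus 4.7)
The plan is to assemble the two preblending machines directly from the dynamical rectangles produced in Claims~\ref{cl.pepinodecapri} and~\ref{cl.pepinodemodena}, and then to connect them through an explicit transition word built from the cycle map $\psi_\yc$ and iterations of $\psi_\tp$, $\psi_\tq$. First I would apply Proposition~\ref{p.muitos} together with Claims~\ref{cl.pepinodecapri} and~\ref{cl.pepinodemodena} to obtain, after a sequence of adapted perturbations preserving the contour, a map $g$ whose quotient family $\mathcal{G}$ admits affine restrictions $\psi_{\ti_k}|_{\qR_{\ti_k}}$ and $\psi_{\tj_k}|_{\qR_{\tj_k}}$, for $k=2,3$, with the prescribed diagonal derivatives and with the covering properties
\[
\qV(\vartheta)\subset \mathrm{int}\bigl(Q_\tq(\qR_{\ti_2})\cup Q_\tq(\qR_{\ti_3})\bigr), \qquad
\qH(\delta)\subset \mathrm{int}\bigl(P_\tp(\qR_{\tj_2})\cup P_\tp(\qR_{\tj_3})\bigr).
\]

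Next I would set $\tfB_1=(\{\qR_{\ti_2},\qR_{\ti_3}\},B_1,\tfS_1)$ for $\mathcal{G}$ in $\qU_\tq$ with preblending region $B_1=\qH_{\ti_2,\ti_3}\times \qV(\vartheta)$, central dimension $d_c=1$, and $\tfS_1=(P_1,Q_1)$ the coordinate splitting aligned with the diagonal form of $D\psi_{\ti_k}|_{\qR_{\ti_k}}=\mathrm{diag}(\rho_{\ti_k},\theta_{\ti_k})$; the central projection $P_1$ is taken onto the neutral direction and $Q_1$ onto the strongly hyperbolic one. Conditions (PB1)-(PB2) of Definition~\ref{def:cu-blending} follow directly from the affine form of $\psi_{\ti_k}$, producing dominated rates $\mu_{\ti_k}$ and $\gamma_{\ti_k}$ that grow with $m_k,n_k$, together with the image inclusions of Claim~\ref{cl.pepinodecapri}(3)-(4); condition (PB3) is the covering of $\qV(\vartheta)$ above. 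Symmetrically, $\tfB_2=(\{\qR_{\tj_2},\qR_{\tj_3}\},B_2,\tfS_2)$ is defined for $\mathcal{G}^{-1}$ in $\qU_\tp$ with $B_2=\qH(\delta)\times \qV_{\tj_2,\tj_3}$, swapping the roles of the two coordinates. Both machines are expanding because $|\rho_{\ti_k}|^{-1}$ and $|\eta_{\tj_k}|$ can be made arbitrarily large by increasing $m_k,n_k,r_k,t_k$.

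Third, I would realize the transition via the cycle map. For positive integers $a,b$, set $\sharp=\tp^a\yc\tq^b\in\Sigma_T^\ast$; by (QD1)-(QD3) the map
\[
\psi_\sharp(x,y)=\bigl(\beta_1^b(a_{11}\alpha_1^a x+a_{12}\alpha_2^a y),\ \beta_2^b(a_{21}\alpha_1^a x+a_{22}\alpha_2^a y)\bigr)
\]
sends $\qU_\tp$ into $\qU_\tq$. I would choose $a,b$ with $a_{21}\alpha_1^a\beta_2^b\approx 1$, so that $\psi_\sharp$ sends a neighbourhood of $\qD'=(1,0)\in\qU_\tp$ to a neighbourhood of $\qA=(0,1)\in\qU_\tq$; with this choice the dominant terms of $D\psi_\sharp$ are of order $(\beta_1/\beta_2)^b$ in the first column and $(\alpha_2/\alpha_1)^a$ in the second, which makes $\psi_\sharp$ strongly contract the $P_2$-direction and strongly expand the $Q_1$-direction. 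Taking $R_\sharp\subset\qU_\tp$ to be the $(P_2,Q_1\circ\psi_\sharp)$-rectangle obtained from $B_2$ by shrinking the $Q_2$-fiber so that $\psi_\sharp(R_\sharp)$ spans $Q_1(B_1)$ and is strictly contained in $P_1(B_1)$ in the central direction, conditions (at2)-(at4) of Definition~\ref{d.adpreblender} reduce to direct computations with the affine formulas above, and (at1) holds once $a,b$ are chosen large enough so that $\mu_1\gamma_2$ and $\mu_2\gamma_1$ exceed $1$.

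The main obstacle will be condition (at5): the Lebesgue numbers $\Delta_1,\Delta_2$ of $\tfB_1,\tfB_2$—fixed once the coverings in Claims~\ref{cl.pepinodecapri} and~\ref{cl.pepinodemodena} are chosen—must dominate $\mu_\sharp^{-1}\diam\bigl(P_2(B_2)\bigr)$ and $\gamma_\sharp^{-1}\diam\bigl(Q_1(B_1)\bigr)$ respectively. I expect this to be achievable because $\diam(P_2(B_2))$ and $\diam(Q_1(B_1))$ are controlled by $\delta$ and $|\qH_{\ti_2,\ti_3}|\approx|\alpha_1^{m_2}\beta_1^{n_2}|$, which have already been made small by the construction of the preblending machines, while $\mu_\sharp$ and $\gamma_\sharp$ scale like $|\beta_2/\beta_1|^b$ and $|\alpha_1/\alpha_2|^a$ and so can be tuned independently of the machines by increasing $a,b$. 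Since every perturbation in the argument is adapted and preserves the contour, the original co-index two cycle is left intact throughout.
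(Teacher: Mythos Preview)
There is a genuine gap in your assignment of which machine is for $\mathcal{G}$ and which is for $\mathcal{G}^{-1}$. You build $\tfB_1$ for $\mathcal{G}$ from the rectangles $\qR_{\ti_k}$ near $\qA$, using the forward maps $\psi_{\ti_k}$. But by Claim~\ref{cl.pepinodecapri}(2) the derivative of $\psi_{\ti_k}$ on $\qR_{\ti_k}$ is $\mathrm{diag}(\rho_{\ti_k},\theta_{\ti_k})$ with $|\rho_{\ti_k}|$ arbitrarily small and $|\theta_{\ti_k}|\in(0.99,1)$: the map contracts \emph{both} coordinates. Condition (PB2) of Definition~\ref{def:cu-blending} demands simultaneously $Q(R_\ti)\Subset Q(B)$ and $(Q\circ\phi_\ti)(R_\ti)=Q(B)$, which forces $\phi_\ti$ to expand the $Q$-direction; no choice of $\tfS_1$ can make a purely contracting affine map satisfy this. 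Your own remark that the machines are ``expanding because $|\rho_{\ti_k}|^{-1}$ and $|\eta_{\tj_k}|$ can be made arbitrarily large'' is a symptom: $|\rho_{\ti_k}|^{-1}$ is the rate of $\psi_{\ti_k}^{-1}$, not of $\psi_{\ti_k}$. The paper's assignment is the reverse of yours: $\tfB_1=\tfB_\tp$ for $\mathcal{G}$ is built from the $\qR_{\tj_k}$ near $\qD'$ (where $\psi_{\tj_k}$ genuinely expands, with $|\lambda_{\tj_k}|>1$ and $|\eta_{\tj_k}|\gg 1$), and $\tfB_2=\tfB_\tq$ for $\mathcal{G}^{-1}$ is built from the $\qR_{\ti_k}$ near $\qA$. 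With the splitting $\tfS_\yr=(P_\yr,Q_\yr)$ given by the coordinate projections, (PB1)--(PB3) then follow from Claims~\ref{cl.pepinodecapri}--\ref{cl.pepinodemodena} as stated.

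Once this is corrected, the transition must go from $N_2$ (near $\qA$, in $\qU_\tq$) to $N_1$ (near $\qD'$, in $\qU_\tp$), so your word $\sharp=\tp^a\yc\tq^b$ points the wrong way; the admissible route is $\psi_\sharp=\psi_\td\circ\psi_\tq^{w}\circ\psi_\tc\circ\psi_\tp^{v}\circ\psi_\ta$. More importantly, you omit the translation of the cycle map. In the paper the Markovian intersection in (at3) is obtained by replacing $\psi_\tc$ with $\psi_{\tc,\bar\omega}$ on a small neighbourhood of the origin (equation~\eqref{e.translationofcyclemap}); without this adjustment the image rectangle need not land in the target for any integer pair $(v,w)$ compatible with the other constraints. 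Finally, your (at5) estimate is off: with the correct transition one has $\gamma_\sharp\approx\alpha_2^{v}\beta_2^{w}$, which the paper arranges to be \emph{bounded} (of order $\vartheta/\delta$, see \eqref{e.uniformlybounded}), not divergent, while $\mu_\sharp\approx(\alpha_1^{v}\beta_1^{w})^{-1}\to\infty$; the second inequality in (at5) then holds precisely because $\vartheta$ and $\delta$ were coupled through $\gamma_\sharp$, not because $\gamma_\sharp$ can be made large independently.
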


The proof of this proposition has two steps: the construction of the preblending machines (Section~\ref{ss.preblenders})  and 
their connection (Section~\ref{sss.choiceofparameters}).

\subsubsection{Choice of the preblending machines}
\label{ss.preblenders}

Associated to $f$, consider the quotient dynamics in Section~\ref{ss.associatedquotient}.
For simplicity, we will assume that the numbers $\alpha_1,\alpha_2, \beta_1, \beta_2$ 
in~\eqref{e.fzero} are all positive.

The preblending machines are $\tfB_1=\tfB_{\tp}=(\scR_\tp,B_\tp, \tfS_\tp)$ and
$\tfB_2=\tfB_{\tq}=(\scR_\tq,B_\tq, \tfS_\tq)$
are associated with a perturbation $\mathcal{G}$
 of $\mathcal{F}$ and defined  as follows. 
First, $\tfS_\tq,\tfS_\tp$ are the projections
given by
\begin{equation}
\label{e.PQproj}
\tfS_\yr=( P_\yr, Q_\yr),\quad
   P_\yr,
   Q_\yr \colon \qU_\yr \to \mathbb{R}, \quad P_\yr(x,y)=x,\quad
   Q_\yr(x,y)=y, \quad \yr =\tq, \tp.
\end{equation}
The other elements  are obtained from the rectangles in
Section~\ref{ss.constructionofrectangles} as follows. 
Consider
$\vartheta_0,\delta_0>0$,
 the adapted perturbation
$\mathcal{G}$ of $\mathcal{F}$, the words $\ti_2,\ti_3, \tj_2, \tj_3$, and
the rectangles 
$$
\qH(\delta) \times \qV_{\tj_2, \tj_3}, \qquad
\qH_{\ti_2, \ti_3}  \times \qV (\vartheta),
\qquad \mbox{and} \qquad 
{\qR}_{\ti_k}, \quad  {\qR}_{\tj_k}, \quad k=2,3
$$
in  Claims~\ref{cl.pepinodecapri}
and \ref{cl.pepinodemodena}.
We let
\begin{equation}
\label{e.ingredients}
\begin{split}
&\scR_{\tp,\tj_2,\tj_3} \eqdef\{{\qR}_{\tj_2}, {\qR}_{\tj_3}\},\qquad
B_{\tp, \tj_2,\tj_3} (\delta) \eqdef \qH(\delta) \times \qV_{\tj_2, \tj_3},  \qquad
\tfS_\tp= \{P_{_\tp},Q_{_\tp}\},\\
&\scR_{\tq,\ti_2,\ti_3} \eqdef\{{\qR}_{\ti_2}, {\qR}_{\ti_3}\},\qquad
B_{\tq, \ti_2,\ti_3} (\vartheta) \eqdef \qH_{\ti_2, \ti_3}  \times \qV (\vartheta),  \qquad
\tfS_\tq= \{P_{_\tq},Q_{_\tq}\},
\end{split}
\end{equation}
and denote
\begin{equation}
\label{e.belndingmachinepq}
\begin{split}
\tfB_{\tp} &= \tfB_{\tp}(\delta)=
\tfB_{\tp} (\delta, \tj_2, \tj_3)\eqdef (\scR_{\tp,\tj_2,\tj_3} , B_{\tp, \tj_2,\tj_3} (\delta) , \tfS_\tp),\\
\tfB_{\tq} &= \tfB_{\tq} (\vartheta)=
\tfB_{\tq} (\vartheta, \ti_2, \ti_3)\eqdef (\scR_{\tq,\ti_2,\ti_3} , B_{\tq, \ti_2,\ti_3} (\vartheta) , \tfS_\tq).
\end{split}
\end{equation}
In Lemma~\ref{l.Ds} below, we see that 
$\tfB_{\tp}$ and $\tfB_{\tq}$
are indeed preblending machines.

With this terminology in  \eqref{e.notationforwords}, we have
$$
 \ti_k=   \ti_{(m_k, n_k)}, \qquad
 \tj_k=\tj_{(r_k,t_k)}, \qquad \mbox{where} \quad
 m_3>m_2,\, n_3>n_2,\,
 r_3>r_2,\, t_3>t_2.
 $$

\begin{lem}
\label{l.Ds}
Given the words $\tj_k=\tj_{(r_k,t_k)}$,
and
 $\ti_k=   \ti_{(m_k, n_k)}$, $k=2,3$,  in Proposition~\ref{p.muitos},
there  are $\delta_0, \vartheta_0>0$ 
such that for every $\delta \in (0,\delta_0)$ 
and every  $\vartheta \in (0,\vartheta_0)$ the following holds:
\begin{itemize}
\item 
$\tfB_{\tp}=\tfB_{\tp} (\delta, \tj_2, \tj_3)$ is an expanding preblender machine for $\mathcal{G}$
with constants $\mu_\tp \in (1,2)$ and $\gamma_\tp \approx \alpha_2^{r_2}\beta_2^{t_2}$, and Lebesgue number $\Delta_\tp \in (\delta/2, \delta)$;
\item
$\tfB_{\tq}=\tfB_{\tq} (\vartheta, \ti_2, \ti_3)$ is an expanding preblending machine for $\mathcal{G}^{-1}$
with constants $\mu_\tq\in (1,2)$ and $\gamma_\tq \approx (\alpha_1^{m_2}\beta_1^{n_2})^{-1}$, and Lebesgue number $\Delta_\tq \in (\vartheta/2, \vartheta)$.
\end{itemize}
\end{lem}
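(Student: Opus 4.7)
The plan is to unfold Definition~\ref{def:cu-blending} and verify conditions (PB1)--(PB3) for the two candidate tuples in~\eqref{e.belndingmachinepq}, using the geometry established in Claims~\ref{cl.pepinodecapri}--\ref{cl.pepinodemodena}. The argument is essentially a bookkeeping exercise, since the substantive content lies in the construction of the rectangles $\qR_{\ti_k}$ and $\qR_{\tj_k}$; here one simply rewrites that data in the language of preblending machines. We would treat $\tfB_\tp$ in detail; the argument for $\tfB_\tq$ is parallel upon replacing $\mathcal{G}$ by $\mathcal{G}^{-1}$ and Claim~\ref{cl.pepinodemodena} by Claim~\ref{cl.pepinodecapri}.

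For (PB1), Claim~\ref{cl.pepinodemodena}(2) yields that $\psi_{\tj_k}|_{\qR_{\tj_k}}$ is affine with diagonal derivative whose entries are $\lambda_{\tj_k}$ (of modulus in $(1,1.01)$) and $\eta_{\tj_k}$ (of order $\alpha_2^{t_k}\beta_2^{r_k}$, arbitrarily large for $r_k,t_k$ large). Because $\tfS_\tp=(P_\tp,Q_\tp)$ in~\eqref{e.PQproj} is the canonical horizontal/vertical splitting and the derivative is diagonal, the dilatation quantities appearing in Lemma~\ref{l.new-dilatation} reduce to the moduli of these diagonal entries. Invoking the second half of Lemma~\ref{l.new-dilatation} (the criterion $\mu\gamma>1$), we obtain $(\tfS_\tp,\mu_\tp,\gamma_\tp)$-domination on $\qR_{\tj_k}$ with $\mu_\tp$ in $(1,2)$ and $\gamma_\tp$ of order $\alpha_2^{r_2}\beta_2^{t_2}$, as claimed.

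For (PB2), the product form $\qR_{\tj_k}=\qH_k(\delta)\times \qV_{\tj_k}$ together with the diagonal form of $\psi_{\tj_k}$ makes $\qR_{\tj_k}$ a $(P_\tp,Q_\tp\circ\psi_{\tj_k})$-rectangle by inspection. The containments
\[
Q_\tp(\qR_{\tj_k})\Subset Q_\tp(B_\tp),\qquad (Q_\tp\circ \psi_{\tj_k})(\qR_{\tj_k})=Q_\tp(B_\tp),\qquad (P_\tp\circ \psi_{\tj_k})(\qR_{\tj_k})\Subset P_\tp(B_\tp)
\]
are direct translations of Claim~\ref{cl.pepinodemodena}(3) and the construction of $\qV_{\tj_2,\tj_3}$ as an interval whose interior contains $\qV_{\tj_2}\cup \qV_{\tj_3}$. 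Condition (PB3), i.e.\ the cover of $\overline{P_\tp(B_\tp)}=\overline{\qH(\delta)}$ by $\{\mathrm{int}(\qH_k(\delta))\}_{k=2,3}$, is exactly Claim~\ref{cl.pepinodemodena}(1). The Lebesgue number of this one-dimensional cover is governed by the length of the overlap $\qH_2(\delta)\cap \qH_3(\delta)$; arranging this overlap to have length of order $\delta$ at the construction stage, while keeping the base points $d_{\tj_k}$ in the interior of $\qH_k(\delta)$, places $\Delta_\tp\in(\delta/2,\delta)$.

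The analogous argument for $\tfB_\tq$ uses Claim~\ref{cl.pepinodecapri}: $\psi_{\ti_k}|_{\qR_{\ti_k}}$ is affine with horizontal factor $\rho_{\ti_k}$ of order $\alpha_1^{m_k}\beta_1^{n_k}$ (small) and vertical factor $\theta_{\ti_k}$ of modulus in $(0.99,1)$. Passing to the inverse family $\mathcal{G}^{-1}$ (the one for which $\tfB_\tq$ is defined) produces a diagonal derivative with horizontal entry of modulus $(\alpha_1^{m_k}\beta_1^{n_k})^{-1}$ (arbitrarily large) and vertical entry of modulus in $(1,1/0.99)$, which yields $\mu_\tq\in(1,2)$ and $\gamma_\tq\approx (\alpha_1^{m_2}\beta_1^{n_2})^{-1}$ by the same dilatation calculation. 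The rectangle and cover conditions (PB2)--(PB3) for $\tfB_\tq$ are read off Claim~\ref{cl.pepinodecapri}(1)--(3) in exactly the same way, with $Q_\tq$ and $P_\tq$ interchanging roles relative to the direct case. The only mildly delicate point in the whole argument is that $\delta_0$ and $\vartheta_0$ must be coordinated with the (already fixed) words $\tj_k,\ti_k$, so that all strict containments in (PB2) hold and the Lebesgue numbers land in the prescribed intervals; this is a single scaling constraint, arranged by shrinking $\delta,\vartheta$ and, if necessary, enlarging $r_k,t_k,m_k,n_k$ so that the images $\psi_{\tj_k}(\qR_{\tj_k})$ and $\psi_{\ti_k}(\qR_{\ti_k})$ lie sufficiently deep inside the respective preblending regions.
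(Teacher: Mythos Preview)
Your proposal is correct and takes essentially the same route as the paper: verify (PB1)--(PB3) for $\tfB_\tp$ directly from the rectangle data of Claim~\ref{cl.pepinodemodena}, then argue symmetrically for $\tfB_\tq$ via Claim~\ref{cl.pepinodecapri} and the inverse family $\mathcal{G}^{-1}$. The paper's proof is terser---for (PB1) it simply names constants explicitly (taking $\mu_{\tj_2}=\mu_{\tj_3}=1/2$ and $\gamma_{\tj_2}=\gamma_{\tj_3}=\kappa$ with $5<\kappa<\min\{\eta_{\tj_2},\eta_{\tj_3}\}$) rather than routing through Lemma~\ref{l.new-dilatation}---but the structure and the references to the claims are identical.
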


\begin{proof}
We prove the lemma for $\tfB_{\tp}$, the proof for $\tfB_{\tq}$ is similar and hence omitted.
We
need to check conditions (PB1)--(PB3) in Definition~\ref{def:cu-blending}.
To check the domination
 condition (PB1), it is enough to take
$\mu_{\tj_2}=\mu_{\tj_3} =1/2$
and
$\gamma_{\tj_2}=\gamma_{\tj_3} =\kappa$, for some $\kappa$ with 
$5 < \kappa <
 \min\{ \eta_{\tj_2}, \eta_{\tj_3}\}$, where $\eta_{\tj_k}$ is as in Claim~\ref{cl.pepinodemodena}.
Thus, condition (PB1) follows from
item (2) in Claim~\ref{cl.pepinodemodena}.

To check (PB2), note   that, for $k=2,3$ it holds
\begin{itemize}
\item ${\qR}_{\tj_k}$ is a $(P_\tp,Q_\tp \circ \psi_{\tj_k} )$-rectangle  by  construction,
\item
$Q_\tp (\qR_{\tj_k})\Subset Q_\tp (B_\tp)$
follows from  $\qV_{\tj_k} \subset \qV_{\tj_2, \tj_3} $,
\item
$(Q_\tp\circ \psi_{\tj_k})(\qR_{\tj_k}) = Q_\tp(B_\tp)$ 
follows from \ref{cl.pepinodemodena}, and
 \item
 $(P_\tp \circ  \psi_{\tj_k}) (\qR_{\tj_k}) \Subset P_\tp(B_\tq)$
 follows from Claim~\ref{cl.pepinodemodena}.  
 \end{itemize}

Condition (PB3) follows from the definitions of the elements of
$\tfB_{\tp}$ and item (1) in Claim~\ref{cl.pepinodemodena} that can be read as $\overline{P_\tp(B_\tp)} \subset
\mathrm{int} (P_\tp ({\qR}_{\tj_2})) \cup \mathrm{int} (P_\tp ({\qR}_{\tj_3}))$.

The assertion about the expanding constants follows again by Claim~\ref{cl.pepinodemodena}.

Recalling the construction of the rectangles 
$\qR_{\ti_k}$ in \eqref{e.irectangle}
and having in mind the
inclusion in item (1) in Claim~\ref{cl.pepinodemodena}, one deduces that
the Lebesgue number $\Delta_\tp$ of $\tfB_{\tp}$ satisfies $\Delta_\tp \in (\delta/2,\delta)$, recall 
Definition~\ref{def:cu-blending}, ending the
proof of the lemma.
\end{proof}

\begin{remark}
\label{r.Dnecoexistence}
In the construction  of the preblending machines above:
\begin{itemize}
\item
The fixed points of $\psi_\tq$ and $\psi_\tp$ are not modified and
the condition $\psi_\yc(0,0)=(0,0)$ (in local charts) is preserved. Thus
the cycle structure kept unchanged.
\item
The machines
coexist independently and
persist after perturbations, see Remark~\ref{r.robustpreblender}.
Note that $\delta$ and $\vartheta$ can be
chosen independently and arbitrarily small. The words 
$ \tj_k$ and  $\ti_k$
can be chosen independently and of arbitrarily large length.
\item
The points
$\qA_{\ti_k}$ and $\qD'_{\tj_k}$ belong to the corresponding rectangles of the machines.
\end{itemize}
\end{remark}

\subsubsection{Connecting preblending machines}
\label{sss.choiceofparameters}
To connect
 the preblending machines 
  $ \tfB_{\tp} =\tfB_{\tp} (\delta, \tj_1, \tj_2)$ and 
$ \tfB_{\tq}  =\tfB_{\tq} (\vartheta, \ti_1, \ti_2)$ in Lemma~\ref{l.Ds}
we unfold the cycle map $\psi_\yc$.

\begin{lem}
\label{l.item2cor}
 There exists an adapted perturbation $g$ of $f$ 
such that, for the corresponding  quotient family $\mathcal{G}$,
the preblending machines $\tfB_{\tq}$ and $\tfB_{\tp}$ are connected.
\end{lem}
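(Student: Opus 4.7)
The plan is to realize the transition as $\psi_\sharp$ associated to a word $\sharp$ that traverses the cycle from near $\qA \in \qU_\tq$ to near $\qD \in \qU_\tp$, using the cycle map $\psi_\yc$ as the bridge. The natural candidate is $\sharp = \ya\tp^r\yc\tq^t\yd$, so that $\psi_\sharp = \psi_\yd \circ \psi_\tq^t \circ \psi_\yc \circ \psi_\tp^r \circ \psi_\ya$ follows the heteroclinic orbit from $\qA$ through $\qA' \in \qU_\tp$, contracts toward $\qP$ under $\psi_\tp^r$, jumps to $\qU_\tq$ via $\psi_\yc$, stretches along the unstable direction of $\qQ$ under $\psi_\tq^t$, and lands in $\qU_\tp$ near $\qD$ via $\psi_\yd$. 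For the unperturbed cycle map this composition will not hit $\qD'$ at its center, since its image in $\qU_\tq$ has the form $(a_{12}\alpha_2^r\beta_1^t, a_{22}\alpha_2^r\beta_2^t)$ and the coordinate ratio behaves like $(\beta_1/\beta_2)^t\to 0$ rather than matching $(1,0)$. To fix the alignment, I would perform an adapted $C^1$-small perturbation of $\psi_\yc$ that preserves its value at $\qC$ (hence the contour) but modifies its linear part at $\qC$ by a small matrix $\eta M$; the modified image $\tilde{\psi}_\yc(0, \alpha_2^r)$ can then be placed so that its $\psi_\tq^t$-image lies arbitrarily close to $\qD' \in \qU_\tq$, whence $\psi_\yd$ brings it to a neighbourhood of $\qD$.

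Next I would choose $R_\sharp$ as a subset of $B_\tq$ centered at $\qA_{\ti_k}$ whose $P_2$-extent (the $y$-extent in $\qU_\tq$, which plays the role of the central projection of $\tfB_\tq$) equals $\qV(\vartheta)$ and whose $Q_2$-extent (the $x$-extent) is compactly contained in $\qH_{\ti_2,\ti_3}$. Condition (at1) is automatic from Lemma~\ref{l.Ds}, which gives $\mu_\tp,\mu_\tq\in (1,2)$ and $\gamma_\tp,\gamma_\tq$ arbitrarily large; (at2) is also provided by that lemma. Condition (at3) is the geometric alignment just described: once the perturbation and the exponents $r,t$ are tuned, the image $\psi_\sharp(R_\sharp)$ is a curvilinear rectangle whose $Q_1$-projection saturates $\qV_{\tj_2,\tj_3}$ and whose $P_1$-projection is compactly contained in $\qH(\delta)$, which is exactly what (at3) requires (after straightening $R_\sharp$ to be a rectangle in the appropriate coordinates). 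Condition (at4) follows from the product formula $D\psi_\sharp \approx D\psi_\yd \cdot \mathrm{diag}(\beta_1^t,\beta_2^t)\cdot D\tilde\psi_\yc \cdot \mathrm{diag}(\alpha_1^r,\alpha_2^r)\cdot D\psi_\ya$, so $\mu_\sharp$ is of order $\beta_1^t$ and $\gamma_\sharp$ is of order $\beta_2^t/\alpha_2^r$, both arbitrarily large for suitable $r,t$. Condition (at5) is then immediate, since the Lebesgue numbers satisfy $\Delta_\tp\approx\delta$ and $\Delta_\tq\approx\vartheta$ while $\mu_\sharp^{-1}$ and $\gamma_\sharp^{-1}$ can be made arbitrarily small.

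The hard part is the interplay between the perturbation size and the exponents $r,t$: the modification of the linear part of $\psi_\yc$ must be small enough to remain a genuine $C^1$-small perturbation and, crucially, to preserve the preblending machines $\tfB_\tp$ and $\tfB_\tq$ already produced by Lemma~\ref{l.Ds} (invoking the robustness asserted in Remark~\ref{r.robustpreblender}), while being large enough, relative to $\alpha_2^r$, to shift the image by an amount of order $\beta_1^{-t}$. Since the preimage point has magnitude $\alpha_2^r$ and the required shift is $\beta_1^{-t}$, the relative size of the matrix perturbation is of order $\beta_1^{-t}/\alpha_2^r$; by choosing $r$ and $t$ in the regime $r\log(1/|\alpha_2|)\ll t\log|\beta_1|$, this ratio becomes arbitrarily small, and Remark~\ref{r.Dnecoexistence} then guarantees that the perturbation needed to realize the transition can be carried out on top of the perturbation producing the preblending machines, without destroying either. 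This closes the construction and delivers the connected pair of preblending machines required by Proposition~\ref{p.t.occonpreblenders-vA}.
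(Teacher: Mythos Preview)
Your transition word $\sharp=\ya\tp^r\yc\tq^t\yd$ and the checklist for (at1)--(at5) coincide with the paper's, but the perturbation mechanism has a genuine gap. You modify the \emph{linear part} of $\psi_\yc$ at $\qC$ and estimate the required $C^1$-size as $\eta\approx\beta_1^{-t}/\alpha_2^r$, which you make small by imposing $r\log(1/\alpha_2)\ll t\log\beta_1$. That regime is incompatible with (at3). For $\psi_\tq^t\circ\tilde\psi_\yc\circ\psi_\tp^r\circ\psi_\ya(R_\sharp)$ to land in the fixed set $\psi_\yd^{-1}(B_\tp)$ near $\qD=(d_1,d_2)\in\qU_\tq$, the second coordinate forces $a_{22}\,\alpha_2^r\beta_2^t\,y_0\approx d_2$ with $y_0\in[1-\vartheta,1+\vartheta]$; hence $\alpha_2^r\beta_2^t$ must stay \emph{bounded}. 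Under this unavoidable constraint $\alpha_2^r\beta_1^t=(\alpha_2^r\beta_2^t)(\beta_1/\beta_2)^t\to 0$, so $\eta\approx\beta_1^{-t}/\alpha_2^r\approx(\beta_2/\beta_1)^t\to\infty$: a linear perturbation fixing the origin cannot shift the image of $(0,\alpha_2^r)$ by $\beta_1^{-t}$ when $\alpha_2^r\approx\beta_2^{-t}\ll\beta_1^{-t}$. Conversely, in your regime $\alpha_2^r\beta_2^t\to\infty$, so the image escapes $\psi_\yd^{-1}(B_\tp)$ altogether and (at3) fails. (There is also a secondary slip: in the convention $\tfS_2=(Q_2,P_2)$ the central projection of $\tfB_\tq$ is $Q_2=Q_\tq$, not $P_2$; accordingly $R_\sharp$ must have full $x$-extent $\qH_{\ti_2,\ti_3}$ and small $y$-extent, as in Figure~\ref{fig.markovianint}, opposite to what you wrote.)

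The paper avoids this obstruction by using a \emph{translation} $\psi_{\yc,\bar\omega}=\psi_\yc+\bar\omega$ rather than a linear modification, and---this is the key idea you are missing---localising it in a small neighbourhood $\qV_\yc$ of $\qP$ chosen \emph{disjoint} from the orbits $\mathcal{O}(\tfB_\tp)\cup\mathcal{O}(\tfB_\tq)$ of the already-built preblending machines (equation~\eqref{e.orbitsmachines}). A translation directly provides the needed shift $|\bar\omega|\approx\beta_1^{-w}\to 0$ independently of the input point's magnitude, so the perturbation is arbitrarily $C^1$-small, and the localisation means the preblending machines are \emph{exactly} preserved (no appeal to robustness is required). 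The exponents are then chosen so that $\alpha_2^v\beta_2^w$ is uniformly bounded between fixed constants (see \eqref{e.uniformlybounded}); this simultaneously yields the Markovian intersection and gives $\gamma_\sharp\approx\alpha_2^v\beta_2^w$ the correct size for (at5), while $\alpha_1^v\beta_1^w\to 0$ gives $\mu_\sharp^{-1}\to 0$.
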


\begin{proof}
Recalling Notation~\ref{n.notationshifts} for a prefix of a word, consider 
the orbits of the preblending machines given by:
\begin{equation}
\label{e.orbitsmachines}
\begin{split}
\mathcal{O} ( \tfB_{\tp} (\delta, \tj_2, \tj_3)) &\eqdef
\{ \psi_{\tj} (\qR_{\tj_2}), \, \mbox{$\tj$ a prefix of $\tj_2$} \}
\cup \{ \psi_{\tj} (\qR_{\tj_3}), \, \mbox{$\tj$ a prefix of $\tj_3$} \},\\
\mathcal{O} (\tfB_{\tq} (\vartheta, \ti_2, \ti_3)) &\eqdef
\{ \psi_{\ti} (\qR_{\ti_2}), \, \mbox{$\ti$ a prefix of $\ti_3$} \}
\cup \{ \psi_{\ti} (\qR_{\ti_3}), \, \mbox{$\ti$ a prefix of $\ti_3$} \}.
\end{split}
\end{equation}
These sets are compact and disjoint from $\{\qP,\qQ\}$.
This allows to
select small neighbourhoods
 $\qV_\yc$ and  $\qO_\yc$ of $\qP$ contained in $\qU_\tc\subset \qU_\tp$
which are
disjoint from the orbits
$\mathcal{O} (\tfB_\tp)$  and $\mathcal{O} (\tfB_\tq)$ 
such that $\overline \qV_\yc \subset \qO_c \subset 
 \overline{\qO_\yc} \subset
\qU_\yc$. 
Then there is small  $\tau>0$ such that for every $\bar \omega$ 
with $|\bar \omega|<\tau$
there is a  local diffeomorphism, called a {\em{translations of $\psi_\yc$,}} 
such that
\begin{equation} 
\label{e.translationofcyclemap}
\psi_{\yc,\bar \omega} \colon
\qU_{\yc} \to \qU_{\tq},\qquad
\psi_{\yc,\bar \omega }(x,y)=
\begin{cases}
&\psi_\yc (x,y)+\bar \omega , \quad \mbox{if $(x,y) \in \qV_\yc$},\\
&\psi_\yc (x,y), \quad \mbox{if $(x,y) \not\in \overline{\qO}_\yc$}.
\end{cases}
\end{equation}

To connect 
the machines  $\tfB_\tq$ and  $\tfB_\tp$, we consider compositions of the form
$$
\psi_\# \eqdef \psi_\td  \circ \psi_\tq^{w} \circ \psi_{\tc,\bar \omega_{v, w}}
\circ \psi_\tp^v  \circ \psi_\ta,
$$
for arbitrary large $w, v$ 
and a small vector $\bar \omega_{v,w}$.
We begin by choosing $w$ and $v$.
Fixed large $w$,
the vertical $\Delta_\tp^{\mathrm{ver}} (w)$ 
and  horizontal $\Delta_\tp^{\mathrm{hor}} (w)$ sizes 
of  $\psi_\tq^{-w} (\psi_\td^{-1} (B_\tp))$  satisfies
\begin{equation}
\label{e.tamanosp}
 \Delta_\tp^{\mathrm{ver}} (w)\approx \beta_2^{-w} |\qV_{\tj_2, \tj_3}| \qquad \mbox{and} \qquad  \Delta_\tp^{\mathrm{hor}} (w) \approx
 \beta_1^{-w} \delta.
\end{equation}
Similarly, the vertical $\Delta_\tq^{\mathrm{ver}} (v)$ 
and  horizontal $\Delta_\tq^{\mathrm{hor}} (v)$ sizes 
of  $\psi_\tp^{v} ( \psi_\ta (B_\tq))$  satisfy
\begin{equation}
\label{e.tamanosq}
 \Delta_\tq^{\mathrm{ver}}(v)\approx \alpha_2^v \vartheta 
   \qquad \mbox{and} \qquad  \Delta_\tq^{\mathrm{hor}} (v)\approx  \alpha_1^v |\qH_{\ti_2, \ti_3}|. 
\end{equation}
We will chose $v, w$ arbitrarily large such that
$$
 \Delta_\tq^{\mathrm{ver}}(v) >  \Delta_\tp^{\mathrm{ver}}(w) 
 \qquad 
 \mbox{and}
 \qquad
 \Delta_{\tq}^{\mathrm{hor}}(v) <  \Delta_\tp^{\mathrm{hor}}(w).
 $$
In view of \eqref{e.tamanosp} and \eqref{e.tamanosq},
to get these inequalities  it is enough to find large $v,w$  with
\begin{equation}\label{eq.fromabove}
\alpha_2^v \beta_2^w \geq \frac{ |\qV_{\tj_2, \tj_3}|}{\vartheta}
\qquad
\mbox{and}
\qquad
\alpha_1^v\beta_1^w \leq \frac{\delta}{ |\qH_{\ti_2, \ti_3}|}.
\end{equation}
Note that there are arbitrarily large $v, w$ as in \eqref{eq.fromabove} such that:
\begin{equation}
\label{e.uniformlybounded}
\frac{4\vartheta}{\delta}
\leq \alpha_2^v \beta_2^w \leq \frac{4\vartheta}{\delta} \alpha_2^{-2} \beta_2^2 \eqdef \kappa
\end{equation}
Therefore 
\begin{equation}
\label{e.itgoestozero}
\alpha_1^v \beta_1^w =  \left( \frac{\alpha_1}{\alpha_2}\right)^v \alpha_2^v  \beta_2^w
\left( \frac{\beta_1}{\beta_2}\right)^w \approx
 \left( \frac{\alpha_1}{\alpha_2} \right)^v \left( \frac{\beta_1}{\beta_2}\right)^w 
\kappa \to 0
 \quad \mbox{as}\quad
 v, w\to \infty.
\end{equation}
Moreover, for $v, w$ big enough and every  $|\bar \omega| \leq \tau$ it holds
\begin{equation}
\label{e.inclusiontranslation}
 \psi_\tp^{v} ( \psi_\ta (B_\tq)) \subset \qV_\yc \quad \mbox{and} \quad
 \psi_\tq^{-w} (\psi_\td^{-1} (B_\tp)) \subset \psi_{\yc} (\qO_\yc) =\psi_{\yc, \bar \omega} 
  (\qO_\yc). 
\end{equation}

Note that there are sequences $(v_k), (w_k)\to \infty$ satisfying 
\eqref{e.uniformlybounded}, \eqref{e.itgoestozero}, and \eqref{e.inclusiontranslation}. 
This choice of $v_k$ and $w_k$ implies that there are vectors $\bar \omega_k \eqdef \bar \omega_{v_k, w_k}$,
with $|| \bar \omega_{k}||\approx \beta_1^{-v_k} \alpha_2^{w_k}$,
whose associated 
translations $\psi_{\tc,k}\eqdef \psi_{\tc, \bar \omega_{v_k,w_k}}$ of
$\psi_\tc$ are such that
 $\psi_\tq^{-w_k} (\psi_\td^{-1} (B_\tp))$ and 
  $\psi_{\tc, \bar \omega_{v_k, w_k}}(\psi_\tp^{v_k} (\psi_\ta (B_\tq)))$ intersect in a Markovian way, as depicted in Figure~\ref{fig.markovianint}. Call this intersection $\widetilde \qR_{\sharp, k}$. Let 
   \begin{equation}
   \label{e.sharptodojunto}
   \begin{split}
   \qR_{\sharp,k} &\eqdef
  \psi_\ya^{-1} \circ
  \psi_\tp^{-v_k} \circ
  \psi_{\yc, k}^{-1}
    (\widetilde \qR_{\sharp, k}) \subset \qB_\tq, \\
    U_{\sharp}& \eqdef \mathrm{int}(B_\tp),\\
     \psi_{\sharp, k} & \eqdef 
\psi_\td  \circ \psi_\tq^{w_k} \circ \psi_{\tc, k } \circ \psi_\tp^{v_k}  \circ \psi_\ta.
   \end{split} 
   \end{equation}

\begin{figure}[h]
\begin{overpic}[scale=0.4,
]{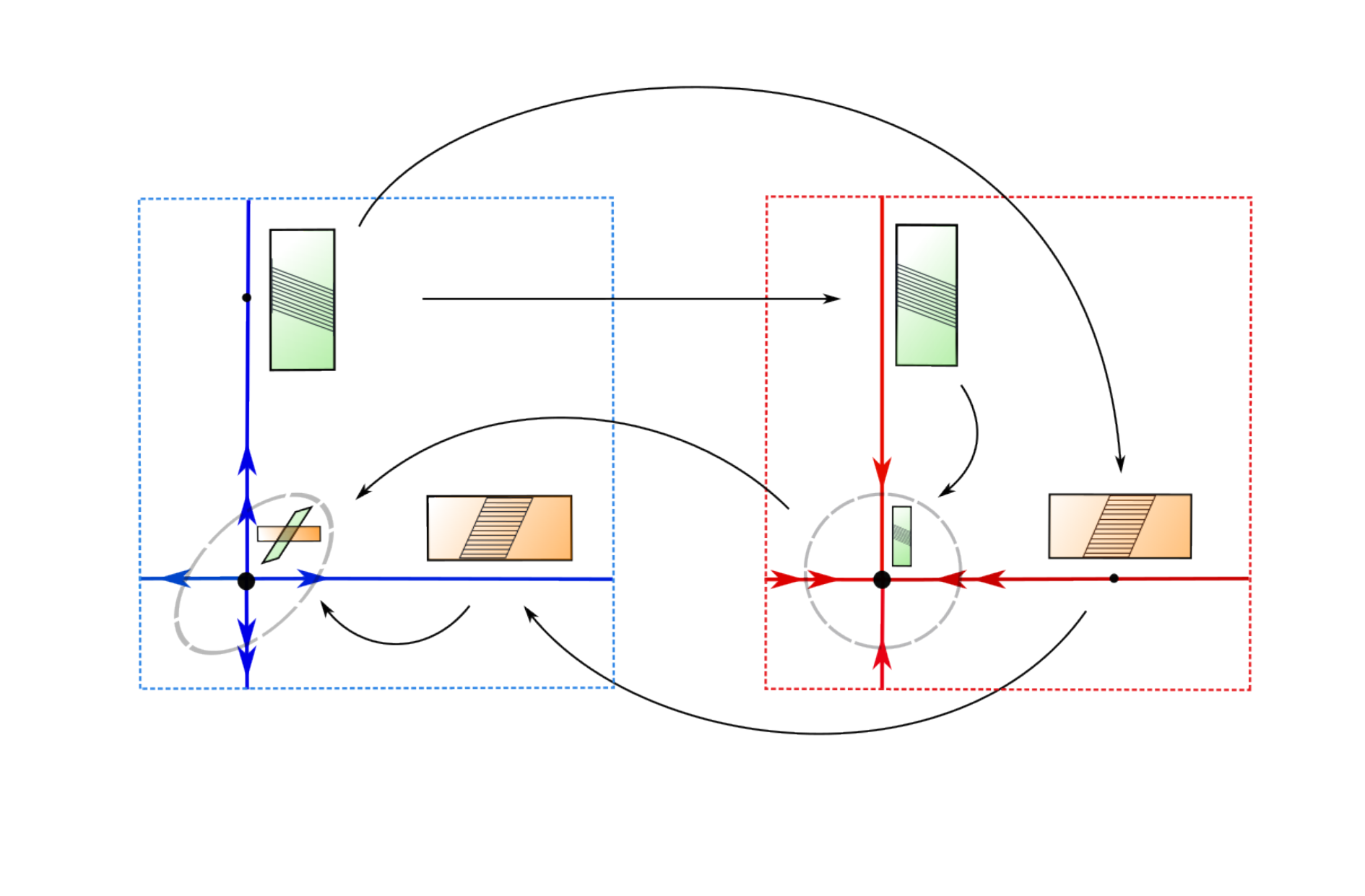}
\put(15,20){\large $\qQ$}   
\put(62,20){\large $\qP$}  
\put(15,43){\large $\qA$}   
     \put(20.7,52){\large $B_\tq$}   
         \put(48.5,46.5){\large  $\psi_{ \ya}$}   
         \put(34.3,37.5){\large  $\psi_{\tc, \bar \omega_{v_k, w_k}}$}
               \put(60,31){\large $\qV_\yc$} 
           \put(72.5,33){\large $\psi_{\tp}^{v_k}$}
\put(81,20){\large $\qD'$}   
  \put(86,31){\large $B_\tp$}  
         \put(59,7){\large  $\psi_{\yd}^{-1}$}
           \put(26.5,14.5){\large  $\psi_{\tq}^{-w_k}$}
         \put(70,57){\large $\psi_{\sharp, k}$}   
           \put(25.5,43){\large $\qR_{\sharp,k} $}              
         \end{overpic}
         \caption{Markovian intersection between $\psi_\tq^{-w_k} (\psi_\td^{-1} (B_\tp))$ and 
  $\psi_{\tc, \bar \omega_{v_k, w_k}}(\psi_\tp^{v_k} (\psi_\ta (B_\tq)))$}
         \label{fig.markovianint}
\end{figure}

The following claim implies the lemma.

\begin{claim}
\label{cl.isadated}
For every $k$ large enough,
the triple
$(\qR_{\sharp,k}, U_{\sharp},\psi_{\sharp, k})$
is an adapted transition between
$\tfB_{\tq}$ and $\tfB_{\tp}$.
\end{claim}

\begin{proof}
We check that conditions (at1)-(at5)  in Definition~\ref{d.adpreblender} hold. We will omit the subscript $k$.
Observe first that, by construction, the pair verifies condition (at3). 
Let $\mu_\tp, \gamma_\tp$ and  $\mu_\tq, \gamma_\tq$ be
the expanding constants of the preblender machines $\tfB_{\tp}$ and $\tfB_{\tq}$.
Taking 
$$
\mu_1=\mu_\tp,  \quad \gamma_1=\gamma_\tp \quad \mbox{and} \quad
 \mu_2=\mu_\tq, \quad \gamma_2=\gamma_\tq,
 $$
  it follows from
 Lemma~\ref{l.Ds} 
 that 
 $\mu_1\gamma_2>1$ and $\mu_2 \gamma_1>1$, proving (at1).
 The domination condition in (at2) follows similarly.

It remains to check conditions (at4) and (at5) hold with the constants
$$
\mu_\sharp=\mu_{\sharp,k}  \approx (\alpha_1^{v_k} \,  \beta_1^{w_k})^{-1},
\qquad
\gamma_\sharp=
\gamma_{\sharp,k} \approx
\alpha_2^{v_k} \, \beta_2^{w_k}.
$$
These choices are done
to guarantee that $\psi_{\sharp,k}$ is 
$\big((P_\tq,Q_\tq), (P_\tp,Q_\tp), \mu_{\sharp,k}, \gamma_{\sharp, k}\big)$-dominated
on $R_{\sharp,k}$, providing (at4).

 To get (at5), recall that the Lebesgue numbers $\Delta_{\tp}$ and  $\Delta_{\tq}$ of $\tfB_{\tp}(\delta)$ and  $\tfB_{\tq}(\vartheta)$ satisfies $\Delta_{\tp}\in (\delta/2,\delta)$ and 
 $\Delta_{\tq}\in (\vartheta/2,\vartheta)$, see Lemma~\ref{l.Ds}.
Moreover, 
$$
\diam P_\tp(B_\tp (\delta))=\diam (\qH(\delta))=2\delta \qquad \mbox{and} \qquad
\diam Q_\tq(B_\tq (\vartheta))= \diam (\qV(\vartheta))=2\vartheta.
$$
By \eqref{e.uniformlybounded}, the values  $\gamma_{\sharp,k}$ are uniformly bounded and by \eqref{e.itgoestozero}, 
 $\mu_{\sharp, k}^{-1}\to 0$ as $k\to \infty$. Therefore, for sufficiently large $k$,
$$
\mu_{\sharp,k}^{-1}\, \diam P_\tp(B_\tp (\delta))
\approx
\mu_{\sharp, k}^{-1}  \, 2\delta
 < {\Delta_\tq}\quad \mbox{and}\quad 
 \gamma^{-1}_{\sharp,k}\, \diam Q_\tq(B_\tq (\vartheta))
=
\gamma^{-1}_{\sharp,k} \, 2\vartheta
 < {\Delta_\tp},
$$
where the inequalities follow from \eqref{e.uniformlybounded}.
This completes the proof of the claim.
\end{proof}
The proof of the lemma is now complete.
\end{proof} 

The proof of the propostion is now complete.
\hfill \qed

\subsection{End of the proof of Theorem~\ref{t.robustcycles}}
\label{ss.robustcycle1}
Consider an adapted perturbation $g$ of $f$ as in Proposition~\ref{p.t.occonpreblenders-vA} and its quotient family 
$\mathcal{G}$  having connected expanding preblending machines $\tfB_{2}=\tfB_{\tq}$ for $\mathcal{G}^{-1}$
  and $\tfB_{1}=\tfB_{\tp}$ for $\mathcal{G}$. 
 First, by Theorem~\ref{thm:blending-yield-blenders},  the tower map
  $F$ associated to $\mathcal{G}$ in~\eqref{e.towermap} 
  has a pair of split blending machines associated with these preblending machines.
  These machines provides the hyperbolic sets
  $\Lambda_\tp$ and $\Lambda_\tq$ in Remark~\ref{r.preparacion} 
  satisfying \eqref{e.preparation}. 
  
    We now see that these split blending machines satisfy the hypotheses  Theorem~\ref{tct.conecting-preblender}, hence
  the tower map $F$  
   has a $C^1$ robust cycle of co-index to these split blending machines and  
the same holds for the sets $\Lambda_\tp$ and $\Lambda_\tq$. 
 
 We now explain the objects involved in our application of Theorem~\ref{tct.conecting-preblender},
 Given $g$ as above  and its local dynamics $\{g_\tx \colon \tx \in \mathcal{C}\}$,
   we consider the family of affine maps 
   \begin{equation}
   \label{e.thetwolegmaps}
   A_\tx=g_\tx|_{{E^\sss \oplus E^\uuu}}.
   \end{equation}
  As $g$ is an adapted perturbation, these maps are constant in the sets $U_\tx$.
 Given a word $\ti \in \Sigma_T^\ast$, we define compositions  $A_\ti$ as in \eqref{e.notation}.
     
 Associated with the preblending machines 
   \begin{equation}
  \label{e.thefinalpreblending}
  \begin{split}
  \tfB_1&\eqdef \tfB_{\tp}=(\scR_\tp, B_\tp, \tfS_\tp)=(\scR_{\tp,\tj_2,\tj_3} , B_{\tp, \tj_2,\tj_3} (\delta) , \tfS_\tp),\\
  \tfB_2&\eqdef \tfB_{\tq}=(\scR_\tq, B_\tq, \tfS_\tq)=(\scR_{\tq,\ti_2,\ti_3} , B_{\tq, \ti_2,\ti_3} (\vartheta) , \tfS_\tq),
  \end{split}
  \end{equation} 
  whose ingredients  are specified in~\eqref{e.ingredients}, 
  we consider the two legs horseshoe maps $\mathbb{A}_1$ and $\mathbb{A}_2$ whose maps
  are  $(A_{\ti_k})_{k=2,3}$ and $(A_{\tj_k})_{k=2,3}$, respectively, defined by \eqref{e.thetwolegmaps} on the corresponding rectangles.  
The indices of these maps are $I_1=\{\ti_2, \ti_3\}$ and $I_2=\{\tj_2, \tj_3\}$. 

We also note that $\mathbb{A}_1$ is $\tfS_\tp$-adapted to $\tfB_{\tp}$, and that $\mathbb{A}_2^{-1}$ is  $\tfS_\tq$-adapted to $\tfB_{\tq}$.
All the assertions about adapted stretching follows from domination and partial hyperbolicity.

Finally,
   consider the triple $(R_{\sharp,k},U_{\sharp,k},\psi_{\sharp,k})$ in Claim~\ref{cl.isadated}, see also~\eqref{e.sharptodojunto}.
   In what follows, $k$ is fixed and hence omitted. 
   We also consider the one-leg horseshoe map
$\mathbb{A}_\sharp=(A_\sharp)$ with index $I_\sharp=\{\sharp\}$.
   
We now detail  the ingredients in our application of  Theorem~\ref{tct.conecting-preblender}:
 \begin{enumerate}[label=$\bullet$, leftmargin=0.5cm]
 \item the open sets $N_1,N_2\subset M$ where $N_1\eqdef \mathrm{int}(U_\tp)$ and 
 $N_2\eqdef\mathrm{int}(U_\tq)$;
 \item the $(1,1)$-splitting $\tfS_1\eqdef\tfS_\tp$ and $\tfS_2\eqdef \tfS_\tq$ of $N_1$ and $N_2$;
 \item the families $\mathcal{F}_1\eqdef \mathcal{G}$, $\mathcal{F}_2\eqdef \mathcal{G}^{-1}$ and
  $\mathcal{F}_\sharp\eqdef \{\psi_{\sharp} \}$; 
     \item  
  the preblenders in  $\tfB_1$ and  $\tfB_2$ in \eqref{e.thefinalpreblending},
  \item
the adapted transition 
 $(R_\sharp, U_\sharp, \psi_\sharp)$ between $\tfB_1$ and $\tfB_2$ in~\eqref{e.sharptodojunto},
\item 
the two-legs horseshoe maps $\mathbb{A}_1$ ($\tfS_\tp$-adapted to $\tfB_{\tp}$) and 
 $\mathbb{A}_2^{-1}$ ($\tfS_\tq$-adapted to $\tfB_{\tq}$),
  \item  
the one-leg horseshoe map
 $\mathbb{A}_\sharp$, 
 with domain $\bfR_{\sharp}= \mathsf{H}_\sharp \times R_\sharp$,
 whose stretching is 
 $(\tfS_2^t,\tfS_1)$-adapted to  $\psi_{\sharp}$ on $R_{\sharp}$.
\end{enumerate}

Consider the tower maps  $F_\rho$ associated to
 $(\mathbb{A}_\rho \ltimes \mathcal{F}_\rho,
 \mathcal{R}_\rho, I_\rho)$, $\rho=1,2,\sharp$, see~\eqref{e.towermap},
and the split blending machines
$\fB_{\tau}$ associated to $F_\tau$ and $\tfB_\tau$ for $\tau=1,2$,
given by Theorem~\ref{thm:blending-yield-blenders}. Now,  by Theorem~\ref{tct.conecting-preblender},
$$
   \Lambda^\st(\fB_1, F_1)  \cap F_{\sharp}(\bfR_{\sharp} \cap \Lambda^\st(\fB_2, F_2) )  \neq \emptyset.
$$
Moreover, this intersection holds for $C^1$ neighbourhoods of $F_1$ and $F_2$. Noting that $F_2$ is associated
to a local inverse map, 
there exist a $C^1$ neighbourhood $\mathcal{U}$ of $F_{\mathcal{G}}$ such that for every $G\in \mathcal{U}$ we get
$$
\Lambda^\st(\fB_{1,G}, G)  \cap  \Lambda^\ut(\fB_{2,G}, G)  \neq \emptyset
$$
where $\fB_{\tau,G}$ is the continuation of $\fB_{\tau}$, $\tau=1,2$.

Finally, we observe that there exists a \( C^1 \)-neighbourhood \( \mathcal{V} \) of \( g \) and a correspondence \( h \in \mathcal{V} \to G_h \in \mathcal{U} \), such that for each \( h \in \mathcal{V} \), there exist hyperbolic sets \( \Lambda_{1,h} \) and \( \Lambda_{2,h} \) of u-indices \( \ell+2 \) and \( \ell \), respectively, corresponding to the continuations of the sets \( \mathfrak{B}_{1,G_h} \) and \( \mathfrak{B}_{2,G_h} \)This complete the proof of the theorem.

\section{Examples of non-escaping heterodimensional cycles} 
\label{s.examples}

We present two classes of examples exhibiting non-escaping cycles.  
The first is obtained by adapting the nonhyperbolic diffeomorphisms of Abraham-Smale~\cite{AbrSma:68} and Shub\newline~\cite{Shu:72}, following 
Manning's construction~\cite{Man:72}.
The second arises in the context of matrix cocycles in $\mathrm{GL}(3,\mathbb{R})$.  
These constructions are outlined only briefly.

\subsection{Shub-Manning and Abraham-Smale-Manning's maps} 
\label{ss.shubandco}

We begin with a variation of the  {\em{double derived from Anosov (DA)}} diffeomorphism in \cite[Example 3, pag 36]{Man:72}. We closely follow the construction in \cite[Section 2]{CP20}. 

Consider a
  (linear) Anosov diffeomorphism $L\colon \mathbb{T}^2 \to \mathbb{T}^2$ 
having two  fixed points $\theta_1\neq\theta_2$, a
hyperbolic splitting,  $T\mathbb{T}^2=E^\st_L\oplus E^\ut_L$,  and the corresponding stable 
and unstable foliations ${W}^\st(\,\cdot,L)$ and ${W}^\ut(\,\cdot,L)$.
For $i=1,2$, denote by ${W}^\ut_{\pm}(\theta_i,L)$ the connected component of 
${W}^\ut(\theta_i,L)\setminus\{\theta_i\}$. 
Similarly, for the components ${W}^\st_{\pm}(\theta_i,L)$
of  ${W}^\st_L(\theta_i,L)\setminus\{\theta_i\}$. We observe that for any combination $(i,j) \in (\pm,\pm)$ it holds
\begin{equation}
\label{e.theintersection}
{W}^\ut_{i}(\theta_1,L) \pitchfork
W^\st_{j}(\theta_2,L) \neq \emptyset.
\end{equation}

The DA map $g$ is obtained by perturbing $L$ in two steps (these perturbations are not $C^1$ small).
First, we get a classical DA map $g_1$ with a nontrivial transitive hyperbolic attractor $\Lambda_1$.
This maps  is obtained by a homotopic deformation of $L$ around of the fixed point $\theta_1$,  breaking it into three fixed points $\theta_1^\ell$, $\theta_1^\ct=\theta_1$, 
and $\theta_1^r$, where $\theta_1$ is a source and $\theta_1^{\ell,r}$ are saddles. This perturbation preserves the stable foliation of $L$. The attractor of $g_1$ is $\Lambda_1=\mathbb{T}^2\setminus W^\ut(\theta_1,g_1)$ which contains $\theta_1^{\ell}$,  $\theta_1^{r}$. 

In this  construction, the map $g_1$  has a dominating splitting  
$T\mathbb{T}^2=E^\ct_{g_1}\oplus E^\ut_{g_1}$, where $E^\ct_{g_1}=E^\st_L$, and 
there are invariant foliations 
$W^\ct(\,\cdot, g_1)$ and $W^\ut(\,\cdot, g_1)$  tangent to $E^\ct_{g_1}$ and $E^\ut_{g_1}$,
respectively, such that $W^\ut(\,\cdot, g_1)$ is uniformly expanding and $W^\ct(\,\cdot, g_1)=W^\st(\,\cdot, L)$. 
This construction implies that equation \eqref{e.theintersection} reads as follows
\begin{equation}
\label{e.otherintersection1}
W^\ut_{i}(\theta_1,g_1) \pitchfork
W^\ct_{j}(\theta_2,g_1) \neq \emptyset.
\end{equation}
Note that $W^\ut_{\pm}(\theta_1,g_1)$ are the separatrices of the strong unstable manifold of $\theta_1$ and 
$W^\ct_{\pm}(\theta_2,g_1)$ are the separatrices of the stable manifold of $\theta_2$.

 The map $g$ is obtained from $g_1$, in a similar way, by a homotopic deformation 
around of the fixed point $\theta_2$ giving rise to three fixed points $\theta_2^\ell$, $\theta_2^\ct=\theta_2$, where
$\theta_2$ is a sink and $\theta_2^{\ell,r}$ are saddles. 
As above, this perturbation preserves the unstable foliation of
$g_1$. Therefore there exists an invariant (dominated) splitting
$T\mathbb{T}^2=E^{\ct_1}_{g}\oplus E^{\ct_2}_{g}$ and invariant foliations 
 $W^{\ct_1}(\,\cdot, g)$ and $W^{\ct_2}(\,\cdot, g)$  tangent to $E^{\ct_1}_{g}$ and $E^{\ct_2}_{g}$.
Equation \eqref{e.otherintersection1} now reads as follows
\begin{equation}
\label{e.otherintersection2}
W^{\ct_2}_{i}(\theta_1,g) \pitchfork
W^{\ct_1}_{j}(\theta_2,g) \neq \emptyset.
\end{equation}
Note that $W^{\ct_2}_{\pm}(\theta_1,g)$ and $W^{\ct_1}_{\pm}(\theta_2,g)$ are the separatrices of the strong unstable and stable manifolds of $\theta_1$ and 
  $\theta_2$, respectively.

Moreover, any diffeomorphisms $C^1$ close to $g$ satisfies the corresponding equation~\eqref{e.otherintersection2}.
This ends the definition of the DA map $g$.

Consider now an Axiom A diffeomorphism
$h \colon M \to M$ of a compact manifold $M$ with two different fixed points $p$ and $q$ 
in a basic set $\Lambda$ of its spectral decomposition.
Consider the skew-product 
\begin{equation}\label{e.19}
\Phi \colon M\times \mathbb{T}^2\to M \times \mathbb{T}^2,
\quad
\Phi (x,y)=(h(x), f_x(y)),
\end{equation}
where $(f_x)_{x\in M}$ is a smooth family 
 of diffeomorphisms such that
\begin{equation}
\label{e.family}
f_x \colon  \mathbb{T}^2 \to \mathbb{T}^2, \qquad
f_x = \left\{
\begin{array}{ll}
L, & \quad \hbox{if $x \notin B_{\varrho}(q)$,} \\
g, & \quad \hbox{if $x \in B_{\varrho/2}(q)$,}
\end{array}
\right.
\end{equation}
where $\varrho>0$ is small and $B_\varrho(q)$ denotes the open ball centred at $q$ with radius $\varrho$.
On the set $B_{\varrho}(q)\setminus B_{\varrho/2}(q)$, the family $f_x$ is an homotopic deformation between
the maps
$L$ to $g$
with one-dimensional foliations tangent to a dominated splitting, see Figure~\ref{fig:Sh-M}.

\begin{figure}[h]
\centering
\begin{overpic}[scale=.35,
]{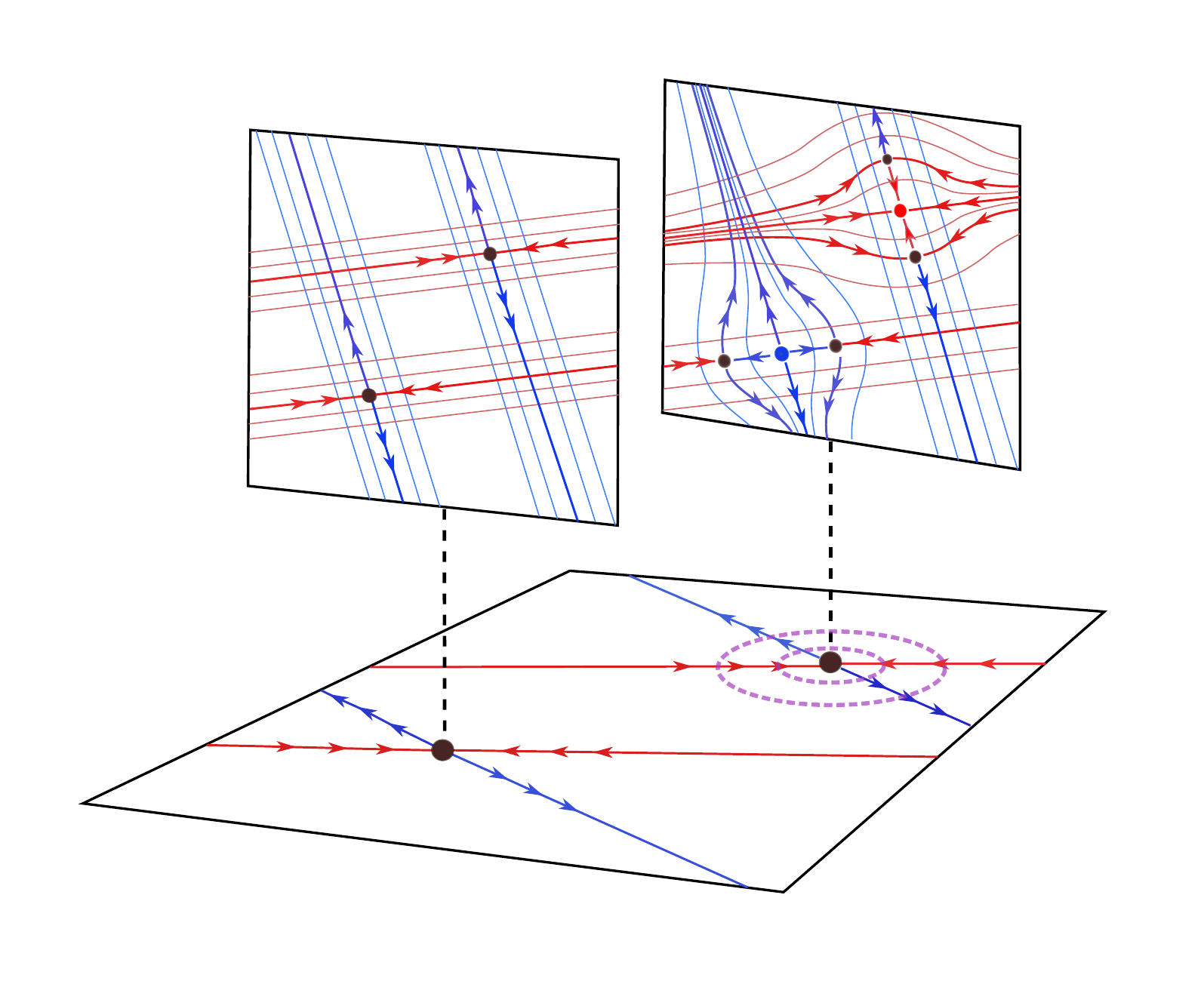}
            \put(30,73){\large{Anosov}}
           \put(26,42){{$\theta_1$}}
           \put(43,65){{$\theta_2$}}
           \put(35.5,13){{$p$}}
           \put(68,21){$q$}
           \put(60,48){{$\theta_1$}}
           \put(77,68){{$\theta_2$}}
           \put(67,76){\large{DA}}
\end{overpic}
\caption{Homotopic deformation from $\sigma\times L$ to $\Phi$}
\label{fig:Sh-M}
\end{figure}
  We assume that the contraction/expansion in the fibers are dominated by ones on the base, 
 as  a consequence the set $\Lambda\times\mathbb{T}^2$ is a partially hyperbolic 
with a splitting $T_{\Lambda\times \mathbb{T}^2} M\times \mathbb{T}^2=E^\st\oplus E^{\ct_1} \oplus E^{\ct_2} \oplus E^\ut$,
where
the fibers of the bundle
 $E^\ct = E^\ct_1 \oplus E^\ct_2$ are tangent to the normally hyperbolic invariant foliation
 $W^\ct(x,y) \eqdef \{x\}\times \mathbb{T}^2$, $x\in \mathbb{T}^2$. Moreover, each central leaf is sub foliated by invariant foliations  
 $W^{\ct_1}, W^{\ct_2}$  tangent to  $E^{\ct_1}, E^{\ct_2}$, respectively.

 As a consequence, there exists a $C^1$ neighbourhood $\mathcal{U}$ of $\Phi$ such that every map in $\mathcal{U}$  is conjugate to skew-product  diffeomorphisms as in ~\eqref{e.19} with the same basis $h$ and fiber dynamics $\tilde{f}_x$ $C^1$ close to $f_x$, see~\cite[Chapter 8]{HirPugShu:77}. Moreover, $\Phi$ (restricted to $\Lambda\times\mathbb{T}^2$) is robustly transitive, see \cite[Proposition 7.2]{BonDiaVia:05}.
  In what follows, we use the perturbations in $\mathcal{U}$ and the associated skew-products interchangeably.

Consider the fixed points of saddle type
$Q \eqdef \{(q,\theta_1)\}$ and   $P \eqdef \{(q,\theta_2)\}$
of $\ut$-indices three and one, respectively. We claim that, after a small perturbation, these points form a non-escaping cycle.

By robust transitivity, and after an arbitrarily small perturbation, the points \( Q \) and \( P \) form a cycle.  
This is an standard consequence of the Connecting Lemma in \cite{Hay:97}.
Condition~\eqref{e.otherintersection2}, involving the separatrices of the fixed saddles \( \theta_1 \) and \( \theta_2 \) of the map \( g \), ensures that items (NE1)-(NE2) in Definition~\ref{d.nonescapinghc} are satisfied for the intersection \( W^\ut(Q, \Phi) \pitchfork W^\st(P, \Phi) \). These separatrices behaves as  those of the map \( f_0 \) in the toy model 
in Section~\ref{ss.toynonescaping}; see Figure~\ref{fig.cases-exp}.)

The previous construction applies to the \textit{Shub-Manning maps}, 
where 
 $h$ in \eqref{e.19} is a  (transitive) Anosov map in  $\mathbb{T}^2$.
It also applies to the \textit{Abraham-Smale-Manning maps},
taking $h\colon \mathbb{S}^2 \to \mathbb{S}^2$ a (global) horseshoe map and $\Lambda$ is a horseshoe\footnote{This map can be defined as a \textit{one-step skew product}. Indeed, if $R_1$,$R_2\subset M$ is a Markov partition of $\Lambda$, such that $p\in R_1$ and $q\in R_2$, then replacing $B_{\varrho}(q)$ in~\eqref{e.family} by a small neighbourhood of $R_2$ disjoint from $R_1$, the restriction of $\Phi$ to $\Lambda\times \mathbb{T}^2$  is a one-step skew product.}.

%

%
%

\subsection{Non-escaping cycles arising from $\mathrm{GL}(3,\mathbb{R})$-cocycles}
\label{ss.cocyclesnonescaping}
In \cite{AviBocYoc:10}, the set of uniformly hyperbolic $\mathrm{SL}(2,\mathbb{R})$-cocycles is characterized, and its boundary is described either by the presence of parabolic elements or by the existence of two maps in the cocycle, one of which sends the unstable direction of the first into the stable direction of the second; see \cite[Theorem 4.1]{AviBocYoc:10}.
Considering the projectivization of the cocycles,
this property can be viewed as a heterodimensional cycle
 of the corresponding
skew-product, see  \cite[Section 1.2]{DiaGelRam:22}.
Note that to obtain co-index two cycles, one needs to consider dimensions higher than two. We  
focus on $\mathrm{GL}(3,\mathbb{R})$-cocycles (with $\mathrm{SL}(3, \mathbb{R})$ being a particular case).

%
%
%

  Consider $A,B\in \mathrm{GL}(3,\mathbb{R})$ such that $A$ has  
  basis of unitary eigenvectors $\ve_1,\ve_2,\ve_3$ with
  \begin{equation}
  \label{e.cyeigen}
A(\ve_i)= \lambda_i \ve_i,  \quad \mbox{where} \quad 0<\lambda_1 < \lambda_2< \lambda_3,
\qquad \mbox{and}   \qquad 
  B(\ve_3)= \ve_1, \quad B(\ve_1)\neq \ve_3.
  \end{equation}  
  
  Given $L\in \mathrm{GL}(3,\mathbb{R})$, denoting $[\vv]$ the class of a nonzero vector
  $\vv \in  \mathbb{R}^3$
  in $ \mathbb{P}(\mathbb{R}^3)$,
  consider its projectivization 
 $$
 f_L \colon 
 \mathbb{P}(\mathbb{R}^3)\to \mathbb{P}(\mathbb{R}^3), \quad 
 f_L([\vv]) \eqdef [L(\vv)].
 $$ 
Condition \eqref{e.cyeigen} implies that (see Figure~\ref{fig:Cocliclos})
 \begin{itemize}
 \item
 $[\ve_3]$ is a sink of $f_A$ and has a strong stable manifold,
 \item
   $[\ve_2]$ is a saddle $f_A$, and
\item
  $[\ve_1]$ is a source of $f_A$ and has a strong unstable manifold.
\end{itemize}

\begin{figure}[h]
\centering
\begin{overpic}[scale=.34,
]{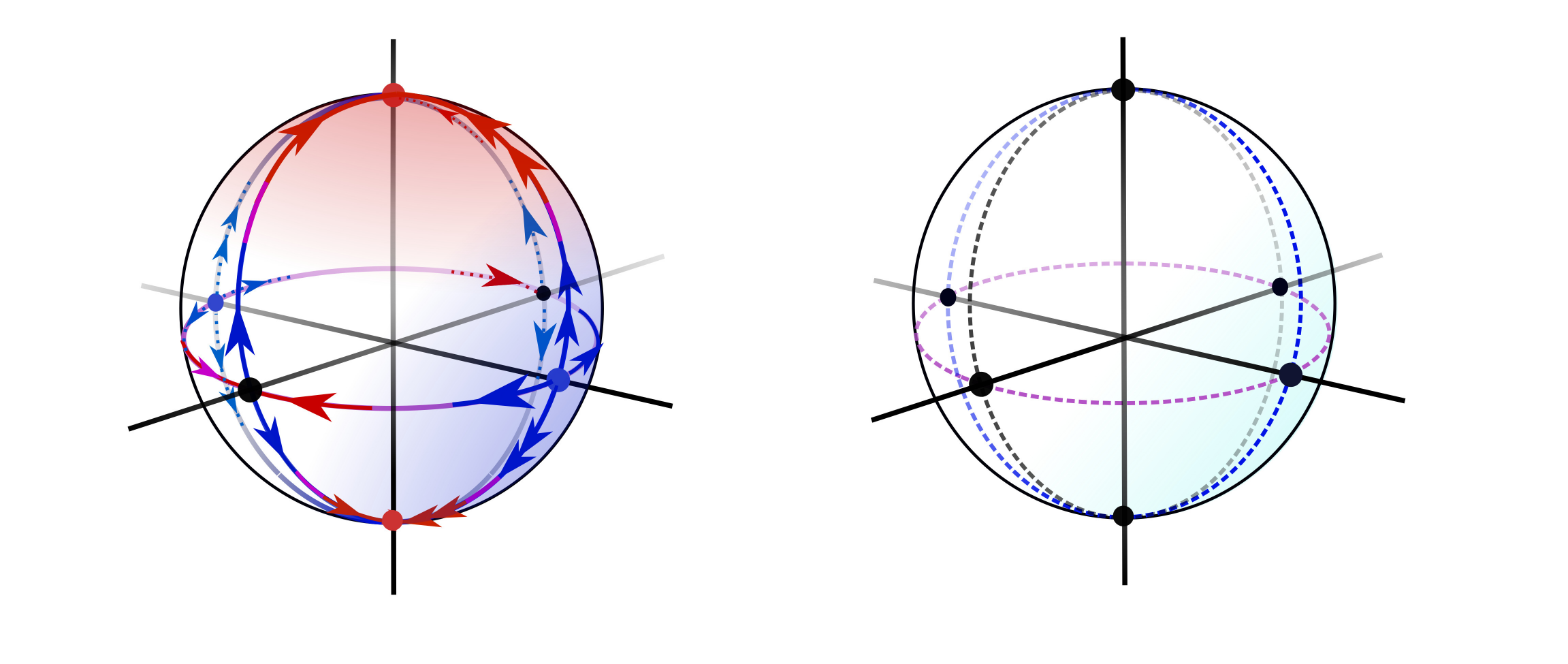}
           \put(27,39){$[\ve_3]$}
           \put(38,13){$[\ve_1]$}
           \put(9.5,11.5){$[\ve_2]$}
           \put(23.5,0){(a)}  
     \put(74,39){$[\ve_3]$}
      \put(84,13){$[\ve_1]=f_B([\ve_3])$}
      \put(57,11.5){$[\ve_2]$}   
      \put(70.2,0){(b)}       
\end{overpic}
\caption{(a) Dynamics of $f_A$. (b) Dynamics of $f_B$}
\label{fig:Cocliclos}
\end{figure}

Letting
 $f_0\eqdef f_A$ and $f_1\eqdef f_B$,  we consider the skew-product as in 
\eqref{e.toyzinho}
$$
F\colon \Sigma_2 \times 
 \mathbb{P}(\mathbb{R}^3) \to   \Sigma_2 \times 
 \mathbb{P}(\mathbb{R}^3), \quad (\underline{\ti}, x) = (\sigma(\,\underline{\ti}\,), f_{i_0} (x)), 
 \quad  \mbox{with}\quad \underline{\ti} =(\ldots  i_{-1}.i_0i_1\ldots). 
 $$
Comparing with Section~\ref{ss.toynonescaping},
 here $[\ve_3]$ and $[\ve_1]$ play the roles of $P$ and $Q$, 
respectively.
As in~\eqref{e.hoy},
we consider the ``saddles''   $\mathbf{E}_1=(0^\mathbb{Z}, [\ve_1])$
and $\mathbf{E}_3=(0^\mathbb{Z}, [\ve_3])$ with different "$\ut$-indices''. Note that $B (\ve_3)=\ve_1$ 
implies $f_1 ([\ve_3]) = [\ve_1]$ (as condition (d) in Section~\ref{ss.toynonescaping}) and  the global structure implies that $W^s ([\ve_3],f_0)$ intersects
$W^u([\ve_1], f_0)$, providing a cycle of 
co-index two between  $\mathbf{E}_1$ and  $\mathbf{E}_3$ for $F$ which is of non-escaping type.

The definition of $f_0$ implies the existence of the strong stable bundle of $[\ve_3]$ and 
the strong unstable bundle of $[\ve_1]$. We also have a dominated splittings and the corresponding strong manifolds,  we now provide  additional information about these objects.

Denote by $[\ve_i, \ve_j]$ the equivalence classes  in $\mathbb{P}(\mathbb{R}^3)$
of the  vectors in the plane  generated by $\ve_i$ and $\ve_j$, $i,j \in \{1,2,3\}$, $i<j$.
The $A$-invariance of these planes
and a direct calculation implies that both the strong stable bundle of $[\ve_3]$  and 
the strong unstable bundle
$[\ve_1]$ for $f_0$ are tangent to
   $[\ve_1,\ve_3]$. Thus we get the inclusion
 $$
\big( [\ve_1, \ve_3] \setminus \{ [\ve_1], [\ve_2]\} \big) \subset   W^\ss   ([\ve_3], f_0) \cap  W^\uu ([\ve_1], f_0).
$$ 
%
This  condition is different from (NE2) in the definition of non-escaping cycle and also 
from the one in the toy model in Section~\ref{ss.toynonescaping}, where the strong invariant manifolds are different.
This difficulty is bypassed using the map $B$.
From  \eqref{e.cyeigen}, we get that
   $$
   B([\ve_1, \ve_3]) \ne [\ve_1, \ve_3],
   $$
 which means that the $f_1$ maps $W^\uu ([\ve_1],f_0)$ outside of $W^\ss ([\ve_3],f_0)$.
The fact that $f^{-1}$ maps $W^\ss ([\ve_3], f_0)$ disjointly from $W^\uu ([\ve_1], f_0)$ also follows from
\eqref{e.cyeigen}. 

The fact that $Df_1$ maps the dominated splitting of $[\ve_3]$ into a splitting that
is in general position with respect to the one of $[\ve_1]$ also follows from \eqref{e.cyeigen}. 
Obtaining a similar condition to (e) in Section~\ref{ss.toynonescaping}. This completes our sketch.

\begin{remark}
Our construction provides a small skew-product perturbation with robust cycles of co-index two and one. 
However, this perturbation  it is not associated to a cocycle. A general question is if these perturbations can be obtained associated to a cocycle. Answering this question is beyond the scope of our paper.
\end{remark}

\bibliographystyle{siam}

\end{document}